\setlist[itemize]{align = parleft,left = 4pt..1.5em}
\setlist[enumerate]{align = parleft,left = 4pt..2.1em}
\titleformat{\section} {\bf}{\thesection.}{1em}{} \titleformat{\subsection}
\numberwithin{equation}{section}
\numberwithin{table}{section}
\newtheorem{theorem}{Theorem}[section]
\newtheorem{lemma}[theorem]{Lemma}
\newtheorem{proposition}[theorem]{Proposition}
\newtheorem{corollary}[theorem]{Corollary}
\newtheorem{assumption}[theorem]{Assumption}
\theoremstyle{definition}
\newtheorem{definition}[theorem]{Definition}
\newtheorem{example}[theorem]{Example}
\newtheorem{remark}[theorem]{Remark}
\newtheorem{conjecture}[theorem]{Conjecture}
\pgfplotsset{compat=1.18}
\newcommand{\cmark}{\ding{51}}%
\newcommand{\xmark}{\ding{55}}%
\newcommand{\op}{\operatorname}
\newcommand{\dom}{U}
\newcommand{\Norm}[1]{\left|\left| #1 \right|\right|}
\newcommand{\Lip}{L}
\newcommand{\lip}{l}
\newcommand{\ud}{\ensuremath{\mathrm{d} }}
\newcommand\R{\mathbb{R}}
\newcommand\eps{\varepsilon}
\newcommand\E{\mathbb{E}}
\renewcommand\P{\mathbb{P}}
\begin{document}
% {{{ Title, abstract
\title[Parabolic SPDEs on bounded domains]
{Parabolic stochastic PDEs on bounded domains with rough initial conditions: moment and correlation bounds}
\author{David Candil}
\address{Institut de math\'ematiques, \'Ecole Polytechnique F\'ed\'erale de Lausanne, Station 8, CH-1015 Lausanne, Switzerland}
\email{david.candil@epfl.ch}
\author{Le Chen}
\address{Department of Mathematics and Statistics, Auburn University, 203 Parker Hall, Auburn, Alabama 36849, United States}
\email{le.chen@auburn.edu}
\author{Cheuk Yin Lee}
\address{Department of Mathematics, National Tsing Hua University, No.~101, Section 2, Kuang-Fu Road, Hsinchu 300044, Taiwan}
\email{cylee@math.nthu.edu.tw}

\begin{abstract}
  We consider nonlinear parabolic stochastic PDEs on a bounded Lipschitz domain
  driven by a Gaussian noise that is white in time and colored in space, with
  Dirichlet or Neumann boundary condition. We establish existence, uniqueness
  and moment bounds of the random field solution under measure-valued initial
  data $\nu$. We also study the two-point correlation function of the solution
  and obtain explicit upper and lower bounds. For $C^{1, \alpha}$-domains with
  Dirichlet condition, the initial data $\nu$ is not required to be a finite
  measure and the moment bounds can be improved under the weaker condition that
  the leading eigenfunction of the differential operator is integrable with
  respect to $|\nu|$. As an application, we show that the solution is fully
  intermittent for sufficiently high level $\lambda$ of noise under the
  Dirichlet condition, and for all $\lambda > 0$ under the Neumann condition.
\end{abstract}
\keywords{Parabolic Anderson model; stochastic heat equation; Dirichlet/Neumann
boundary conditions; Lipschitz domain; intermittency; two-point correlation;
rough initial conditions.}

\subjclass{60H15, 35R60}

\maketitle
\tableofcontents

% }}}
\section{Introduction and main results}

In this paper, we study nonlinear parabolic \textit{stochastic partial differential
equations} (SPDEs) on a bounded Lipschitz domain $\dom$ in $\R^d$. By a domain
we refer to a connected open subset of $\R^d$.
Consider a second-order differential operator
\begin{align}\label{E:op-L}
  \mathscr{L} = -\sum_{i,j = 1}^d \frac{\partial}{\partial x_i}\Big(a_{ij}(x) \frac{\partial}{\partial x_j}\Big),
\end{align}
where $[a_{ij}(x)]_{i,j}$ is a
real-valued symmetric matrix that is H\"older continuous on $\dom$ with some
exponent $0 < \gamma \le 1$ and uniformly elliptic, i.e., there exists a
positive finite constant $C$ such that
\begin{align}\label{E:u-ellip}
      C^{-1} |\xi|^2
  \le \sum_{i = 1}^d \sum_{j = 1}^d a_{ij}(x) \xi_i \xi_j
  \le C |\xi|^2 \qquad \text{for all $x \in \dom$ and $\xi \in \R^d$,}
\end{align}
where $|\xi|\coloneqq \sqrt{\xi_1^2+\dots+\xi_d^2}$. We consider
operators of the form \eqref{E:op-L} because our approach in this paper is based
on heat kernel estimates for operators in divergence form (see Section
\ref{SS:Kernel} below). We consider the following SPDE with (vanishing)
Dirichlet boundary condition:
\begin{equation}\label{E:she-d}
  \begin{dcases}
    \frac{\partial}{\partial t} u(t, x) + \mathscr{L} u(t, x) = \lambda\: \sigma\left(t, x, u(t, x)\right)\: \dot W(t, x),
                              & t > 0,\: x \in \dom, \\
    u(0, \cdot) = \nu(\cdot), & x \in \dom,          \\
    u(t, x) = 0,              & t > 0,\: x \in \partial \dom,
  \end{dcases}
\end{equation}
as well as the same equation with (vanishing) Neumann boundary condition:
\begin{equation}\label{E:she-n}
  \begin{dcases}
    \frac{\partial}{\partial t} u(t, x) + \mathscr{L} u(t, x) = \lambda\: \sigma\left(t, x, u(t, x)\right)\: \dot W(t, x),
                                                     & t > 0,\: x \in \dom, \\
    u(0, \cdot) = \nu(\cdot),                        & x \in \dom,          \\
    \frac{\partial}{\partial \textup{n}}u(t, x) = 0, & t > 0,\: x \in \partial \dom,
  \end{dcases}
\end{equation}
where $\textup{n}$ is the outward normal to the boundary $\partial \dom$ of
$\dom$.

\medskip

We make the following assumption on the noise and correlation function:

\begin{assumption}\label{A:f}
  The noise $\dot W$ is a centered and spatially homogeneous Gaussian noise that
  is white in time with the covariance given by
  \begin{align}\label{E:CovNoise}
    \E\left[\dot W(t, x) \dot W(s, y)\right] = \delta(t-s) f(x-y),
  \end{align}
  where $\delta$ is the delta function and $f$ is a nonnegative and nonnegative
  definite function on $\R^d$. We assume that there exist constants $0 < C_f <
  \infty$ and $0 < \beta < 2 \wedge d$ such that
  \begin{equation}\label{E:f-bd}
    C_f^{-1} |x-y|^{-\beta} \le f(x-y) \le C_f |x-y|^{-\beta}
    \qquad \text{for all $x,y\in \dom$}.
  \end{equation}
\end{assumption}

For example, $f$ may be taken as the Riesz kernel $f(x-y) = |x-y|^{-\beta}$. In
both equations~\eqref{E:she-d} and~\eqref{E:she-n}, $\lambda>0$ is a constant
parameter representing the level or intensity of the noise.

\medskip

We need some regularity and cone conditions on the diffusion coefficient
$\sigma$,  which is given by the following assumption:
\begin{assumption}\label{A:sigma}
  We assume that $\sigma: (0, \infty) \times \dom \times \R \to \R$ in both
  equations~\eqref{E:she-d} and~\eqref{E:she-n} is a non-random function such
  that $\sigma(t, x, 0) = 0$ for all $(t,x)\in (t,\infty) \times \dom$ and there
  exists a constant $\Lip_\sigma>0$ such that
  \begin{equation}\label{E:Lipsigma}
    |\sigma(t, x, u) - \sigma(t, x, v)| \le \Lip_\sigma |u - v| \qquad \text{for all $t > 0$, $x \in \dom$ and $u, v \in \R$}.
  \end{equation}
\end{assumption}

In particular, Assumption~\ref{A:sigma} implies that
\begin{equation}\label{E:Lsigma}
  |\sigma(t, x, u)| \le \Lip_\sigma |u| \qquad \text{for all $t > 0$, $x \in \dom$ and $u, v \in \R$}.
\end{equation}
Besides, we will need the other side of condition~\eqref{E:Lsigma} in order to
derive some lower bounds later: there exists a constant $\lip_\sigma>0$ such
that
\begin{equation}\label{E:lsigma}
  \sigma(t, x, u) \ge \lip_\sigma |u| \qquad \text{for all $t > 0$, $x \in \dom$ and $u \in \R$}.
\end{equation}
Our results will also cover the important case --- the \textit{parabolic Anderson
model} (PAM)~\cite{carmona.molchanov:94:parabolic}:
\begin{equation}\label{E:Anderson}
  \sigma(t, x, u) = u \qquad \text{for all $t > 0$, $x \in \dom$ and $u \in \R$}.
\end{equation}
In this case, our results hold with $\Lip_\sigma = \lip_\sigma = 1$.

\medskip

We assume that the initial condition $\nu$ is a non-random, locally finite,
signed Borel measure on $\dom$. Denote $|\nu| \coloneqq \nu_+ + \nu_-$, where
$\mu = \mu_+ - \mu_-$ is the corresponding \textit{Jordan decomposition} of
$\mu$ with $\mu_\pm$ being two nonnegative Borel measures with disjoint support.
The exact blow-up rate of the locally finite measure near the boundary will be
controlled via an integrability condition by the leading eigenfunction;
see~\eqref{E:RoughData}. Initial conditions of this type will be called
\textit{rough initial conditions}. An important example is the \textit{Dirac
delta measure}, which plays an important role in the studying the long-time
asymptotics of the solution; see, e.g.,~\cite{amir.corwin.ea:11:probability}
and~\cite{corwin:12:kardar-parisi-zhang}. \medskip

For stochastic heat equations on $\mathbb{R}^d$, the probabilistic moment bounds
and the two-point correlation function, both under rough initial conditions,
have been studied in~\cite{chen.dalang:15:moments*1, chen.dalang:15:moments,
chen.hu.ea:17:two-point, chen.hu.ea:19:nonlinear, chen.kim:19:nonlinear}. As for
bounded domains, Foondun and Nualart~\cite{foondun.nualart:15:on} considered the
stochastic heat equation on an interval $(0, L)$ with space-time white noise and
either Dirichlet or Neumann boundary condition, and studied the moments and
intermittency properties of the solutions. Nualart~\cite{nualart:18:moment} and
Guerngar and Nane~\cite{guerngar.nane:20:moment} extended the results
in~\cite{foondun.nualart:15:on} to fractional stochastic heat equations with
colored noise, but only to the case when the domain is the unit ball in $\R^d$
plus a Dirichlet boundary condition. In all these
works~\cite{foondun.nualart:15:on, nualart:18:moment, guerngar.nane:20:moment},
the initial conditions are assumed to be a bounded function. Important initial
data, such as the Dirac delta measure, have not been properly studied. \medskip

\begin{figure}[htpb]
  \centering

  \renewcommand{\thesubfigure}{\thefigure.\arabic{subfigure}}
  \makeatletter
    \renewcommand{\p@subfigure}{}
  \makeatother
  \captionsetup[subfigure]{style = default, margin = 0pt, parskip = 0pt, hangindent = 0pt, labelformat = simple}
  % \captionsetup[subfigure]{style = default, labelformat = simple, labelsep = period}

  \subfloat[][\cmark]{\label{SF:Circ}
    \begin{tikzpicture}[ x = 2em, y = 2em, scale = 0.8]
      \draw[fill = gray!50!white, thick] (0,0) circle (1.5);
    \end{tikzpicture}
  }
  \quad
  \subfloat[][\cmark]{\label{SF:Square}
    \begin{tikzpicture}[ x = 1em, y = 1em, scale = 1.1]
      \draw[fill = gray!50!white, thick] (-2,-2) rectangle (2,2);
    \end{tikzpicture}
  }
  \quad
  \subfloat[][\cmark]{\label{SF:Star}
    \begin{tikzpicture}[ x = 1em, y = 1em, scale = 1.2]
      \draw[fill = gray!50!white, thick] (-2,+1) --++ (1,0) --++ (0,1) --++ (2,0) --++ (0,-1) --++ (1,0) --++ (0,-2) --++ (-1,0) --++ (0,-1)  --++ (-2,0)  --++ (0,1) --++ (-1,0) --++ (0,2);
    \end{tikzpicture}
  }
  \quad
  \subfloat[][\cmark]{\label{SF:NonConvex}
    \begin{tikzpicture}[ x = 2em, y = 2em, scale = 0.9]
      \draw[use Hobby shortcut, fill = gray!50!white, thick] ([closed]-1,0) .. (0,1.5)  ..  (2,0.1)  ..  (0,-0.5)  ..  (-0.6,-1);
    \end{tikzpicture}
  }
  \quad
  \subfloat[][\xmark]{\label{SF:Cups}
    \begin{tikzpicture}[ x = 2em, y = 2em, scale=1.4]
      % \draw[xshift=12cm, use Hobby shortcut, fill = gray!50!white] ([out angle = 0, in angle = 0]-2,0) .. (0,2.5)  ..  (2,0)  ..  (0,-1.5)  .. (-2,0);
      \filldraw[
          draw=black,
          fill=gray!50!white,
          thick,
          samples=100,
        ]
          plot[domain=0:1] (\x, {sqrt(\x)})
          -- (0,2) -- (-1,1) --
          plot[domain=-1:0] (\x, {sqrt(-\x)})
          --
          cycle;
    \end{tikzpicture}
  }

  \caption{Various bounded domains on $\R^2$: Fig.~\ref{SF:Circ} --
  Fig.~\ref{SF:NonConvex} are Lipschitz domains (either convex or not);
Fig.~\ref{SF:Cups} is a typical example of the non-Lipschitz domain where there
is a cusp.}

  \label{F:Domains}
\end{figure}

One of the main objectives/contributions of this paper is to study the moments
and correlation function of the solution of parabolic SPDEs~\eqref{E:she-d}
and~\eqref{E:she-n} under rough initial conditions with a uniformly elliptic
operator $\mathscr{L}$ on a bounded domain $\dom \subset \R^d$. For the
parabolic Anderson model (i.e., $\sigma(t, x, u) = u$) on $\R^d$, Chen and
Kim~\cite{chen.kim:19:nonlinear} have shown that the two-point correlation
function can be expressed as
\begin{equation}\label{E:corr}
  \E[u(t, x) u(t, x)] = \lambda^{-2} \iint_{\R^d \times \R^d}
  \nu(\ud y) \, \nu(\ud y') \, \mathcal{K}(t, x-y, x'-y', y'-y)
\end{equation}
for some kernel function $\mathcal K$; see also~\cite{chen.hu.ea:17:two-point}
for the space-time white noise case with $d=1$. Under the
conditions~\eqref{E:Lsigma} and~\eqref{E:lsigma} for $\sigma$, the correlation
function also admits upper and lower bounds of the same form as the right-hand
side of~\eqref{E:corr}. By establishing sharp upper and lower bounds for the
kernel $\mathcal K$, one can then obtain sharp bounds for the correlation
function. The formula~\eqref{E:corr} is established
in~\cite{chen.kim:19:nonlinear} by using a convolution-type operator
$\triangleright$. It is natural to ask if one can obtain similar formula and
bounds for the correlation function in the case of bounded domains. The
convolution-type operator $\triangleright$ on a bounded domain $\dom \subset
\R^d$ has been considered by Candil in~\cite{candil:22:localization}, where this
operator is used to study the localization error between the solution of the
stochastic heat equation on $\dom$ and the solution of the same equation on
$\R^d$.

In~\cite{nualart:18:moment}, it is mentioned that the extension of the moment
estimates from the unit ball---a smooth, convex and bounded domain---to other
bounded domains is not straightforward. One may expect that some geometric and
regularity conditions on the domain would be required. The majority of our
results below work for a general Lipschitz domain (see examples in
Fig.~\ref{SF:Circ} -- Fig.~\ref{SF:NonConvex}). Convexity of the domain will be
only required for the lower bounds of the moments in case of the Neumann
boundary conditions. Domains with more regularity than the Lipschitz condition
on the domain, such as the $C^{1,\alpha}$-domain ($\alpha>0$), will allow us to
obtain sharper upper moment bounds under Dirichlet boundary condition. See
Table~\ref{Tb:Summary} below for a summary of our results.

For the Neumann boundary condition, there have not been many results except
those in~\cite{foondun.joseph:14:remarks, foondun.nualart:15:on,
khoshnevisan.kim:15:non-linear}, which are concerned with the stochastic heat
equation driven by the space-time white noise on an interval $(0, L)$ in one
spatial dimension. In the Neumann case, they prove that full intermittency
occurs for all levels $\lambda > 0$ of noise. This suggests the formation of
tall peaks for the solutions even when the noise level is small. However, the
precise intermittency behavior has not been well studied for general domains
under the Neumann boundary condition. \medskip

Another main contribution of the paper is about the weak conditions, namely, the
rough initial conditions, that we impose on the initial data $\nu$. When $\dom =
\R^d$, i.e., the boundary is at $|x|\to\infty$, and in case of $\mathcal{L} =
-\frac{1}{2}\Delta$, the rough initial condition refers to locally finite
(signed) measure on $\R^d$ that satisfies the following integrability condition:
\begin{align}\label{E:J0Finite}
  \int_{\R^d} e^{- a |x|^2} |\nu|(\ud x) <\infty, \quad \text{for all $a>0$,}
\end{align}
which is equivalent to the solution to the homogeneous heat equation exists for
all time:
\begin{align}\label{E:J_0R^d}
  (p_t*|\nu|)(x) \coloneqq \int_{\R^d} (2\pi t)^{-d/2} e^{- \frac{|x-y|^2}{2t}}
   | \nu | (\ud y) <\infty, \quad \text{for all $t>0$ and $x\in\R^d$}.
\end{align}
Initial conditions of this type were studied
in~\cite{chen.dalang:15:moments,chen.huang:19:comparison,chen.kim:17:on}; see
also~\cite{conus.joseph.ea:14:initial}. When the domain $\dom$ is bounded, we
will show that the integrability condition~\eqref{E:J0Finite} should be replaced
by
\begin{align}\label{E:RoughData}
  \int_\dom \Phi_1(y)\, |\nu|(\ud y) < \infty,
\end{align}
where $\Phi_1(\cdot)$ is the eigenfunction corresponding to the leading
eigenvalue of the operator $\mathcal{L}$. In particular, for the Neumann
boundary condition case (see Theorems~\ref{T:ExUnque} and~\ref{T:corr-bd}),
since $\Phi_1(x)$ is a constant function that does not vanish at the boundary,
condition~\eqref{E:RoughData} is equivalent to $|\mu|(\dom)<\infty$, i.e.,
$|\mu|$ is a finite measure on the domain $\dom$. In case of the Dirichlet
boundary condition (see Theorem~\ref{T:C1alpha} and Corollary~\ref{C:prod-dom}),
condition~\eqref{E:RoughData} allows locally finite measure with certain growth
rate near the boundary. For specific domains, condition~\eqref{E:RoughData} can
be made more explicit; see examples in Table~\ref{Tb:Examples}.

\begin{table}[htpb]
  \centering
  \caption{Simplification of Condition~\eqref{E:RoughData} for specific domains
  in case of Dirichlet boundary conditions and $\mathcal{L} = \Delta$.}
  \label{Tb:Examples}
  \renewcommand{\arraystretch}{2.2}
  \begin{tabular}{|c|c|c|} \hline
    % \rowcolor{black!50!white}
    \rowcolor{lightgray}
    Domain $\dom$                    & $\displaystyle \int_\dom \Phi_1(y)\, |\nu|(\ud y) < \infty$                                         & Ref.                      \\ \hline
    Interval: $(0,1)$                & $\displaystyle \int_0^1 x(1-x)\, |\nu|(\ud x) <\infty$                                              & Example~\ref{Eg:interval} \\
    Ball in $\R^d$: $|x|<1$          & $\displaystyle \int_{|x|<1} \left(1-|x|\right)\, |\nu|(\ud x) <\infty$                              & Example~\ref{Eg:ball}     \\
    Annulus in $\R^2$: $R_1<|x|<R_2$ & $\displaystyle \int_{R_1<|x|<R_2} \left(R_2-|x|\right) \left(|x|-R_1\right)\, |\nu|(\ud x) <\infty$ & Example~\ref{Eg:Ann}      \\
    Box: $(0,1)^d$                   & $\displaystyle \int_{(0,1)^d} \bigg(\prod_{i = 1}^{d} x_i(1-x_i)\bigg)\, |\nu|(\ud x) <\infty$      & Example~\ref{Eg:Rect}     \\ \hline
  \end{tabular}
\end{table}

\medskip In order to obtain precise moment results and allow rough initial
conditions, we start by considering in Lemmas~\ref{Lem:iint-d}
and~\ref{Lem:iint-n} the following heat kernel integral:
\begin{align*}
  \iint_{\dom^2} G(t, x, y) G(t, x', y') f(y-y')\, \ud y\, \ud y'.
\end{align*}
By using the heat kernel estimates in Proposition~\ref{P:G}, we find that the
sharp bound for this integral is $e^{-2\mu_1 t}(1\wedge t)^{-\beta/2}$. In
particular, the factor of $(1\wedge t)^{-\beta/2}$ improves the estimates
in~\cite{nualart:18:moment}. Furthermore, we consider the convolution-type
integral of the heat kernel
\begin{align*}
  \iint_{\dom^2} G(t-s, x, z) G(t-s, x', z') f(z-z') G(s, z, y) G(s, z', y') \,\ud z\, \ud z'
\end{align*}
and obtain optimal bounds with a similar factor of $\big(1\wedge
\frac{(t-s)s}{t}\big)^{-\beta/2}$. Also, based on these optimal bounds and the
convolution-type operator $\triangleright$ as considered
in~\cite{candil:22:localization}, we extend the two-point correlation
formula~\eqref{E:corr} and related bounds in Proposition~\ref{P:2p-corr}, and
establish explicit upper and lower bounds for the kernel function in
Propositions~\ref{P:K-d} and~\ref{Lem:K-n}. In Theorem~\ref{T:corr-bd}, we use
these kernel bounds to obtain sharp bounds for the two-point correlation
function. In case of $C^{1, \alpha}$-domains with Dirichlet boundary condition,
we improve the above bounds, in Lemma~\ref{Lem:G^2} and
Proposition~\ref{P:K-C1alpha}, by including a factor which contains
$\Phi_1(x)\Phi_1(x')\Phi_1(y)\Phi_1(y')$.

\medskip As some applications of our moment bounds, in Theorems~\ref{Thm:pm-d}
and~\ref{Thm:pm-n}, we establish the \textit{full intermittency} property for
sufficiently large $\lambda$ under the Dirichlet boundary condition, and for all
$\lambda > 0$ under the Neumann boundary condition. This extends significantly
the results in~\cite{foondun.nualart:15:on}. We also apply our moment bounds to
study the \textit{$L^2$-energy} of the solution as a function of the parameter
$\lambda$. This property has been studied in~\cite{foondun.joseph:14:remarks}
and~\cite{khoshnevisan.kim:15:non-linear} on $(0, L)$ in the large $\lambda$
regime, i.e., as $\lambda \to \infty$, under the Neumann boundary condition. In
this paper, we study this property in both large and small $\lambda$ regimes and
for a general bounded domain with Neumann boundary condition. We find that, at a
fixed time, when $\lambda > 0$ is small, the $L^2$-energy of the solution on
$\dom$ has the exponential rate $\exp(C\lambda^2)$, which is different from the
rate $\exp(C\lambda^{4/(2-\beta)})$ when $\lambda$ is large (see
Theorem~\ref{Thm:pm-n} and Corollary~\ref{Cor:n}).

\begin{remark}
  Since the domain $\dom$ is bounded, it is natural to study the SPDE
  in~\eqref{E:she-d} or~\eqref{E:she-n} under the framework of
  infinite-dimensional stochastic differential equations as in Da Prato and
  Zabczyk~\cite{da-prato.zabczyk:14:stochastic}; see also
  Cerrai~\cite{cerrai:01:second} and Pr\'ev\^ot and
  R\"ockner~\cite{prevot.rockner:07:concise}. However, in order to obtain
  sharper pointwise estimates of the probabilistic moments with both $t>0$ and
  $x\in\dom$ fixed, and in order to demonstrate how the geometric and analytic
  properties of the boundary $\partial \dom$ affect the solution especially
  through the initial conditions, we adopt the random field approach in this
  paper. The random field approach was pioneered by
  Walsh~\cite{walsh:86:introduction} and extended by
  Dalang~\cite{dalang:99:extending};
  see~\cite{dalang.quer-sardanyons:11:stochastic} for a comparison of the two
  approaches.
  % Note that since the correlation function $f(\cdot)$ given in
  % Assumption~\ref{A:f} blows up at zero, the stochastic integral---the Walsh
  % integral (see~\eqref{E:mild-sol} below)---is not obtained by a $Q$-Wiener
  % process, but via a cylindrical Wiener process.
\end{remark}

Before we state our main results, let us first introduce some notations.
Throughout the paper, $G_D$ and $G_N$ denote the Dirichlet and Neumann heat
kernel, respectively. We use $G$ to denote either $G_D$ or $G_N$ when we do not
need to distinguish the two cases. We use $\|\cdot\|_p$ to denote the $L^p
(\Omega)$-norm. Moreover, $a \wedge b = \min\{a, b\}$ for any $a, b \in \R$.

\subsection{Main results}

Our first theorem concerns the existence and uniqueness of random field solution
(see Definition~\ref{D:sol} below) and the $p$-th moment bounds of the solution.
For any $c > 0$, set
\begin{align}\label{E:Jc}
  J_c(t, x) \coloneqq \int_\dom \frac{1}{1 \wedge t^{d/2}} e^{-c\frac{|x-y|^2}{t}} |\nu|(\ud y).
\end{align}
It is clear that $J_c(t,x)<\infty$ for all $t > 0$ and $x\in\dom$ if and only if
$|\nu| (\dom)<\infty$, i.e., $|\nu|$ is a finite Borel measure on $\dom$. Let
$\mu_1$ be the smallest positive eigenvalue of the operator $\mathscr{L}$ with
Dirichlet boundary condition on $\dom$, and $\Phi_1$ be the corresponding
eigenfunction such that
\begin{align}
  \begin{cases}
    \mathscr{L} \Phi_1(x) = \mu_1 \Phi_1(x), & x \in \dom, \\
    \Phi_1(x) = 0,                           & x \in \partial \dom,
  \end{cases}
\end{align}
with $\Phi_1$ chosen to be positive and usually normalized $\|\Phi_1\|_{L^2
(\dom)} = 1$; see Section~\ref{SS:Kernel}. \medskip

We use the following convention for the constants $\mu$, $c$, and $c'$ in
Theorems~\ref{T:ExUnque},~\ref{T:corr-bd}, and~\ref{T:C1alpha}. In case of
Dirichlet (resp.~Neumann) boundary condition, we set $\mu = \mu_1$, $c = c_1$,
$c' = c_2$ (resp.~$\mu = 0$, $c = c_3$, $c' = c_4$) as the constants given
by~\eqref{E:G-d} (resp.~\eqref{E:G-n} and~\eqref{E:G-n-lower}) in
Proposition~\ref{P:G} below.

\begin{theorem}\label{T:ExUnque}
  If $\dom$ is a bounded Lipschitz domain, the noise $\dot W$ satisfies
  Assumption~\ref{A:f} and $\sigma$ satisfies Assumption~\ref{A:sigma}, then
  there exists a random field solution to~\eqref{E:she-d} with Dirichlet
  boundary condition (and~\eqref{E:she-n} with Neumann boundary condition,
  respectively). Moreover:
  \begin{enumerate}
    \item[(i)] If $\nu$ has a bounded density, then for all $T > 0$ and all $p
      \ge 2$,
      \begin{equation}\label{E:sol-Lp}
        \sup_{0 < t \le T} \sup_{x \in \dom} \|u(t, x)\|_p < \infty.
      \end{equation}
    \item[(ii)] If $\nu$ is a signed Borel measure with $|\nu|(\dom)<\infty$,
      then there exists a positive finite constant $C$ such that for all $t >
      0$, $x \in \dom$ and all $p \ge 2$,
      \begin{equation}\label{E:sol-Lp-bd}
        \|u(t, x)\|_p \le C e^{t\Big(Cp\lambda^2 L_\sigma^2 + C p^{\frac{2}{2-\beta}}\lambda^{\frac{4}{2-\beta}}L_\sigma^{\frac{4}{2-\beta}}-\mu\Big)}J_c(t, x).
      \end{equation}
  \end{enumerate}
  In both cases, the solution is unique among all random field solutions such
  that for each $T>0$, there exists $C_T<\infty$ such that
  \begin{equation}\label{E:sol-unique}
    \|u(t, x)\|_2 \le C_T J_c(t, x) \quad \text{for all $(t,x)\in (0, T]\times \dom$}.
  \end{equation}
\end{theorem}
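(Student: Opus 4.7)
The plan is to construct the solution by Picard iteration in the mild formulation
\begin{equation*}
  u_{n+1}(t,x) = J_\nu(t,x) + \lambda \int_0^t \int_\dom G(t-s,x,y)\,\sigma(s,y,u_n(s,y))\,W(\ud s,\ud y),
\end{equation*}
starting from $u_0 \equiv J_\nu$, where $J_\nu(t,x) := \int_\dom G(t,x,y)\,\nu(\ud y)$ and $G$ is the appropriate Green function. The only analytic inputs are Burkholder--Davis--Gundy (BDG), Minkowski, and the sharp heat-kernel bound
\begin{equation*}
  \iint_{\dom^2} G(t-s,x,y)G(t-s,x,y') f(y-y')\,\ud y\,\ud y' \le C e^{-2\mu(t-s)}\bigl(1\wedge(t-s)\bigr)^{-\beta/2}
\end{equation*}
provided by Lemmas~\ref{Lem:iint-d} and~\ref{Lem:iint-n}.

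For part (i), boundedness of the density of $\nu$ makes $J_\nu$ uniformly bounded on $(0,T]\times\dom$. Applying BDG and Minkowski yields
\begin{equation*}
  \|u_{n+1}(t,x) - J_\nu(t,x)\|_p^2 \le 4p \lambda^2 L_\sigma^2 \int_0^t \iint G(t-s,x,y)G(t-s,x,y')f(y-y') \|u_n(s,y)\|_p \|u_n(s,y')\|_p\,\ud y\,\ud y'\,\ud s.
\end{equation*}
Setting $M_n(t):=\sup_x \|u_n(t,x)\|_p^2$ and using the kernel bound, this becomes a Volterra integral inequality $M_{n+1}(t) \le C + C p\lambda^2 L_\sigma^2\int_0^t (1\wedge(t-s))^{-\beta/2} M_n(s)\,\ud s$, which closes by the standard fractional Gr\"onwall lemma (Mittag--Leffler kernel) and yields~\eqref{E:sol-Lp}.

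For part (ii), uniformity in $x$ fails because $J_\nu(t,x)$ is singular as $t\to 0$, so one must track a \emph{pointwise} bound $\|u_n(t,x)\|_p \le A_n(t)\,J_c(t,x)$. The crucial supplementary ingredient is a Chapman--Kolmogorov-type estimate
\begin{equation*}
  \int_\dom G(t-s,x,y)\,J_c(s,y)\,\ud y \le C e^{-\mu(t-s)} J_c(t,x),
\end{equation*}
obtained by combining the Gaussian bounds of Proposition~\ref{P:G} with the semigroup property of the Gaussian kernel defining $J_c$. Inserting the inductive hypothesis into BDG and applying this estimate twice reduces the problem to a scalar recursion
\begin{equation*}
  A_{n+1}(t)^2 \le C + C p \lambda^2 L_\sigma^2 \int_0^t e^{-2\mu(t-s)} \bigl(1\wedge(t-s)\bigr)^{-\beta/2} A_n(s)^2\,\ud s,
\end{equation*}
whose solution via the fractional Gr\"onwall lemma yields precisely the exponent $Cp\lambda^2 L_\sigma^2 + Cp^{2/(2-\beta)}\lambda^{4/(2-\beta)}L_\sigma^{4/(2-\beta)} - \mu$ in~\eqref{E:sol-Lp-bd}; the two competing terms arise from the linear part $e^{-2\mu(t-s)}$ and the singular part $(1\wedge(t-s))^{-\beta/2}$ of the kernel respectively. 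Convergence of $\{u_n\}$ to a random-field solution in $L^p$ is obtained by applying the same argument to $u_{n+1}-u_n$ and telescoping.

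Uniqueness proceeds along identical lines: if $u, v$ both satisfy~\eqref{E:sol-unique}, then $D(t,x) := \|u(t,x) - v(t,x)\|_2$ inherits the bound $D(t,x)\le 2C_T J_c(t,x)$, and BDG gives a linear Volterra inequality for $D^2$ with the singular kernel above, forcing $D\equiv 0$ by iterating the inequality $n$ times and sending $n\to\infty$. The main obstacle throughout is the measure-valued case: the singularity of $J_\nu$ at $t=0$ prevents any sup-in-$x$ iteration, and the whole argument hinges on the semigroup-type bound for $J_c$ against the Dirichlet/Neumann Green function, which must be established uniformly in $\dom$ from the Gaussian heat kernel estimates in Proposition~\ref{P:G}.
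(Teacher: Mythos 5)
Your sketch for part~(i) is essentially the paper's proof (Picard iteration, BDG $+$ Minkowski, the kernel bound of Lemmas~\ref{Lem:iint-d}/\ref{Lem:iint-n}, Gr\"onwall-type closure); the only cosmetic deviation is that the paper works with the increments $\|u_n-u_{n-1}\|_p$ and Lemma~\ref{Lem:I}(iv) to establish the Cauchy property directly, whereas you bound $M_n=\sup_x\|u_n(t,x)\|_p^2$. The uniqueness argument is also in the same spirit as the paper's (Lemma~\ref{L:Gronwall}).

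The serious gap is in part~(ii). You correctly identify that one must track a pointwise bound $\|u_n(t,x)\|_p \lesssim A_n(t)\,J_c(t,x)$, but your proposed tool, the one-variable Chapman--Kolmogorov estimate $\int_\dom G(t-s,x,y)J_c(s,y)\,\ud y\lesssim e^{-\mu(t-s)}J_c(t,x)$, cannot be ``applied twice'' to the quantity you actually need to control, namely
\begin{equation*}
  \iint_{\dom^2} G(t-s,x,y)\,G(t-s,x,y')\,f(y-y')\,J_c(s,y)\,J_c(s,y')\,\ud y\,\ud y',
\end{equation*}
because the singular correlation $f(y-y')$ couples the two spatial variables; there is no way to integrate in $y$ and $y'$ separately. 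Consequently, the scalar recursion you claim, with kernel $(1\wedge(t-s))^{-\beta/2}$, is not established by your argument, and in fact the single-step estimate behind it is false. Writing $J_c(s,\cdot)$ as an integral against $|\nu|$ and factoring the Gaussians via the Brownian-bridge identity~\eqref{E:exp-id}, the inner double integral above is exactly the object bounded in Lemma~\ref{Lem:iintGGf-d}; the sharp kernel there is $\bigl(1\wedge\frac{(t-s)s}{t}\bigr)^{-\beta/2}$, not $(1\wedge(t-s))^{-\beta/2}$. These two kernels differ genuinely: as $s\to0$ with $t$ fixed and small, the former blows up like $s^{-\beta/2}$ while the latter stays bounded, and this additional $s$-singularity is precisely the contribution of the rough initial datum (for $\nu=\delta_0$ both $J_c(s,y)$ and $J_c(s,y')$ concentrate at scale $\sqrt s$, so $f(y-y')\sim s^{-\beta/2}$ on the effective support). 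So your claimed recursion cannot hold at the first step. The missing ideas are exactly the paper's Lemma~\ref{Lem:iintGGf-d} (the four-fold Gaussian convolution bound with $f$, which is the central technical novelty the introduction highlights) and, to iterate the resulting kernel, the comparison $2^{-n}\widehat h_n(t)\le\widetilde h_n(t)\le(2^{1+\rho})^n\widehat h_n(t)$ of Lemma~\ref{Lem:tildeh}. With that kernel in hand, your scalar recursion would give the claimed bound; without it, the induction step is invalid.
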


Parts (i) and (ii) of Theorem~\ref{T:ExUnque} are proved in
Sections~\ref{S:Bounded} and~\ref{S:Key}, respectively. \medskip

The next result is about the upper and lower bounds for the two-point
correlation of the solution. We need a few more notations: for $x\in \dom$,
denote
\begin{align}\label{E:Ue}
  \op{dist}(x, \partial \dom) \coloneqq \inf\left\{|x-y|: y \in \partial \dom \right\}, \quad
  \dom_\eps \coloneqq \left\{x \in \dom: \textrm{dist}(x, \partial \dom) > \eps \right\},
\end{align}
and accordingly,
\begin{align}\label{E:Jc,eps}
  J_{c,\eps}(t,x) \coloneqq \int_{\dom_\eps} \frac{1}{1 \wedge t^{d/2}} e^{-c\frac{|x-y|^2}{t}} |\nu|(\ud y).
\end{align}

\begin{theorem}[Two-point correlation]\label{T:corr-bd}
  Suppose that $\dom$ is a bounded Lipschitz domain and the initial data $\nu$
  is a finite nonnegative measure on $\dom$. Assume that the noise $\dot W$
  satisfies Assumption~\ref{A:f} and $\sigma$ satisfies
  Assumption~\ref{A:sigma}. Let $u$ be the solution to~\eqref{E:she-d} with
  Dirichlet boundary condition or~\eqref{E:she-n} with Neumann boundary
  condition.
  \begin{enumerate}
    \item[(i)] Assume~\eqref{E:Anderson} or the nonnegativity of the solution,
      namely, $u(t,x) \ge 0$ a.s. for all $(t,x)\in (0,\infty)\times\dom$. Then
      there exists a positive finite constant $C$ such that for all $t > 0$ and
      $x, x' \in \dom$,
      \begin{align}\label{E:corr-ub}
        \E\left(u(t, x) u(t, x')\right)
        \le C e^{2t \Big(C \lambda^2 \Lip_\sigma^2 + C \lambda^{\frac{4}{2-\beta}}
        \Lip_\sigma^{\frac{4}{2-\beta}} - \mu \Big)} J_c(t,x)J_c(t,x').
      \end{align}
    \item[(ii)] Assume~\eqref{E:lsigma} or~\eqref{E:Anderson}. Then, in case of
      Dirichlet boundary condition, there exists $0 < \eps_0 < 1$ such that for
      all $0 < \eps \le \eps_0$, there exists $\overline{C} = \overline{C}(\eps)
      > 0$ with $\lim_{\eps \to 0} \overline{C}(\eps) = 0$ such that for all $t
      > 0$ and $x, x' \in \dom_\eps$,
      \begin{align}\label{E:corr-lb}
        \qquad \E(u(t, x) u(t, x'))
        \ge \overline{C} e^{2t \Big(\overline{C} \lambda^2 \lip_\sigma^2 + \overline{C} \lambda^{\frac{4}{2-\beta}}
        \lip_\sigma^{\frac{4}{2-\beta}} - \mu \Big)} e^{-16c \frac{|x-x'|^2}{t}}J_{12c',\eps}(t,x)J_{12c',\eps}(t,x').
      \end{align}
      In case of Neumann boundary condition, if the heat kernel lower
      bound~\eqref{E:G-n-lower} below holds (which is the case, for example,
      when $\dom$ is a smooth, convex domain and $\mathscr{L} = -\Delta$; see
      Proposition~\ref{P:G} below), then there exists a constant $\overline{C} >
      0$ such that~\eqref{E:corr-lb} holds with $\eps = 0$ for all $t > 0$ and
      $x, x' \in \dom$.
  \end{enumerate}
\end{theorem}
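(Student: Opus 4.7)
The plan is to reduce both parts of the theorem to the explicit kernel bounds established in Propositions~\ref{P:K-d} and~\ref{Lem:K-n}, via the two-point correlation representation of Proposition~\ref{P:2p-corr}, and then perform the outer integration against $\nu(\ud y)\,\nu(\ud y')$ using the Gaussian structure of the functions $J_c$ and $J_{c,\eps}$.

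For the upper bound in (i), I would first invoke Proposition~\ref{P:2p-corr}: under~\eqref{E:Anderson} the correlation admits an exact representation analogous to~\eqref{E:corr}, and under the nonnegativity of $u$ together with~\eqref{E:Lsigma} one obtains the same representation as an inequality,
$$\E\bigl[u(t, x)\, u(t, x')\bigr] \le \iint_{\dom^2} \mathcal{K}(t, x, x', y, y')\,\nu(\ud y)\,\nu(\ud y').$$
The upper bound for $\mathcal{K}$ supplied by Propositions~\ref{P:K-d} (Dirichlet) or~\ref{Lem:K-n} (Neumann) is of the schematic form
$$\mathcal{K}(t, x, x', y, y') \le C\, e^{2 t \Theta_{\Lip_\sigma}}\, G(t, x, y)\, G(t, x', y'), \qquad \Theta_{\Lip_\sigma} \coloneqq C\lambda^2 \Lip_\sigma^2 + C\lambda^{\frac{4}{2-\beta}} \Lip_\sigma^{\frac{4}{2-\beta}} - \mu,$$
i.e.\ the entire Picard-series contribution of the noise is absorbed into the exponential prefactor and the remaining spatial dependence factorizes through two heat kernels. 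Plugging in the Gaussian upper bound on $G$ from Proposition~\ref{P:G}, the $y$- and $y'$-integrals decouple and collapse to $J_c(t, x)$ and $J_c(t, x')$ respectively, which yields~\eqref{E:corr-ub}.

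For the lower bound in (ii), I would proceed symmetrically. Under either~\eqref{E:lsigma} or~\eqref{E:Anderson}, the kernel lower bound of Propositions~\ref{P:K-d}/\ref{Lem:K-n} delivers
$$\mathcal{K}(t, x, x', y, y') \ge \overline{C}\, e^{2 t \Theta_{\lip_\sigma}}\, G_{-}(t, x, y)\, G_{-}(t, x', y'),$$
where $G_{-}$ denotes the Gaussian lower bound on the relevant heat kernel. In the Dirichlet case this lower bound is only available in the interior (see Proposition~\ref{P:G}), which forces the restrictions $x, x' \in \dom_\eps$ and the replacement of $\nu$ by its restriction to $\dom_\eps$; in the convex/smooth Neumann case the hypothesis~\eqref{E:G-n-lower} removes the $\eps$-constraint. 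To convert $\iint_{\dom_\eps^2} G_{-}(t, x, y)\, G_{-}(t, x', y')\, \nu(\ud y)\,\nu(\ud y')$ into $e^{-16 c |x-x'|^2 / t}\, J_{12 c',\eps}(t,x)\, J_{12 c',\eps}(t,x')$, I would split and shift the Gaussian exponents via the elementary triangle-inequality bound on $|x-y|^2$ and $|x'-y'|^2$, pulling out the cross dependence $|x-x'|^2/t$ as a prefactor; the remaining two integrals then decouple and each reconstructs a $J_{12 c',\eps}$-factor centered at the correct base point, with the enlarged constant $12 c'$ absorbing the cost of the splitting.

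The main obstacle, and the place where the bookkeeping is most delicate, is in this lower-bound step: one must simultaneously (a) absorb the cross-dependence $|x-x'|^2/t$ into a clean prefactor of the form $e^{-16 c |x-x'|^2/t}$, (b) retain enough Gaussian decay in each of the two decoupled integrals to reproduce a $J_{12 c',\eps}$-type quantity, and (c) preserve the exponential rate $\Theta_{\lip_\sigma}$ inherited from the kernel lower bound. The Dirichlet/Neumann dichotomy and the necessity of the $\eps$-cutoff then follow directly from the region of validity of the Gaussian lower bound in Proposition~\ref{P:G}: in the Neumann convex/smooth setting the bound extends up to $\partial\dom$, while in the Dirichlet case it necessarily degenerates there, forcing $x, x' \in \dom_\eps$ and the use of $J_{\cdot,\eps}$ in place of $J_\cdot$.
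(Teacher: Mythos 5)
Your approach is exactly the paper's: Theorem~\ref{T:corr-bd} follows by combining the two-point correlation representation and bounds of Proposition~\ref{P:2p-corr} with the kernel estimates of Propositions~\ref{P:K-d} (Dirichlet) and~\ref{Lem:K-n} (Neumann), and then integrating against $\nu\otimes\nu$. One correction to your lower-bound bookkeeping, though: you posit that the kernel lower bound factorizes as $\mathcal{K}(t,x,x',y,y') \ge \overline{C}\,e^{2t\Theta_{\lip_\sigma}}\,G_-(t,x,y)\,G_-(t,x',y')$ and that a triangle-inequality splitting is then needed to manufacture the cross factor $e^{-16c|x-x'|^2/t}$ and the enlarged constant $12c'$. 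Such a splitting could not produce a genuine $|x-x'|^2/t$ cross-dependence from a clean product of two decoupled Gaussians. Fortunately, none is required: Proposition~\ref{P:K-d}(ii) (and likewise~\ref{Lem:K-n}(ii)) already delivers the kernel lower bound in the form
\begin{equation*}
\mathcal{K}^{\lambda}_D(t,x,x',y,y') \ge \frac{\overline{C}\lambda^2}{1\wedge t^d}\, e^{-16 c_2 \frac{|x-x'|^2}{t}}\, e^{-12 c_2 \frac{|x-y|^2 + |x'-y'|^2}{t}}\, e^{2t\left(\overline{c}\lambda^2 + \widetilde{c}\lambda^{\frac{4}{2-\beta}} - \mu_1\right)}
\end{equation*}
for $x,x',y,y'\in\dom_\eps$, so the cross factor and the constant $12c_2$ are inherited directly from the proof of the kernel estimate rather than from any manipulation at the integration step. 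The cross factor is independent of $y,y'$ and pulls out of the $\nu\otimes\nu$-integral, and the remaining Gaussian product reconstructs $J_{12c_2,\eps}(t,x)\,J_{12c_2,\eps}(t,x')$ immediately.
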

Theorem~\ref{T:corr-bd} is proved at the end of Section~\ref{S:Two-point}.

\begin{remark}[Nonnegativity and comparison principle]\label{R:PathComparison}
  The condition $u\ge 0$ a.s.\ in Theorem~\ref{T:corr-bd} should be interpreted
  as $u(t,x)\ge 0$ a.s.\ for all $t>0$ and $x\in\dom$. It is generally believed
  that under condition~\eqref{E:Lsigma}, if the initial condition is
  nonnegative, then the solution to the stochastic heat equation (SHE) is
  nonnegative or even strictly positive, which is indeed a consequence of the
  well-known \textit{sample-path comparison principle} for SHE. In particular,
  Mueller~\cite{mueller:91:on} established the sample-path comparison principle
  for the case of SHE on $[0,1]$ with Neumann boundary conditions and space-time
  white noise. Later, Shiga~\cite{shiga:94:two} proved the case of SHE on $\R$
  with space-time white noise. The case of SHE on $[0,1]$ with Dirichlet
  boundary conditions was proved by Mueller and
  Nualart~\cite{mueller.nualart:08:regularity}. The case of SHE on $\R$ with a
  fractional Laplace, space-time white noise, and rough initial conditions was
  established in~\cite{chen.kim:17:on} and the case of SHE on $\R^d$ with rough
  initial data and with a noise that is white in time and homogeneously colored
  in space was proved in~\cite{chen.huang:19:comparison}. The sample-path
  comparison principle under the settings of the current paper is left as a
  future project.
\end{remark}

The last set of results focus on the case of $C^{1, \alpha}$-domains with the
Dirichlet boundary condition and some variations. Note that
$C^{1,\alpha}$-domains are a special case of Lipschitz domains. We need to
introduce some notations:
\begin{align}\label{E:Psi}
  \Psi(t, x) \coloneqq 1 \wedge \frac{\Phi_1(x)}{1 \wedge t^{1/2}}
  \quad \text{and} \quad
  J_c^* (t, x) \coloneqq \int_{\dom} \Psi(t, y)
  \frac{e^{-c\frac{|x-y|^2}{t}}}{1\wedge t^{d/2}} |\nu|(\ud y),
\end{align}
where $\Phi_1$ is the leading eigenfunction. In this case, we are able to
improve the previous results by giving a new condition \eqref{E:C1alpha-nu}
below, namely $\Phi_1 \in L^1(\dom, |\nu|)$, which is weaker than the above
condition $|\nu|(\dom) < \infty$ in Theorem~\ref{T:ExUnque} for existence and
all moments of solutions with measure-valued initial data. This new
integrability condition indicates the rate of blow-up for the initial data which
is allowed near the boundary $\partial \dom$ (see
Examples~\ref{Eg:interval}--\ref{Eg:Rect} and Remark~\ref{R:JcFinite} below),
and hence $\nu$ is not necessarily a finite measure. Moreover,
because
\begin{align*}
  \Psi(t, x)\Big|_{x \in \partial\dom} = 0 \quad \text{and} \quad
  J_c^* (t,x) \le J_c(t,x),
\end{align*}
the bounds in~\eqref{E:C1alpha-Lp}, \eqref{E:C1alpha-corr}
and~\eqref{E:C1alpha-corr-lb} below strengthen the previous
bounds~\eqref{E:sol-Lp-bd}, \eqref{E:corr-ub} and~\eqref{E:corr-lb},
respectively, especially near the boundary of the domain. Indeed,
\begin{enumerate}
  \item for any $t>0$ fixed, when $x$ is close to the boundary of $\dom$, the
    term $\Psi(t,x)$ in~\eqref{E:sol-Lp-bd} and~\eqref{E:corr-ub} plays the
    dominant role in pushing the moments to zero;
  \item for any $x\in\dom$ fixed, since $\Psi(\cdot,\cdot)\le 1$, when $t\to 0$, the
    term $J_c^* (t,x) \asymp (p_t*|\nu|)(x)$ defines the behavior of the
    moments. Here, $p_t(x)$ is the heat kernel on $\R^d$ and ``$*$'' refers to
    the spatial convolution; see~\eqref{E:J_0R^d}.
\end{enumerate}

\begin{theorem}\label{T:C1alpha}
  Let $\dom$ be a bounded $C^{1, \alpha}$-domain for some $\alpha > 0$ with the
  Dirichlet boundary condition at $\partial \dom$. Assume that the noise $\dot
  W$ satisfies Assumption~\ref{A:f} and $\sigma$ satisfies
  Assumption~\ref{A:sigma}. If the initial condition $\nu$ is any locally finite
  and signed measure that satisfies the following integrability condition
  \begin{align}\label{E:C1alpha-nu}
    \|\Phi_1\|_{L^1(\dom,\, |\nu|)} =  \int_\dom \Phi_1(y)\, |\nu|(\ud y) < \infty,
  \end{align}
  where $\Phi_1(\cdot)$ is the leading eigenfunction of the differential
  operator $\mathscr{L}$ on the domain $\dom$, then we have the following:
  \begin{enumerate}
    \item[(i)] There exists a random field solution to~\eqref{E:she-d}. The
      solution has the property that for some $C< \infty$, for all $t > 0$ and
      $x \in \dom$,
      \begin{align}\label{E:C1alpha-Lp}
        \|u(t, x)\|_p \le C e^{t\Big(Cp\lambda^2 L_\sigma^2 + Cp^{\frac{2}{2-\beta}}\lambda^{\frac{4}{2-\beta}}L_\sigma^{\frac{4}{2-\beta}} - \mu_1\Big)}
        \Psi(t, x) J_{2c_1/3}^* (t, x),
      \end{align}
      where $\mu_1$ and $c_1$ are the constants in Proposition~\ref{P:G} below.
      Moreover, the solution is unique among all random field solutions such
      that for each $T > 0$, there exists $C_T < \infty$ such that
      \begin{align}\label{E:C1alpha-unique}
        \|u(t, x)\|_2 \le C_T \Psi(t, x) J_c^* (t, x) \quad \text{for all $(t,x) \in (0, T]\times \dom$}.
      \end{align}
    \item[(ii)] Assume~\eqref{E:Anderson} or the nonnegativity of the solution,
      namely, $u(t,x) \ge 0$ a.s. for all $(t,x)\in (0,\infty)\times\dom$. Then
      for all $t > 0$ and $x, x' \in \dom$,
      \begin{align}\label{E:C1alpha-corr}
      \begin{split}
        \MoveEqLeft \E(u(t, x) u(t, x'))\\
        &\le C e^{2t \Big(C \lambda^2 \Lip_\sigma^2 + C \lambda^{\frac{4}{2-\beta}} \Lip_\sigma^{\frac{4}{2-\beta}} - \mu_1 \Big)}
        \Psi(t, x)\Psi(t, x')
        J_{2c_1/3}^* (t, x) J_{2c_1/3}^* (t, x').
      \end{split}
      \end{align}
    \item[(iii)] Assume~\eqref{E:lsigma} or~\eqref{E:Anderson}. Then, there
      exists $\bar C > 0$ such that for all $t > 0$ and $x, x' \in \dom$,
      \begin{align}\label{E:C1alpha-corr-lb}
        \begin{split}
        \MoveEqLeft \E(u(t, x)u(t, x'))\\
        &\ge \bar C e^{2t \left( \bar C \lambda^2 l_\sigma^2 + \bar C \lambda^{\frac{4}{2-\beta}} l_\sigma^{\frac{4}{2-\beta}} - \mu_1\right)}
        e^{-16c_2\frac{|x-x'|^2}{t}}\Psi(t, x) \Psi(t, x') J^*_{12c_2}(t, x) J^*_{12c_2}(t, x').
        \end{split}
      \end{align}
  \end{enumerate}
\end{theorem}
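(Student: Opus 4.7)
The plan is to adapt the Picard iteration and moment-bound machinery used in the proof of Theorem~\ref{T:ExUnque}, but feed in the sharper $C^{1,\alpha}$ Dirichlet heat kernel estimate of Proposition~\ref{P:G}, which extracts the boundary decay factor $\Psi(t,x)\Psi(t,y)$ from $G_D(t,x,y)$. Under this estimate, the homogeneous part already satisfies
\begin{equation*}
\left|\int_\dom G_D(t,x,y)\,\nu(\ud y)\right| \le C\, e^{-\mu_1 t}\,\Psi(t,x)\, J_{c_1}^*(t,x),
\end{equation*}
which is finite exactly under the weaker integrability condition~\eqref{E:C1alpha-nu}. This is the whole point of the $C^{1,\alpha}$ refinement: the $\Phi_1$-weight inside $J_c^*$ lets $|\nu|$ blow up near $\partial\dom$, so $\nu$ is no longer required to be a finite measure.

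For part~(i), I would set up Picard iterates $u_n$ and apply Burkholder--Davis--Gundy together with Assumption~\ref{A:sigma} to obtain
\begin{equation*}
\|u_{n+1}(t,x)\|_p^2 \lesssim |u_0(t,x)|^2 + C_p \lambda^2 \Lip_\sigma^2 \int_0^t \iint_{\dom^2} G_D(t-s,x,y)\, G_D(t-s,x,y')\, f(y-y')\, \|u_n(s,y)\|_p\, \|u_n(s,y')\|_p \,\ud y\,\ud y'\,\ud s.
\end{equation*}
With the inductive ansatz $\|u_n(s,y)\|_p \le A_n\,\Psi(s,y)\,J^*_{2c_1/3}(s,y)$, I would invoke the convolution-type kernel estimate of Lemma~\ref{Lem:G^2}---which produces a matching $\Psi(t,x)^2$ out front and an integrable-in-time $(1\wedge s)^{-\beta/2}$ singularity---to close the induction and derive a recursion for $A_n$. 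Summing the resulting series as in Section~\ref{S:Key} (balancing the $(1\wedge s)^{-\beta/2}$ singularity against Gamma-function factorial decay) gives the stated exponential rate in~\eqref{E:C1alpha-Lp}. Uniqueness~\eqref{E:C1alpha-unique} then follows by applying the same estimate to the difference of two candidate solutions with $p=2$ and invoking Gronwall in the $\Psi\cdot J^*_c$-weighted norm.

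Parts~(ii) and~(iii) follow from the two-point correlation representation of Proposition~\ref{P:2p-corr}: under~\eqref{E:Anderson} the correlation equals $\lambda^{-2}\iint \nu(\ud y)\,\nu(\ud y')\,\mathcal K(t,x-y,x'-y',y'-y)$, and under~\eqref{E:Lsigma} or~\eqref{E:lsigma} analogous upper, respectively lower, bounds with the same kernel $\mathcal K$ hold. Substituting the refined $C^{1,\alpha}$ upper kernel bound of Proposition~\ref{P:K-C1alpha}---which carries the extra factor $\Phi_1(x)\Phi_1(x')\Phi_1(y)\Phi_1(y')$---yields~\eqref{E:C1alpha-corr}; the matching lower kernel bound of the same proposition yields~\eqref{E:C1alpha-corr-lb}, with the Gaussian separation $e^{-16c_2|x-x'|^2/t}$ emerging from comparing the Gaussians centered at $x$ and $x'$ via the triangle inequality $|x-y|^2 \le 2|x-x'|^2 + 2|x'-y|^2$ (and symmetrically for $y'$), absorbing the harmless $|x-x'|^2$ piece out of the spatial integral.

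The main obstacle is propagating the $\Psi(t,x)$ boundary factor through the Picard iteration in~(i): one must show that integrating $G_D(t-s,x,y)\,G_D(t-s,x,y')\,\Psi(s,y)\,\Psi(s,y')$ against $f(y-y')$ returns a factor proportional to $\Psi(t,x)^2$ rather than the weaker $\Psi(s,x)^2$ or no boundary factor at all. This is precisely the $\Psi$-semigroup compatibility encoded in Lemma~\ref{Lem:G^2}, and verifying it---rather than the subsequent induction---is where the substantive analytic work lies; the slight shift from $c_1$ to $2c_1/3$ in the Gaussian constant is the standard Chapman--Kolmogorov price paid by the time convolution, and the appearance of $c_2$ (rather than $c_1$) in the lower bounds of~(iii) reflects the corresponding heat kernel lower bound in Proposition~\ref{P:G}.
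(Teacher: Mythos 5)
Your proposal mirrors the paper's proof: part (i) is Theorem~\ref{T:ExUnque}(ii)'s Picard argument with Lemma~\ref{Lem:G^2} substituted for Lemma~\ref{Lem:iintGGf-d}(i), and parts (ii)--(iii) follow by plugging Proposition~\ref{P:K-C1alpha} into Proposition~\ref{P:2p-corr}, exactly as the paper does. One small notational slip: on a bounded domain the resolvent kernel is $\mathcal{K}^\lambda(t,x,x',y,y')$ rather than the translation-invariant $\mathcal{K}(t,x-y,x'-y',y'-y)$ of~\eqref{E:corr} --- this is precisely why the $\triangleright$ operator is generalized to three spatial slots --- and the factor $e^{-16c_2|x-x'|^2/t}$ is already built into the kernel lower bound of Proposition~\ref{P:K-C1alpha} (via the triangle/Cauchy--Schwarz step inside the proof of Proposition~\ref{P:K-d}(ii)) rather than arising at the substitution stage.
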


% \begin{remark}\label{R:RoughData}
%   When $\dom = \R^d$ and hence the boundary is at $|x|\to\infty$, the optimal
%   condition for the initial measure is given by the following integral
%   condition:
%   \begin{align}\label{E:J0Finite}
%     \int_{\R^d} e^{- a |x|^2} |\nu|(\ud x) <\infty, \quad \text{for all $a>0$.}
%   \end{align}
%   Initial conditions of this type, which are called \textit{rough initial
%   conditions}, were studied
%   in~\cite{chen.dalang:15:moments,chen.kim:17:on,chen.huang:19:comparison}.
%   Condition~\eqref{E:C1alpha-nu} is the right extension for the rough initial
%   condition~\eqref{E:J0Finite} from $U = \R^d$ to a bounded domain; see
%   Examples~\ref{Eg:interval}--\ref{Eg:Rect} for more precise conditions on the
%   initial data.
% \end{remark}

\begin{remark}\label{R:JcFinite}
  Condition~\eqref{E:C1alpha-nu} holds if and only if
  \begin{align*}
    J_c^* (t,x) <\infty \quad \text{for all $c>0$, $t>0$ and $x\in \dom$.}
  \end{align*}
  which is the consequence of the following bounds:
  \begin{align}\label{E:JcMnu}
    ((c_0D)^{-1} \wedge 1)e^{-\frac{c D^2}{t}} \|\Phi_1\|_{L^1(\dom,\, |\nu|)}
    \le J_c^*\left(t,x\right)
    \le \frac{\|\Phi_1\|_{L^1(\dom,\, |\nu|)}}{1\wedge t^{(d+1)/2}} \quad
    \text{for all $t>0$ and $x\in\dom$,}
  \end{align}
  where $D\coloneqq\sup\{|x-y|,\: x,y\in\dom\}$ and we have used the fact that
  $\Phi_1(\cdot)$ is bounded (see Remark~\ref{R:Phi1Bdd}). The proof
  of~\eqref{E:JcMnu} is straightforward and is left as an exercise for
  interested readers.
\end{remark}

In fact, Theorem~\ref{T:C1alpha} can easily be extended for Cartesian products
of bounded $C^{1, \alpha}$-domains to allow some Lipschitz domains; see
Example~\ref{Eg:interval} below.

\begin{corollary}\label{C:prod-dom}
  Let $U$ be a bounded domain in the following Cartesian product form:
  \begin{align*}
    \dom = \dom_1 \times \dom_2 \times \cdots \times \dom_m \subseteq\R^d, \quad
    \text{with $m\ge 1$, $U_i\subseteq \R^{d_i}$, $d_i\ge 1$, and
    $\sum_{i = 1}^{m} d_i = d$.}
  \end{align*}
  Assume that each $\dom_i$ is a bounded $C^{1, \alpha_i}$-domain for some
  $\alpha_i > 0$. Let $\mathscr{L}_i$ be a uniformly elliptic differential
  operator on $\dom_i$ of the form~\eqref{E:op-L} satisfying the
  condition~\eqref{E:u-ellip}. Consider the SPDE~\eqref{E:she-d} with
  $\mathscr{L} = \mathscr{L}_1 + \dots + \mathscr{L}_m$ on $\dom$ with the
  Dirichlet boundary condition. Suppose $\sigma$ satisfies Assumption \ref{A:sigma} and
  the initial measure $\nu$ on $\dom$ satisfies the following integrability
  condition
  \begin{align}\label{E:int-prod-nu}
    \int_\dom \prod_{i = 1}^m \Phi_1^{\dom_i}(x_i) |\nu|(\ud x) < \infty,
  \end{align}
  where $\Phi_1^{\dom_i}(x_i)$ is the eigenfunction corresponding to the leading
  eigenvalue $\mu_i^{\dom_i}$ of the Dirichlet operator $\mathscr{L}_i$ on
  $\dom_i$. Let $J^*_c(t, x)$ be defined as in~\eqref{E:Psi} but with $\Psi$
  replaced by
  \begin{align}\label{E:Psi-prod}
    \Psi^* (t, x) \coloneqq \prod_{i = 1}^m \left(1 \wedge \frac{\Phi_1^{\dom_i}(x_i)}{1 \wedge t^{1/2}}\right).
  \end{align}
  Then, the statements (i) and (ii) in Theorem~\ref{T:C1alpha} above hold with
  $\mu_1 = \sum_{i = 1}^m \mu_i^{\dom_i}$, which is the leading eigenvalue of
  the Dirichlet operator $\mathscr{L}$, and with $\Psi$ defined in~\eqref{E:Psi}
  replaced by $\Psi^*$ defined above in~\eqref{E:Psi-prod}.
\end{corollary}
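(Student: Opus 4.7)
The plan is to reduce everything to the single-factor case already handled by Theorem~\ref{T:C1alpha}, by exploiting the product structure of the operator $\mathscr{L} = \mathscr{L}_1 + \cdots + \mathscr{L}_m$ on $\dom = \dom_1 \times \cdots \times \dom_m$. The crucial observation is that, because $\mathscr{L}_i$ acts only on the $i$-th block of coordinates, the Dirichlet heat semigroup $e^{-t\mathscr{L}}$ on $\dom$ tensorizes, so the Dirichlet heat kernel factorizes as
\begin{align*}
  G(t,x,y) = \prod_{i=1}^m G_i(t, x_i, y_i),
\end{align*}
where $G_i$ is the Dirichlet heat kernel for $\mathscr{L}_i$ on $\dom_i$. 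Analogously, the leading eigenfunction and eigenvalue of $\mathscr{L}$ on $\dom$ are given by $\Phi_1(x) = \prod_{i=1}^m \Phi_1^{\dom_i}(x_i)$ and $\mu_1 = \sum_{i=1}^m \mu_i^{\dom_i}$, as a direct separation-of-variables computation shows. Note that $\dom$ itself is generally not a $C^{1,\alpha}$-domain (it has corners where boundaries of different factors meet), so Theorem~\ref{T:C1alpha} cannot be applied directly; the factorization is what saves us.

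First, I would apply Proposition~\ref{P:G} on each factor $\dom_i$ (which is $C^{1,\alpha_i}$) to obtain, for each $i$,
\begin{align*}
  G_i(t,x_i,y_i) \;\le\; C\, \Psi_i(t,x_i)\Psi_i(t,y_i)\, \frac{e^{-c_1 |x_i - y_i|^2/t}}{1 \wedge t^{d_i/2}}\, e^{-\mu_i^{\dom_i} t},
\end{align*}
where $\Psi_i(t,x_i) := 1 \wedge \Phi_1^{\dom_i}(x_i)/(1\wedge t^{1/2})$. Multiplying across $i$ and using the identities $|x-y|^2 = \sum_i |x_i-y_i|^2$, $\sum_i \mu_i^{\dom_i} = \mu_1$, and $\prod_i (1 \wedge t^{d_i/2}) = 1 \wedge t^{d/2}$, I obtain
\begin{align*}
  G(t,x,y) \;\le\; C\, \Psi^*(t,x)\Psi^*(t,y)\, \frac{e^{-c_1 |x-y|^2/t}}{1 \wedge t^{d/2}}\, e^{-\mu_1 t},
\end{align*}
with $\Psi^*$ defined in~\eqref{E:Psi-prod}. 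A matching two-sided bound holds by the same product argument applied to the lower bound in Proposition~\ref{P:G}. In other words, the heat kernel on $\dom$ enjoys exactly the same sharp two-sided estimate used in the single-domain $C^{1,\alpha}$ analysis, with $\Psi$ replaced by $\Psi^*$.

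Once these heat kernel bounds are in hand, I would re-run the entire chain of estimates leading to Theorem~\ref{T:C1alpha}(i)--(ii). Specifically, the analogues of Lemmas~\ref{Lem:iint-d} and~\ref{Lem:G^2} and Propositions~\ref{P:K-C1alpha} and~\ref{P:2p-corr} depend on the heat kernel only through the two-sided estimates above, so they carry over verbatim with $\Psi^*$ replacing $\Psi$ (note that $f(y-y') = |y-y'|^{-\beta}$ is still controlled by the total distance $|y-y'|^2 = \sum_i |y_i - y_i'|^2$, so the Gaussian factor in the product bound absorbs it exactly as before). With these product-domain versions of the kernel estimates, the Picard iteration used to establish existence, uniqueness, and the moment bound~\eqref{E:C1alpha-Lp} proceeds without change, yielding (i); and the upper bound~\eqref{E:C1alpha-corr} on the two-point correlation follows via the same operator $\triangleright$ argument and kernel upper bound used in the proof of Theorem~\ref{T:C1alpha}(ii).

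The main technical point to verify carefully is that the finiteness of $J^*_c$ associated with $\Psi^*$ is equivalent to the integrability condition~\eqref{E:int-prod-nu}, in the spirit of Remark~\ref{R:JcFinite}. This follows from the boundedness of each $\Phi_1^{\dom_i}$ and the product structure of $\Psi^*$, which gives two-sided bounds on $J^*_c$ by $\int_\dom \prod_i \Phi_1^{\dom_i}(x_i)\, |\nu|(\ud x)$ up to multiplicative constants depending on $t$ and $\op{diam}(\dom)$. The rest is bookkeeping: every place in the proof of Theorem~\ref{T:C1alpha} that invoked ``$\dom$ is $C^{1,\alpha}$'' is invoked only through the heat-kernel two-sided bound, which we have just established in product form for $\dom$.
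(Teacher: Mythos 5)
Your proposal follows essentially the same route as the paper: factorize the Dirichlet heat kernel over the product $\dom = \prod_i \dom_i$, apply Proposition~\ref{P:G} on each $C^{1,\alpha_i}$ factor, multiply the resulting bounds, and then re-run the single-domain analysis with $\Psi$ replaced by $\Psi^*$.

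There is, however, one imprecision worth flagging. You assert that the analogues of Lemmas~\ref{Lem:iint-d} and~\ref{Lem:G^2} and Propositions~\ref{P:K-C1alpha} and~\ref{P:2p-corr} ``depend on the heat kernel only through the two-sided estimates above, so they carry over verbatim.'' This is not quite right for Lemma~\ref{Lem:G^2}: its proof does not rest on the heat kernel bound alone, but crucially on the structural inequality~\eqref{Psi-bd}, namely $\Psi(s,v)\Psi(r,w)e^{-\frac{c_1}{6}|v-w|^2/r} \le C_0 \Psi(s,w)$. That inequality is derived from the fact that $\Phi_1$ is Lipschitz-comparable to the distance to the boundary (i.e.\ \eqref{E:Phi-bd} with $a_1=a_2=1$), which is a $C^{1,\alpha}$-specific property. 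Since $\dom$ itself is \emph{not} $C^{1,\alpha}$ (as you note), \eqref{Psi-bd} cannot be invoked directly for $\Psi^*$. The paper handles this by re-deriving the $\Psi^*$-version of~\eqref{Psi-bd} \emph{component-by-component}: each factor $\dom_i$ is $C^{1,\alpha_i}$, so the single-block inequality
\begin{align*}
\left(1 \wedge \frac{\Phi_1^{\dom_i}(v_i)}{1\wedge s^{1/2}}\right)\left(1 \wedge \frac{\Phi_1^{\dom_i}(w_i)}{1\wedge r^{1/2}}\right)e^{-\frac{c_1}{6}\frac{|v_i-w_i|^2}{r}}\le C_{0,i}\left(1 \wedge \frac{\Phi_1^{\dom_i}(w_i)}{1\wedge s^{1/2}}\right)
\end{align*}
holds, and multiplying over $i$ yields $\Psi^*(s,v)\Psi^*(r,w)e^{-\frac{c_1}{6}|v-w|^2/r}\le C_0\Psi^*(s,w)$. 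With this in hand, the rest of your outline (Lemma~\ref{Lem:G^2} with $\Psi\to\Psi^*$, then Propositions~\ref{P:K-C1alpha}(i) and~\ref{P:2p-corr}, then parts (i) and (ii) of Theorem~\ref{T:C1alpha}) goes through exactly as you describe. So the gap is localized and fixable by the same factorization idea you already use for the heat kernel, but it needs to be made explicit: the tensorization must be applied not just to the heat kernel but also to the $\Psi$-structural estimate.
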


Theorem~\ref{T:C1alpha} and Corollary~\ref{C:prod-dom} are proved at the end of
Section~\ref{S:Holder-Domain}. Finally, Table~\ref{Tb:Summary} below summarizes
the main results of this paper.
\begin{table}[htpb]
  \centering
  \renewcommand{\arraystretch}{1.2}
  \newcommand\gc{\cellcolor{lightgray}}
  \caption{Summary of the main results and the qualitative differences across domains of increasing regularity/conditions (progressing from the first to the third row).}
  \label{Tb:Summary}

  \begin{tabular}{|c|c|c|c|c|}
    \hline \rowcolor{lightgray}
                                                                     & \multicolumn{2}{c|}{Dirichlet} \gc               & \multicolumn{2}{c|}{Neumann} \gc \\ \hline
    Domain \gc                                                       & Upper bound \gc                                  & Lower bound \gc                   & Upper bound \gc                            & Lower bound \gc    \\ \hline
    \multirow{2}{*}{Lipschitz}                                       & \eqref{E:sol-Lp-bd} and~\eqref{E:corr-ub}:       & \eqref{E:corr-lb}:                & \eqref{E:sol-Lp-bd} and~\eqref{E:corr-ub}: &                    \\
                                                                     & $\dagger$                                          & $\ddagger$                          & $\dagger$                                    &                    \\ \hline
    \multirow{2}{2.5cm}{\centering $C^{1,\alpha}$ or their products} & \eqref{E:C1alpha-Lp} and~\eqref{E:C1alpha-corr}: & \eqref{E:C1alpha-corr-lb}:        &                                            &                    \\
                                                                     & $\dagger$, $\ast$                                    & $\dagger$, $\ast$                     &                                            &                    \\ \hline
    \multirow{2}{*}{Smooth and convex}                               &                                                  &                                   &                                            & \eqref{E:corr-lb}: \\
                                                                     &                                                  &                                   &                                            & $\dagger$            \\ \hline
  \end{tabular}
  \bigskip

  $\dagger$\:: Hold(s) on $U$; \quad
  $\ddagger$\:: Hold(s) on $U_\epsilon$ for $\epsilon > 0$; \quad
  $\ast$\:: Better estimates near $\partial U$.
\end{table}

\subsection{Outline of the paper}

The rest of the paper is organized as follows. In Section~\ref{S:Applications},
we first give some concrete examples and apply our moment bounds to establish
the full intermittency of the solution and discuss its $L^2$-energy. Then in
Section~\ref{S:Prelim}, we give some preliminaries which include the definition
of the mild solution in Section~\ref{SS:Mild}, the cone condition for the domain
in Section~\ref{SS:domain}, and the heat kernel estimates for the
equations~\eqref{E:she-d} and~\eqref{E:she-n} in Section~\ref{SS:Kernel}. Then
in Sections~\ref{S:Bounded}, resp.~\ref{S:Key}, we derive the moment bounds in
case of bounded, resp.~rough, initial conditions, and prove the two cases in
Theorem~\ref{T:ExUnque}. The two-point correlation function is studied in
Section~\ref{S:Two-point}, where Theorem~\ref{T:corr-bd} is proved. The case of
bounded $C^{1,\alpha}$-domains with Dirichlet condition is studied in
Section~\ref{S:Holder-Domain}, at the end of which we prove
Theorem~\ref{T:C1alpha} and Corollary~\ref{C:prod-dom}.

\section{Examples and applications}\label{S:Applications}

In this Section, we give some examples of our main results and apply our moment
bounds to study the intermittency property and the $L^2$-energy of the
solutions. \medskip

\subsection{Rough initial conditions under Dirichlet boundary condition}\label{SS:Examples}

In this part, we give a few examples to illustrate Theorem~\ref{T:C1alpha} and
Corollary~\ref{C:prod-dom}.

\begin{example}[Interval for $d = 1$]\label{Eg:interval}
  Consider the stochastic heat equation~\eqref{E:she-d} with $\mathscr{L} =
  -\partial^2 /\partial x^2$ on an interval $\dom = (0, L)$ with Dirichlet
  boundary condition. This is a smooth, and hence $C^{1,\alpha}$ domain. The
  first eigenvalue is $\mu_1 = (\pi/L)^2$ and the corresponding eigenfunction is
  $\Phi_1(x) = (2/L)^{1/2} \sin(\pi x / L)$. In this case, $\Psi(t,x)$ defined
  in~\eqref{E:Psi} reduces to (see Figure~\ref{SF:Psi_1d})
  \begin{align}\label{E:Psi_1d}
    \Psi\left(t,x\right) = 1\wedge\frac{(2 /L)^{1/2} \sin\left(\pi x/L\right)}{1\wedge \sqrt{t}},
  \end{align}
  and condition~\eqref{E:C1alpha-nu} becomes
  \begin{align}\label{E:sin-nu}
    \int_0^L \sin(\pi x / L) \,|\nu|(\ud x) < \infty
    \quad \Longleftrightarrow \quad
    \int_0^L x(L-x) \,|\nu|(\ud x) < \infty.
  \end{align}
  Due to the dissipative or cooling-down effect of the Dirichlet boundary
  condition, one can inject, at time zero, more heat flow into the domain from
  the boundary. For example, the following nonnegative measures with compact
  support satisfies~\eqref{E:sin-nu} or equivalently~\eqref{E:C1alpha-nu}:
  \begin{align}\label{E:sin-nu2}
    \nu(\ud x) = \frac{\textbf{1}_{(0,L)}(x)}{\left[\sin(\pi x/L)\right]^\beta} \ud x
    \quad \text{or} \quad
    \nu(\ud x) = \frac{\textbf{1}_{(0,L)}(x)}{\left[x(L-x)\right]^\beta} \ud x,
    \quad \text{with $\beta<2$.}
  \end{align}
  But since examples in~\eqref{E:sin-nu2} neither have bounded densities nor are
  finite measures on the domain being considered, both parts of
  Theorem~\ref{T:ExUnque} fail to apply for such initial conditions.
\end{example}

\begin{figure}
  \centering

  \renewcommand{\thesubfigure}{\thefigure.\arabic{subfigure}}
  \makeatletter
    \renewcommand{\p@subfigure}{}
  \makeatother
  \captionsetup[subfigure]{style = default, margin = 0pt, parskip = 0pt, hangindent = 0pt, labelformat = simple}

  \subfloat[][$d = 1$.]{\label{SF:Psi_1d}
    \centering
    \includegraphics[width = 0.35\textwidth]{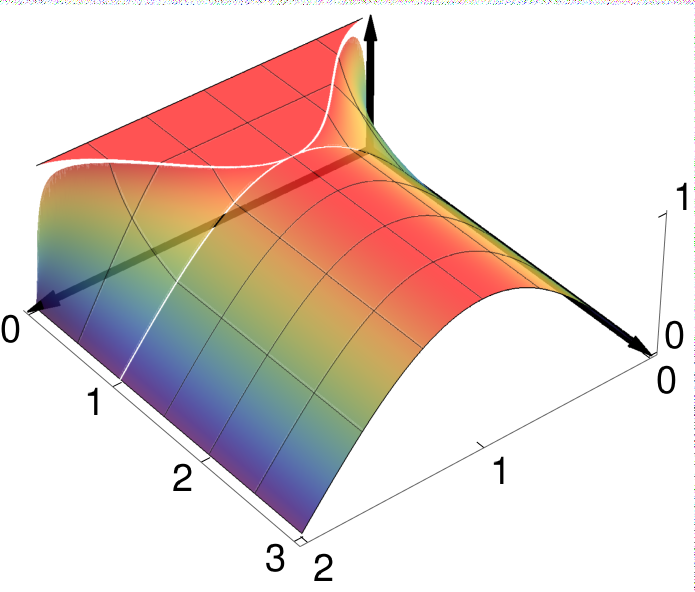}
  }
  \qquad\qquad
  \subfloat[][$d \ge 2$.]{\label{SF:Psi_4d}
    \centering
    \includegraphics[width = 0.35\textwidth]{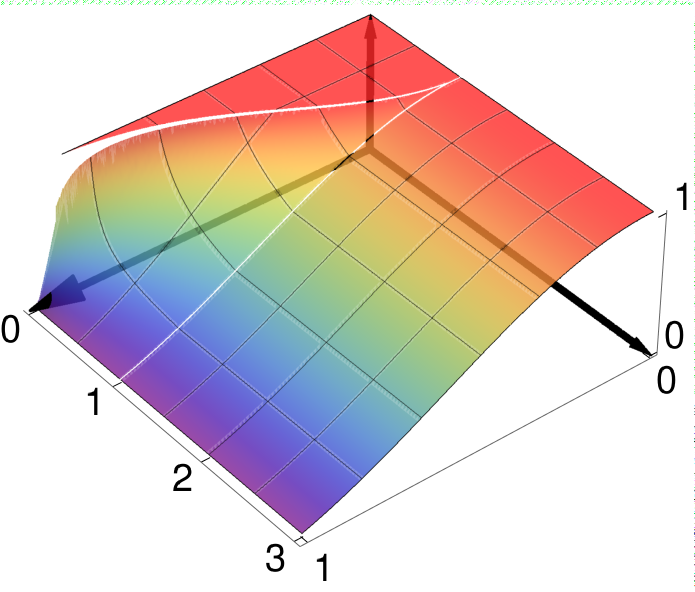}
  }

  \caption{Some plots of the function $\Psi(t,x)$ in case of $d = 1$ in
    Figure~\ref{SF:Psi_1d} with $L = 2$, $x\in [0,2]$ and $t\in[0,3]$ and
    $\Psi(t,r)$ in case of $d\ge 2$ in Figure~\ref{SF:Psi_4d} with $d = 4$,
    $r\in[0,1]$ and $t\in[0,3]$. The normalization constant $C_d$ for the plot
  in Figure~\ref{SF:Psi_4d} is chosen to be the one in~\eqref{E:Cd} so that
$\max_{(t,r)\in (0,\infty)\times(0,1)}\Psi(t,r)=1$.}

  \label{F:Psi}
\end{figure}

\begin{example}[Unit ball in $\R^d$]\label{Eg:ball}
  Consider the stochastic heat equation~\eqref{E:she-d} with $\mathscr{L} =
  -\Delta$ on the unit disk $\dom = B(0, 1)$ in $\R^d$, $d\ge 2$, with Dirichlet
  boundary condition. The first eigenvalue is $\mu_1 = z_0^2$, where $z_0$ is
  the first positive zero of the Bessel function $J_{(d-2)/2}(\cdot)$, and the
  corresponding eigenfunction is $\Phi_1(x) = \frac{1}{C_d}|x|^{(2-d)/2}
  J_{(d-2)/2}(z_0|x|)$; see Remark~\ref{R:EgienBall} below for more details. In
  this case, $\Psi(t,x)$ defined in~\eqref{E:Psi} reduces to (see
  Figure~\ref{SF:Psi_4d} with $r = |x|$)
  \begin{align}\label{E:Psi_4d}
    \Psi(t,x) = 1\wedge \frac{|x|^{(2-d)/2} J_{(d-2)/2}(z_0 |x|)}{C_d\left(1\wedge \sqrt{t}\right)}.
  \end{align}
  Similarly to the previous example, we claim that the following locally finite
  nonnegative measure on $\R^d$ with compact support
  \begin{align}\label{E:BesselJFinite2}
    \nu(\ud x) = \frac{\textbf{1}_{B(0,1)}(x)}{|x|^{\beta_0} \left(1-|x|\right)^{\beta_1}} \ud x,
    \quad \text{with $\beta_0 < 2$ and $\beta_1 < 2$,}
  \end{align}
  satisfies condition~\eqref{E:C1alpha-nu}. Indeed,
  \begin{align*}
    0 \le \int_{B(0,1)}\frac{|x|^{(2-d)/2} J_{(d-2)/2}(z_0|x|)}{|x|^{\beta_0}\left(1-|x|\right)^{\beta_1}}\ud x
    =   C_d \int_0^1 \frac{r^{(2-d)/2} J_{(d-2)/2}(z_0 r)}{r^{\beta_0-1} \left(1-r\right)^{\beta_1}} \ud r
      \le C_d \times C \times I, \shortintertext{where}
    C \coloneqq \max_{r\in (0,1)}\frac{r^{(2-d)/2}J_{(d-2)/2}(z_0 r)}{1-r} <\infty \quad \text{and} \quad
    I \coloneqq \int_0^1 \frac{\ud r}{r^{\beta_0-1} (1-r)^{\beta_1-1}} <\infty.
  \end{align*}
  Note that the above maximum is finite thanks to Lemma~\ref{L:BesselJMax}
  below. By the same reason, condition~\eqref{E:C1alpha-nu} in this case reduces
  to
  \begin{align}\label{E:BesselJFinite}
    \int_{B(0, 1)} \left(1-|x|\right) \; |\nu|(\ud x) < \infty.
  \end{align}
\end{example}

\begin{example}[Annular domain in $\R^2$]\label{Eg:Ann}
  Consider the stochastic heat equation~\eqref{E:she-d} with $\mathscr{L} =
  -\Delta$ on the following annulus with Dirichlet boundary
  condition\footnote{The explicit form of the fundamental solution can be found,
  e.g., in~\cite[Section 4.1.2 on p. 418]{polyanin.nazaikinskii:16:handbook}.}:
  \begin{align*}
    \dom = \left\{x \in \R^2: R_1 < |x| < R_2 \right\}, \quad 0<R_1<R_2<+\infty.
  \end{align*}
  Note that $\dom$ is a nonconvex, but smooth, bounded domain. The leading
  eigenvalue is $\mu_1 = z_0^2$, where $z_0$ is the first positive zero of the
  cross-product Bessel functions
  \begin{align}\label{E:ZeroJY}
    J_0(R_1 z) Y_0(R_2 z) -
    Y_0(R_1 z) J_0(R_2 z) = 0,
  \end{align}
  where $J_0(\cdot)$ and $Y_0(\cdot)$ are the Bessel functions of the first and
  second kind of order zero, respectively. The corresponding eigenfunction is
  \begin{align}\label{E:Z(r)}
    \Phi_1(x) = C Z(|x|) \quad \text{with} \quad
    Z(r) \coloneqq
    J_0(R_1 z_0) Y_0(r z_0) -
    Y_0(R_1 z_0) J_0(r z_0) ;
  \end{align}
  see Figure~\ref{SF:Phi1_Ann} for a plot of $\Phi_1(x)$. Similarly to the
  previous example~\eqref{Eg:ball}, we claim that
  \begin{align}\label{E:nu_Ann}
    \nu(\ud x) = \frac{|x|^{\beta_0}\textbf{1}_{\dom}(x)}{ \left(R_2-|x|\right)^{\beta_2} \left(|x|-R_1\right)^{\beta_1}}  \ud x,
    \quad \text{with $\beta_i<2$, $i=1,2$, and $\beta_0\in\R$,}
  \end{align}
  satisfies condition~\eqref{E:C1alpha-nu}. Indeed,
  \begin{align*}
    0 & < \int_\dom \frac{\Phi_1(x) |x|^{\beta_0}\ud x}{\left(R_2-|x|\right)^{\beta_2} \left(|x|-R_1\right)^{\beta_1}}
    = 2 \pi \int_{R_1}^{R_2} \frac{Z(r) r^{\beta_0+1} \ud r}{\left(R_2 - r\right)^{\beta_2} \left(r-R_1\right)^{\beta_1}}
        \le 2 \pi \times C \times I,
  \end{align*}
  where
  \begin{align*}
    \displaystyle C \coloneqq \max_{r\in \left(R_1,R_2\right)}
    \frac{Z(r)}{\left(R_2-r\right)\left(r-R_1\right)}<\infty
    \quad \text{and} \quad
    I  \coloneqq \int_{R_1}^{R_2} \frac{r^{\beta_0+1} \ud r}{\left(R_2-r\right)^{\beta_2-1} \left(r-R_1\right)^{\beta_1-1}} <\infty.
  \end{align*}
  Note that the finiteness of the above constant $C$ is due to the fact that
  $R_1$ and $R_2$ are both simple zeros of $Z(r)$; see Lemma \ref{L:simple0}
  below. By the same reason, in this case, condition~\eqref{E:C1alpha-nu} can be
  equivalently written as
  \begin{align*}
    \int_{R_1<|x|<R_2} \left(R_2-|x|\right) \left(|x|-R_1\right)\; |\nu|(\ud x) < \infty.
  \end{align*}
\end{example}

\begin{figure}
  \centering

  \renewcommand{\thesubfigure}{\thefigure.\arabic{subfigure}}
  \makeatletter
    \renewcommand{\p@subfigure}{}
  \makeatother
  \captionsetup[subfigure]{style = default, margin = 0pt, parskip = 0pt, hangindent = 0pt, labelformat = simple}

  \subfloat[][Annular domain.]{\label{SF:Phi1_Ann}
    \centering
    \includegraphics[width = 0.35\textwidth]{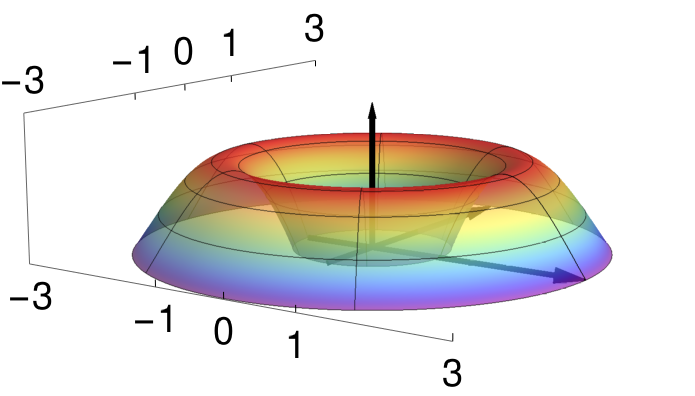}
  }
  \qquad\qquad
  \subfloat[][Rectangular domain.]{\label{SF:Phi1_Box}
    \centering
    \includegraphics[width = 0.33\textwidth]{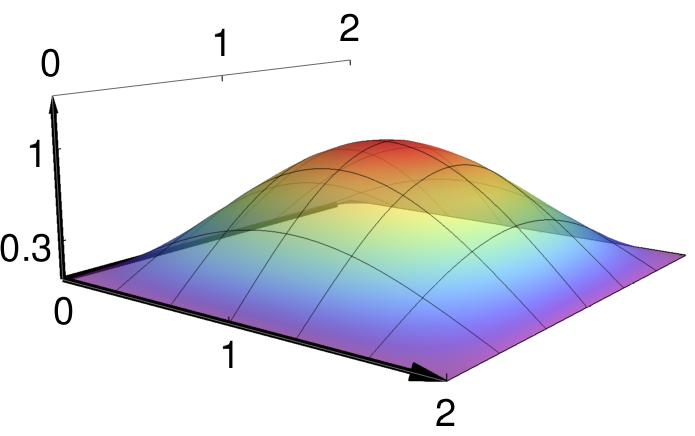}
  }

  \caption{Some plots of the leading eigenfunction $\Phi_1(x)$ with $x\in\R^2$
  both in case of the annular domain in Figure~\ref{SF:Phi1_Ann} where $R_1 = 1$
and $R_2 = 3$ and of the rectangular domain in Figure~\ref{SF:Phi1_Box} where $L
= 2$.}

  \label{F:Phi1}
\end{figure}

\begin{example}[Rectangular domain in $\R^d$ with $d\ge 2$]\label{Eg:Rect}
  Consider the stochastic heat equation~\eqref{E:she-d} with $\mathscr{L} =
  -\Delta$ on the rectangular domain $\dom = (0, L)^d\subseteq \R^d$, $d\ge 2$
  and $L>0$, with Dirichlet boundary condition. Note that for $d \ge 2$, $\dom$
  has corners and hence, is not a $C^{1,\alpha}$--domain, but only a Lipschitz
  domain. The first eigenvalue is $\mu_1 = d \times \left(\pi/L\right)^{2}$ and
  the corresponding (normalized) eigenfunction is given by (see
  Figure~\ref{SF:Phi1_Box} for a plot)
  \begin{align*}
    \Phi_1(x) = (2 / L)^{d/2}\prod_{i = 1}^d \sin\left(\frac{\pi x_i}{L}\right),
    \qquad \text{for $x = \left(x_1,\dots, x_d\right)\in (0,L)^d$.}
  \end{align*}
  Similar to Example~\ref{Eg:interval}, by Corollary~\ref{C:prod-dom},
  $\Psi^*\left(t,x\right)$ defined in~\eqref{E:Psi-prod} reduces to
  \begin{align}\label{E:Psi_rec}
    \Psi^*\left(t,x\right) = \prod_{i = 1}^{d} \left(1\wedge\frac{(2 /L)^{1/2} \sin\left(\pi x_i/L\right)}{1\wedge \sqrt{t}}\right),
  \end{align}
  and condition~\eqref{E:int-prod-nu} becomes
  \begin{align}\label{E:BoxCond}
    \int_{(0,L)^d}|\nu|(\ud x) \prod_{i = 1}^d \sin\left(\frac{\pi x_i}{L}\right) <\infty
    \quad \Longleftrightarrow \quad
    \int_{(0,L)^d}|\nu|(\ud x) \prod_{i = 1}^d \left(x_i (L-x_i)\right) <\infty.
  \end{align}
  Locally finite measures similar to~\eqref{E:sin-nu2} can be constructed
  component-wise.
\end{example}

Other Lipschitz domains can be considered as an application of
Corollary~\ref{C:prod-dom} as well. For example, for the cylinder domain
\begin{align*}
  \dom = \left\{(x_1,x_2,x_3)\in\R^3: x_1^2 + x_2^2 < 1~\text{and}~0< x_3
  <1\right\},
\end{align*}
as an easy exercise (which is left for the interested readers),
condition~\eqref{E:int-prod-nu} becomes
\begin{align*}
  \int_{\R^3} \left(1-x_1^2 -x_2^2\right) x_3 (1-x_3) \: |\nu|(\ud x) <\infty.
\end{align*}

\subsection{Intermittency}

Following~\cite{foondun.khoshnevisan:09:intermittence} and Definition III.1.1
of~\cite{carmona.molchanov:94:parabolic}, we say that $u$ is \emph{weakly
intermittent} if, for all $x \in \dom$,
\begin{gather}\label{E:low-lya}
  \limsup_{t \to \infty} \frac 1t \log \E(|u(t, x)|^2) > 0 \quad \text{and} \\
  \limsup_{t \to \infty} \frac 1t \log \E(|u(t, x)|^p) < \infty
  \quad \text{for all $p\ge 2$}, \label{E:up-lya}
\end{gather}
and $u$ is \emph{fully intermittent} (or simply intermittent)
if~\eqref{E:low-lya} can be strengthened to
\begin{align}\label{E:F-Intermit}
  \liminf_{t \to \infty} \frac 1t \log \E(|u(t, x)|^2) > 0.
\end{align}

The following two theorems extend the corresponding results
in~\cite{foondun.nualart:15:on} and~\cite{nualart:18:moment}. Recall that
$\mu_1$ is the first eigenvalue of the operator $\mathscr{L}$ with Dirichlet
boundary condition. Also, recall the definitions of $J_c(t, x)$ and $J_{c,
\varepsilon}(t, x)$ in~\eqref{E:Jc} and~\eqref{E:Jc,eps}, respectively. The
following theorem provides moment bounds for the solution and shows that full
intermittency occurs when $\lambda$ is sufficiently large, but not when
$\lambda$ is small.

\begin{theorem}\label{Thm:pm-d}
  Let $\dom \subset \R^d$ be a bounded Lipschitz domain. Let $u$ be the solution
  to~\eqref{E:she-d} with Dirichlet boundary condition. Suppose~\eqref{E:Lsigma}
  holds. Suppose $\nu \ge 0$ and $\nu(\dom) < \infty$. Then, there exist
  positive finite constants $C$, $c$ and $c'$ such that for all $p \ge 2$,
  $\lambda > 0$, $t > 0$, $x \in \dom$,
  \begin{equation}\label{pm-d-ub}
    \E(|u(t, x)|^p)
    \le C^p \big(J_{c_1} (t, x)\big)^p
    e^{pt{\Big(c p \lambda^2 \Lip_\sigma^2 +
    c' p^{\frac{2}{2-\beta}} \lambda^{\frac{4}{2-\beta}}\Lip_\sigma^{\frac{4}{2-\beta}}-\mu_1\Big)}}.
  \end{equation}
  Moreover, if~\eqref{E:Anderson} or~\eqref{E:lsigma} holds, then there exists
  $0 < \eps_0 < 1$ such that for all $0 < \eps \le \eps_0$, there exist positive
  finite constants $\overline{C}$, $\overline{c}$ and $\widetilde{c}$ depending
  on $\eps$ such that for all $p \ge 2$, $\lambda > 0$, $t > 0$, $x \in
  \dom_\eps$,
  \begin{equation}\label{pm-d-lb}
  \E(|u(t, x)|^p)
  \ge \overline{C}^p \big(J_{12c_2, \varepsilon}(t, x)\big)^p
  e^{pt{\Big(\overline{c} \lambda^2 \lip_\sigma^2 +
  \widetilde{c} \lambda^{\frac{4}{2-\beta}}\lip_\sigma^{\frac{4}{2-\beta}}-\mu_1\Big)}}.
  \end{equation}
  Consequently, if $0 < \nu(\dom_\eps) \le \nu(\dom) < \infty$, then there exist
  $0 < \lambda_0 < \lambda_1 < \infty$ such that $u$ is fully intermittent on
  $\dom_\eps$ when $\lambda > \lambda_1$, but not when $\lambda < \lambda_0$ as
  \begin{align*}
    \limsup_{t\to \infty} \frac 1t \log\E(|u(t, x)|^2)<0,
  \end{align*}
  where
  \begin{gather*}
    \lambda_0 \coloneqq \sup \Big\{\lambda > 0: 2 c \lambda^2 \Lip_\sigma^2 + 2^{\frac{2}{2-\beta}}c' \lambda^{\frac{4}{2-\beta}} \Lip_\sigma^{\frac{4}{2-\beta}} \le \mu_1 \Big\}, \\
    \lambda_1 = \lambda_1(\eps) \coloneqq \inf \Big\{\lambda > 0: \overline{c} \lambda^2 \lip_\sigma^2 + \widetilde{c} \lambda^{\frac{4}{2-\beta}} \lip_\sigma^{\frac{4}{2-\beta}} \ge \mu_1 \Big\}.
  \end{gather*}
\end{theorem}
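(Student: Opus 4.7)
The plan is to deduce this result almost directly from Theorems~\ref{T:ExUnque} and~\ref{T:corr-bd} already proved earlier, and then translate the resulting moment estimates into statements about upper/lower exponential rates.

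\textbf{Step 1: Upper bound \eqref{pm-d-ub}.} I would start from \eqref{E:sol-Lp-bd} in Theorem~\ref{T:ExUnque}(ii), which under the present hypotheses ($\nu$ a finite nonnegative Borel measure, $\sigma$ satisfying \eqref{E:Lsigma}) gives
\[
\|u(t,x)\|_p \le C\, e^{t(Cp\lambda^2 L_\sigma^2 + Cp^{2/(2-\beta)}\lambda^{4/(2-\beta)} L_\sigma^{4/(2-\beta)} - \mu_1)} J_{c_1}(t,x).
\]
Raising both sides to the $p$-th power and renaming the absolute constants into $c,c'$ yields exactly \eqref{pm-d-ub}.

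\textbf{Step 2: Lower bound \eqref{pm-d-lb}.} For $p\ge 2$, Lyapunov's inequality gives $\E(|u(t,x)|^p)\ge \E(|u(t,x)|^2)^{p/2}$. It therefore suffices to bound $\E(u(t,x)^2)$ from below, and this is precisely what Theorem~\ref{T:corr-bd}(ii) provides when one sets $x'=x$: under \eqref{E:Anderson} or \eqref{E:lsigma}, for some $\eps_0>0$ and every $0<\eps\le \eps_0$,
\[
\E(u(t,x)^2) \ge \overline{C}\, e^{2t(\overline{C}\lambda^2 l_\sigma^2 + \overline{C}\lambda^{4/(2-\beta)} l_\sigma^{4/(2-\beta)} - \mu_1)} J_{12c_2,\eps}(t,x)^2
\]
for all $t>0$ and $x\in\dom_\eps$, since the factor $e^{-16c|x-x'|^2/t}$ equals one when $x=x'$. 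Taking the $(p/2)$-th power, relabeling the constant $\overline{C}$ that sits in the exponent as $\overline{c}$ and $\widetilde{c}$ to match the stated form, and noting that the prefactor $\overline{C}^{p/2}$ can be written as $\overline{C}^p$ after enlarging $\overline{C}$, we obtain \eqref{pm-d-lb}.

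\textbf{Step 3: Intermittency.} The key observation is the asymptotic behavior of the envelopes $J_{c_1}$ and $J_{12c_2,\eps}$ as $t\to\infty$: since for $t\ge 1$, $1\wedge t^{d/2}=1$ and $e^{-c|x-y|^2/t}\to 1$ uniformly in $y\in\dom$ (because $\dom$ is bounded), the dominated convergence theorem gives
\[
\lim_{t\to\infty} J_{c_1}(t,x) = \nu(\dom) \in (0,\infty), \qquad \lim_{t\to\infty} J_{12c_2,\eps}(t,x) = \nu(\dom_\eps) \in (0,\infty],
\]
where the latter is strictly positive by the standing assumption $\nu(\dom_\eps)>0$. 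In particular, for fixed $x\in\dom_\eps$, $\frac{1}{t}\log J_{c_1}(t,x)\to 0$ and $\frac{1}{t}\log J_{12c_2,\eps}(t,x)\to 0$ as $t\to\infty$. Applying this to \eqref{pm-d-ub} with $p=2$ gives
\[
\limsup_{t\to\infty} \tfrac{1}{t}\log \E(u(t,x)^2) \le 2\bigl(2c\lambda^2 L_\sigma^2 + 2^{2/(2-\beta)} c'\lambda^{4/(2-\beta)} L_\sigma^{4/(2-\beta)} - \mu_1\bigr),
\]
which is strictly negative whenever $\lambda<\lambda_0$; applying it to \eqref{pm-d-lb} with $p=2$ gives
\[
\liminf_{t\to\infty} \tfrac{1}{t}\log \E(u(t,x)^2) \ge 2\bigl(\overline{c}\lambda^2 l_\sigma^2 + \widetilde{c}\lambda^{4/(2-\beta)} l_\sigma^{4/(2-\beta)} - \mu_1\bigr),
\]
which is strictly positive whenever $\lambda>\lambda_1(\eps)$, establishing \eqref{E:F-Intermit} and hence full intermittency on $\dom_\eps$. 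The two thresholds are finite and positive because both exponents are strictly increasing continuous functions of $\lambda$ that vanish at $\lambda=0$ and tend to $+\infty$ as $\lambda\to\infty$.

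The only place where any real care is needed is Step 3, i.e., verifying that the polynomial envelopes $J_{c_1},J_{12c_2,\eps}$ are asymptotically negligible on the exponential scale; everything else is a repackaging of the bounds already established in the earlier theorems.
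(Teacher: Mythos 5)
Your proposal is correct and follows essentially the same route as the paper's proof: the upper bound is read off from Theorem~\ref{T:ExUnque}(ii), the lower bound from Theorem~\ref{T:corr-bd}(ii) via Jensen's inequality, and the intermittency thresholds come from noting that $J_{c_1}(t,x)$ and $J_{12c_2,\eps}(t,x)$ are subexponential in $t$. The only tiny omission is that you verify the $\limsup$ bound only for $p=2$, whereas the definition of full intermittency also requires~\eqref{E:up-lya}, i.e., finiteness of $\limsup_{t\to\infty}\tfrac{1}{t}\log\E(|u(t,x)|^p)$ for every $p\ge 2$; this follows from the same computation applied to~\eqref{pm-d-ub} with general $p$, exactly as the paper does.
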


\begin{proof}
  The upper bound~\eqref{pm-d-ub} follows from Theorem~\ref{T:ExUnque}. The
  lower bound follows from Theorem~\ref{T:corr-bd} and Jensen's inequality
  $\E(|u(t, x)|^p) \ge \E(|u(t, x)|^2)^{p/2}$.

  It remains to prove the last statement of full intermittency. First, since
  $\nu(\dom) < \infty$,
  \begin{align*}
    \limsup_{t \to \infty} \frac 1t \log \big(J_{c_1}(t, x)\big)^p
    \le \limsup_{t\to \infty} \frac pt \left(-\log(1\wedge t^{d/2}) + \log \nu(\dom)\right) = 0.
  \end{align*}
  Then,~\eqref{pm-d-ub} implies that for all $x \in \dom$ and $\lambda > 0$,
  \begin{align}\label{limsup-ub}
    \limsup_{t \to \infty} \frac 1t \log\E(|u(t, x)|^p) \le p\Big({c p \lambda^2 \Lip_\sigma^2 + c' \, p^{\frac{2}{2-\beta}} \, \lambda^{\frac{4}{2-\beta}}\Lip_\sigma^{\frac{4}{2-\beta}}} - \mu_1\Big),
  \end{align}
  which proves~\eqref{E:up-lya}. Moreover, $\nu(\dom_\eps) > 0$ implies $\log
  \nu(\dom_\eps) > -\infty$, hence
  \begin{align*}
    \liminf_{t\to\infty}\frac1t \log \big(J_{12c_2, \varepsilon}(t, x)\big)^2
    \ge \liminf_{t\to\infty} \frac 2t \bigg(- \log(1\wedge t^{d/2}) - 4c_2\sup_{x, y \in \dom}|x-y|^2 + \log\nu(\dom_\eps)\bigg) = 0.
  \end{align*}
  If $\lambda > \lambda_1$, then it follows from~\eqref{pm-d-lb} that for all $x
  \in \dom_\eps$,
  \begin{align*}
    \liminf_{t\to \infty} \frac 1t \log\E(|u(t, x)|^2)
    \ge 2\Big(\overline{c} \lambda^2 \lip_\sigma^2 + \widetilde{c} \lambda^{\frac{4}{2-\beta}} \lip_\sigma^{\frac{4}{2-\beta}} - \mu_1\Big) > 0,
  \end{align*}
  which proves~\eqref{E:F-Intermit}. Hence, $u$ is fully intermittent on
  $\dom_\eps$. On the other hand, by~\eqref{limsup-ub},
  \begin{align*}
        \limsup_{t\to \infty} \frac 1t \log\E(|u(t, x)|^2)
    \le 2\Big(2 c \lambda^2 \Lip_\sigma^2 + 2^{\frac{2}{2-\beta}}c' \lambda^{\frac{4}{2-\beta}} \Lip_\sigma^{\frac{4}{2-\beta}} - \mu_1\Big)
    <   0, \quad \text{when $\lambda<\lambda_0$,}
  \end{align*}
  which completes the proof of Theorem~\ref{Thm:pm-d}.
\end{proof}

Similarly, we get the following result from Theorem~\ref{T:C1alpha} for $C^{1,
\alpha}$-domains.

\begin{theorem}\label{Thm:pm-d2}
  Let $\dom \subset \R^d$ be a bounded $C^{1, \alpha}$-domain, where $\alpha >
  0$. Let $u$ be the solution to~\eqref{E:she-d} with Dirichlet boundary
  condition. Suppose~\eqref{E:Lsigma} holds. Suppose $\nu \ge 0$ and $\Phi_1 \in
  L^1(\dom, \nu)$. Then, there exist positive finite constants $C$, $c$ and $c'$
  such that for all $p \ge 2$, $\lambda > 0$, $t > 0$, $x \in \dom$,
  \begin{equation}\label{pm-d-ub2}
    \E(|u(t, x)|^p)
    \le C^p \Psi^p(t, x) \big(J_{2c_1/3}^* (t, x)\big)^p
    e^{pt{\Big(c p \lambda^2 \Lip_\sigma^2 +
    c' p^{\frac{2}{2-\beta}} \lambda^{\frac{4}{2-\beta}}\Lip_\sigma^{\frac{4}{2-\beta}}-\mu_1\Big)}}.
  \end{equation}
  Moreover, if~\eqref{E:Anderson} or~\eqref{E:lsigma} holds, then there exist
  positive finite constants $\overline{C}$, $\overline{c}$ and $\widetilde{c}$
  such that for all $p \ge 2$, $\lambda > 0$, $t > 0$, $x \in \dom$,
  \begin{equation}\label{pm-d-lb2}
    \E(|u(t, x)|^p)
    \ge \overline{C}^p \Psi^p(t, x) \big(J_{12c_2}^*(t, x)\big)^p
    e^{pt{\Big(\overline{c} \lambda^2 \lip_\sigma^2 +
    \widetilde{c} \lambda^{\frac{4}{2-\beta}}\lip_\sigma^{\frac{4}{2-\beta}}-\mu_1\Big)}}.
  \end{equation}
  Consequently, if $0 < \|\Phi_1\|_{L^1(\dom,\, \nu)} < \infty$, then there
  exist $0 < \lambda_0 < \lambda_1 < \infty$ such that $u$ is fully intermittent
  on $\dom$ when $\lambda > \lambda_1$, but not when $\lambda < \lambda_0$,
  where
  \begin{align*}
    \lambda_0 & \coloneqq \sup \Big\{\lambda > 0: 2 c \lambda^2 \Lip_\sigma^2 + 2^{\frac{2}{2-\beta}}c' \lambda^{\frac{4}{2-\beta}} \Lip_\sigma^{\frac{4}{2-\beta}} \le \mu_1 \Big\}, \\
    \lambda_1 & \coloneqq \inf \Big\{\lambda > 0: \overline{c} \lambda^2 \lip_\sigma^2 + \widetilde{c} \lambda^{\frac{4}{2-\beta}} \lip_\sigma^{\frac{4}{2-\beta}} \ge \mu_1 \Big\}.
  \end{align*}
\end{theorem}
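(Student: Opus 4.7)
The plan is to reduce Theorem~\ref{Thm:pm-d2} to Theorem~\ref{T:C1alpha} together with the logarithmic asymptotics of $\Psi$ and $J_c^*$ encoded in Remark~\ref{R:JcFinite}, following the same template as the proof of Theorem~\ref{Thm:pm-d}. The new integrability hypothesis $\Phi_1 \in L^1(\dom, \nu)$ now plays the role that $\nu(\dom) < \infty$ played before, and with the pair $(\Psi, J_c^*)$ in place of $J_c$ the entire dichotomy argument transfers essentially verbatim.

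For the upper bound~\eqref{pm-d-ub2}, I simply raise~\eqref{E:C1alpha-Lp} to the $p$-th power. For the lower bound~\eqref{pm-d-lb2}, I apply Theorem~\ref{T:C1alpha}(iii) with $x' = x$, at which point the spatial Gaussian factor $e^{-16c_2|x-x'|^2/t}$ equals $1$, and then invoke Jensen's inequality $\E(|u(t,x)|^p) \ge \E(u(t,x)^2)^{p/2}$ to lift the second-moment bound to a $p$-th moment bound with the exponent rate multiplied by $p$.

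For the full intermittency statement, the key step is to show that $\Psi$ and $J_c^*$ contribute nothing on the exponential scale as $t \to \infty$. The hypothesis $0 < \|\Phi_1\|_{L^1(\dom, \nu)} < \infty$ together with the sandwiching~\eqref{E:JcMnu} from Remark~\ref{R:JcFinite} yields
\begin{align*}
\lim_{t \to \infty} \tfrac{1}{t} \log J_c^*(t, x) = 0
\qquad \text{for every fixed $c > 0$ and $x \in \dom$,}
\end{align*}
because the lower-bound prefactor $e^{-cD^2/t}$ tends to $1$ and the upper bound blows up only polynomially in $1/t$. Similarly $\Psi(t, x) = 1 \wedge \Phi_1(x)$ is a strictly positive constant for $t \ge 1$ (since $\Phi_1 > 0$ in the interior of $\dom$), so $\lim_{t \to \infty} \tfrac{1}{t} \log \Psi(t, x) = 0$. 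Feeding these into~\eqref{pm-d-ub2} gives $\limsup_{t \to \infty} \tfrac{1}{t} \log \E(|u(t, x)|^2) < 0$ whenever $\lambda < \lambda_0$, and feeding them into~\eqref{pm-d-lb2} with $p = 2$ gives $\liminf_{t \to \infty} \tfrac{1}{t} \log \E(|u(t, x)|^2) > 0$ whenever $\lambda > \lambda_1$, which together prove the claimed dichotomy.

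The principal qualitative gain over Theorem~\ref{Thm:pm-d} is that the intermittency conclusion now holds on the entire domain $\dom$ and $\lambda_1$ is independent of $\eps$; no interior truncation is required because the $C^{1,\alpha}$ estimates of Theorem~\ref{T:C1alpha} already incorporate the boundary behavior through the factor $\Psi(t, x)$. I expect no genuine obstacle here: all the analytic work sits inside Theorem~\ref{T:C1alpha} and Remark~\ref{R:JcFinite}, and the only nontrivial verification beyond bookkeeping is the vanishing of $\tfrac{1}{t}\log J_c^*(t,x)$ and $\tfrac{1}{t}\log \Psi(t,x)$, which follows immediately from~\eqref{E:JcMnu}.
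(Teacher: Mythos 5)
Your proposal is correct and coincides with the proof the paper leaves implicit: after Theorem~\ref{Thm:pm-d} the authors simply remark ``Similarly, we get the following result from Theorem~\ref{T:C1alpha},'' and your argument --- raise~\eqref{E:C1alpha-Lp} to the $p$-th power for the upper bound, set $x'=x$ in~\eqref{E:C1alpha-corr-lb} plus Jensen for the lower bound, then use~\eqref{E:JcMnu} to kill $\tfrac1t\log J_c^*$ and $\tfrac1t\log\Psi$ as $t\to\infty$ --- is exactly what is intended. (A negligible slip: for the $t\to\infty$ asymptotics the upper bound in~\eqref{E:JcMnu} is a constant once $t\ge 1$, not a polynomial in $1/t$; the conclusion is unchanged.)
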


As in \cite{foondun.nualart:15:on, nualart:18:moment, guerngar.nane:20:moment},
it is not clear whether the solution is intermittent if $\lambda \in [\lambda_0,
\lambda_1]$ in Theorems \ref{Thm:pm-d} and \ref{Thm:pm-d2} above. Instead, we
propose the following conjecture for future investigation.
\begin{conjecture}
  Under the settings of either Theorem~\ref{Thm:pm-d} or
  Theorem~\ref{Thm:pm-d2}, there exists $\lambda^*\in [\lambda_0, \lambda_1]$
  such that when
  $\lambda > \lambda^*$, the solution $u(t,x)$ to~\eqref{E:she-d} is fully intermittent; when $\lambda < \lambda^*$, the solution has all $p$-th moments ($p\ge 2$) bounded in time and is not fully intermittent.
\end{conjecture}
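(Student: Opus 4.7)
The plan is to identify $\lambda^*$ with a spectral threshold and treat the two regimes separately, starting from the PAM case $\sigma(u) = u$ where the second-moment equation closes and becomes linear. In that case, the two-point function $v(t, x, y) := \E[u(t, x) u(t, y)]$ satisfies the deterministic linear PDE
\[
\partial_t v(t, x, y) = -\mathscr{L}_x v(t, x, y) - \mathscr{L}_y v(t, x, y) + \lambda^2 f(x-y)\, v(t, x, y)
\]
on $\dom \times \dom$, with vanishing Dirichlet trace whenever $x$ or $y$ lies on $\partial \dom$ and with initial datum $\nu \otimes \nu$. Let $A_\lambda := -\mathscr{L}_x - \mathscr{L}_y + \lambda^2 f(x-y)$, and let $\Lambda(\lambda)$ denote its principal (largest) eigenvalue on $L^2(\dom \times \dom)$ with Dirichlet boundary condition, corresponding to a positive eigenfunction $\Phi^{(2)}_\lambda$ supplied by the Krein--Rutman theorem since $e^{t A_\lambda}$ is positivity-preserving and compact. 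By the Rayleigh--Ritz variational formula, $\lambda \mapsto \Lambda(\lambda)$ is continuous and strictly increasing, with $\Lambda(0) = -2\mu_1 < 0$ and $\Lambda(\lambda) \to +\infty$ as $\lambda \to \infty$. I would then define $\lambda^* := \inf\{\lambda > 0 : \Lambda(\lambda) \ge 0\}$; the two-sided bounds already proved in Theorems~\ref{Thm:pm-d} and~\ref{Thm:pm-d2} force $\lambda^* \in [\lambda_0, \lambda_1]$.

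For $\lambda > \lambda^*$, the spectral expansion of $v(t, \cdot, \cdot)$ gives $v(t, x, x) \sim e^{\Lambda(\lambda) t}\, \Phi^{(2)}_\lambda(x, x)\, \langle \Phi^{(2)}_\lambda, \nu \otimes \nu \rangle$ as $t \to \infty$, so that $\liminf_{t \to \infty} t^{-1} \log \E[u(t, x)^2] = \Lambda(\lambda) > 0$ and full intermittency in the sense of~\eqref{E:F-Intermit} follows at once. Conversely, for $\lambda < \lambda^*$ the same expansion yields exponential decay of $v(t, x, x)$, and in particular $\sup_{t > 0} \E[u(t, x)^2] < \infty$ on compact subsets of $\dom$. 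To promote this to all $p$-th moments for $p > 2$, I would pass to a Feynman--Kac representation of the $p$-point function, in which $p$ independent killed $\mathscr{L}$-diffusions are weighted by a pair potential $\lambda^2 \sum_{i < j} f(X^i_s - X^j_s)$, and identify the growth rate $\gamma_p(\lambda) := \lim_{t \to \infty} t^{-1} \log \E[u(t, x)^p]$ with the principal eigenvalue $\Lambda_p(\lambda)$ of the $p$-particle operator $-\sum_{i} \mathscr{L}_{x_i} + \lambda^2 \sum_{i < j} f(x_i - x_j)$ on $\dom^p$ with Dirichlet boundary condition.

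The main obstacle is to show that the thresholds $\lambda_p^{\#} := \inf\{\lambda > 0 : \Lambda_p(\lambda) \ge 0\}$ all coincide with $\lambda^* = \lambda_2^{\#}$ for $p \ge 3$. A monotonicity argument (adding one more particle only enlarges the attractive pair potential) yields $\lambda_p^{\#}$ non-increasing in $p$, so $\lambda_p^{\#} \le \lambda^*$; the reverse bound amounts to $\Lambda_p(\lambda)/p$ being comparable to $\Lambda_2(\lambda)/2$ near the transition, which is the bounded-domain analogue of the delicate matching of upper and lower Lyapunov exponents for the PAM on $\R^d$, and does not appear to follow from elementary spectral estimates. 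A secondary obstacle is the passage from PAM to general $\sigma$ satisfying both~\eqref{E:Lsigma} and~\eqref{E:lsigma}: sandwiching $v$ between the PAM two-point functions at effective noise levels $\lambda \lip_\sigma$ and $\lambda \Lip_\sigma$ only pins the critical value inside an interval, and collapsing this interval to a single $\lambda^*$ seems to require a sample-path comparison principle of the type discussed in Remark~\ref{R:PathComparison}, which has not yet been established in the present setting.
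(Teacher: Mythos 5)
The statement you set out to prove is stated in the paper as a \emph{conjecture}, not a theorem: the authors explicitly remark that, as in the antecedent works on bounded domains, the behavior for $\lambda\in[\lambda_0,\lambda_1]$ is unknown, and they propose this dichotomy ``for future investigation.'' There is therefore no proof in the paper to compare your proposal against, and your proposal is not a proof either --- which you acknowledge candidly.

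That said, the spectral setup you sketch is the natural one, and it is the standard framework in the PAM literature. For $\sigma(u)=u$ the two-point function does close into the linear deterministic PDE you write (this is exactly what the Duhamel series in Proposition~\ref{P:2p-corr}(i) encodes), the Schr\"odinger operator $A_\lambda=-\mathscr{L}_x-\mathscr{L}_y+\lambda^2 f(x-y)$ is a form perturbation that makes sense for $\beta<2$, Krein--Rutman gives a principal eigenvalue $\Lambda(\lambda)$ with positive eigenfunction, $\Lambda(\cdot)$ is strictly increasing with $\Lambda(0)=-2\mu_1<0$ and $\Lambda(\lambda)\to\infty$, and the bounds~\eqref{pm-d-ub}--\eqref{pm-d-lb} (or~\eqref{pm-d-ub2}--\eqref{pm-d-lb2}) do force the sign-change point $\lambda^*$ into $[\lambda_0,\lambda_1]$. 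This part is reasonable, modulo some care needed to go from the $L^2$ spectral expansion to pointwise asymptotics of $v(t,x,x)$ when the initial datum $\nu\otimes\nu$ is not square-integrable.

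The two obstacles you name are genuine, and the first one is more than a technical nuisance: it is \emph{essential} for the conjectured dichotomy to be well posed. By Lyapunov's inequality the thresholds $\lambda_p^{\sharp}$ are nonincreasing in $p$, so a priori one only gets $\lambda_p^{\sharp}\le\lambda_2^{\sharp}=\lambda^*$, which cuts the \emph{wrong} way: if any $\lambda_p^{\sharp}$ were strictly smaller than $\lambda^*$, there would be an interval of $\lambda$ values where some $p$-th moment grows while the second moment decays --- a regime that is neither ``fully intermittent'' (which requires second-moment growth) nor ``all moments bounded.'' Collapsing all the thresholds to a single value is thus not an optional refinement but the heart of the matter, and neither the Feynman--Kac representation of the $p$-particle growth rate (for the singular potential $f$, on a bounded domain, with rough initial data) nor the matching of upper and lower $p$-th Lyapunov exponents near the transition is available at present. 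The second obstacle, passing from PAM to general $\sigma$ with~\eqref{E:Lsigma} and~\eqref{E:lsigma}, is also real: the sandwich between PAM at levels $\lambda\lip_\sigma$ and $\lambda\Lip_\sigma$ only localizes the transition inside an interval, and closing that interval seems to require a sample-path comparison principle, which, as Remark~\ref{R:PathComparison} makes explicit, has not yet been established in this setting. In short, your sketch is a sensible plan of attack but does not resolve the conjecture, and the paper does not claim to resolve it either.
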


\begin{theorem}\label{Thm:pm-n}
  Let $\dom \subset \R^d$ be a convex bounded Lipschitz domain. Let $u$ be the
  solution to~\eqref{E:she-n} with Neumann boundary condition.
  Suppose~\eqref{E:Lsigma} holds. Suppose $\nu \ge 0$ and $\nu(\dom) < \infty$.
  Then, there exist positive finite constants $C$, $c$ and $c'$ such that for
  all $p \ge 2$, $\lambda > 0$, $t > 0$, $x \in \dom$,
  \begin{equation}\label{pm-n-ub}
    \E(|u(t, x)|^p)
    \le C^p \big(J_{c_3}(t, x)\big)^p e^{pt\Big({c p \lambda^2 \Lip_\sigma^2 + c' \, p^{\frac{2}{2-\beta}} \, \lambda^{\frac{4}{2-\beta}}\Lip_\sigma^{\frac{4}{2-\beta}}}\Big)}.
  \end{equation}
  Moreover, if~\eqref{E:Anderson} or~\eqref{E:lsigma} holds, then there exist
  positive finite constants $\overline{C}$, $\overline{c}$ and $\widetilde{c}$ such
  that for all $p \ge 2$, $\lambda > 0$, $t > 0$, $x \in \dom$,
  \begin{equation}\label{pm-n-lb}
    \E(|u(t, x)|^p)
    \ge \overline{C}^p \big(J_{12c_4}(t, x)\big)^p e^{pt\Big(\overline{c} \lambda^2 \lip_\sigma^2 +
    \widetilde{c} \lambda^{\frac{4}{2-\beta}}\lip_\sigma^{\frac{4}{2-\beta}}\Big)}.
  \end{equation}
  Consequently, if $0 < \nu(\dom) < \infty$, then $u$ is fully intermittent on
  $\dom$ for all $\lambda > 0$.
\end{theorem}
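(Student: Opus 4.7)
The plan is to mirror the proof of Theorem~\ref{Thm:pm-d}, but now with the Neumann convention $\mu=0$, $c=c_3$, $c'=c_4$ from Proposition~\ref{P:G}. The crucial structural difference is that the $-\mu_1$ term is absent from the exponent of both the upper and lower moment bounds, which is precisely what will force full intermittency for every $\lambda>0$ rather than only for sufficiently large $\lambda$.

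For the upper bound~\eqref{pm-n-ub}, I would feed the Neumann parameters into Theorem~\ref{T:ExUnque}(ii), which yields
\[
  \|u(t,x)\|_p \le C\, e^{t\bigl(Cp\lambda^2 \Lip_\sigma^2 + Cp^{2/(2-\beta)}\lambda^{4/(2-\beta)} \Lip_\sigma^{4/(2-\beta)}\bigr)} J_{c_3}(t,x),
\]
and raising both sides to the $p$-th power (with the prefactor constants relabelled as $C$, $c$, $c'$) gives~\eqref{pm-n-ub}. The hypothesis $\nu(\dom)<\infty$ is exactly what guarantees that $J_{c_3}(t,x)<\infty$.

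For the lower bound~\eqref{pm-n-lb}, I would apply Theorem~\ref{T:corr-bd}(ii) in the Neumann setting. Convexity of $\dom$ is used here: it ensures that the Gaussian lower bound~\eqref{E:G-n-lower} for the Neumann heat kernel is available up to the boundary (by reflecting across supporting hyperplanes, a standard device for convex domains), so the correlation lower bound~\eqref{E:corr-lb} holds with $\varepsilon=0$ on all of $\dom\times\dom$. Setting $x=x'$ and recalling $\mu=0$ produces
\[
  \E\bigl(|u(t,x)|^2\bigr) \ge \overline C\, e^{2t\bigl(\overline C\lambda^2 \lip_\sigma^2 + \overline C\lambda^{4/(2-\beta)} \lip_\sigma^{4/(2-\beta)}\bigr)} J_{12c_4}(t,x)^2,
\]
and Jensen's inequality $\E(|u(t,x)|^p) \ge \E(|u(t,x)|^2)^{p/2}$ upgrades this to~\eqref{pm-n-lb} after relabeling constants as $\overline C$, $\overline c$, $\widetilde c$.

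Full intermittency then falls out directly. Because $\nu$ is a finite positive measure on $\dom$, for $t\ge 1$ one has $J_{12c_4}(t,x) \ge t^{-d/2} e^{-12c_4 D^2/t}\, \nu(\dom)$ with $D$ the diameter of $\dom$, so $\frac 1t \log J_{12c_4}(t,x)^2 \to 0$ as $t\to\infty$. Plugging this into~\eqref{pm-n-lb} with $p=2$ gives
\[
  \liminf_{t\to\infty}\frac 1t \log\E\bigl(|u(t,x)|^2\bigr) \ge 2\bigl(\overline c\,\lambda^2 \lip_\sigma^2 + \widetilde c\,\lambda^{4/(2-\beta)} \lip_\sigma^{4/(2-\beta)}\bigr) > 0
\]
for every $\lambda>0$, while the matching $\limsup$ derived from~\eqref{pm-n-ub} is finite, verifying~\eqref{E:up-lya}. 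The main obstacle I foresee is confirming that~\eqref{E:G-n-lower} is actually available for convex \emph{Lipschitz} domains and not only for smooth convex ones as in the parenthetical example of Theorem~\ref{T:corr-bd}; once that is either incorporated into the hypothesis on $\dom$ or established via the reflection argument sketched above, the remainder of the proof is a clean concatenation of the previously established estimates.
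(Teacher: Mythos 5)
Your plan reproduces the paper's proof almost verbatim: the paper also derives~\eqref{pm-n-ub} from Theorem~\ref{T:ExUnque}(ii) with the Neumann convention $\mu = 0$, $c = c_3$, $c' = c_4$, derives~\eqref{pm-n-lb} from Theorem~\ref{T:corr-bd}(ii) with $\eps = 0$ and $x = x'$ followed by Jensen's inequality, and then obtains full intermittency for all $\lambda > 0$ by observing that $\tfrac 1t \log J^2_{12c_4}(t,x) \to 0$ and that the $-\mu_1$ term is absent.

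The one place where your route diverges slightly from the paper is the concern you flag at the end: whether the Neumann Gaussian lower bound~\eqref{E:G-n-lower} is available for convex bounded \emph{Lipschitz} domains and not only the smooth convex ones listed in Proposition~\ref{P:G}. That concern is well-founded and worth raising, but the resolution is not quite the reflection argument you sketch (reflection across supporting hyperplanes is clean for polyhedra and half-spaces but does not give a pointwise Gaussian lower bound on a general convex body with curved boundary). The paper's intended resolution is Remark~\ref{R:HeatKernel}: by \cite[Theorem 3.1]{saloff-coste:10:heat} the two-sided Gaussian Neumann bound is equivalent to volume doubling plus the Poincar\'e inequality, and a bounded convex domain satisfies both (the volume regularity you yourself verified in Lemma~\ref{Lem:vol}, plus the standard Poincar\'e inequality on convex domains). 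Also note that convexity is used a second time, independently of the heat kernel bound, inside the proof of the lower bound in Lemma~\ref{Lem:iintGGf-n}(ii): it guarantees that the convex combination $a = \tfrac{s}{t}x + \tfrac{t-s}{t}y$ stays inside $\dom$, a step your proposal implicitly relies on through Theorem~\ref{T:corr-bd}(ii) but does not mention. Otherwise the argument is a clean and correct concatenation of the paper's earlier estimates.
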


\begin{proof}
  The proof of~\eqref{pm-n-ub} and~\eqref{pm-n-lb} is similar to that of
  Theorem~\ref{Thm:pm-d}. Finally,~\eqref{pm-n-ub} and~\eqref{pm-n-lb} imply
  that
  \begin{align*}
    \liminf_{t \to \infty} \frac 1t \log\E(|u(t, x)|^2) & \ge 2\Big(\overline{c} \lambda^2 \lip_\sigma^2 + \widetilde{c} \lambda^{\frac{4}{2-\beta}} \lip_\sigma^{\frac{4}{2-\beta}}\Big) > 0 \quad \shortintertext{and}
    \limsup_{t \to \infty} \frac 1t \log\E(|u(t, x)|^p) & \le p\Big({c p \lambda^2 \Lip_\sigma^2 + c' \, p^{\frac{2}{2-\beta}} \, \lambda^{\frac{4}{2-\beta}}\Lip_\sigma^{\frac{4}{2-\beta}}}\Big) < \infty
  \end{align*}
  for all $p \ge 2$ and all $\lambda > 0$. Hence, $u$ is fully intermittent for
  all $\lambda > 0$.
\end{proof}

\subsection{\texorpdfstring{$L^2 (\dom)$}{L two U}-energy of solution}

Following~\cite{foondun.nualart:15:on, khoshnevisan.kim:15:non-linear,
liu.tian.ea:17:on}, the $L^2$-\emph{energy} of the solution $u$ at time $t > 0$
is defined as
\begin{align*}
   \mathscr{E}_t(\lambda) = \left(\E\int_\dom |u(t, x)|^2 \ud x\right)^{1/2},
\end{align*}
and the \emph{excitation index} (at infinity) is defined as
\begin{align*}
   \lim_{\lambda \to \infty} \frac{\log\log\mathscr E_t(\lambda)}{\log \lambda}
\end{align*}
provided the limit exists. Note that, for $\eps \ge 0$,
\begin{align*}
\mathrm{Vol}(\dom_\eps) \times \inf_{x \in \dom_\eps} \E(|u(t, x)|^2)
\le \mathscr{E}^2_t(\lambda) \le \mathrm{Vol}(\dom) \times \sup_{x \in \dom} \E(|u(t, x)|^2).
\end{align*}

As a result of Theorems~\ref{Thm:pm-d} and~\ref{Thm:pm-n}, we see that the
solution is intermittent for $\lambda$ large under Dirichlet or Neumann boundary
condition, and the energy of the solution behaves like
\begin{align*}
  \mathscr E_t(\lambda) \sim C e^{Ct\lambda^{\frac{4}{2-\beta}}}
  \quad \text{for $\lambda$ large}.
\end{align*}
Under Neumann boundary condition, the solution remains intermittent even
when $\lambda > 0$ is small. However, in this case, the energy of the solution
has a different exponential rate in $\lambda$ than the one above, namely,
\begin{align*}
   \mathscr E_t(\lambda) \sim C e^{Ct\lambda^2}
   \quad \text{for $\lambda>0$ small.}
\end{align*}
In other words, the excitation index ``at zero'' is different. We obtain the
following corollaries:

\begin{corollary}
  Let $u$ be the solution of~\eqref{E:she-d} with Dirichlet boundary condition.
  \begin{enumerate}
    \item If the conditions of Theorem~\ref{Thm:pm-d} hold, then for all $t>0$,
      \begin{align*}
         \lim_{\lambda \to \infty} \frac{\log\log \mathscr{E}_t(\lambda)}{\log\lambda} = \frac{4}{2-\beta}.
      \end{align*}
    \item Moreover, if the conditions of Theorem~\ref{Thm:pm-d2} hold, then for
      all $t>0$,
      \begin{gather*}
        \mathscr{E}_t^*(\lambda) \coloneqq
        \left( \E \int_\dom |u(t, x)|^2 \frac{\ud x}{|\Phi_1(x)|^2} \right)^{1/2} < \infty \shortintertext{and}
        \lim_{\lambda \to \infty} \frac{\log\log\mathscr{E}_t^*(\lambda)}{\log \lambda} = \frac{4}{2-\beta}.
      \end{gather*}
  \end{enumerate}
\end{corollary}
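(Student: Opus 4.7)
The plan is, in both parts, to sandwich $\mathscr{E}_t(\lambda)$ between two explicit expressions of the form $C_1 \exp(c_1 t \lambda^{4/(2-\beta)} + \text{lower order in }\lambda)$ for fixed $t > 0$, and then observe that applying $\log\log(\cdot)/\log\lambda$ and sending $\lambda \to \infty$ selects precisely the exponent $\frac{4}{2-\beta}$. The needed upper/lower moment bounds at $p=2$ are already in place from Theorems~\ref{Thm:pm-d} and~\ref{Thm:pm-d2}; the remaining work is to integrate them in $x$ (with the appropriate weight in part (2)) and verify that the prefactors do not interfere with the $\log\log$ limit.

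For part (1), the upper bound on $\mathscr{E}_t^2(\lambda)$ will come from bounding $\mathscr{E}_t^2(\lambda) \le \op{Vol}(\dom)\sup_{x\in\dom}\E(|u(t,x)|^2)$ and inserting~\eqref{pm-d-ub} with $p=2$; since $\nu$ is a finite measure on $\dom$, $J_{c_1}(t,\cdot)$ is bounded uniformly on $\dom$ at fixed $t$, so only the exponential factor matters. Taking $\log\log$ and dividing by $\log\lambda$ then yields $\limsup \le \frac{4}{2-\beta}$. For the lower bound, I fix $\eps \in (0,\eps_0]$ small enough that both $\op{Vol}(\dom_\eps) > 0$ and $\nu(\dom_\eps) > 0$; the assumption $\nu(\dom) < \infty$ combined with the lower bound~\eqref{pm-d-lb} then gives $\mathscr{E}_t^2(\lambda) \ge \op{Vol}(\dom_\eps) \inf_{x \in \dom_\eps}\E(|u(t,x)|^2)$, where $\inf_{x\in\dom_\eps} J_{12c_2,\eps}(t,x)$ is a strictly positive constant (depending only on $\eps$, $t$, and $\dom$). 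The $\log\log$ step again yields $\liminf \ge \frac{4}{2-\beta}$.

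For part (2), the new ingredient is the weight $|\Phi_1(x)|^{-2}$, and the key observation is
\[
\Psi(t,x) \;=\; 1 \wedge \frac{\Phi_1(x)}{1\wedge t^{1/2}} \;\le\; \frac{\Phi_1(x)}{1\wedge t^{1/2}},
\qquad\text{hence}\qquad
\frac{\Psi(t,x)^2}{|\Phi_1(x)|^2} \;\le\; \frac{1}{1 \wedge t},
\]
which is finite and independent of $x$. Combining this with the upper bound~\eqref{pm-d-ub2} for $p=2$ and the bound $J^*_{2c_1/3}(t,x) \le \|\Phi_1\|_{L^1(\dom,|\nu|)}/(1\wedge t^{(d+1)/2})$ from~\eqref{E:JcMnu} simultaneously establishes $\mathscr{E}_t^*(\lambda) < \infty$ and gives the upper $\limsup \le \frac{4}{2-\beta}$. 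For the lower bound on $\mathscr{E}_t^{*2}(\lambda)$, I will restrict the integral to a compact subset $K \subset \dom$ on which $\Psi^2/|\Phi_1|^2$ is bounded below by a positive constant (which holds because $\Phi_1$ is continuous and strictly positive on $K$), and apply~\eqref{pm-d-lb2} together with the pointwise lower bound on $J^*_{12c_2}$ from~\eqref{E:JcMnu}; this yields $\mathscr{E}_t^{*2}(\lambda) \ge \bar c \exp(2t\tilde c \lambda^{4/(2-\beta)} l_\sigma^{4/(2-\beta)} + \cdots)$ and the matching $\liminf \ge \frac{4}{2-\beta}$.

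The main obstacle is purely technical and lies in part (2): one needs to handle the weight $|\Phi_1|^{-2}$, which diverges at $\partial\dom$. The min-truncation in the definition of $\Psi$ is exactly what rescues this -- the bound $\Psi/\Phi_1 \le (1\wedge t^{1/2})^{-1}$ is scale-free in $x$ -- but one must verify it at the right moment, and must also check that the positive lower bound on $\Psi/\Phi_1$ used for the lower bound indeed holds on a set of positive Lebesgue measure. The rest is bookkeeping: in the exponent of the moment bounds, two terms compete, namely $\lambda^2 L_\sigma^2$ and $\lambda^{4/(2-\beta)}L_\sigma^{4/(2-\beta)}$; since $\beta \in (0,2)$ we have $\frac{4}{2-\beta} > 2$, so the second term dominates as $\lambda \to \infty$ and is the one that survives in the $\log\log/\log$ limit, matched from above and below.
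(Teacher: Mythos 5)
Your proof is correct and follows exactly the route the paper intends: the paper itself states (immediately before the corollary) the sandwich $\mathrm{Vol}(\dom_\eps)\inf_{x\in\dom_\eps}\E(|u(t,x)|^2) \le \mathscr{E}_t^2(\lambda) \le \mathrm{Vol}(\dom)\sup_{x\in\dom}\E(|u(t,x)|^2)$ and then presents the corollary as a direct consequence of Theorems~\ref{Thm:pm-d} and~\ref{Thm:pm-d2}, which is precisely the computation you carry out. Your two genuine observations for part (2) --- that the truncation in $\Psi$ gives the uniform bound $\Psi^2/\Phi_1^2 \le (1\wedge t)^{-1}$, needed for finiteness of $\mathscr{E}_t^*$, and that $\Psi^2/\Phi_1^2$ is bounded below on a compact $K\subset\dom$ of positive measure, needed for the lower limit --- are exactly the two points that make the weighted version work.

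One small bookkeeping remark: in part (1) the positivity of $\inf_{x\in\dom_\eps} J_{12c_2,\eps}(t,x)$ relies on $\nu(\dom_\eps) > 0$, which you correctly secure by taking $\eps$ small (and is available since the hypotheses of Theorem~\ref{Thm:pm-d}, read in full to include the intermittency part, include $0<\nu(\dom_\eps)$); it is not a consequence of $\nu(\dom)<\infty$, so the phrase ``the assumption $\nu(\dom)<\infty$ combined with the lower bound'' slightly misattributes which hypothesis is doing the work. This does not affect the argument.
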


Note that $\mathscr{E}_t(\lambda) \le C \mathscr{E}_t^*(\lambda)$ for some constant $C$.

\begin{corollary}\label{Cor:n}
  Let $u$ be the solution of~\eqref{E:she-n} with Neumann boundary condition and
  the conditions in Theorem~\ref{Thm:pm-n} hold. Then for any $t > 0$,
  \begin{align*}
   \lim_{\lambda \to \infty} \frac{\log\log \mathscr{E}_t(\lambda)}{\log\lambda} = \frac{4}{2-\beta} \quad\text{and}\quad
   \lim_{\lambda \to 0^+} \frac{\log\log \mathscr{E}_t(\lambda)}{\log\lambda} = 2.
  \end{align*}
\end{corollary}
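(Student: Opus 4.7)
The plan is to derive both limits from the pointwise moment bounds in Theorem~\ref{Thm:pm-n}, specialized to $p=2$, by integrating in $x$ over $\dom$ and then extracting the exponential growth rate in $\lambda$ via two successive logarithms.

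First, I would specialize the upper bound~\eqref{pm-n-ub} and lower bound~\eqref{pm-n-lb} to $p=2$ and integrate in $x$ over the bounded domain $\dom$. Because $\nu$ is a finite, nontrivial, nonnegative measure, the integrals $\int_\dom J_{c_3}(t,x)^2\,\ud x$ and $\int_\dom J_{12c_4}(t,x)^2\,\ud x$ are finite and strictly positive at every fixed $t>0$. Writing the resulting $\lambda$-independent positive constants as $K_\pm(t)$, this produces the sandwich bounds
\begin{align*}
K_-(t)\, e^{2t\rho_-(\lambda)} \;\le\; \mathscr{E}_t(\lambda)^2 \;\le\; K_+(t)\, e^{2t\rho_+(\lambda)},
\end{align*}
where $\rho_\pm(\lambda) = a_\pm \lambda^2 + b_\pm \lambda^{4/(2-\beta)}$ with positive constants $a_\pm, b_\pm$ that can be read off from the hypotheses of Theorem~\ref{Thm:pm-n}. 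Taking a first logarithm yields the matching two-term bounds $\tfrac12\log K_\pm(t) + t\rho_\pm(\lambda)$ on $\log \mathscr{E}_t(\lambda)$.

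For the large-$\lambda$ limit: since $4/(2-\beta) > 2$ (as $\beta\in (0,2)$), the monomial $\lambda^{4/(2-\beta)}$ dominates $\rho_\pm(\lambda)$ and the additive constants $\tfrac12\log K_\pm(t)$ become negligible. Therefore $\log\mathscr{E}_t(\lambda)\asymp t \lambda^{4/(2-\beta)}$; applying one more logarithm gives $\log\log\mathscr{E}_t(\lambda)=\tfrac{4}{2-\beta}\log\lambda+O(1)$, and dividing by $\log\lambda\to\infty$ yields the first equality.

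For the small-$\lambda$ limit the dominant monomial in $\rho_\pm$ becomes $\lambda^2$, so the two-sided bounds on $\log\mathscr{E}_t(\lambda)$ take the form $a_\pm t\lambda^2 + O(1)$. One then expects $\log\log\mathscr{E}_t(\lambda) = 2\log\lambda + O(1)$, and division by $\log\lambda\to -\infty$ produces the second equality. The main technical obstacle lies precisely here: unlike the large-$\lambda$ regime, the additive constants $\tfrac12\log K_\pm(t)$ do not vanish as $\lambda\to 0^+$, so some care is required to show that after the outer logarithm they contribute only an $o(|\log\lambda|)$ perturbation and do not swamp the dominant $2\log\lambda$ term. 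Concretely, both $\liminf$ and $\limsup$ of $\log\log\mathscr{E}_t(\lambda)/\log\lambda$ have to be pinched at $2$, which calls for aligning the leading $\lambda^2$ coefficients of the upper and lower bounds and tracking the sign of $\log\lambda$ at each step.
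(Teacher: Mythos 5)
Your first limit is handled correctly, and your overall route---integrate the $p=2$ bounds of Theorem~\ref{Thm:pm-n} over $\dom$, take two logarithms, divide by $\log\lambda$---is exactly the argument the paper alludes to (it offers no written proof of Corollary~\ref{Cor:n}). For the large-$\lambda$ limit the additive constants $\tfrac12\log K_\pm(t)$ are dominated by $t\rho_\pm(\lambda)\sim t b_\pm\lambda^{4/(2-\beta)}\to\infty$, so the argument goes through cleanly.

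The issue you flag in the small-$\lambda$ limit is, however, not a technicality that "some care" can dispose of: it is fatal to the method, and your proposed remedy (aligning the leading $\lambda^2$ coefficients of the two bounds) would not help. The bracketing you obtain reads
\begin{align*}
   \tfrac12\log K_-(t) + t\rho_-(\lambda) \;\le\; \log\mathscr{E}_t(\lambda) \;\le\; \tfrac12\log K_+(t) + t\rho_+(\lambda),
\end{align*}
and as $\lambda\to 0^+$ both $\rho_\pm(\lambda)\to 0$, so $\log\mathscr{E}_t(\lambda)$ is trapped between two constants, namely $\tfrac12\log K_\pm(t)$. In fact $\mathscr{E}_t(\lambda)\to\mathscr{E}_t(0)=\|J(t,\cdot)\|_{L^2(\dom)}$, which under the hypotheses of Theorem~\ref{Thm:pm-n} is strictly positive and finite but has no reason to equal $1$. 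Consequently $\log\log\mathscr{E}_t(\lambda)$ stays bounded (if $\mathscr{E}_t(0)>1$) or is eventually undefined (if $\mathscr{E}_t(0)\le 1$); in the former case $\log\log\mathscr{E}_t(\lambda)/\log\lambda\to 0$, not $2$. The only way to salvage the stated value $2$ along your route is to impose the normalization $\|J(t,\cdot)\|_{L^2(\dom)}=1$, or to reinterpret the limit---for instance with $\log\mathscr{E}_t(\lambda)-\log\mathscr{E}_t(0)$ (a single $\log$ of a difference, which the moment bounds do give as $\asymp\lambda^2$) replacing $\log\log\mathscr{E}_t(\lambda)$. As stated, the second equality of Corollary~\ref{Cor:n} does not follow from the moment bounds alone, and your proof proposal correctly detects the obstruction but does not (and, by this method, cannot) close it.
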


\section{Preliminaries}\label{S:Prelim}

\subsection{Mild solutions}\label{SS:Mild}

Let $\dot{W}$ be a centered and spatially homogeneous Gaussian noise that is
white in time defined on a complete probability space $(\Omega, \mathscr{F},
\P)$ with covariance given in~\eqref{E:CovNoise}.
\begin{remark}
  Let $f$ be any spatially homogeneous correlation correlation function on
  $\R^d$; see~\eqref{E:CovNoise}. When restricted on the domain
  $\left\{x-y:\:x,y\in\dom\right\}$, $f(\cdot)$ is still a nonnegative definite
  function. Indeed, for any test function $\phi$ defined on $\dom$,
  \begin{align*}
    \iint_{\dom^2}  \phi(x) f(x-y) \phi(y) \ud x\ud y
    = \iint_{\R^{2d}} \left(\phi(x) \mathbf{1}_{\dom}(x)\right) f(x-y) \left(\phi(y)\mathbf{1}_{\dom}(y)\right) \ud x\ud y
    \ge 0.
  \end{align*}
\end{remark}

Let $\mathscr{B}(\dom)$ denote the Borel $\sigma$-algebra on
$\dom\subseteq\R^d$. Let $\left\{W_t(A); t \ge 0, A \in
\mathscr{B}(\dom)\right\}$ be the martingale measure associated to the noise
$\dot W$ in the sense of Walsh~\cite{walsh:86:introduction}. Let
$\{\mathcal{F}_t, t\ge 0\} $ be the underlying filtration generated by $W$ and
augmented by the $\sigma$-field $\mathcal{N}$ generated by all $\mathbb{P}$-null
sets in $\mathcal{F}$, namely,
\begin{align*}
  \mathscr{F}_t = \sigma\left\{W_s(A): 0 \le s \le t, A \in \mathscr{B}(\dom) \right\}\vee \mathcal{N}.
\end{align*}

\begin{definition}\label{D:sol}
  A process $u = \{u(t, x); t > 0, x \in \dom\}$ is called \textit{a random field
  solution} to~\eqref{E:she-d} (or~\eqref{E:she-n}, respectively) if:
  \begin{enumerate}
    \item[(i)] $u$ is adapted, i.e., for each $t > 0$ and $x \in \dom$, $u(t,
      x)$ is $\mathscr{F}_t$-measurable;
    \item[(ii)] $u$ is jointly measurable with respect to $\mathscr{B}((0,
      \infty) \times \dom) \times \mathscr{F}$;
    \item[(iii)] for each $t > 0$ and $x \in \dom$,
      \begin{align}\label{D:sol-L2}
        \E \left(\int_0^t \ud s \iint_{\dom^2}\ud y \, \ud y'\,
        G(t-s,x,y) \sigma(s, y, u(s, y))f(y-y') G(t-s, x, y') \sigma(s ,y', u(s, y'))\right) < \infty;
      \end{align}
    \item[(iv)]
    %$(t, x) \mapsto I(t, x)$ is a continuous map from $(0, \infty)\times \dom$
      %to $L^2(\Omega)$;
    $u$ satisfies
    \begin{equation}\label{E:mild-sol}
      u(t, x) = J(t,x) + \lambda \int_0^t \int_\dom G(t-s, x, y) \sigma(s, y, u(s, y)) W(\ud s, \ud y) \quad \text{a.s.}
    \end{equation}
    for each $t > 0$ and $x \in \dom$, where $G = G_D$ (or $G = G_N$, respectively),
    and $J(t,x)$ is the solution to the homogeneous equation, namely,
    \begin{align}\label{E:J0}
       J(t,x)\coloneqq \int_\dom G(t, x, y) \nu(\ud y).
    \end{align}
  \end{enumerate}
\end{definition}

Note that in (iii) above, the condition~\eqref{D:sol-L2} ensures that the
\textit{Walsh stochastic integral}
\begin{align*}
  \int_0^t \int_\dom G(t-s, x, y) \sigma(s, y, u(s, y)) W(\ud s, \ud y)
\end{align*}
is well-defined and the square of its $\|\cdot\|_2$-norm is equal to the
expression in~\eqref{D:sol-L2}.

\subsection{Regularities and geometric properties of the domain}\label{SS:domain}

The definition of the Lipschitz domain is standard; see
e.g. Section 1.2.1 of~\cite{grisvard:85:elliptic}.

\begin{definition}\label{D:LipDomain}
  A bounded domain $\dom \subset \R^d$ is called a \emph{Lipschitz domain}
  if there exist positive constants $K_\dom$ and $r_0$ such that for every $q
  \in \partial \dom$, there exist a Lipschitz function $F_q: \R^{d-1} \to \R$
  satisfying $|F_q(x)-F_q(\overline{x})| \le K_\dom |x- \overline{x}|$ for all
  $x, \overline{x} \in \R^{d-1}$ and an orthonormal coordinate system with
  origin $q$ such that if $z = (x, y)$, $x \in \R^{d-1}, y \in \R$, in this
  coordinate system, then
  \begin{align*}
    \dom \cap B(q, r_0)          & = B(q, r_0) \cap \left\{z = (x, y): y > F_q(x) \right\} \shortintertext{and}
    \partial \dom \cap B(q, r_0) & = B(q, r_0) \cap \left\{z = (x, y): y = F_q(x) \right\},
  \end{align*}
  where $B(q,r_0)$ is the open ball centered at $q$ of radius $r_0$. We call
  $K_\dom$ the \textit{Lipschitz constant} of $\dom$ and $r_0$ the
  \textit{localization radius} of $\dom$.
\end{definition}

\begin{definition}[Definition 2.4.1 of~\cite{henrot.pierre:05:variation}]\label{D:delta-cone}
  For $\delta > 0$, we say that $\dom$ has the \emph{$\delta$-cone property} if,
  for every $x \in \partial \dom$, there exists a unit vector $\xi_x \in \R^d$
  such that for all $y \in \overline \dom \cap B(x, \delta)$, we have
  $\mathscr{C}(y, \xi_x, \delta) \subset \dom$, where $\mathscr{C}(y, \xi,
  \delta)$ is the \emph{$\delta$-cone} defined by
  \begin{align}\label{E:Cone}
    \mathscr{C}(y, \xi, \delta) \coloneqq \left\{
      z \in \R^d:
        0 < |z-y| < \delta \text{~and~}
        (z-y)\cdot \xi \ge |z-y| \cos(\delta)
    \right\}.
  \end{align}
  See Fig.~\ref{F:delta-cone} for an illustration.
\end{definition}

\begin{figure}[htpb]
  \centering
  \begin{center}
    \begin{tikzpicture}[scale=1, samples = 200]
      \tikzset{>=latex}
      \node[name path = circle] (dom) at (2,1) {$\dom$};
      \draw[domain = 0:2, thick] plot (\x, {sqrt(\x)});
      \draw[domain = 0:2, thick] plot (\x, {-sqrt(\x)});
      % \filldraw[gray!30!white, name intersections={of = upper and circle}]
      \draw[fill = gray!30!white, very thick]
        plot [domain = 0:0.59] (\x, {sqrt(\x)}) --
        plot [domain = 50:-50] ({cos(\x)}, {sin(\x)}) --
        plot [domain = 0.6:0] (\x, {-sqrt(\x)}) --
        cycle;
      \filldraw[thick]  (0,0) circle (0.05) node[anchor = north east] {$x$};
      \draw[dashed, thick]  (0,0) circle (1);

      \begin{scope}[rotate = -20]
        \draw[<->, dashed, shorten >= 2pt, shorten <= 2pt] (0,0) --++ (-1,0) node [midway, above, sloped] {$\delta$};
      \end{scope}

      \begin{scope}[rotate = 5]
        \draw[->, thin] (0,0) -- (2,0) node [right] {$\xi_x$};
      \end{scope}

      \begin{scope}[xshift = 1.8em, yshift= -1.4em, rotate = 5]
        \draw[fill = gray!80!white, thick]
          (0,0) --
          plot [domain = 15:-15] ({cos(\x)}, {sin(\x)}) node [right] {$\mathscr{C}(y, \xi_x, \delta)$} --
          (0,0);
        \draw[->, thin] (0,0) -- (2,0);
        \filldraw[thick] (0,0) circle (0.05) node[above] {$y$};
      \end{scope}

    \end{tikzpicture}

  \end{center}
  \caption{Illustration for the $\delta$-cone property in Definition~\ref{D:delta-cone}.}
  \label{F:delta-cone}
\end{figure}
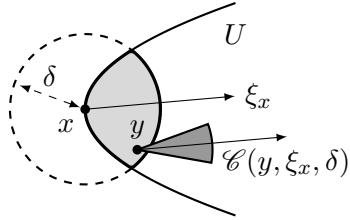

It is clear that if $\dom$ has the $\delta$-cone property, then it also has the
$\delta'$-cone property for $0 < \delta' \le \delta$. It is known that for a
bounded domain $\dom\subset\R^d$, it is a Lipschitz domain if and only if it has
the $\delta$-cone property for some $\delta>0$; see, e.g.,~\cite[Theorem
2.4.7]{henrot.pierre:05:variation} or~\cite[Theorem
1.2.2.2]{grisvard:85:elliptic}. In particular, according to the above
definitions, one can easily see that if $\dom$ is a Lipschitz domain, then it
satisfies the $\delta$-cone condition with
\begin{align}\label{E:delta-Lip}
  \delta = \arctan(1/K_\dom)\wedge r_0.
\end{align}
The $\delta$-cone property gives us a convenient way to handle the Lipschitz
domain.

\begin{example}
  Any $C^{1,\alpha}$-domain with $\alpha>0$ is a Lipschitz domain; see,
  e.g.,~\cite{ouhabaz.wang:07:sharp}. The unit ball in $\R^d$ is a smooth domain
  and also a $C^{1,\alpha}$-domain. $\dom = (-1,1)^d$ is a Lipschitz domain but
  not a $C^{1,\alpha}$-domain for any $\alpha>0$. Domains with cusps are not
  Lipschitz domain; see Fig.~\ref{SF:Cups}.
\end{example}

\subsection{Heat kernel estimates}\label{SS:Kernel}

Recall that $\mathscr{L}$ is the operator defined in divergence form \eqref{E:op-L} satisfying the uniformly elliptic condition \eqref{E:u-ellip}.
It is known that the operator $\mathscr{L}$ with Dirichlet boundary condition on
$\dom$ has a discrete spectrum with a sequence of positive eigenvalues $0 <
\mu_1 \le \mu_2 \le \dots$ such that the first eigenvalue $\mu_1$ is simple and
its eigenfunction $\Phi_1$ can be chosen to be positive and $\|\Phi_1\|_{L^2
(\dom)} = 1$; see, e.g.,~\cite{davies:90:heat}.
Moreover, we have the following heat kernel estimates under Dirichlet and Neumann boundary conditions.

\begin{proposition}\label{P:G}
  Let $\dom\subset \R^d$ be a bounded Lipschitz domain. Then the following estimates hold.
  \begin{enumerate}
  \item[(i)] Dirichlet heat kernel estimates: There exist positive finite
    constants $c_1, c_2, C_1, C_2$ and $0 < a_2 \le 1 \le a_1$ such
    that for all $t>0$ and $x,y\in \dom$,
    \begin{align}\label{E:G-d}
    \begin{gathered}
      C_2 \left(1 \wedge \frac{\Phi_1(x)}{1 \wedge t^{a_2/2}}\right) \left(1 \wedge \frac{\Phi_1(y)}{1 \wedge t^{a_2/2}}\right) \frac{e^{-\mu_1 t}}{1 \wedge t^{d/2}}e^{-c_2 \frac{|x-y|^2}{t}} \\
      \le G_D(t, x, y) \le                                                                                                                                                                      \\
      C_1 \left(1 \wedge \frac{\Phi_1(x)}{1 \wedge t^{a_1/2}}\right) \left(1 \wedge \frac{\Phi_1(y)}{1 \wedge t^{a_1/2}}\right) \frac{e^{-\mu_1 t}}{1 \wedge t^{d/2}}e^{-c_1 \frac{|x-y|^2}{t}}.
    \end{gathered}
    \end{align}
    Moreover, if $\dom$ is a bounded $C^{1, \alpha}$-domain with $\alpha > 0$,
    then~\eqref{E:G-d} holds with $a_1 = a_2 = 1$.
  \item[(ii)] Neumann heat kernel estimates: there exist positive finite
    constants $c_3$ and $C_3$ such that for all $t > 0$ and $x, y \in \dom$,
    \begin{align}\label{E:G-n}
      G_N(t, x, y) \le C_3 \frac{1}{1 \wedge t^{d/2}} e^{-c_3 \frac{|x-y|^2}{t}}.
    \end{align}
    In addition, if $\dom$ is a smooth convex domain and $\mathscr{L} =
    -\Delta$, then there exist positive finite constants $c_4$ and $C_4$ such
    that for all $t > 0$ and $x, y \in \dom$,
    \begin{align}\label{E:G-n-lower}
      G_N(t, x, y) \ge C_4 \frac{1}{1 \wedge t^{d/2}} e^{-c_4\frac{|x-y|^2}{t}}.
    \end{align}
  \end{enumerate}
\end{proposition}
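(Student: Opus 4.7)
The plan is to assemble Proposition~\ref{P:G} from the existing heat-kernel literature rather than derive it from scratch, since all three pieces (Dirichlet two-sided bound; Neumann upper bound; Neumann lower bound on smooth convex domains) are known in varying degrees of generality. The work is to locate the correct references for divergence-form operators with H\"older continuous coefficients on a bounded Lipschitz (resp.\ $C^{1,\alpha}$, resp.\ smooth convex) domain and to rewrite the boundary behavior in the uniform form involving $1\wedge \Phi_1(x)/(1\wedge t^{a/2})$.

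For part (i), my first step is to invoke the two-sided Gaussian estimates for the Dirichlet heat kernel on a bounded Lipschitz domain, which in their ``raw'' form read
\[
G_D(t,x,y) \asymp \frac{e^{-\mu_1 t}}{1\wedge t^{d/2}} \Bigl(1\wedge \tfrac{\delta(x)}{1\wedge t^{1/2}}\Bigr)^{a}\Bigl(1\wedge \tfrac{\delta(y)}{1\wedge t^{1/2}}\Bigr)^{a} e^{-c|x-y|^2/t},
\]
with $\delta(x) = \mathrm{dist}(x,\partial \dom)$. These bounds are due (in the divergence-form setting with H\"older coefficients) to the Varopoulos--Davies--Zhang line of results combined with intrinsic ultracontractivity. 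I would then use the boundary Harnack principle on Lipschitz domains to obtain $\Phi_1(x) \asymp \delta(x)^{a_*}$ with possibly different exponents on the upper and lower sides, yielding $a_1\ge 1\ge a_2$ after rewriting. On a $C^{1,\alpha}$-domain one has $\Phi_1(x) \asymp \delta(x)$, hence $a_1=a_2=1$; this is exactly Ouhabaz--Wang, already cited in the excerpt, which I would invoke directly.

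For part (ii), the upper bound on a bounded Lipschitz domain follows from general Gaussian upper bounds for the Neumann semigroup on doubling spaces with a Poincar\'e inequality (Sturm / Grigor'yan--Saloff-Coste), and no exponential prefactor appears because $\mu_1^N = 0$. The lower bound under convexity with $\mathscr{L} = -\Delta$ is more subtle; I would appeal to the sharp two-sided Neumann bound of Hsu/Wang, whose proof is based on a reflection-coupling argument that uses convexity of $\dom$ in an essential way to control boundary reflections.

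The main obstacle will be packaging the small-time and large-time regimes into the \emph{single} factor $1\wedge \Phi_1(\cdot)/(1\wedge t^{a/2})$: the short-time behavior is naturally expressed through $\delta(\cdot)$ while the long-time behavior is dominated by the product $\Phi_1(x)\Phi_1(y) e^{-\mu_1 t}$, and one must interpolate between them uniformly in $t$. A secondary care point is to carry the full off-diagonal Gaussian factor $e^{-c|x-y|^2/t}$ through the citations (some references give only on-diagonal or ultracontractive bounds), which dictates the precise choice of references at each step.
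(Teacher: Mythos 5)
Your strategy (assemble from the literature) is exactly what the paper does, but the specific citation path differs and the paper's is more economical. The paper proves part (i) in one stroke by citing Riahi~\cite{riahi:13:estimates} (Theorem~2.1 and Remark~1 on p.~123), whose theorem is \emph{already stated} in the form involving $1\wedge\Phi_1(\cdot)/(1\wedge t^{a/2})$ uniformly in $t$, with the sharpening $a_1=a_2=1$ for $C^{1,\alpha}$-domains built in; Riahi also supplies the comparison $\Phi_1(z)\asymp\op{dist}(z,\partial\dom)^{a_i}$ used as Remark~\ref{R:Phi1Bdd}. Your plan instead starts from a ``raw'' two-sided Gaussian bound phrased through $\delta(x)=\op{dist}(x,\partial\dom)$ and then invokes boundary Harnack to pass to $\Phi_1$ and interpolates between short- and long-time regimes. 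That is a legitimate reconstruction, but it turns the main content of Riahi's theorem into a separate lemma you would have to reprove, and the interpolation issue you flag as ``the main obstacle'' simply does not arise once one cites the right reference. Also note that you must be careful that whatever ``Varopoulos--Davies--Zhang''-type source you pick actually covers divergence-form operators with merely H\"older coefficients (not just the Laplacian) on Lipschitz domains with the full off-diagonal Gaussian factor; Riahi does. For part (ii), your references are fine in principle. The paper cites Davies~\cite{davies:90:heat} (Theorem~3.2.9, checking the extension property holds on a Lipschitz domain) for the upper bound, and Saloff-Coste~\cite{saloff-coste:10:heat} for the lower bound. The Saloff-Coste route buys something your Hsu/Wang reflection-coupling route does not: it phrases the lower bound as a consequence of volume doubling plus Poincar\'e, which is why the paper can later remark (Remark~\ref{R:HeatKernel}) that all lower-bound results under Neumann conditions extend to any domain satisfying those equivalent conditions, beyond the smooth-convex case.
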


\begin{proof}
  The Dirichlet case is proved in~\cite{riahi:13:estimates} (see Theorem 2.1 and
  Remark 1 on p.123). For the Neumann case, the upper bound in~\eqref{E:G-n} can
  be found in Theorem 3.2.9 of~\cite{davies:90:heat}, where we note that the
  \textit{extension property} referred in that theorem (\textit{ibid.}) is satisfied
  by the Lipschitz domain (see either Proposition 1.7.9 of~\cite{davies:90:heat}
  or Theorem 1.4.3.1 of~\cite{grisvard:85:elliptic}). The lower bound in~\eqref{E:G-n-lower} follows from~\cite[Theorem 3.1 and Examples
  3.3]{saloff-coste:10:heat}; see also~\cite{saloff-coste:92:note}.
\end{proof}

\begin{remark}\label{R:Phi1Bdd}
  In the Dirichlet boundary condition case, by~\cite[(1.2)]{riahi:13:estimates},
  there exists a finite constant $c_0 > 1$ such that for all $z \in \dom$,
  \begin{align}\label{E:Phi-bd}
    c_0^{-1} \left[\op{dist}(z, \partial \dom)\right]^{a_1} \le \Phi_1(z) \le
    c_0      \left[\op{dist}(z, \partial \dom)\right]^{a_2},
  \end{align}
  where $a_1$ and $a_2$ are constants from part (i) of Proposition~\ref{P:G}.
\end{remark}

\begin{remark}\label{R:HeatKernel}
  In general, Theorem 3.1 of~\cite{saloff-coste:10:heat} states that the
  following conditions are equivalent:
  \begin{itemize}
    \item The two-sided bound~\eqref{E:G-n} and~\eqref{E:G-n-lower} holds for
      the Neumann heat kernel on $\dom$, that is, for all $t > 0$ and $x, y \in
      \dom$,
      \begin{align}\label{E:G-n-bd}
        C_4 \frac{1}{1 \wedge t^{d/2}} e^{-c_4\frac{|x-y|^2}{t}}
        \le G_N(t, x, y)
        \le C_3 \frac{1}{1 \wedge t^{d/2}} e^{-c_3 \frac{|x-y|^2}{t}};
      \end{align}
    \item The parabolic Harnack inequality holds;
    \item The domain $\dom$ has the volume doubling property and the Poincar\'e
      inequality holds.
  \end{itemize}
  The results of the present paper under the Neumann boundary condition,
  especially the lower bound results, remain valid for $\dom$ satisfying any one
  of the above equivalent conditions.
\end{remark}

In the rest of the paper, the lower case constants $c_1, c_2, c_3, c_4$ are
reserved for the constants given by Proposition~\ref{P:G} above.

Before the end of this subsection, we prove that the Lipschitz domain $\dom$
satisfies the lower bound in~\eqref{E:RegSet} below. The following lemma may
well be buried in the literature. Since its proof is short, it will be given below. Let $\op{Vol}(A)$ denote the volume (or $d$-dimensional Lebesgue measure)
of any measurable set $A$ in $\R^d$.

\begin{lemma}\label{Lem:vol}
  Suppose that $\dom$ is a bounded Lipschitz domain in $\R^d$. Then, there
  exists positive finite constants $C$ and $C'$ such that for all $y \in
  \overline{\dom}$ and $r > 0$,
  \begin{align}\label{E:RegSet}
    C(1 \wedge r)^d \le \op{Vol}(\dom \cap B(y, r)) \le C' (1\wedge r)^d.
  \end{align}
\end{lemma}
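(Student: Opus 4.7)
The plan is to handle the two bounds in~\eqref{E:RegSet} separately, with the upper bound essentially immediate and the lower bound following from the $\delta$-cone property recorded after Definition~\ref{D:delta-cone}. For the upper bound I would write
\begin{equation*}
  \op{Vol}(\dom \cap B(y,r)) \le \min\{\omega_d r^d,\, \op{Vol}(\dom)\},
\end{equation*}
where $\omega_d$ is the volume of the unit ball in $\R^d$. Splitting on $r \le 1$ versus $r > 1$ and using that $\dom$ is bounded, the right-hand side is at most $(\omega_d \vee \op{Vol}(\dom))(1 \wedge r)^d$, which is the desired inequality with $C' \coloneqq \omega_d \vee \op{Vol}(\dom)$.

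For the lower bound I would invoke the fact that any bounded Lipschitz domain has the $\delta$-cone property for some $\delta \in (0, \pi/2)$, with $\delta$ as in~\eqref{E:delta-Lip}. Fix such a $\delta$ and split according to $d(y) \coloneqq \op{dist}(y, \partial \dom)$. In the interior regime $d(y) \ge \delta$ one has $B(y, \min(r, \delta)) \subset \dom$, which immediately gives $\op{Vol}(\dom \cap B(y, r)) \ge \omega_d \min(r, \delta)^d$. In the boundary regime $d(y) < \delta$, pick $x \in \partial \dom$ with $|y-x| = d(y) < \delta$; since $y \in \overline{\dom} \cap B(x, \delta)$, the $\delta$-cone property supplies a unit vector $\xi_x$ with $\mathscr{C}(y, \xi_x, \delta) \subset \dom$, and hence
\begin{equation*}
  \op{Vol}(\dom \cap B(y,r)) \ge \op{Vol}\bigl(\mathscr{C}(y, \xi_x, \delta) \cap B(y, r)\bigr) = \kappa(d, \delta)\, \min(r, \delta)^d,
\end{equation*}
where $\kappa(d, \delta) > 0$ is the volume of a unit spherical cone of half-aperture $\delta$ in $\R^d$, obtained in spherical coordinates centered at $y$ with polar axis $\xi_x$ by integrating over the spherical cap $\{\xi \in S^{d-1} : \xi \cdot \xi_x \ge \cos \delta\}$ and over radii in $(0, \min(r, \delta))$.

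In either regime, a two-case check on $r \le 1$ versus $r > 1$ gives $\min(r, \delta)^d \ge (\delta \wedge 1)^d (1 \wedge r)^d$, and taking $C$ to be the smaller of the two resulting constants yields the lower bound uniformly in $y \in \overline{\dom}$ and $r > 0$. There is no serious obstacle: the upper bound is one line and the lower bound reduces, via the $\delta$-cone property, to a standard volume calculation for a truncated spherical cone; the only minor point to monitor is the comparison $\min(r, \delta)^d \asymp (1 \wedge r)^d$, which is handled by the simple case split on $r$.
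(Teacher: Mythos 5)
Your proposal is correct and follows essentially the same route as the paper: the upper bound is immediate from boundedness, and the lower bound is derived from the $\delta$-cone property by distinguishing whether $y$ is near the boundary or in the interior. The only cosmetic difference is that you split on $d(y) \ge \delta$ versus $d(y) < \delta$ and absorb all radii via $\min(r,\delta)$, whereas the paper first treats $r < \delta$ (splitting there on $\op{dist}(y,\partial\dom)$ relative to $r$) and then reduces $r \ge \delta$ to that case; both are equivalent and rely on the same cone-volume computation.
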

\begin{proof}
The upper bound is trivial since $\mathrm{Vol}(\dom \cap B(y, r)) \le
\mathrm{Vol}(\dom) \wedge \mathrm{Vol}(B(y, r))$. We only need to prove the
lower bound. It is known that the Lipschitz domain $\dom$ satisfies the
$\delta$-cone property with $\delta$ given in~\eqref{E:delta-Lip}.

We first consider the case of $0 < r < \delta$. If $\mathrm{dist}(y, \partial
\dom) > r$, then $B(y, r) \subset \dom$ and
\begin{align*}
  \mathrm{Vol}(\dom \cap B(y, r))
  = \mathrm{Vol}(B(y, r))
  = V_d\: r^d
  \ge V_d (1 \wedge r)^d,
\end{align*}
where $V_d = \pi^{d/2}/\Gamma(d/2+1)$. If $\mathrm{dist}(y, \partial \dom) \le
r$, then $y\in B(x, \delta)$ for some $x \in \partial \dom$, and by the
$\delta$-cone property, we can find a unit vector $\xi = \xi_x \in \R^d$ such
that $\mathscr{C} (y, \xi, \delta) \subset \dom$. It follows that
\begin{align*}
  \mathrm{Vol}(\dom \cap B(y, r))
  & \ge \mathrm{Vol}(\mathscr{C} (y, \xi, \delta) \cap B(y, r))                                    \\
  & = \op{Vol}\{ z \in \R^d : 0 < |z-y| < r \text{ and } (z-y) \cdot \xi \ge  |z-y|\cos\delta \} \\
  & = C_{d,\delta} V_d \: r^d,
\end{align*}
where $C_{\delta,d}\in (0,1]$. Therefore, when $r\in (0,\delta)$,
$\mathrm{Vol}(\dom \cap B(y, r)) \ge C_{d,\delta} V_d\: \left(1\wedge
r\right)^d$. Finally, the case of $r \ge \delta$ follows from the previous case
because
\begin{align*}
  \mathrm{Vol}(\dom \cap B(y, r))
  \ge \mathrm{Vol}(\dom \cap B(y, \delta))
  \ge C_{d,\delta} V_d\: \delta^d
  \ge C_{d,\delta} V_d\:\delta^d (1\wedge r)^d.
\end{align*}
This implies the desired lower bound with $C = C_{d,\delta} V_d\:\left(1\wedge
\delta\right)^d$.
\end{proof}

Next, we need to replace $\dom$ in~\eqref{E:RegSet} above by a subset of $\dom$
with some specific properties. Take $0 < \eps_1 < 1$ such that $\dom_{\eps_1}\ne
\varnothing$ (see~\eqref{E:Ue}) and let
\begin{equation}\label{Def:eps_0}
  \eps_0 = \eps_1 \wedge (\delta/2).
\end{equation}
By the compactness of $\dom$, find and fix a finite collection of open balls $\{
B(y_i, \eps_0) \}_{i = 1}^m$ with centers $y_1, \dots, y_m \in \partial \dom$ such
that $\bigcup_{i = 1}^m B(y_i, \eps_0) \supset \overline\dom \setminus
\dom_{\eps_0/2}$. Then, for any $x \in \dom$, we have one of the following
cases:
\begin{enumerate}
  \item[(1)] If $x \in \bigcup_{i = 1}^m \overline{B(y_i, \eps_0)}$, choose
    the smallest $i$ such that $\overline{B(y_i, \eps_0)} \ni x$. Then, by the
    $\delta$-cone property of $\dom$, we have $\overline{\mathscr{C}(x,
    \xi_{y_i}, \eps_0/2)} \subset \dom$.
  \item[(2)] If $x \not\in \bigcup_{i = 1}^m \overline{B(y_i, \eps_0)}$, then
    $\op{dist}(x, \partial \dom) > \eps_0/2$ and thus $\overline{B(x, \eps_0/2)}
    \subset \dom$.
\end{enumerate}
Accordingly, we define (see Fig.~\ref{F:V(x)} for an illustration)
\begin{align}\label{E:V(x)}
  V(x) = \begin{dcases}
    \overline{\mathscr{C}(x, \xi_{y_i}, \eps_0/2)} & \text{in case (1)}, \\
    \overline{B(x, \eps_0/2)}                      & \text{in case (2)}.
  \end{dcases}
\end{align}

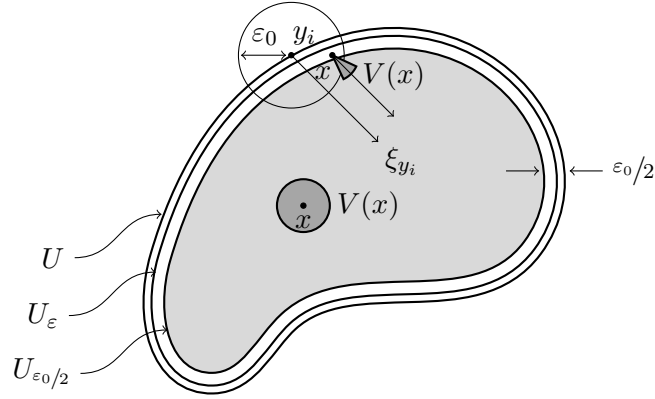
\begin{figure}[htpb]
  \centering
  \begin{tikzpicture}[ x = 3em, y = 3em, samples = 150]

    \draw [use Hobby shortcut,
      % fill = gray!50!white,
      thick,
      scale = 1.4,
      double,
      double distance = 10pt,
      ] ([closed]-1,0) .. (0,1.5)  ..  (2,0.1)  ..  (0,-0.5)  ..  (-0.6,-1);
    % \draw[use Hobby shortcut, fill = gray!50!white, thick, scale = 1.2] ([closed]-1,0) .. (0,1.5)  ..  (2,0.1)  ..  (0,-0.5)  ..  (-0.6,-1);
    \draw [use Hobby shortcut,
      fill = gray!30!white,
      thick,
      scale = 1.4,
      double,
      double distance = 4pt,
      ] ([closed]-1,0) .. (0,1.5)  ..  (2,0.1)  ..  (0,-0.5)  ..  (-0.6,-1);

    \begin{scope}[yshift = 7em, xshift = 2pt]
      \draw (0,0) circle (20pt);
      \filldraw (0,0) circle (1pt) node [above, xshift = 5pt] {$y_i$};
      \draw[<->, shorten >= 1pt, shorten <= 2pt] (0,0) -- (-20pt,0) node [midway, above] {$\eps_0$};
      \draw[->] (0,0) -- (1,-1) node[below, xshift = 9pt, yshift = 2pt] {$\xi_{y_i}$};
    \end{scope}

    \begin{scope}[yshift = 3em, xshift = 7.6em]
      \draw[->] (0,0) --++ (0.38,0);
      \draw[<-,xshift = -3.2em] (1.8,0) --++ (0.38,0) node[right] {$\sfrac{\eps_0}{2}$};
    \end{scope}

    \begin{scope}[xshift = -7.3em]
      \draw[->] (0,0) node[left] {$\dom$} .. controls (0.5,0) and (0.5, 0.5) .. (1,0.5);
    \end{scope}

    \begin{scope}[xshift = -7.5em,yshift = -2em]
      \draw[->] (0,0) node[left] {$\dom_{\eps}$} .. controls (0.5,0) and (0.5, 0.5) .. (1,0.5);
    \end{scope}

    \begin{scope}[xshift = -7.1em,yshift = -4em]
      \draw[->] (0,0) node[left] {$\dom_{\sfrac{\eps_0}{2}}$} .. controls (0.5,0) and (0.5, 0.5) .. (1,0.5);
    \end{scope}

    \begin{scope}[xshift = 0.6em, yshift = 1.8em]
      \draw[->, fill = gray!70!white, thick] (0,0) circle (10pt) node [right, xshift = 0.8em] {$V(x)$};
      \filldraw (0,0) circle (1pt) node [below] {$x$};
    \end{scope}

    \begin{scope}[xshift = 1.6em, yshift = 7em, rotate = -45]
      \draw[fill = gray!70!white, thick]
        plot [domain = 20:-20] ({10pt*cos(\x)}, {10pt*sin(\x)}) --
        (0,0) --
        cycle;
      \draw[->] (0,0) -- (1,0);
      \node at (2em,1em) {$V(x)$};
      \filldraw (0,0) circle (1pt) node [left, yshift = -6.5pt, xshift = 3.5pt] {$x$};
    \end{scope}

  \end{tikzpicture}

  \caption{Illustration for the two cases of $V(x)$ in~\eqref{E:V(x)}.}
  \label{F:V(x)}
\end{figure}

The next lemma will be used later together with the heat kernel
estimates to derive the lower bounds in Lemmas~\ref{Lem:iintGGf-d}
and~\ref{Lem:iintGGf-n} below.

\begin{lemma}\label{Lem:vol2}
  Let $\dom \subset \R^d$ be a bounded Lipschitz domain with the $\delta$-cone
  property. Let $\eps_0\in (0,1)$ and $V(x) \subset \dom$, for $x \in \dom$, be
  defined by~\eqref{Def:eps_0} and~\eqref{E:V(x)} above. Then, for each $\eps \in
  (0, \eps_0]$ there exists $c_\eps > 0$ with $\lim_{\eps \to 0} c_\eps = 0$
  such that for all $x \in \dom_\eps$, we have
  \begin{gather}\label{dist-V(x)}
    d(z)\coloneqq \op{dist}(z, \partial \dom) \ge c_\eps \quad \text{for all $z \in V(x)$} \shortintertext{and}
    \op{Vol}(V(x) \cap B(x, r)) \ge C (1 \wedge r)^d \quad \text{for all $r > 0$},
    \label{vol-V(x)}
  \end{gather}
  where $C>0$ is a constant depending on $d$ and $\eps_0$.
\end{lemma}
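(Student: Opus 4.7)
The plan is to verify both~\eqref{dist-V(x)} and~\eqref{vol-V(x)} by handling the two cases of the definition~\eqref{E:V(x)} separately and then taking $c_\eps$ to be the smaller of the two resulting constants.

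For case~(1), where $V(x)=\overline{\mathscr{C}(x,\xi_{y_i},\eps_0/2)}$, the key input is that the \emph{enlarged} cone $\mathscr{C}(x,\xi_{y_i},\delta)$ is contained in $\dom$ by the $\delta$-cone property, which is applicable at $x$ since $|x-y_i|\le\eps_0\le\delta/2<\delta$. For $z\in V(x)$, I parameterize $z=x+t\omega$ with $t\in[0,\eps_0/2]$ and $\omega\cdot\xi_{y_i}\ge\cos(\eps_0/2)$, and combine two lower bounds on $d(z)$: the naive one $d(z)\ge d(x)-t\ge\eps-t$, dominating for small $t$; and the cone estimate $d(z)\ge t\sin(\delta-\eps_0/2)$, obtained by verifying (via an angular computation in $\R^d$) that the ball $B(z,t\sin(\delta-\eps_0/2))$ fits inside the enlarged cone and hence inside $\dom$. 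Balancing the two at $t^\star=\eps/(1+\sin(\delta-\eps_0/2))$ yields
\[
  d(z)\ \ge\ \frac{\eps\sin(\delta-\eps_0/2)}{1+\sin(\delta-\eps_0/2)},
\]
which is strictly positive and vanishes as $\eps\to 0$.

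For case~(2), where $V(x)=\overline{B(x,\eps_0/2)}$, the construction gives the strict inequality $d(x)>\eps_0/2$, hence $d(z)\ge d(x)-\eps_0/2$. To upgrade this to a uniform positive bound, I argue by compactness. If a sequence $(x_n)$ of case-(2) points in $\overline\dom$ satisfied $d(x_n)-\eps_0/2\to 0$, then (by continuity of $d$ and compactness of $\overline\dom$) a subsequence converges to some $x^\star$ with $d(x^\star)=\eps_0/2$; by the covering property, $x^\star\in\overline\dom\setminus\dom_{\eps_0/2}\subset\bigcup_i B(y_i,\eps_0)$, so $x^\star\in B(y_{i_0},\eps_0)$ for some $i_0$. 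Since this open ball captures the tail of $(x_n)$, we would have $x_n\in\overline{B(y_{i_0},\eps_0)}$ for large $n$, contradicting the case-(2) assumption. Thus $c^\star:=\inf\{d(x)-\eps_0/2:x\text{ is a case-(2) point in }\dom\}>0$ uniformly. Setting $c_\eps:=\min\bigl\{c^\star,\ \eps\sin(\delta-\eps_0/2)/(1+\sin(\delta-\eps_0/2))\bigr\}$ then covers both cases, with $c_\eps\to 0$ as $\eps\to 0$.

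For the volume estimate~\eqref{vol-V(x)}, a direct geometric computation suffices. In case~(2), $V(x)\cap B(x,r)=B(x,\min(r,\eps_0/2))$ has volume $V_d(\min(r,\eps_0/2))^d\gtrsim(1\wedge r)^d$, where the implicit constant depends on $d$ and $\eps_0$. In case~(1), $V(x)\cap B(x,r)$ is a spherical cone with vertex $x$, half-angle $\eps_0/2$, and radius $\min(r,\eps_0/2)$; its volume equals a solid-angle constant $\omega_{d,\eps_0}$ times $(\min(r,\eps_0/2))^d/d$, which is again $\gtrsim(1\wedge r)^d$ with constant depending only on $d$ and $\eps_0$. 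Taking the smaller of the two constants yields the stated bound. The main technical step is the angular computation in case~(1) justifying the ball inclusion $B(z,t\sin(\delta-\eps_0/2))\subset\mathscr{C}(x,\xi_{y_i},\delta)$, which requires a careful trigonometric check; the rest of the proof is routine once the geometry of $V(x)$ is in hand.
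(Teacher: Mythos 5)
Your proof is correct. It shares the paper's case decomposition and volume argument, but your case-(1) treatment of~\eqref{dist-V(x)} is genuinely different. The paper handles both cases by soft compactness: the distance-to-boundary maps $x\mapsto \op{dist}(\overline{\mathscr{C}(x,\xi_{y_i},\eps_0/2)},\partial\dom)$ and $x\mapsto\op{dist}(\overline{B(x,\eps_0/2)},\partial\dom)$ are continuous and strictly positive on the relevant compact subsets of $\overline{\dom_\eps}$, hence attain positive minima, and $c_\eps\to 0$ is read off from the trivial bound $c_{\eps,i}\le\eps$. You keep the compactness idea in case (2) --- your subsequence-plus-contradiction argument is the same extreme-value reasoning --- but replace case (1) by explicit geometry: via the $\delta$-cone property (applicable at $x$ since $|x-y_i|\le\eps_0\le\delta/2<\delta$), the enlarged cone $\mathscr{C}(x,\xi_{y_i},\delta)\subset\dom$ contains the ball $B(z,t\sin(\delta-\eps_0/2))$ around $z=x+t\omega\in V(x)$; the trigonometric verification does go through (for $w$ in this ball the angle between $w-x$ and $\xi_{y_i}$ is $\le\eps_0/2+\arcsin(|w-z|/t)<\delta$, and $|w-x|<t(1+\sin(\delta-\eps_0/2))\le\eps_0<\delta$ using $\eps_0\le\delta/2$). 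Combining $d(z)\ge t\sin(\delta-\eps_0/2)$ with the Lipschitz bound $d(z)\ge\eps-t$ and minimizing the maximum over $t$ gives the explicit constant $c_\eps=\eps\sin(\delta-\eps_0/2)/(1+\sin(\delta-\eps_0/2))$. This buys a transparent decay rate $c_\eps\asymp\eps$ and one fewer appeal to compactness, at the cost of the angular computation; the volume estimate~\eqref{vol-V(x)} is essentially identical in both proofs.
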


\begin{proof}
  Let $\eps \in (0, \eps_0]$. We first prove~\eqref{dist-V(x)}. On the one hand,
  for each $i$ and $x \in \overline{\dom_\eps} \cap \overline{B(y_i,
  \eps_0/2)}$, by the $\delta$-cone property of $\dom$, we have
  $\overline{\mathscr{C}(x, \xi_{y_i}, \eps_0/2)} \subset \dom$, which implies
  that
  \begin{align*}
    \op{dist}\left(\overline{\mathscr{C}(x, \xi_{y_i}, \eps_0/2)}, \partial \dom\right)
    \coloneqq \inf\left\{|z - y|: z \in \overline{\mathscr{C}(x, \xi_{y_i}, \eps_0/2)}, \, y \in \partial \dom\right\} > 0.
  \end{align*}
  Since the function $x \mapsto \op{dist}(\overline{\mathscr{C}(x, \xi_{y_i},
    \eps_0/2)}, \partial \dom)$ is continuous on the compact set
    $\overline{\dom_\eps} \cap \overline{B(y_i, \eps_0/2)}$, we can find
    $c_{\eps, i} > 0$ such that $\op{dist}(\overline{\mathscr{C}(x, \xi_{y_i},
    \eps_0/2)}, \partial \dom) \ge c_{\eps, i}$ for all $x \in
    \overline{\dom_\eps} \cap \overline{B(y_i, \eps_0/2)}$.
    \medskip

  On the other hand, for $x \in \overline{\dom_\eps} \setminus \bigcup_{i = 1}^m
  {B(y_i, \eps_0/2)}$, we have $\overline{B(x, \eps_0/2)} \subset \dom$, and
  hence
  \begin{align*}
    \op{dist}(\overline{B(x, \eps_0/2)}, \partial \dom) > 0.
  \end{align*}
  Then, by the continuity of $x \mapsto \op{dist}(\overline{B(x, \eps_0/2)},
  \partial \dom)$ and the compactness of $\overline{\dom_\eps} \setminus
  \bigcup_{i = 1}^m {B(y_i, \eps_0/2)}$, we can find $c_{\eps, 0} > 0$ such that
  $\op{dist}(\overline{B(x, \eps_0/2)}, \partial \dom) \ge c_{\eps, 0}$ for all
  $x \in \overline{\dom_\eps} \setminus \bigcup_{i = 1}^m {B(y_i, \eps_0/2)}$.
  \medskip

  Therefore, by taking $c_\eps = \min\{c_{\eps, i}: 0 \le i \le m\}$, we get
  that $\op{dist}(V(x), \partial \dom) \ge c_\eps$. Also, note that for $1 \le i
  \le m$, we have $0 < c_{\eps, i} \le \eps$, so $c_\eps \to 0$ as $\eps \to 0$.
  This proves~\eqref{dist-V(x)}. \medskip

  As for~\eqref{vol-V(x)}, if $V(x) = \overline{\mathscr{C}(x, \xi_{y_i},
  \eps_0/2)}$ in case (1) of~\eqref{E:V(x)}, then
  \begin{align*}
    \MoveEqLeft[2] \op{Vol}(V(x) \cap B(x, r))
      = \op{Vol}(\overline{\mathscr{C}(x, \xi_{y_i}, \eps_0/2)} \cap B(x, r)) \\
    & = \op{Vol}\{z \in \R^d: 0 \le |z - x| < (\eps_0/2) \wedge r \text{ and } (z-x) \cdot \xi_{y_i} \ge |z-x|\cos(\eps_0/2) \} \\
    & = C_{d, \eps_0}((\eps_0/2) \wedge r)^d
      \ge C_{d, \eps_0} (\eps_0/2)^d (1 \wedge r)^d.
  \end{align*}
  If $V(x) = \overline{B(x, \eps_0/2)}$ in case (2) of~\eqref{E:V(x)}, then
  \begin{align*}
    \op{Vol}(V(x) \cap B(x, r))
    = \op{Vol}(B(x, (\eps_0/2) \wedge r))
    = C_d ((\eps_0/2) \wedge r)^d
    \ge C_d (\eps_0/2)^d (1 \wedge r)^d.
  \end{align*}
  This shows~\eqref{vol-V(x)} and completes the proof of Lemma~\ref{Lem:vol2}.
\end{proof}

\section{Bounded initial condition case}\label{S:Bounded}

In this section, we give some computations to show how the noise interacts
with the differential operator. Let us first give a general definition,
which is not only restricted to the heat equation.

\begin{definition}\label{D:h&K}
  Let $\dom$ be a general domain in $\R^d$ and let $G(t,x,y)$ be the fundamental
  solution to the corresponding partial differential equation. Let $f$ be a
  nonnegative, nonnegative definite function. Let $h_0^{\dom}(t)$ be a
  locally integrable function defined on $\R_+ \coloneqq [0, \infty)$. Define formally the following
  functions:
  \begin{subequations}\label{E:h&K}
  \begin{align}\label{E:K}
    K_\lambda^{\dom}(t) & \coloneqq \sum_{n=0}^{\infty} \lambda^{2n} h_n^{\dom}(t) \shortintertext{where}
    h_n^{\dom}(t)       & \coloneqq \left(k^{\dom} * h_{n-1}^{\dom}\right)(t) \quad \text{for $n\ge 1$ and}, \label{E:hn}   \\
    k^{\dom}(t)         & \coloneqq \sup_{x,x'\in \dom}\iint_{\dom^2} G(t, x, y) G(t, x', y') f(y-y')\, \ud y\, \ud y'. \label{E:kO}
  \end{align}
  \end{subequations}
\end{definition}
These functions depend on the fundamental solution
$G$. When it is clear from the context, the superscript $\dom$ will be omitted.

In the above, ``$*$" is the standard convolution in the time variable:
  \begin{align*}
    h * k (t) = \int_0^t h(t-s) k(s) \ud s.
  \end{align*}
  For $n \ge 1$, we will also denote by $k^{* n}$ the $n$-th convolution power of $k$, i.e., $k^{* 1} = k$ and
\begin{equation*}
 k^{* (n+1)}(t) = \int_0^t k^{* n}(t-s) k(s) \ud s.
\end{equation*}

\begin{remark}
  In~\cite{chen.kim:19:nonlinear} and~\cite{chen.huang:19:comparison}, the
  kernel function $k(t)$ is defined as
  \begin{align*}
    k(t) = \int_{\R^d} f(z)G(t,z) \ud z,
  \end{align*}
  where $G(t,x)$ is the heat kernel on $\R^d$. This is consistent
  to~\eqref{E:kO} (up to a factor of $2$):
  \begin{align*}
    k^{\R^d}(t) & = \sup_{x,x'\in\R^d}\iint_{\R^{2d}} G(t,x-y)G(t,x'-y') f(y-y') \ud y \ud y' \\
                & = \sup_{x,x'\in\R^d}\iint_{\R^{2d}} G(t,x-y)G(t,x'-y+z) f(z) \ud z \ud y \\
                & = \sup_{x,x'\in\R^d}\int_{\R^{d}} G(2t,x-x'-z) f(z) \ud z
                  = \int_{\R^{d}} G(2t,z) f(z) \ud z.
  \end{align*}
\end{remark}

The following two lemmas provide estimates for $k^\dom(t)$ in the case of the
Dirichlet and Neumann heat kernel, respectively. In particular, for the case of
$\mathscr{L} = -\Delta$, our lower and upper bounds generalize (3.3) and (3.5)
of~\cite{nualart:18:moment} from $\dom$ being an open ball to more general domains.

\begin{lemma}\label{Lem:iint-d}
  If $\dom$ is a bounded Lipschitz domain, then we have the
  following integral estimates for the Dirichlet heat kernel:
  \begin{enumerate}
    \item[(i)] There exists a positive finite constant $C$ such that for all
      $t>0$, for all $x, x' \in \dom$,
      \begin{equation}\label{E:iint-d-ub}
        \iint_{\dom \times \dom} G_D(t, x, y) G_D(t, x', y') f(y-y')\, \ud y\, \ud y'
        %k(t)
        \le C e^{-2\mu_1 t} (1 \wedge t)^{-\beta/2}.
      \end{equation}
    \item[(ii)] There exists $0 < \eps_0 < 1$ such that for any $0 < \eps \le
      \eps_0$, there exists a positive finite constant $C_\eps$ such that for
      all $t > 0$, for all $x, x' \in \dom_\eps$ with $|x-x'| \le \sqrt t$,
      \begin{equation}\label{E:iint-d-lb}
        \iint_{\dom \times \dom} G_D(t, x, y) G_D(t, x', y') f(y-y')\, \ud y\, \ud y'
        \ge C_\eps e^{-2\mu_1 t} (1\wedge t)^{-\beta/2}.
      \end{equation}
  \end{enumerate}
\end{lemma}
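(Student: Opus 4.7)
The proof rests on the two-sided Dirichlet heat kernel estimates in Proposition~\ref{P:G} together with the geometric input from Lemmas~\ref{Lem:vol} and~\ref{Lem:vol2}.

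For the upper bound in (i), I will first drop the two $\Phi_1$-factors in the upper estimate of~\eqref{E:G-d} (each is trivially $\le 1$), reducing to the simpler bound $G_D(t,x,y) \le C(1\wedge t^{d/2})^{-1} e^{-\mu_1 t} e^{-c_1|x-y|^2/t}$, and then split on whether $t\le 1$ or $t>1$. When $t\le 1$, extending the integrals to $\R^{2d}$ and applying the polarization identity $|x-y|^2 + |x'-y'|^2 = \tfrac12|(y-y')-(x-x')|^2 + \tfrac12|(y+y')-(x+x')|^2$ together with the change of variables $w = y-y'$, $v = y+y'$, reduces matters to a single Gaussian integral
\[
\int_{\R^d} e^{-c|w-(x-x')|^2/(2t)}\,|w|^{-\beta}\,\ud w,
\]
which by a standard rescaling is $O(t^{(d-\beta)/2})$ (the integral $\int e^{-|u|^2/2}|u+b|^{-\beta}\,\ud u$ being bounded in $b$ since $\beta < d$); multiplied by the $(1\wedge t^{d/2})^{-2} = t^{-d}$ prefactor this delivers the required $t^{-\beta/2}$. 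When $t>1$, the prefactor is $1$ and I simply note that $\iint_{\dom^2}|y-y'|^{-\beta}\,\ud y\,\ud y' < \infty$ since $\beta < d$ and $\dom$ is bounded. The two regimes then combine into the claimed $e^{-2\mu_1 t}(1\wedge t)^{-\beta/2}$.

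For the lower bound in (ii), the plan is to restrict the integral to a subregion on which $G_D$, the $\Phi_1$-factors, and $f$ each admit uniform-in-$t$ lower bounds. Concretely I integrate only over $y \in V(x) \cap B(x,(1\wedge\sqrt t)/2)$ and $y' \in V(x') \cap B(x',(1\wedge\sqrt t)/2)$, where $V(\cdot)$ is the set built in~\eqref{E:V(x)} from the $\delta$-cone property. On this subset, Lemma~\ref{Lem:vol2} together with~\eqref{E:Phi-bd} yields a positive $\eps$-dependent lower bound on $\Phi_1$, which via the trivial estimate $1\wedge t^{a_2/2}\le 1$ passes to a lower bound on the $\Phi_1$-factors in the lower part of~\eqref{E:G-d}; the exponentials $e^{-c_2|x-y|^2/t}$ are bounded below by $e^{-c_2/4}$ thanks to the radius choice; and Lemma~\ref{Lem:vol2} gives $\op{Vol}(V(x)\cap B(x,(1\wedge\sqrt t)/2)) \ge C(1\wedge t^{d/2})$, which exactly cancels the $(1\wedge t^{d/2})^{-2}$ prefactor produced by the two Dirichlet kernels.

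The step I expect to be the main obstacle is extracting the $(1\wedge t)^{-\beta/2}$ rate from $|y-y'|^{-\beta}$ in a single estimate valid for all $t > 0$. The hypothesis $|x-x'| \le \sqrt t$ together with the triangle inequality gives $|y-y'| \le 2\sqrt t$ when $t\le 1$, yielding $|y-y'|^{-\beta} \ge C\, t^{-\beta/2}$; for $t > 1$ the now-fixed radius $1/2$ gives $|y-y'| \le 1 + \op{diam}(\dom)$, yielding $|y-y'|^{-\beta} \ge C = C(1\wedge t)^{-\beta/2}$. The judicious choice of the common radius $(1\wedge\sqrt t)/2$ is precisely what collapses the two regimes into the single estimate~\eqref{E:iint-d-lb}.
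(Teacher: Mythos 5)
Your upper bound (i) is correct in spirit but takes a genuinely different route from the paper. After enlarging to $\R^{2d}$, the paper passes to Fourier space via Plancherel: the integral becomes $C\int_{\R^d}e^{-i(x-x')\cdot\xi}e^{-\frac{2}{c_1}t|\xi|^2}|\xi|^{\beta-d}\,\ud\xi$, which after discarding the phase and rescaling gives $C't^{-\beta/2}$ directly. Your polarization-and-change-of-variables argument stays in real space and is equally valid. However, there is a bookkeeping slip you must fix: after passing to $(w,v)=(y-y',y+y')$, the $v$-variable also appears in a Gaussian $e^{-c|v-(x+x')|^2/(2t)}$, and integrating it out produces a factor $\sim t^{d/2}$ that your plan never records. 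As written, your chain of exponents gives $t^{(d-\beta)/2}\cdot t^{-d}=t^{-(d+\beta)/2}$ rather than $t^{-\beta/2}$; including the $v$-integral's $t^{d/2}$ restores the correct count $t^{d/2}\cdot t^{(d-\beta)/2}\cdot t^{-d}=t^{-\beta/2}$.

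For the lower bound (ii), the paper splits into the two regimes $t\ge 1$ and $t<1$. For $t\ge 1$ it integrates over all of $\dom_\eps\times\dom_\eps$ and uses only that $\dom$ has finite diameter and $\op{Vol}(\dom_\eps)>0$; for $t<1$ it restricts to $V(x)\times V(x')$ with radius $\sqrt t$ and invokes Lemma~\ref{Lem:vol2}. Your choice of the single $t$-dependent radius $(1\wedge\sqrt t)/2$ is a nice unification of these two cases: it guarantees $|x-y|^2/t\le 1/4$ uniformly in $t$, bounds $|y-y'|$ by $2\sqrt t$ when $t\le 1$ and by $1$ plus the diameter of $\dom$ when $t>1$ (so $|y-y'|^{-\beta}\ge C(1\wedge t)^{-\beta/2}$ throughout), and feeds directly into the volume estimate $\op{Vol}(V(x)\cap B(x,r))\ge C(1\wedge r)^d$ from Lemma~\ref{Lem:vol2}. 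The lower bounds on the $\Phi_1$-factors then come, exactly as in the paper, from~\eqref{E:Phi-bd} applied at the points $x,x'\in\dom_\eps$ and at $y\in V(x)$, $y'\in V(x')$ (using~\eqref{dist-V(x)}). This is a cleaner presentation yielding the same estimate, and I would recommend it over the two-case treatment.
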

\begin{proof}
  (i) Let $I$ be the left-hand side of~\eqref{E:iint-d-ub}, By~\eqref{E:f-bd}
  and the upper bound in~\eqref{E:G-d},
  \begin{align*}
  I & \le C e^{-2\mu_1 t}\bigg(\iint_{\dom^2} |y-y'|^{-\beta}\, \ud y\,\ud y'\, \textbf{1}_{\{t \ge 1\}} \\
    & \quad + \iint_{\dom^2} \frac{1}{t^{d/2}}\, e^{-c_1\frac{|x-y|^2}{t}}
      \frac{1}{t^{d/2}}\, e^{-c_1\frac{|x'-y'|^2}{t}} |y-y'|^{-\beta} \,\ud y\,\ud y' \, \textbf{1}_{\{t < 1\}}
    \bigg).
  \end{align*}
  The first integral is a finite constant since $\beta < d$. For the second
  integral, we can enlarge the domain of integration to $\R^d \times \R^d$ and
  use the Plancherel theorem to get the upper bound
  \begin{align*}
    C \int_{\R^d} e^{-i(x-x') \cdot \xi}\,e^{-\frac{2}{c_1}t|\xi|^2} |\xi|^{\beta-d} \ud\xi
    \le C \int_{\R^d} e^{-\frac{2}{c_1}t|\xi|^2} |\xi|^{\beta-d} \ud\xi
     = C' t^{-\beta/2}.
  \end{align*}
  The last equality can be obtained by scaling. This proves the upper
  bound~\eqref{E:iint-d-ub}.

  (ii) We now turn to the proof of the lower bound~\eqref{E:iint-d-lb}.
  Let $t > 0$ and $x, x' \in \dom_\eps$ be such that $|x-x'| \le \sqrt{t}$.
  By~\eqref{E:f-bd}, we have
  \begin{align}\label{I_eps-lb}
  \begin{split}
    I \ge C e^{-2\mu_1 t}
    & \bigg(\iint_{\dom_\eps\times \dom_\eps} G_D(t, x, y) G_D(t, x', y') |y-y'|^{-\beta}\, \ud y\, \ud y' \, {\bf 1}_{\{t \ge 1\}} \\
    & + \iint_{\dom\times \dom} G_D(t, x, y) G_D(t, x', y') |y-y'|^{-\beta}\, \ud y\, \ud y' \,{\bf 1}_{\{t < 1\}} \bigg).
  \end{split}
  \end{align}
  We estimate the two integrals separately. Since $\dom$ is bounded, $\sup_{y,
  y' \in \dom} |y - y'| \le M < \infty$. By the lower bounds in~\eqref{E:G-d}
  and~\eqref{E:Phi-bd}, the first integral in~\eqref{I_eps-lb} is bounded below by
  \begin{align*}
    C_2^2 (1 \wedge (c_0^{-1} \eps^{a_1}))^4 e^{-2c_2M^2} M^{-\beta}\big[\op{Vol}(\dom_\eps)\big]^2
    \textbf{1}_{\{t \ge 1\}}
    &= C_\eps {\textbf{1}}_{\{t \ge 1\}}.
  \end{align*}
  By the lower bound in~\eqref{E:G-d}, the second integral
  in~\eqref{I_eps-lb} is bounded below by
  \begin{align}\label{I_eps-lb2}
  \begin{split}
  &\iint_{V(x) \times V(x')}
  C_2^2 (1 \wedge \Phi_1(x)) (1 \wedge \Phi_1(y)) (1 \wedge \Phi_1(x')) (1 \wedge \Phi_1(y'))\\
  &\quad \times \frac{1}{t^{d}} e^{-2c_2}
  \textbf{1}_{\{|x-y| \le \sqrt t,\, |x'-y'| \le \sqrt{t}\}}|y-y'|^{-\beta} \,\ud y\,\ud y'\, \textbf{1}_{\{t < 1\}}.
  \end{split}
  \end{align}
  Since $|x-x'| \le \sqrt{t}$, we have $|y-y'| \le 3\sqrt{t}$ on the set
  $\{|x-y| \le \sqrt t, |x'-y'| \le \sqrt{t}\}$. By~\eqref{E:Phi-bd} and
  Lemma~\ref{Lem:vol2}, for $0 < \eps \le \eps_0$,~\eqref{I_eps-lb2} is bounded
  below by
  \begin{align*}
    & C_2^2 (1 \wedge (c_0^{-1}\eps^{a_1}))^2 (1 \wedge (c_0^{-1}c_\eps^{a_1}))^2 t^{-d} e^{-2c_2} (3\sqrt{t})^{-\beta} \\
    & \quad \times \op{Vol}(V(x) \cap B(x, \sqrt{t})) \times \op{Vol}(V(x') \cap B(x', \sqrt{t}))\, \textbf{1}_{\{t < 1\}} \\
    & \ge C_\eps t^{-\beta/2}\,{\textbf{1}}_{\{t < 1\}}.
  \end{align*}
  With this, we complete the proof of Lemma~\ref{Lem:iint-d}.
\end{proof}

\begin{lemma}\label{Lem:iint-n}
  If $\dom$ is a bounded Lipschitz domain, then we have the
  following integral estimates for the Neumann heat kernel:
  \begin{enumerate}
    \item[(i)] There exists a positive finite constant $C_1$ such that for all
      $t>0$, for all $x, x' \in \dom$,
    \begin{equation}\label{iint-n-ub}
      \iint_{\dom^2} G_N(t, x, y) G_N(t, x', y') f(y-y')\, \ud y\, \ud y'
      \le C_1 (1 \wedge t)^{-\beta/2}.
    \end{equation}
    \item[(ii)] If~\eqref{E:G-n-lower} holds, then there exists a positive
      finite constant $C_2$ such that for all $t > 0$, for all $x, x' \in \dom$
      with $|x-x'| \le \sqrt t$,
      \begin{equation}\label{iint-n-lb}
        \iint_{\dom^2} G_N(t, x, y) G_N(t, x', y') f(y-y')\, \ud y\, \ud y'
        \ge C_2 (1\wedge t)^{-\beta/2}.
      \end{equation}
  \end{enumerate}
\end{lemma}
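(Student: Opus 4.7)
The plan is to mimic the proof of Lemma~\ref{Lem:iint-d}, splitting into the two cases $t \ge 1$ and $t < 1$, and exploiting Proposition~\ref{P:G}(ii). The Neumann case is conceptually simpler than the Dirichlet case because the two-sided bounds~\eqref{E:G-n} and~\eqref{E:G-n-lower} contain no $\Phi_1$-factor and no $e^{-\mu_1 t}$ decay; as a result, no restriction to $\dom_\eps$ is needed for the lower bound, and a full $x,x'\in \dom$ version suffices. Throughout, let $M\coloneqq \sup_{y,y'\in\dom}|y-y'|<\infty$.

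For the upper bound~\eqref{iint-n-ub}, I would split the integral according to whether $t\ge 1$ or $t<1$. When $t\ge 1$ one has $G_N(t,x,y)\le C_3$ by~\eqref{E:G-n}, so that by~\eqref{E:f-bd},
\begin{equation*}
\iint_{\dom^2} G_N(t,x,y)G_N(t,x',y')f(y-y')\,\ud y\,\ud y'
\le C_3^2\, C_f \iint_{\dom^2}|y-y'|^{-\beta}\,\ud y\,\ud y'<\infty,
\end{equation*}
which is finite because $\dom$ is bounded and $\beta<d$. When $t<1$, I would follow the Dirichlet case exactly: apply the Gaussian upper bound in~\eqref{E:G-n}, enlarge the domain of integration to $\R^{2d}$, and apply Plancherel (or a direct scaling) to obtain a bound of order $t^{-\beta/2}$. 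Combining the two regimes yields $C_1 (1\wedge t)^{-\beta/2}$.

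For the lower bound~\eqref{iint-n-lb}, I again split into the two regimes. When $t\ge 1$, Proposition~\ref{P:G}(ii) and $|x-y|\le M$ yield $G_N(t,x,y)\ge C_4 e^{-c_4 M^2}$ uniformly in $x,y\in\dom$, so using~\eqref{E:f-bd},
\begin{equation*}
\iint_{\dom^2} G_N(t,x,y)G_N(t,x',y')f(y-y')\,\ud y\,\ud y'
\ge C_4^2 e^{-2c_4 M^2} C_f^{-1}\iint_{\dom^2}|y-y'|^{-\beta}\,\ud y\,\ud y' \ge c>0,
\end{equation*}
which matches $(1\wedge t)^{-\beta/2}=1$. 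When $t<1$ and $|x-x'|\le\sqrt{t}$, I would restrict the integration to $y\in \dom\cap B(x,\sqrt{t})$ and $y'\in \dom\cap B(x',\sqrt{t})$. On this set, $|x-y|^2/t\le 1$ and $|x'-y'|^2/t\le 1$, so the Gaussian factors in~\eqref{E:G-n-lower} are bounded below by $e^{-c_4}$, while $|y-y'|\le |y-x|+|x-x'|+|x'-y'|\le 3\sqrt{t}$, so $f(y-y')\ge C_f^{-1}(3\sqrt{t})^{-\beta}$. Finally Lemma~\ref{Lem:vol} gives
\begin{equation*}
\op{Vol}\bigl(\dom\cap B(x,\sqrt{t})\bigr)\cdot \op{Vol}\bigl(\dom\cap B(x',\sqrt{t})\bigr)\ge C\,t^{d},
\end{equation*}
and multiplying these factors produces a lower bound of order $t^{-d}\cdot t^{-\beta/2}\cdot t^{d}=t^{-\beta/2}$.

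There is no genuine obstacle, since the argument is a strictly simpler analogue of the Dirichlet proof; the only mild subtlety is the $t\ge 1$ lower bound, where one must observe that boundedness of $\dom$ combined with the absence of an $e^{-\mu_1 t}$ term in~\eqref{E:G-n-lower} allows $G_N$ to be bounded below by a positive constant on $\dom\times\dom$ for all $t\ge 1$ — this is precisely what eliminates the need for an $\eps$-shrinkage and lets the lower bound hold on all of $\dom$ rather than only on $\dom_\eps$.
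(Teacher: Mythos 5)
Your proof is correct and follows exactly the strategy the paper indicates (the authors state the Neumann proof ``follows the same strategy as that of Lemma~\ref{Lem:iint-d}'' and leave it to the reader). You correctly identify the one substantive simplification: because the Neumann two-sided bound~\eqref{E:G-n-bd} carries no $\Phi_1$ factor and no $e^{-\mu_1 t}$ decay, the lower bound needs neither the shrinkage to $\dom_\eps$ nor the $V(x)$-cone apparatus of Lemma~\ref{Lem:vol2} --- a direct restriction to $\dom\cap B(x,\sqrt t)\times\dom\cap B(x',\sqrt t)$ together with the volume estimate of Lemma~\ref{Lem:vol} suffices, which is precisely why the lower bound in Lemma~\ref{Lem:iint-n}(ii) holds on all of $\dom$ rather than only on $\dom_\eps$.
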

The proof of Lemma~\ref{Lem:iint-n} follows the same strategy as that of
lemma~\ref{Lem:iint-d}. We will leave it to the interested readers.

\bigskip

Lemmas~\ref{Lem:iint-d} and~\ref{Lem:iint-n} suggest the study of the following
functions.
\begin{definition}\label{D:wht}
  Let $\widehat{K}_\lambda(t)$ and $\widehat{h}_n(t)$ be defined as~\eqref{E:K}
  and~\eqref{E:hn}, respectively, except that $h^{\dom}_0(t)$ and $k^{\dom}(t)$
  in~\eqref{E:kO} be replaced, respectively, by
  \begin{align}\label{E:wht-k}
    \widehat{h}_0(t) \equiv 1 \quad \text{and} \quad
    \widehat{k}(t) = (1 \wedge t)^{-\rho} \quad
    \text{with $\rho\in(0,1)$,}
  \end{align}
  namely,
  \begin{align*}
   \widehat{K}_\lambda(t) \coloneqq \sum_{n = 0}^{\infty} \lambda^{2n} \widehat{h}_n(t)
   \quad \text{and} \quad
   \widehat{h}_n(t) \coloneqq \left(\widehat{k}^{* n} * 1\right)(t) \quad \text{for $n \ge 1$}.
  \end{align*}
\end{definition}

Before proceeding to the next lemma, we recall some useful formulas and
inequalities:
\begin{itemize}
  \item From the Beta integral (see, e.g.,~\cite{olver.lozier.ea:10:nist}), we
    have that for all $t > 0$, $n \ge 2$, and $r_0, r_1, \dots, r_n > -1$,
  \begin{equation}\label{E:beta-id}
  \begin{gathered}
     \int_0^t \ud s_1 \int_0^{s_1} \ud s_2 \cdots\!\!\int_0^{s_{n-1}} \ud s_n
      (t-s_1)^{r_0} (s_1-s_2)^{r_1}
      \dots (s_{n-1} - s_n)^{r_{n-1}} s_n^{r_n}\\
     = \frac{\prod_{i = 0}^n\Gamma(1+r_i)}{\Gamma(n+ \sum_{i = 0}^n r_i + 1)} t^{n + \sum_{i = 0}^n r_i}.
  \end{gathered}
  \end{equation}
  \item For any $a > 0$, there exist positive finite constants $C$ and $c$
    depending on $a$ such that
    \begin{equation}\label{Gamma-bd}
      c^n (n!)^a \le \Gamma(an+1) \le C^n (n!)^a,
      \qquad \text{for all integers $n \ge 0$.}
    \end{equation}
  This inequality can be easily verified using Stirling's formula; see (68)
  of~\cite{balan.conus:16:intermittency}.
  \item The following result is proved in~\cite[Lemma
  A.1]{balan.conus:16:intermittency} and~\cite[Lemma
  5.2]{balan.jolis.ea:17:intermittency}: For any $a > 0$, there exist positive
  finite constants $C_1, C_2, C_3, C_4$ depending on $a$ such that for all $x >
  0$,
  \begin{equation}\label{E:exp-bd}
  C_1 \exp(C_2 x^{1/a}) \le \sum_{n = 0}^\infty \frac{x^n}{(n!)^a}
  \le C_3 \exp(C_4 x^{1/a}).
  \end{equation}
\end{itemize}

\begin{lemma}\label{Lem:I}
  Let $\lambda > 0$. Let $\widehat{K}_\lambda(t)$ and $\widehat{h}_n(t)$ be
  defined as in Definition~\ref{D:wht}. Then:
  \begin{enumerate}
    \item[(i)] The function $t \mapsto \widehat{h}_n(t)$ is nondecreasing for
      each $n\ge 0$.
    \item[(ii)] There exist positive finite constants $C_1$ and $C_2$ depending only
      on $\rho$ such that for all $t > 0$, for all integers $n \ge 1$,
      \begin{equation}\label{E:wht-hn}
         ((1-\rho)/2)^n t^n \sum_{k = 0}^n \frac{C_1^k t^{-k\rho}}{(n-k)! (k!)^{1-\rho}}
        \le \widehat{h}_n(t)
        \le t^n \sum_{k = 0}^n \frac{C_2^k t^{-k\rho}}{(n-k)! (k!)^{1-\rho}}.
      \end{equation}
    \item[(iii)] There exist positive finite constants $C_3, \dots, C_{8}$
      depending only on $\rho$ such that for all $t>0$,
      \begin{equation}\label{E:sum-I-bd}
        C_3 \exp\left(t\left(C_4 \lambda^2 + C_5 \lambda^{\frac{2}{1-\rho}}\right)\right)
        \le \widehat{K}_\lambda(t)
        \le C_6 \exp\left(t\left(C_7 \lambda^2 + C_{8} \lambda^{\frac{2}{1-\rho}}\right)\right).
      \end{equation}
    \item[(iv)] For any $p\ge 1$ and $t>0$, it holds that $\sum_{n = 0}^{\infty} \left[
      \lambda^{2n} \widehat{h}_n(t) \right]^{1/p}<\infty$.
  \end{enumerate}
\end{lemma}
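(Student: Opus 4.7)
Part (i) is immediate: since $\widehat{k}(t) = (1\wedge t)^{-\rho} \ge 0$, every convolution power $\widehat{k}^{*n}$ is nonnegative, and $\widehat{h}_n(t) = (\widehat{k}^{*n} * 1)(t) = \int_0^t \widehat{k}^{*n}(s)\,\ud s$ is therefore nondecreasing in $t$.

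The heart of the argument is part (ii). I would start from the exact decomposition $\widehat{k}(t) = t^{-\rho}\mathbf{1}_{(0,1]}(t) + \mathbf{1}_{(1,\infty)}(t)$, which yields the two-sided pointwise estimate
\[
\tfrac{1-\rho}{2}\bigl(t^{-\rho} + 1\bigr) \le \widehat{k}(t) \le t^{-\rho} + 1, \qquad t > 0,
\]
the left inequality being verified directly by inspection at $t\to 0$, $t = 1$ and $t\to\infty$. For the upper bound I expand $(t^{-\rho}+1)^{*n}$ binomially and compute each convolution via the iterated Beta integral~\eqref{E:beta-id},
\[
\bigl((\cdot)^{-\rho}\bigr)^{*k} * 1^{*(n-k)}(t) = \frac{\Gamma(1-\rho)^k}{\Gamma(n-k\rho)}\,t^{n-k\rho-1},
\]
then integrate over $[0,t]$ to obtain $\widehat{h}_n(t) \le \sum_k \binom{n}{k}\Gamma(1-\rho)^k t^{n-k\rho}/\Gamma(n-k\rho+1)$. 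Writing $\Gamma(n-k\rho+1) = \Gamma((n-k) + k(1-\rho) + 1)$ and applying~\eqref{Gamma-bd} to compare with $(n-k)!$ and $\Gamma(k(1-\rho)+1)\asymp (k!)^{1-\rho}$ produces the upper bound in the claimed form with $C_2$ depending only on $\rho$. The lower bound follows from $\widehat{k}\ge \tfrac{1-\rho}{2}(t^{-\rho}+1)$ via the same computation and introduces the prefactor $((1-\rho)/2)^n$.

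Once (ii) is in hand, parts (iii) and (iv) are direct summation. Substituting the bound into $\widehat{K}_\lambda(t) = \sum_n \lambda^{2n}\widehat{h}_n(t)$ and swapping the order of summation via $m = n-k$ factors the double sum:
\[
\widehat{K}_\lambda(t) \le e^{\lambda^2 t}\sum_{k\ge 0}\frac{(C_2\lambda^2 t^{1-\rho})^k}{(k!)^{1-\rho}},
\]
and~\eqref{E:exp-bd} with $a = 1-\rho$ bounds the remaining series by $C_6\exp(C_8\lambda^{2/(1-\rho)}t)$, yielding the upper bound of (iii); the lower bound is obtained identically from the lower bound in (ii). For (iv) with $p\ge 1$, the subadditivity $(\sum_i a_i)^{1/p} \le \sum_i a_i^{1/p}$ combined with the same sum-swap gives
\[
\sum_{n\ge 0}\bigl[\lambda^{2n}\widehat{h}_n(t)\bigr]^{1/p}
\le \Bigl(\sum_m \frac{(\lambda^2 t)^{m/p}}{(m!)^{1/p}}\Bigr)\Bigl(\sum_k \frac{(C_2\lambda^2 t^{1-\rho})^{k/p}}{(k!)^{(1-\rho)/p}}\Bigr),
\]
and~\eqref{E:exp-bd} (applied with $a = p$ and with $a = p/(1-\rho)$) ensures both factors are finite.

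The main obstacle lies in (ii): converting $\binom{n}{k}\Gamma(1-\rho)^k/\Gamma(n-k\rho+1)$ into $C^k/[(n-k)!(k!)^{1-\rho}]$ uniformly in $n$ and $k$ requires both sides of~\eqref{Gamma-bd} and a separate treatment of the regimes $t\le 1$ (where the support of $a^{*k}*b^{*(n-k)}$ with $a = t^{-\rho}\mathbf 1_{(0,1]}$, $b = \mathbf 1_{(1,\infty)}$ forces only the $k = n$ contribution to survive and matches the $k=n$ term of the claim exactly) and $t>1$ (where all terms of the sum contribute). This careful choice of constants independent of $n$ is the delicate point; everything else in the proof is mechanical.
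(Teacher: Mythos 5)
Your argument for all four parts follows the paper's very closely: a pointwise two-sided sandwich of $\widehat k$ by $1+t^{-\rho}$, binomial expansion of the $n$-fold convolution, evaluation via the Beta identity~\eqref{E:beta-id}, then~\eqref{Gamma-bd} and~\eqref{E:exp-bd} to close out (iii) and (iv). The one stylistic difference is that you build the entire $((1-\rho)/2)^n$ prefactor into the lower pointwise bound $\widehat k(t)\ge\tfrac{1-\rho}{2}(1+t^{-\rho})$, whereas the paper uses the sharper $\widehat k(t)\ge\tfrac12(1+t^{-\rho})$ and lets the $(1-\rho)^n$ emerge from the Gamma-function step; both produce the stated form.

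Your closing remark that converting $\binom{n}{k}\Gamma(1-\rho)^k/\Gamma(n-k\rho+1)$ into $C^k/[(n-k)!(k!)^{1-\rho}]$ uniformly in $n,k$ is the delicate step is well-placed, though the resolution is not a case split on $t\le 1$ versus $t>1$ (the sandwich approach you and the paper both use never needs that). The real subtlety is this: feeding the lower bound of~\eqref{Gamma-bd2} into the $k$-th term of $h_n^*$ leaves an uncompensated $(1-\rho)^{-(n-k)}$ in the numerator, and since $(1-\rho)^{-(n-k)}>(1-\rho)^{-k}$ whenever $k<n/2$, this factor cannot be absorbed into a single $C_2^k$ with $C_2$ independent of $n$; the paper's displayed $C_2=c^{-1}(1-\rho)^{-1}\Gamma(1-\rho)$ has the same issue, and your proposal inherits it verbatim. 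The gap is benign for the downstream parts: the stray factor is at most $(1-\rho)^{-(n-k)}$, so after the index shift $m=n-k$ it merely replaces $\lambda^2 t$ by $\lambda^2 t/(1-\rho)$ inside the exponential in (iii) and (iv), which only changes $C_7$. But as stated, the term-by-term upper bound in (ii) should carry an extra $(1-\rho)^{-(n-k)}$ (or, more crudely, $(1-\rho)^{-n}$) that neither your writeup nor the paper records.
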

\begin{proof}
  (i) Obviously, $\widehat{h}_0$ is nondecreasing. Suppose $\widehat{h}_n$ is
  nondecreasing. Then
  \begin{align*}
    \widehat{h}_{n+1}(t) = \int_0^t (1\wedge (t-s))^{-\rho}\, \widehat{h}_n(s) \ud s
                         = \int_0^t (1\wedge s)^{-\rho}\, \widehat{h}_n(t-s) \ud s,
  \end{align*}
  which is also nondecreasing (see also the proof of Lemma 2.6 of~\cite{chen.kim:19:nonlinear}).

  (ii) By expanding $\widehat{h}_n(t)$ recursively, we see that
  \begin{equation*}
    \widehat{h}_n (t) = \int_0^t \ud s_1 \int_0^{s_1} \ud s_2 \cdots \int_0^{s_{n-1}} \ud s_n
    \prod_{i = 1}^n (1\wedge (s_{i-1} - s_{i}))^{-\rho},
  \end{equation*}
  where $s_0 = t$.
  Note that
  \begin{align}\label{E:Crho}
   \frac{1}{2}(1+x^{-\rho}) \le (1\wedge x)^{-\rho} \le 1 + x^{-\rho}
   \quad \text{for all $x \ge 0$}.
  \end{align}
  So, in order to prove~\eqref{E:wht-hn}, it suffices to prove that
  \begin{equation}\label{E:wht-hn*}
   (1-\rho)^n t^n \sum_{k = 0}^n \frac{C_1^k t^{-k\rho}}{(n-k)! (k!)^{1-\rho}}
   \le h^*_n(t)
   \le t^n \sum_{k = 0}^n \frac{C_2^k t^{-k\rho}}{(n-k)! (k!)^{1-\rho}},
  \end{equation}
  where
  \begin{equation*}
    h^*_n (t) = \int_0^t \ud s_1 \int_0^{s_1} \ud s_2 \cdots \int_0^{s_{n-1}} \ud s_n
    \prod_{i = 1}^n \left(1+ (s_{i-1} - s_{i})^{-\rho}\right).
  \end{equation*}
  We observe that
  \begin{align*}
    \prod_{i = 1}^n (1+ (s_{i-1} - s_i)^{-\rho})
    = \sum_{k = 0}^n\, \sum_{1 \le i_1 < i_2 < \dots < i_k \le n}\,
    \prod_{j = 1}^k (s_{i_j-1} - s_{i_j})^{-\rho},
  \end{align*}
  where we have used the convention that when $k = 0$ the summation and product
  inside gives one. Then, by~\eqref{E:beta-id}, we have
  \begin{align}\label{I_n^*}
    \begin{split}
      h_n^* (t)
      & = \sum_{k = 0}^n\, \sum_{1 \le i_1 < i_2 < \dots < i_k \le n}
          \int_0^t \ud s_1 \int_0^{s_1} \ud s_2 \dots \int_0^{s_{n-1}} \ud s_n \,
          \prod_{j = 1}^k \left(s_{i_j-1} - s_{i_j}\right)^{-\rho}\\
      & = \sum_{k = 0}^n \binom{n}{k}
          \frac{\Gamma(1-\rho)^k}{\Gamma(n-k\rho+1)}t^{n-k\rho}.
    \end{split}
  \end{align}
  By the recursion identity of the Gamma function $\Gamma(z+1) = z\Gamma(z)$, we
  have
  \begin{align*}
    \Gamma(n-k\rho+1)
    = \Gamma\big(k(1-\rho)+1\big) \prod_{j = 1}^{n-k} \big(k(1-\rho) + j\big).
  \end{align*}
  Because $0 < \rho < 1$, for all $0 \le j \le n$, it holds that $(1-\rho)(k+j)
  < k(1-\rho)+j < k+j$ and hence,
  \begin{align*}
    \left(1-\rho\right)^{n-k} \frac{n!}{k!}\: \Gamma\left(k(1-\rho)+1\right)
    \le \Gamma\left(n-k\rho+1\right) \le
    \frac{n!}{k!} \: \Gamma\left(k(1-\rho)+1\right).
  \end{align*}
  Also, by~\eqref{Gamma-bd}, $c^k (k!)^{1-\rho} \le \Gamma(k(1-\rho) + 1)
  \le C^k (k!)^{1-\rho}$. Hence, it follows that
  \begin{equation}\label{Gamma-bd2}
    \left(1-\rho\right)^{n-k} c^{k} \frac{n!}{(k!)^\rho}\le \Gamma(n-k\rho+1)
    \le C^{k} \frac{n!}{(k!)^\rho}.
  \end{equation}
  Putting this back into~\eqref{I_n^*}, we get~\eqref{E:wht-hn*} with
  $C_1 = C^{-1}\Gamma(2-\rho)$ and
  $C_2 = c^{-1}\left(1-\rho\right)^{-1}\Gamma(1-\rho)$. Hence,~\eqref{E:wht-hn}
  follows.

  As for part (iii), using~\eqref{E:wht-hn}, interchanging the order of
  summation and applying~\eqref{E:exp-bd} yield~\eqref{E:sum-I-bd}. Finally, for
  part (iv), after an application of the sub-additivity of the function
  $x\mapsto x^{1/p}$ to the far right-hand side of~\eqref{E:wht-hn}, one can
  carry out the same arguments as the proof of the upper bound
  of~\eqref{E:sum-I-bd} to show that the series in question converges. This
  completes the proof of Lemma~\ref{Lem:I}.
\end{proof}

The above lemma plays the same role as, e.g., Lemma A.2
of~\cite{chen.hu.ea:21:regularity} where the kernel function $k(t)$ takes the
form of $\left(t-s\right)^{-\rho}$. In that case, the computations can be made
explicit by using the Mittag-Leffler function. Indeed, Lemma~\ref{Lem:I} can be
rephrased as the Gronwall-type lemma below.

\begin{lemma}[Gronwall-type Lemma]\label{L:Gronwall}
  Let $\lambda \in \R_+$, $\rho\in (0,1)$, $\widehat{k}(t) = (1\wedge
  t)^{-\rho}$ and $b: \R_+ \to \R_+$ be a nonnegative function. Suppose that $H:
  \R_+ \to \R_+$ is a locally integrable nonnegative function such that for all
  $t \ge 0$,
  \begin{align}\label{E:Gron-Equ}
    H(t) & \le b(t) + \lambda^2 \int_0^t \widehat{k}(t-s) H(s) \ud s. \shortintertext{Then}
    H(t) & \le b(t) + \sum_{n = 1}^\infty \lambda^{2n} \left(\widehat{k}^{*n} * b\right)(t). \label{E:Gron-Res}
  \end{align}
\end{lemma}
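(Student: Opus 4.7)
The plan is the classical Neumann-series iteration: I would iterate~\eqref{E:Gron-Equ} into itself to produce a partial sum of convolutions of $b$, plus a remainder term involving $H$, and then verify that the remainder tends to zero.

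First, by induction on $n \ge 0$, I would establish the bound
\begin{equation*}
  H(t) \le \sum_{k = 0}^{n} \lambda^{2k}\bigl(\widehat{k}^{*k} * b\bigr)(t) + R_n(t), \qquad R_n(t) \coloneqq \lambda^{2(n+1)}\bigl(\widehat{k}^{*(n+1)}*H\bigr)(t),
\end{equation*}
with the convention $\widehat{k}^{*0}*b \coloneqq b$. The base case $n = 0$ is precisely~\eqref{E:Gron-Equ}; for the inductive step, one convolves both sides of~\eqref{E:Gron-Equ} with the nonnegative kernel $\lambda^{2(n+1)}\widehat{k}^{*(n+1)}$, which preserves the inequality, and substitutes the resulting bound for $R_n$ into the induction hypothesis.

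The main obstacle is then to show that $R_n(t) \to 0$ as $n \to \infty$ for every fixed $t > 0$, since $H$ is only assumed locally integrable. To this end, fix $T > 0$. Using the pointwise bound $\widehat{k}(s) \le 1 + s^{-\rho}$ from~\eqref{E:Crho} and expanding the convolution power in the same manner as the computation~\eqref{I_n^*} for $h^*_n$, I obtain
\begin{equation*}
  \widehat{k}^{*(n+1)}(s) \le \sum_{k=0}^{n+1}\binom{n+1}{k}\frac{\Gamma(1-\rho)^k}{\Gamma(n+1-k\rho)}\, s^{n-k\rho}, \qquad s > 0.
\end{equation*}
Once $n \ge \lceil \rho/(1-\rho) \rceil$, every exponent $n - k\rho$ is nonnegative, so $s^{n-k\rho} \le T^{n-k\rho}$ on $[0, T]$; applying the Gamma estimates~\eqref{Gamma-bd} and~\eqref{Gamma-bd2} exactly as in the proof of Lemma~\ref{Lem:I}(ii)--(iii) produces a factor of $1/(n!)^{1-\rho}$ that dominates the growth of $\lambda^{2(n+1)}$, so that $\lambda^{2(n+1)}\|\widehat{k}^{*(n+1)}\|_{L^\infty[0,T]} \to 0$ as $n \to \infty$. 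Combining this with $H \in L^1_{\mathrm{loc}}(\R_+)$ via the crude estimate
\begin{equation*}
  R_n(t) \le \lambda^{2(n+1)}\,\|\widehat{k}^{*(n+1)}\|_{L^\infty[0,T]}\,\|H\|_{L^1[0,T]}
\end{equation*}
then yields $R_n(t) \to 0$ uniformly for $t \in [0, T]$, and since $T > 0$ was arbitrary, pointwise on $\R_+$.

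Finally, the partial sums $\sum_{k=0}^n \lambda^{2k}(\widehat{k}^{*k}*b)(t)$ are nondecreasing in $n$, so by monotone convergence they converge (possibly to $+\infty$) to the right-hand side of~\eqref{E:Gron-Res}; passing to the limit in the inductive bound and invoking $R_n(t) \to 0$ then yields~\eqref{E:Gron-Res}.
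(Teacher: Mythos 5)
Your proof is correct and follows essentially the same route as the paper's: iterate~\eqref{E:Gron-Equ} to obtain a partial Neumann sum plus a remainder $\lambda^{2N}(\widehat{k}^{*N}*H)(t)$, bound $\widehat{k}^{*N}$ by the binomial-Gamma expansion obtained from~\eqref{E:Crho} and~\eqref{E:beta-id}, note that for $N$ large enough all exponents of $s$ are nonnegative, and use the Gamma estimates~\eqref{Gamma-bd}--\eqref{Gamma-bd2} to show the remainder vanishes by factorial decay against the local integrability of $H$. The only cosmetic difference is that you extract $\|\widehat{k}^{*(n+1)}\|_{L^\infty[0,T]}$ and pair it with $\|H\|_{L^1[0,T]}$, whereas the paper keeps the $s$-dependence explicit inside the convolution integral; the underlying estimates and conclusion are the same.
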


\begin{remark}
  The following variation will not be used in this paper but it is worth noting
  that if $b(t) \ge 0$ and ``$\le$'' in~\eqref{E:Gron-Equ} is replaced by
  ``$\ge$'' (resp.\ ``$ = $''), then we will obtain the
  conclusion~\eqref{E:Gron-Res} with ``$\le$'' replaced by ``$\ge$'' (resp.\ ``$
  = $''). This can be shown by the same proof below.
\end{remark}

\begin{proof}[Proof of Lemma~\ref{L:Gronwall}]
Using~\eqref{E:Gron-Equ} inductively, we get that for any $t \ge 0$, for any $N \ge 1$,
\begin{equation*}
   H(t) \le b(t) + \sum_{n = 1}^{N-1} \lambda^{2n} \left({\widehat{k}}^{* n} * b\right)(t) + \lambda^{2N} \left({\widehat{k}}^{* N} * H\right) (t).
\end{equation*}
Similarly to the proof of Lemma~\ref{Lem:I}(ii), we can use~\eqref{E:beta-id} to deduce that
\begin{align*}
\widehat{k}^{* N}(t) \le \sum_{k = 0}^N \binom{N}{k} \frac{\Gamma(1-\rho)^k}{\Gamma(N-k\rho)} t^{N-1-k\rho}.
\end{align*}
Then, by~\eqref{Gamma-bd2} and the bound $\binom{N}{k} \le 2^N$, we have
\begin{align*}
\left(\widehat{k}^{* N} * H\right)(t)
\le \int_0^t H(t-s) \sum_{k = 0}^N \frac{\widetilde{C}^N}{(N!)^{1-\rho}} s^{N-1-k\rho} \ud s.
\end{align*}
Since $0 < \rho < 1$, if $N$ is large enough, then $N-1-k\rho \ge N - 1 - N\rho = N(1-\rho) - 1 > 0$ for all $k \le N$, and hence
\begin{align*}
\lambda^{2N}\left(\widehat{k}^{* N} * H\right)(t)
\le \frac{(\lambda^2 \widetilde{C})^N}{(N!)^{1-\rho}} N \big(t^{N-1} + t^{N(1-\rho)-1}\big) \int_0^t H(t-s) \ud s.
\end{align*}
Since $H$ is locally integrable, $\lambda^{2N}(\widehat{k}^{* N} * H)(t) \to 0$
as $N \to \infty$ and~\eqref{E:Gron-Res} follows.
\end{proof}

\subsection{Proof of part (i) of Theorem~\ref{T:ExUnque}}

Now we are ready to prove Theorem~\ref{T:ExUnque} for the case where the initial
data is bounded.
\begin{proof}[Proof of Theorem~\ref{T:ExUnque} (the bounded initial data case)]
  Here we assume that the initial condition $\nu$ is absolutely continuous with
  respect to the Lebesgue measure with a bounded density $g$, namely, $\nu(\ud
  x) = g(x)\ud x$ and $g\in L^\infty(\dom)$. The proof follows a standard Picard
  iteration scheme. Let $u_0(t,x) = J(t,x)$ (see~\eqref{E:J0}) and for $n \ge
  1$,
  \[
    u_n(t, x) = J(t,x) + \lambda \int_0^t \int_\dom G(t-s, x, y) \sigma(s, y, u_{n-1}(s, y)) W(\ud s, \ud y).
  \]
  By Burkholder's inequality and Minkowski's inequality, for all $p \ge 2$, $n
  \ge 2$,
  \begin{align*}
    \sup_{0<s\le t}\Norm{u_n(s, x) - u_{n-1}(s, x)}_p^2
    &\le C_p \Lip_\sigma^2 \lambda^2 \int_0^t \ud s \iint_{\dom^2} G(t-s, x, y)f(y-y')G(t-s, x, y') \\
    & \hspace{50pt} \times \sup_{z \in \dom}\Norm{u_{n-1}(s, z) - u_{n-2}(s, z)}_p^2\,\ud y\,\ud y',
  \end{align*}
  where $C_p$ is a constant depending only on $p$. By Lemma~\ref{Lem:iint-d}
  (for the Dirichlet case) or Lemma~\ref{Lem:iint-n} (for the Neumann case),
  \begin{align*}
    \sup_{x\in \dom}\iint_{\dom^2} G(t-s, x, y) f(y-y') G(t-s, x, y')\, \ud y\, \ud y'
        \le C e^{-2\mu (t-s)} (1\wedge (t-s))^{-\beta/2},
  \end{align*}
  where $\mu = \mu_1$ in the Dirichlet case and $\mu = 0$ in the Neumann case.
  Let
  \[
    H_n(t) \coloneqq e^{2\mu t}\sup_{\substack{(s,x)\in (0,t]\times \dom}} \Norm{u_n(s, x) - u_{n-1}(s, x)}_p^2.
  \]
  It follows that for all $n\ge 2$,
  \begin{align}\label{E:Hnt}
    H_n(t) \le a \int_0^t H_{n-1}(s) (1\wedge (t-s))^{-\beta/2} \ud s,
  \end{align}
  where $a = C_p C \Lip_\sigma^2 \lambda^2$, and for $n = 1$,
  \begin{align*}
    H_1(t) \le a \Norm{g}_{L^\infty(\dom)}^2
    \int_0^t e^{2\mu s} (1\wedge (t-s))^{-\beta/2}\ud s.
  \end{align*}
  For any $t\in [0,T]$ with $T$ fixed, using the notation in
  Definition~\ref{D:wht} with $\rho = \beta/2$, the above bound can be written
  as $H_1(t) \le C_T a (\widehat{k} * \widehat{h}_0)(t)$ for some constant
  $C_T$, and we have $H_n(t) \le C_T a^n \widehat{h}_n(t)$ for all $n \ge 0$.
  Hence, we can apply Lemma~\ref{Lem:I}(iv) with $p = 2$ to see that $\sum_{n =
  1}^\infty H_n^{1/2}(t) < \infty$. This implies that $u_n(t, x)$ converges to
  some $u(t, x)$ in $L^p (\Omega)$ which satisfies~\eqref{E:mild-sol}
  and~\eqref{E:sol-Lp}.

  For the uniqueness, suppose that $u$ and $\widetilde{u}$ are two mild solutions
  satisfying~\eqref{E:sol-Lp} with $p = 2$. By Burkholder's inequality with $p = 2$
  and similar calculations to those above, we get that
  \begin{align*}
     H(t) \le C \lambda^2 L_\sigma^2 \int_0^t H(s) (1 \wedge (t-s))^{-\beta/2} \ud s,
     \quad \text{for all $t > 0$,}
  \end{align*}
  where $H(t) \coloneqq \sup_{(s,x)\in (0,t]\times \dom}\Norm{u(s, x) - \tilde
  u(s, x)}_2^2$. The condition~\eqref{E:sol-Lp} implies that $H$ is locally
  bounded. Then we can apply the Gronwall-type Lemma~\ref{L:Gronwall} with
  $b(t)\equiv 0$ to see that $H(t)\equiv 0$, i.e., $u(t, x) = \widetilde{u}(t, x)$
  a.s. This completes the proof of Theorem~\ref{T:ExUnque} in the case where the
  initial measure has a bounded density.
\end{proof}

\section{The \texorpdfstring{$p$}{pth}-th moment bounds and rough initial data}\label{S:Key}

Our next task is to establish more bounds for the convolution-type integrals of
the heat kernels. Specifically, we seek optimal upper and lower bounds for the
integral:
\begin{align*}
  \iint_{\dom \times \dom} G(t-s, x, z) G(t-s, x', z') f(z-z') G(s, z, y) G(s, z', y') \,\ud z\, \ud z'.
\end{align*}
The following identity plays a key role in the estimation of the above integral:
for all $0 < s < t$ and $v, w \in \R^d$,
\begin{equation}\label{E:exp-id}
    \exp\left({-C\frac{|v|^2}{t-s}}\right)\exp\left({-C\frac{|v-w|^2}{s}}\right)
  = \exp\left(-C\frac{|w|^2}{t}\right)\exp\left(-C\frac{|v-\frac{t-s}{t}w|^2}{(t-s)s/t}\right).
\end{equation}
This identity can be verified by direct calculations;
see~\cite[p.657]{chen.kim:19:nonlinear}. It can also be interpreted as an
expression for the density of the Brownian bridge in terms of a conditional
density; see~\cite[Chapter 6]{candil:22:localization}.

\begin{lemma}\label{Lem:iintGGf-d}% {{{
  If $\dom$ is a bounded Lipschitz domain, then we have the
  following integral estimates:
  \begin{enumerate}
    \item[(i)] There exists a finite constant $C$ such that for all $0 < s < t$,
      for all $x, x', y, y' \in \dom$,
      \begin{equation}\label{E:iintGGf-d-ub}
      \begin{split}
        \iint_{\dom \times \dom}
        & \frac{e^{-2\mu_1 (t-s)}}{1\wedge (t-s)^d}e^{-c_1\frac{|x-z|^2+|x'-z'|^2}{t-s}}
         \frac{e^{-2\mu_1 s}}{1\wedge s^d}e^{-c_1\frac{|z-y|^2+|z'-y'|^2}{s}}
        f(z-z') \, \ud z\, \ud z'\\
        &\le C \frac{e^{-2\mu_1 t}}{1\wedge t^d} e^{-c_1 \frac{|x-y|^2 + |x'-y'|^2}{t}}
        \left(1 \wedge \frac{(t-s)s}{t}\right)^{-\beta/2}.
      \end{split}
      \end{equation}
    \item[(ii)] If $\dom$ is convex, then there exists $0 < \eps_0 < 1$ such
      that for all $0 < \eps \le \eps_0$, there exists a positive finite
      constant $C_\eps$ such that for all $0 < s < t$, for all $x, x', y, y' \in
      \dom_\eps$ with $|x-x'| \le \sqrt{(t-s)s/t}$ and $|y-y'| \le
      \sqrt{(t-s)s/t}$,
      \begin{equation}\label{E:iintGGf-d-lb}
      \begin{split}
        \iint_{\dom \times \dom}
        &\widetilde{G}_D(t-s, x, x', z, z')
        \widetilde{G}_D(s, z, z', y, y')
        f(z-z') \, \ud z\, \ud z'\\
        &\ge C_\eps \frac{e^{-2\mu_1 t}}{1\wedge t^d} e^{-c_2 \frac{|x-y|^2 + |x'-y'|^2}{t}}
        \left(1 \wedge \frac{(t-s)s}{t}\right)^{-\beta/2}.
      \end{split}
      \end{equation}
  \end{enumerate}
\end{lemma}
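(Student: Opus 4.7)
The engine of the proof is the pointwise identity~\eqref{E:exp-id}. Applied with $(v,w)=(z-x,\,y-x)$, it turns the pair of Gaussian-in-time factors $e^{-c|x-z|^2/(t-s)}\,e^{-c|z-y|^2/s}$ into $e^{-c|x-y|^2/t}\,e^{-c|z-a|^2/\tau}$, where $a := x+\tfrac{t-s}{t}(y-x)$ and $\tau := (t-s)s/t$; applying it again in the primed variables introduces $a' := x'+\tfrac{t-s}{t}(y'-x')$. Pulling the two factors $e^{-c|x-y|^2/t}$ and $e^{-c|x'-y'|^2/t}$, as well as $e^{-2\mu_1 t}$, out of the integral, both parts (i) and (ii) reduce to estimating
\begin{equation*}
I(\tau;a,a') \;:=\; \iint_{\dom\times\dom} e^{-c\,\frac{|z-a|^2+|z'-a'|^2}{\tau}}\,f(z-z')\,\ud z\,\ud z',
\end{equation*}
which is precisely the scaled analogue, at time $\tau$ and shifted centres $a,a'$, of the integral treated in Lemma~\ref{Lem:iint-d}. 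Matching the prefactors on the two sides then rests on the elementary equivalence $(1\wedge(t-s))^d(1\wedge s)^d \asymp (1\wedge t)^d(1\wedge\tau)^d$, which a short case analysis on whether each of $t-s$, $s$, $t$ exceeds $1$ makes routine.

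For the upper bound in~(i), I would enlarge the integration domain from $\dom\times\dom$ to $\R^d\times\R^d$ and repeat the Plancherel argument from the proof of Lemma~\ref{Lem:iint-d}(i): the Fourier transform of $e^{-c|\cdot-a|^2/\tau}$ is a constant multiple of $\tau^{d/2}e^{-ia\cdot\xi}e^{-\tau|\xi|^2/(4c)}$, that of $|z|^{-\beta}$ is proportional to $|\xi|^{\beta-d}$, and the scaling $\xi\mapsto\xi/\sqrt\tau$ produces $I(\tau;a,a')\le C\,\tau^{d/2}\,(1\wedge\tau)^{-\beta/2}$. Combined with the prefactor coming out of the identity, this is exactly~\eqref{E:iintGGf-d-ub}.

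For the lower bound in~(ii), I would run the same reduction and then mirror the proof of Lemma~\ref{Lem:iint-d}(ii), but with the cones $V(a)$ and $V(a')$ in place of $V(x)$ and $V(x')$. Here convexity of $\dom$ is decisive: it makes $\dom_\eps$ itself convex, so that $a,a'\in\dom_\eps$ whenever $x,x',y,y'\in\dom_\eps$, and Lemma~\ref{Lem:vol2} applied at the centres $a,a'$ then supplies subsets of $\dom$ whose distance to $\partial\dom$ is at least $c_\eps$ and whose volume inside $B(a,\sqrt\tau)$ is at least $C(1\wedge\sqrt\tau)^d$. The hypotheses $|x-x'|\le\sqrt\tau$ and $|y-y'|\le\sqrt\tau$ force $|a-a'|\le\sqrt\tau$, so on $V(a)\cap B(a,\sqrt\tau)\times V(a')\cap B(a',\sqrt\tau)$ the bound $|z-z'|\le 3\sqrt\tau$ holds and we may invoke $f(z-z')\ge C_f^{-1}|z-z'|^{-\beta}$; the $\Phi_1$ factors hidden in the definition of $\widetilde G_D$ are bounded below on this region by $c_0^{-1}c_\eps^{a_1}$ via~\eqref{E:Phi-bd}, and are absorbed into $C_\eps$. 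The regime $\tau\ge 1$ is handled, as in~\eqref{I_eps-lb}, by restricting the integration to $\dom_\eps\times\dom_\eps$ and using boundedness of $\dom$.

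The principal obstacle is the bookkeeping between the two distinct time scales $t$ and $\tau$: the product of the two heat kernels furnishes a prefactor $\tfrac{1}{(1\wedge(t-s))^d(1\wedge s)^d}$ that depends on $s$ and $t-s$ individually, whereas the desired bound depends only on $t$ and on $\tau=(t-s)s/t$. The equivalence mentioned in the first paragraph is what makes this matching work and is the single nontrivial computational step. A secondary technical point in~(ii) is the choice of $\eps_0$: it must be small enough, in terms of the $\delta$-cone parameter of $\dom$ from~\eqref{E:delta-Lip}, that the cones $V(a), V(a')$ of size $\min(\sqrt\tau,\eps_0/2)$ actually fit inside $\dom$, which is precisely where the convexity-driven conclusion $a,a'\in\dom_\eps$ is used.
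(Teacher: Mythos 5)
Your strategy --- apply the identity~\eqref{E:exp-id}, pull out $e^{-c|x-y|^2/t}$, $e^{-c|x'-y'|^2/t}$ and $e^{-2\mu_1 t}$, reduce to the integral $I(\tau;a,a')$ at the convex combinations $a=\tfrac{s}{t}x+\tfrac{t-s}{t}y$ and $a'$ and at the reduced time $\tau=(t-s)s/t$, then replay the Plancherel (upper) and cone-volume (lower) arguments from Lemma~\ref{Lem:iint-d} --- is exactly the route the paper follows, including using convexity to guarantee $a,a'\in\dom_\eps$ and noting that $|a-a'|\le\sqrt\tau$ once $|x-x'|,|y-y'|\le\sqrt\tau$.

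One arithmetic slip should be repaired before the chain of inequalities actually closes. In the Plancherel computation for $I(\tau;a,a')$, \emph{two} Gaussian transforms appear (one in $z$, one in $z'$), so the prefactor is $\tau^{d}$, not $\tau^{d/2}$, and the correct intermediate bound is $I(\tau;a,a')\le C\,(1\wedge\tau)^{d-\beta/2}$; for $\tau\ge1$ this is a constant by boundedness of $\dom$, whereas the factor $\tau^{d/2}$ in your expression would grow. As written, combining $\tau^{d/2}(1\wedge\tau)^{-\beta/2}$ with the prefactor comparison $(1\wedge(t-s))^d(1\wedge s)^d\asymp(1\wedge t)^d(1\wedge\tau)^d$ leaves a spurious $\tau^{-d/2}$ and does not yield~\eqref{E:iintGGf-d-ub}; with the exponent $d$ the algebra closes exactly as intended. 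That two-sided comparison itself is correct and packages more cleanly what the paper verifies one direction at a time (the upper direction via a short case analysis, the lower direction via~\eqref{min-lb}).
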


\begin{proof}
  (i) Denote the left-hand side of~\eqref{E:iintGGf-d-ub} by $I$.
  By~\eqref{E:f-bd},
  \begin{align*}
    I & \le C_f C_1^4\: e^{-2\mu_1 t} \frac{1}{(1\wedge (t-s)^d)(1\wedge s^d)} \iint_{\dom^2}
    e^{-c_1\left(\frac{|x-z|^2}{t-s} + \frac{|x'-z'|^2}{t-s} + \frac{|z-y|^2}{s} +
    \frac{|z'-y'|^2}{s}\right)} |z-z'|^{-\beta} \,\ud z\,\ud z'\\
    & = C_f C_1^4\: e^{-2\mu_1 t} e^{-c_1\left(\frac{|y-x|^2}{t}+\frac{|y'-x'|^2}{t}\right)}\\
    & \quad \times \frac{1}{(1\wedge (t-s)^d)(1\wedge s^d)}
    \iint_{\dom^2} e^{-c_1 \left(\frac{|(z-x) - \frac{t-s}{t}(y-x)|^2}{(t-s)s/t}
    + \frac{|(z'-x') - \frac{t-s}{t}(y'-x')|^2}{(t-s)s/t}\right)}|z-z'|^{-\beta} \,\ud z\,\ud z'.
  \end{align*}
  For the last equality, we have applied the identity~\eqref{E:exp-id} with $v =
  z-x$ and $w = y-x$, and also with $v' = z'-x'$ and $w' = y'-x'$. Set $\tau =
  (t-s)s/t$. We claim that there exists a finite constant $C$ such that for all
  $a, a' \in \R^d$, for all $\tau > 0$,
  \begin{equation}\label{iint-ub}
    I'\coloneqq \iint_{\dom^2} e^{-c_1 \left(\frac{|z-a|^2}{\tau}
    + \frac{|z'-a'|^2}{\tau}\right)}|z-z'|^{-\beta} \,\ud z\,\ud z'
    \le C(1\wedge \tau)^{d-\beta/2}.
  \end{equation}
  Indeed, if $\tau \ge 1$, then~\eqref{iint-ub} holds since $\beta < d$. If $\tau
  < 1$, then by the Plancherel theorem,
  \begin{align*}
  I' & \le \iint_{\R^d \times \R^d} e^{-c_1 \left(\frac{|z-a|^2}{\tau} + \frac{|z'-a'|^2}{\tau}\right)}|z-z'|^{-\beta} \,\ud z\,\ud z' \\
     & = C \tau^d \int_{\R^d} e^{-i(a-a')\cdot \xi} e^{-\frac{2}{c_1}\tau|\xi|^2}|\xi|^{\beta-d} \ud\xi                            \\
     & \le C \tau^d \int_{\R^d} e^{-\frac{2}{c_1}\tau|\xi|^2}|\xi|^{\beta-d} \ud\xi
       = C' \tau^{d-\beta/2},
  \end{align*}
  where the last equality can be obtained by a scaling argument. This
  verifies~\eqref{iint-ub} and hence the following:
  \begin{gather*}
    I \le Ce^{-2\mu_1 t} e^{-c_1\left(\frac{|y-x|^2}{t}+\frac{|y'-x'|^2}{t}\right)} h(s, t) \left(1\wedge \frac{(t-s)s}{t}\right)^{-\beta/2}, \shortintertext{where}
    h(s, t) \coloneqq \frac{1}{(1\wedge (t-s)^d)(1\wedge s^d)} \left(1 \wedge \frac{(t-s)^d s^d}{t^d}\right).
  \end{gather*}
  It can be verified by straightforward calculations that if $t-s < 1$ and $s <
  1$, then $h(s, t) \le t^{-d}$; otherwise, $h(s, t) \le 1$. Therefore, we obtain
  the upper bound~\eqref{E:iintGGf-d-ub}.

  (ii) To prove the lower bound~\eqref{E:iint-d-lb}, let $0 < s < t$ and $x, x',
  y, y' \in \dom_\eps$ be such that $|x-x'| \le \sqrt{(t-s)s/t}$ and $|y-y'| \le
  \sqrt{(t-s)s/t}$. Denote the left-hand side of~\eqref{E:iint-d-lb} by $\tilde
  I$. By~\eqref{E:f-bd},~\eqref{E:G-d},~\eqref{E:Phi-bd} and~\eqref{E:exp-id},
  we have that
  \begin{align*}
    \widetilde{I}
    & \ge        C_\eps \frac{e^{-2\mu_1 t}}{(1\wedge (t-s)^d)(1\wedge s^d)} e^{-c_2\left(\frac{|y-x|^2}{t} + \frac{|y'-x'|^2}{t}\right)} \iint_{\dom \times \dom} \ud z \ud z'                                        \\
    & \quad \times  (1\wedge \Phi_1(z))^2 (1\wedge \Phi_1(z'))^2 e^{-c_2 \left(\frac{|(z-x) - \frac{t-s}{t}(y-x)|^2}{1 \wedge \tau} + \frac{|(z'-x') - \frac{t-s}{t}(y'-x')|^2}{1 \wedge \tau}\right)} |z-z'|^{-\beta} \\
    & \eqqcolon A, \qquad \text{with $\tau = (t-s)s/t$}.
  \end{align*}

  Now, we consider the following two cases: $\tau \ge 1$ and $\tau <1$. In the
  rest of the proof, the constant $C_\eps$ above will be used to denote a
  generic constant that depends on $\eps$, whose value may change at each
  appearance.

  \medskip\noindent\textit{Case 1.~} Suppose $\tau\ge 1$. Observe that $(1\wedge
  (t-s)^d)(1\wedge s^d) \le 1\wedge t^d$ and $(1\wedge \tau)^{-\beta/2} = 1$.
  Also, by $\sup_{y, y' \in D}|y-y'| \le M < \infty$ and~\eqref{E:Phi-bd}, we
  see that
  \begin{align*}
    A &\ge C_\eps \frac{e^{-2\mu_1 t}}{1\wedge t^d}
      e^{-c_2\left(\frac{|y-x|^2}{t}+\frac{|y'-x'|^2}{t}\right)}
      \iint_{\dom_\eps \times \dom_\eps} (1 \wedge (c_0^{-1}\eps^{a_1}))^4
      e^{-2c_2 M^2} M^{-\beta}\, \ud z\, \ud z'\\
    & = C_\eps \frac{e^{-2\mu_1 t}}{1\wedge t^d}
      e^{-c_2\left(\frac{|y-x|^2}{t}+\frac{|y'-x'|^2}{t}\right)} (1\wedge \tau)^{-\beta/2},
  \end{align*}
  which proves~\eqref{E:iintGGf-d-lb}.

  \medskip\noindent{\textit{Case 2.~}} Suppose $\tau < 1$. In this case, $1
  \wedge \tau = \tau$. Notice that
  \begin{align*}
    (z-x) - \frac{t-s}{t}(y-x) = z - a, \quad \text{where $a\coloneqq \frac st x + \frac{t-s}{t} y$},
  \end{align*}
  and $a'$ is defined similarly.
  The convexity assumption on $\dom$ ensures that both $a$ and $a'$ are in $\dom_\eps$.
  By Lemma~\ref{Lem:vol2}, for $0 < \eps \le \eps_0$,
  \begin{align*}
    A & \ge C_\eps \frac{e^{-2\mu_1 t}}{(1\wedge (t-s)^d)(1\wedge s^d)} e^{-c_2\left(\frac{|y-x|^2}{t}+\frac{|y'-x'|^2}{t}\right)} \\
      & \quad \times \iint_{E(a) \times E(a')} e^{-2c_2} \textbf{1}_{\{|z-a| \le \sqrt{\tau}, \, |z'-a'| \le \sqrt{\tau}\}} |z-z'|^{-\beta} \, \ud z\, \ud z'.
  \end{align*}
  Since $|x-x'| \le \sqrt \tau$ and $|y-y'| \le \sqrt \tau$, we have $|a-a'|\le
  \sqrt \tau$. Hence, $|z-z'| \le 3\sqrt{\tau}$ on the set $\{|z-a| \le
  \sqrt{\tau}, |z'-a'| \le \sqrt{\tau}\}$. Also, by~\eqref{vol-V(x)}, it follows
  that
  \begin{align*}
    A \ge C_\eps \frac{e^{-2\mu_1 t}}{(1\wedge (t-s)^d)(1\wedge s^d)}
        e^{-c_2\left(\frac{|y-x|^2}{t}+\frac{|y'-x'|^2}{t}\right)} \tau^{d-\beta/2}.
  \end{align*}
  To finish the proof, it remains to show that the following bound holds for some
  constant $c > 0$:
  \begin{equation}\label{min-lb}
    \frac{\tau^d}{(1\wedge (t-s)^d)(1\wedge s^d)} \ge \frac{c}{1 \wedge t^d}.
  \end{equation}
  Indeed, for $t \le 2$, we can use the bound
  \begin{align*}
        \frac{\tau^d}{(t-s)^d s^d}
    =   \frac{1}{t^d}
    =   \frac{1}{2 \wedge t^d}
    \ge \frac{1}{2(1\wedge t^d)}.
  \end{align*}
  For $t > 2$, consider the following two cases. If $s < t/2$, then $t-s > t/2 >
  1$ and hence
  \begin{align*}
    \frac{\tau^d}{(1\wedge (t-s)^d)(1\wedge s^d)}
    = \frac{\tau^d}{s^d}
    = \frac{(t-s)^d}{t^d} \ge \frac{1}{2^d}
    = \frac{1}{2^{d}(1 \wedge t^d)}.
  \end{align*}
  If $s \ge t/2$, which is $> 1$, then
  \begin{align*}
    \frac{\tau^d}{(1\wedge (t-s)^d)(1\wedge s^d)}
    = \frac{\tau^d}{(t-s)^d}
    = \frac{s^d}{t^d} \ge \frac{1}{2^d}
    = \frac{1}{2^{d}(1 \wedge t^d)}.
  \end{align*}
  This proves~\eqref{min-lb} and completes the proof of the lower
  bound~\eqref{E:iintGGf-d-lb}.
\end{proof}

\begin{lemma}\label{Lem:iintGGf-n}
  If $\dom$ is a bounded Lipschitz domain, then we have the
  following integral estimates:
  \begin{enumerate}
  \item[(i)] There exists a positive finite constant $C$ such that for all $0 <
    s < t$, for all $x, x', y, y' \in \dom$,
    \begin{equation}\label{E:iintGGf-n-ub}
    \begin{split}
      \iint_{\dom \times \dom}
      &\widetilde{G}_N(t-s, x, x', z, z')
       \widetilde{G}_N(s, z, z',y,y')
      f(z-z') \, \ud z\, \ud z'\\
      &\le \frac{C}{1 \wedge t^d}e^{-c_3 \frac{|x-y|^2 + |x'-y'|^2}{t}}
      \left(1\wedge \frac{(t-s)s}{t}\right)^{-\beta/2}.
    \end{split}
    \end{equation}
  \item[(ii)] If $\dom$ is convex and \eqref{E:G-n-lower} holds, then there
    exists a positive finite constant $C$ such that for all $0 < s < t$, for all
    $x, x', y, y' \in \dom$ with $|x-x'|\le \sqrt{(t-s)s/t}$ and $|y-y'| \le
    \sqrt{(t-s)s/t}$,
    \begin{equation}\label{E:iintGGf-n-lb}
    \begin{split}
      \iint_{\dom \times \dom}
      &\widetilde{G}_N(t-s, x, x', z, z')
       \widetilde{G}_N(s, z, z',y,y')
        f(z-z') \, \ud z\, \ud z'\\
      &\ge \frac{C}{1\wedge t^d} e^{-c_4 \frac{|x-y|^2 + |x'-y'|^2}{t}}
      \left(1\wedge \frac{(t-s)s}{t}\right)^{-\beta/2}.
    \end{split}
    \end{equation}
  \end{enumerate}
\end{lemma}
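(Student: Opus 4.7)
The plan is to mirror the proof of Lemma~\ref{Lem:iintGGf-d}, exploiting that the Neumann heat kernel bounds in Proposition~\ref{P:G} have the same Gaussian shape as the Dirichlet ones but without the boundary eigenfunction factor $\Phi_1$ and without the exponential decay $e^{-\mu_1 t}$.

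For the upper bound (i), I would first apply~\eqref{E:G-n} and the upper bound in~\eqref{E:f-bd} so that the left-hand side of~\eqref{E:iintGGf-n-ub} is dominated by
\[
\frac{C}{(1\wedge(t-s)^d)(1\wedge s^d)}\iint_{\dom^2} e^{-c_3\left(\frac{|x-z|^2+|x'-z'|^2}{t-s}+\frac{|z-y|^2+|z'-y'|^2}{s}\right)}|z-z'|^{-\beta}\,\ud z\,\ud z'.
\]
The identity~\eqref{E:exp-id}, applied with $v=z-x$, $w=y-x$ and separately with $v'=z'-x'$, $w'=y'-x'$, factors out $e^{-c_3(|x-y|^2+|x'-y'|^2)/t}$ and recenters the remaining Gaussians at the points $a=(s/t)x+((t-s)/t)y$ and $a'$ with effective width $\sqrt{\tau}$, where $\tau=(t-s)s/t$. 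Invoking the auxiliary estimate~\eqref{iint-ub} established in the proof of Lemma~\ref{Lem:iintGGf-d} produces an extra factor $C(1\wedge\tau)^{d-\beta/2}$. The same case analysis carried out after~\eqref{iint-ub} in that proof shows that $(1\wedge\tau)^d/((1\wedge(t-s)^d)(1\wedge s^d))\le C/(1\wedge t^d)$ uniformly in $0<s<t$, yielding~\eqref{E:iintGGf-n-ub} after pulling out $(1\wedge\tau)^{-\beta/2}$.

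For the lower bound (ii), I would use the Neumann lower heat kernel bound~\eqref{E:G-n-lower} (in force by hypothesis) together with the lower bound in~\eqref{E:f-bd}, and apply~\eqref{E:exp-id} as above. The task then reduces to bounding
\[
\iint_{\dom\times\dom} e^{-c_4\left(\frac{|z-a|^2+|z'-a'|^2}{\tau}\right)}|z-z'|^{-\beta}\,\ud z\,\ud z'
\]
from below, where the convexity of $\dom$ ensures that both $a$ and $a'$ lie in $\dom$. I would split into two cases: when $\tau\ge 1$, I restrict integration to a fixed ball inside $\dom$ and use that $|z-z'|$ is bounded by the diameter of $\dom$; when $\tau<1$, I restrict to $\dom\cap B(a,\sqrt\tau)$ and $\dom\cap B(a',\sqrt\tau)$, note that $|x-x'|\vee|y-y'|\le\sqrt\tau$ implies $|a-a'|\le\sqrt\tau$ and hence $|z-z'|\le 3\sqrt\tau$ on this product set, and then apply the volume lower bound~\eqref{E:RegSet} of Lemma~\ref{Lem:vol} with $r=\sqrt\tau$. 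Combining with the prefactor $1/((1\wedge(t-s)^d)(1\wedge s^d))$ and invoking~\eqref{min-lb} produces the bound~\eqref{E:iintGGf-n-lb}.

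The main subtlety I anticipate is that the absence of a vanishing eigenfunction permits the Neumann lower bound to hold on all of $\dom$, with no $\dom_\eps$-buffer: the volume estimate~\eqref{E:RegSet} holds uniformly for centers $a\in\overline\dom$, and this uniformity replaces the $\eps$-dependent lower bound $\Phi_1\ge c_0^{-1}\eps^{a_1}$ that was needed in the Dirichlet argument. Consequently the Neumann statement is cleaner than its Dirichlet counterpart and the more delicate subset $V(x)$ of Lemma~\ref{Lem:vol2} is not required.
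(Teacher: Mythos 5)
Your proposal correctly carries out the adaptation of the proof of Lemma~\ref{Lem:iintGGf-d} that the paper alludes to, and you rightly observe that the absence of the $\Phi_1$ factors in the Neumann kernel bounds lets you replace the $V(\cdot)$ construction of Lemma~\ref{Lem:vol2} by the direct volume estimate~\eqref{E:RegSet} at the convex-combination centers $a,a'\in\dom$, which is precisely why the restriction to $\dom_\eps$ can be dropped. This is the intended argument.
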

The proof of Lemma~\ref{Lem:iintGGf-n} is similar to that of
Lemma~\ref{Lem:iintGGf-d}, which will be left to interested readers.

\begin{lemma}\label{Lem:tildeh}
  Let $0 < \rho < 1$. Define $\widetilde{h}_0(t) \equiv 1$ and for $n \ge 1$
  \begin{equation}\label{E:wtdh}
    \widetilde{h}_n(t)\coloneqq
    \int_0^t \ud s_1 \int_0^{s_1} \ud s_2 \dots \int_0^{s_{n-1}} \ud s_n
    \prod_{j = 1}^n\left(1 \wedge \frac{(s_{j-1}-s_j)s_j}{s_{j-1}}\right)^{-\rho},
  \end{equation}
  where we use the convention that $s_0 = t$. Then, for all $t > 0$, for all
  integers $n \ge 0$,
  \begin{equation}\label{E:tildeh-bd}
        2^{-n} \widehat{h} _n(t)
    \le \widetilde{h}_n(t)
    \le \left(2^{1+\rho}\right)^n \widehat{h}_n(t),
  \end{equation}
  where $\widehat{h}_n(t)$ is as defined in Lemma~\ref{Lem:I}. Moreover, for any
  $\lambda > 0$, there exists positive finite constants $ C_3,\dots, C_8$ such
  that
  \begin{equation}\label{E:tildeK-bd}
        C_3 \exp\left(t\left(C_4 \lambda^2 + C_5 \lambda^{\frac{2}{1-\rho}}\right)\right)
    \le \sum_{n = 0}^{\infty} \lambda^{2n} \widetilde{h}_n(t)
    \le C_6 \exp\left(t\left(C_7 \lambda^2 + C_{8} \lambda^{\frac{2}{1-\rho}}\right)\right).
  \end{equation}
\end{lemma}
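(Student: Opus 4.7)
The plan is to compare $\widetilde{h}_n$ and $\widehat{h}_n$ through a pointwise bound on the integrand, starting from the elementary harmonic-mean sandwich
\[
  \tfrac{1}{2}\min(a,b) \;\le\; \frac{ab}{a+b} \;\le\; \min(a,b) \qquad (a,b>0).
\]
Applied with $a = s_{j-1}-s_j$ and $b = s_j$ (so that $a+b = s_{j-1}$), this yields
\[
  \tfrac{1}{2}\min(s_{j-1}-s_j,\, s_j) \;\le\; \frac{(s_{j-1}-s_j)s_j}{s_{j-1}} \;\le\; s_{j-1}-s_j,
\]
which is the only input about the particular form of the kernel in $\widetilde{h}_n$ that I will use.

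For the lower bound in \eqref{E:tildeh-bd}, I would use the right inequality together with the fact that $x \mapsto (1\wedge x)^{-\rho}$ is decreasing to get $(1 \wedge \tfrac{(s_{j-1}-s_j)s_j}{s_{j-1}})^{-\rho} \ge (1 \wedge (s_{j-1}-s_j))^{-\rho}$ pointwise. Taking the product over $j$ and integrating then gives $\widetilde{h}_n(t) \ge \widehat{h}_n(t)$, which is in fact stronger than the claimed $2^{-n}\widehat{h}_n(t)$. For the upper bound, I would combine the left inequality with the elementary $(1\wedge(x/2))^{-\rho} \le 2^\rho (1\wedge x)^{-\rho}$ and the identity $(1\wedge \min(a,b))^{-\rho} = \max((1\wedge a)^{-\rho},(1\wedge b)^{-\rho}) \le (1\wedge a)^{-\rho} + (1\wedge b)^{-\rho}$ to arrive at
\[
  \bigl(1 \wedge \tfrac{(s_{j-1}-s_j)s_j}{s_{j-1}}\bigr)^{-\rho}
  \;\le\; 2^\rho \bigl[(1\wedge (s_{j-1}-s_j))^{-\rho} + (1\wedge s_j)^{-\rho}\bigr].
\]
Expanding the $n$-fold product then gives $\widetilde{h}_n(t) \le 2^{n\rho}\sum_{S \subseteq \{1,\ldots,n\}} T_S(t)$, where
\[
  T_S(t) \coloneqq \int_{0<s_n<\cdots<s_1<t}\; \prod_{j \in S}(1\wedge s_j)^{-\rho} \prod_{j \notin S}(1\wedge (s_{j-1}-s_j))^{-\rho}\,\ud s_n\cdots \ud s_1,
\]
with the convention $s_0 = t$. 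This reduces the upper bound to the pointwise claim $T_S(t) \le \widehat{h}_n(t)$ for every $S$, since then $\widetilde{h}_n(t) \le (2\cdot 2^\rho)^n \widehat{h}_n(t) = (2^{1+\rho})^n \widehat{h}_n(t)$.

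The estimate $T_S(t) \le \widehat{h}_n(t)$ is what I expect to be the main technical obstacle. I would attack it in either of two ways. The first is to decompose each factor further via $(1 \wedge x)^{-\rho} \le 1 + x^{-\rho}$, so that $T_S$ and $\widehat{h}_n$ both expand into finite sums of multi-Beta integrals of exactly the type already evaluated in~\eqref{I_n^*} during the proof of Lemma~\ref{Lem:I}(ii); the individual terms can then be bounded using the Gamma estimate~\eqref{Gamma-bd2} and matched against the corresponding terms in the expansion of $\widehat{h}_n$. The second, cleaner approach uses the time-reversal substitution $s_j \mapsto t - s_{n+1-j}$, which preserves the simplex $\{0 < s_n < \cdots < s_1 < t\}$ and converts the ``value'' kernels $(1\wedge s_j)^{-\rho}$ appearing in $T_S$ into ``gap'' kernels of the same form; combined with the monotonicity of $\widehat{h}_k$ from Lemma~\ref{Lem:I}(i), this should allow $T_S$ to be reduced to $\widehat{h}_n$ by induction on $|S|$.

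Finally, \eqref{E:tildeK-bd} will follow at once from \eqref{E:tildeh-bd}: multiplying by $\lambda^{2n}$ and summing over $n$ produces the sandwich
\[
  \widehat{K}_{2^{-1/2}\lambda}(t) \;\le\; \sum_{n\ge 0}\lambda^{2n}\widetilde{h}_n(t) \;\le\; \widehat{K}_{2^{(1+\rho)/2}\lambda}(t),
\]
and applying the two-sided exponential estimate~\eqref{E:sum-I-bd} of Lemma~\ref{Lem:I}(iii) to each side yields the claimed exponential upper and lower bounds, the constants $C_3,\ldots,C_8$ differing from those of Lemma~\ref{Lem:I}(iii) only by absolute factors of $2$ and $2^{1/(1-\rho)}$ coming from the rescaling of $\lambda$.
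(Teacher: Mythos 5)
Your overall route is genuinely different from the paper's: you bound the integrand pointwise via the harmonic-mean sandwich and expand the product into $2^n$ mixed integrals $T_S$, whereas the paper works at the level of the one-step recursion $\widetilde{h}_{n+1}(t)=\int_0^t(1\wedge\tfrac{(t-s)s}{t})^{-\rho}\widetilde{h}_n(s)\,\ud s$, splits it at $s=t/2$ (where $(t-s)s/t\ge s/2$, resp.\ $\ge(t-s)/2$), substitutes $s\mapsto t-s$, and invokes the monotonicity of $\widehat{h}_n$ from Lemma~\ref{Lem:I}(i) --- thereby avoiding the $2^n$ cross-terms entirely. Your lower-bound argument (which actually yields the stronger $\widetilde{h}_n\ge\widehat{h}_n$) and the passage from \eqref{E:tildeh-bd} to \eqref{E:tildeK-bd} do agree with the paper.

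The gap is at $T_S(t)\le\widehat{h}_n(t)$, which you correctly flag as the crux but do not establish, and neither of your two proposed strategies closes it as written. The substitution $s_j\mapsto\sigma_j\coloneqq t-s_{n+1-j}$ reverses the list of gaps $(t-s_1,\,s_1-s_2,\dots,s_{n-1}-s_n,\,s_n)$, but a value $s_j$ becomes $t-\sigma_{n+1-j}$ in the new coordinates, which is a \emph{partial sum} of gaps rather than a single gap; the value--gap duality is exact only at the boundary index $j=n$, so the induction on $|S|$ does not go through by the stated mechanism. The Beta-expansion route produces integrals mixing $s_j^{-\rho}$ with $(s_{j-1}-s_j)^{-\rho}$, which are not of the pure-gap form \eqref{E:beta-id} and do not compare to $\widehat{h}_n$ term-by-term via \eqref{Gamma-bd2} without further work. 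The inequality $T_S\le\widehat{h}_n$ is nevertheless true, and can be repaired in the spirit of your second idea but with a different mechanism: writing $(Ag)(t)=\int_0^t(1\wedge(t-s))^{-\rho}g(s)\,\ud s$ and $(Bg)(t)=\int_0^t(1\wedge s)^{-\rho}g(s)\,\ud s$, both operators preserve nonnegativity and nondecreasingness and are order-preserving, and for any nonnegative nondecreasing $g$,
\[
  (Ag-Bg)(t)=\int_0^{t/2}\bigl[(1\wedge(t-s))^{-\rho}-(1\wedge s)^{-\rho}\bigr]\bigl[g(s)-g(t-s)\bigr]\,\ud s\ \ge\ 0,
\]
being the integral of a product of two nonpositive factors. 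Since $T_S$ is the word in $A,B$ dictated by $S$ applied to $\widehat{h}_0\equiv 1$, replacing $B$ by $A$ one step at a time from the inside out gives $T_S\le A^n 1=\widehat{h}_n$. With that lemma supplied, your reduction would be complete.
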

\begin{proof}
  The estimate~\eqref{E:tildeh-bd} can be deduced using Lemma~4.17
  of~\cite{candil:22:localization}. For completeness, we provide a direct proof.
  We prove~\eqref{E:tildeh-bd} by induction. Clearly, it holds for $n = 0$.
  Suppose it holds for some $n \ge 0$. Since $(t-s)s/t \ge s/2$ for $s \in [0,
  t/2]$ and $(t-s)s/t \ge (t-s)/2$ for $s \in [t/2, t]$, we have
  \begin{align*}
      \widetilde{h}_{n+1}(t)
    & = \int_0^t \left(1 \wedge \frac{(t-s)s}{t}\right)^{-\rho} \widetilde{h}_n(s)\, \ud s \\
    & \le 2^\rho (2^{1+\rho})^n \int_0^{t/2} (1\wedge s)^{-\rho} \widehat{h}_n(s)\, \ud s
      + 2^\rho (2^{1+\rho})^n \int_{t/2}^t (1 \wedge (t-s))^{-\rho} \widehat{h}_n(s)\, \ud s\\
    & = 2^\rho \int_{t/2}^t (1\wedge (t-s))^{-\rho} \widehat{h}_n(t-s)\, \ud s
      + 2^\rho \int_{t/2}^t (1 \wedge (t-s))^{-\rho} \widehat{h}_n(s)\, \ud s.
  \end{align*}
  Since $t-s \le s$ for $s \in [t/2, t]$, and $\widehat{h}_n(\cdot)$ is
  nondecreasing by Lemma~\ref{Lem:I}, we see that
  \begin{align*}
      \widetilde{h}_{n+1}(t)
    & \le 2^\rho (2^{1+\rho})^n \int_{t/2}^t (1\wedge (t-s))^{-\rho} \widehat{h}_n(s)\, \ud s
      + 2^\rho (2^{1+\rho})^n \int_{t/2}^t (1 \wedge (t-s))^{-\rho} \widehat{h}_n(s)\, \ud s\\
    & \le 2^{1+\rho} (2^{1+\rho})^n \int_0^t (1\wedge (t-s))^{-\rho} \widehat{h}_n(s) \, \ud s\\
    & \le (2^{1+\rho})^{n+1} \widehat{h}_{n+1}(t),
  \end{align*}
  where we have applied~\eqref{E:Crho} in the last step. This proves the upper
  bound of~\eqref{E:tildeh-bd}. On the other hand, for the lower bound, it holds
  clearly for $n = 0$. For general $n\ge 1$, by the bound $(t-s)s/t \le t-s$ for
  all $s \in [0, t]$, the induction hypothesis and~\eqref{E:Crho}, we have
  \begin{align*}
    \widetilde{h}_{n+1}(t)
    & \ge 2^{-n}\int_0^t (1 \wedge (t-s))^{-\rho} \widehat{h}_n(s) \, \ud s\\
    & \ge 2^{-n-1}\int_0^t \left(1 + (t-s)^{-\rho}\right) \widehat{h}_n(s) \, \ud s\\
    & =   2^{-(n+1)}\widehat{h}_{n+1}(t).
  \end{align*}
  Finally,~\eqref{E:tildeK-bd} follows clearly from~\eqref{E:tildeh-bd}
  and~\eqref{E:sum-I-bd}. This proves Lemma~\ref{Lem:tildeh}.
\end{proof}

Recall the following form of Burkholder's inequality~\cite[Theorem
B.1]{khoshnevisan:14:analysis}: For any $p \in [2, \infty)$ and any continuous
$L^2$-martingale $\{M_t, t \ge 0 \}$,
\begin{equation}\label{BDG}
  \E(|M_t|^p) \le (4p)^{p/2} \E(\langle M \rangle_t^{p/2}),
\end{equation}
where $\langle M \rangle_t$ denotes the quadratic variation of $M_t$. \medskip

\subsection{Proof of part (ii) of Theorem~\ref{T:ExUnque}}

Now we are ready to prove the second part of Theorem~\ref{T:ExUnque}.

\begin{proof}[Proof of Theorem~\ref{T:ExUnque} (the rough initial data case)]
  We first assume the Dirichlet boundary condition. Let $u_0(t, x) = J(t, x)$
  and
  \begin{align*}
    u_n(t, x) = J(t, x) + \lambda \int_0^t \int_{\dom} G(t-s, x, y) \sigma(s, y, u_{n-1}(s, y)) W(\ud s, \ud y) \quad \text{for $n \ge 1$}.
  \end{align*}
  Let $a = 8p \lambda^2 L_\sigma^2 C$, where $C$ is the constant
  in~\eqref{E:iintGGf-d-ub}. We claim that, for all $n \ge 0$,
  \begin{align}\label{E:u-Lp}
    \|u_n(t, x)\|_p \le \sqrt 2 e^{-\mu_1 t}J_{c_1}(t, x) \left(\sum_{i = 0}^n a^i\, \widetilde{h}_i(t)\right)^{1/2} \text{for all} t>0, x \in \dom,
  \end{align}
  where $\widetilde{h}_n(t)$ is defined in~\eqref{E:wtdh}. By
  Proposition~\ref{P:G},~\eqref{E:u-Lp} holds for $n = 0$. Suppose
  that~\eqref{E:u-Lp} is true for some $n\ge 0$. By Burkholder's
  inequality~\eqref{BDG} and Minkowski's inequality, we get that
  \begin{align*}
    \|u_{n+1}(t, x)\|_p^2 \le 2 J^2 (t, x) + 8p \lambda^2 L_\sigma^2 I_n(t, x),
  \end{align*}
  where
  \begin{align*}
    I_n(t, x) \coloneqq \int_0^t \ud s \iint_{\dom^2} \ud y\, \ud y' G_D(t-s, x, y) G_D(t-s, x, y')
    f(y-y') \|u_{n}(s, y)\|_p \|u_{n}(s, y')\|_p.
  \end{align*}
  By the induction hypothesis,
  \begin{align*}
    I_n(t, x) & \le 2\int_0^t \ud s\iint_{\dom^2} \ud y\, \ud y'f(y-y')G_D(t-s,x,y)G_D(t-s,x,y') \\
              & \quad \times |J(s,y)|\, |J(s,y')| \left(\sum_{i = 0}^{n} a^i \,\widetilde{h}_i(s)\right) \\
              & \le 2\sum_{i = 0}^n a^i \int_0^t \ud s\, \widetilde{h}_i(s)\iint_{\dom^2}\ud y\,\ud y'f(y-y')G_D(t-s,x,y)G_D(t-s,x,y') \\
              & \quad\times \iint_{\dom^2}|\nu|(\ud z)|\nu|(\ud z')G_D(s,y,z)G_D(s,y,z').
  \end{align*}
  Now interchange the order of the two double integrals and apply
  Lemma~\ref{Lem:iintGGf-d}(i) to get that
  \begin{align*}
    I_n(t, x)
    & \le 2\sum_{i = 0}^n a^i \int_0^t \ud s\, \widetilde h_i(s)
    \iint_{\dom^2}|\nu|(\ud z)|\nu|(\ud z')\\
    & \quad \times
    C\frac{e^{-2\mu_1 t}}{1\wedge t^d} e^{-c_1 \frac{|x-z|^2 +|x-z'|^2}{t}}
    \left(1\wedge \frac{(t-s)s}{t}\right)^{-\beta/2}\\
    & = 2C e^{-2\mu_1 t}J_{c_1}^2 (t,x) \sum_{i = 0}^n a^i
    \int_0^t \ud s \left(1\wedge \frac{(t-s)s}{t}\right)^{-\beta/2} \widetilde{h}_i(s)\\
    & = 2C e^{-2\mu_1 t}J_{c_1}^2 (t,x) \sum_{i = 0}^n a^i\, \widetilde h_{i+1}(t).
  \end{align*}
  Recall that $a = 8p \lambda^2 L_\sigma^2 C$. Hence,
  \begin{align*}
    \|u_{n+1}(t, x)\|_p^2
    & \le 2e^{-2\mu_1 t}J_{c_1}^2 (t, x) + 16p\lambda^2 L_\sigma^2 C e^{-2\mu_1 t}J_{c_1}^2 (t,x) \sum_{i = 0}^n a^i\, \widetilde h_{i+1}(t) \\
    & = 2e^{-2\mu_1 t}J_{c_1}^2 (t, x) \sum_{i = 0}^{n+1}a^i\, \widetilde{h}_i(t).
  \end{align*}
  This proves~\eqref{E:u-Lp}. \medskip

  Next, we prove that $u_n(t, x)$ is a Cauchy sequence in $L^p (\Omega)$. Let
  $u_{-1}(t, x) = 0$. By Burkholder's inequality~\eqref{BDG} and Minkowski's
  inequality,
  \begin{align*}
        \Norm{u_m(t, x)-u_n(t, x)}_p^2
  \le & \: 4p \lambda^2 L_\sigma^2 \int_0^t \ud s \iint_{\dom^2} \ud y\, \ud y' G_D(t-s, x, y) G_D(t-s, x, y')f(y-y') \\
      & \times \|u_{m-1}(s, y) - u_{n-1}(s, y)\|_p\|u_{m-1}(s, y') - u_{n-1}(s, y')\|_p.
  \end{align*}
  Then, similarly to the above, we can show by induction that, for all $0 \le n
  < m$,
  \begin{align*}
    \|u_m(t, x)-u_n(t, x)\|_p \le \sqrt 2 e^{-\mu_1 t} J_{c_1}(t, x) \left(\sum_{i = n+1}^m a^i \, \widetilde h_i(t)\right)^{1/2}
    \text{for all $t > 0$ and $x \in \dom$.}
  \end{align*}
  By Lemma~\ref{Lem:tildeh},
  \begin{align*}
    \sum_{i = 0}^\infty a^i\, \widetilde h_i(t) \le C_6 e^{t\left(C_7 a + C_8 a^{\frac{2}{2-\beta}}\right)} < \infty.
  \end{align*}
  This implies that $u_n(t, x)$ is a Cauchy sequence in $L^p (\Omega)$, and
  hence converges in $L^p (\Omega)$ to some $u(t, x)$ which
  satisfies~\eqref{E:mild-sol}. In particular, by~\eqref{E:u-Lp} and
  Lemma~\ref{Lem:tildeh}, we have
  \begin{align*}
    \|u(t, x)\|_p
    & \le \sqrt 2 e^{-\mu_1 t} J_{c_1}(t, x) \left(\sum_{i = 0}^\infty (8p\lambda^2 L_\sigma^2 C)^i \, \widetilde{h}_i(t)\right)^{1/2} \\
    & \le \sqrt 2 C_6 J_{c_1}(t, x) e^{\frac 12 t\left(8C_7 C p \lambda^2 L_\sigma^2 +C_8 (8C)^{\frac{2}{2-\beta}}p^{\frac{2}{2-\beta}} \lambda^{\frac{4}{2-\beta}} L_\sigma^{\frac{4}{2-\beta}} -\mu_1\right)}.
  \end{align*}

  For uniqueness, suppose that $u(t, x)$ and $\widetilde{u}(t, x)$ are mild solutions
  of~\eqref{E:she-d} satisfying~\eqref{E:sol-unique}. Let $T > 0$. Then, for all
  $t \in (0, T]$, for all $x \in \dom$,
  \begin{equation}\label{Ineq:unique}
    \begin{split}
        \|u(t, x) - \widetilde{u}(t, x)\|_2^2
      & \le \lambda^2 L_\sigma^2 \int_0^t \ud s \iint_{\dom^2} \ud y\, \ud y' G_D(t-s, x, x, y_1, y_1') f(y-y') \\
      & \quad \times \|u(s, y) - \widetilde{u}(s, y)\|_2 \|u(s, y') - \widetilde{u}(s, y')\|_2.
    \end{split}
  \end{equation}
  We claim that there exists $C_T < \infty$ such that for all $n \ge 1$, for all
  $t \in (0, T]$ and all $x \in \dom$,
  \begin{equation}\label{claim:unique}
    \|u(t, x) - \widetilde{u}(t, x)\|_2^2
    \le 4C_T^2 J_c^2 (t, x) (\lambda L_\sigma)^{2n} \widetilde h_n(t),
  \end{equation}
  where $\widetilde h_n$ is defined in Lemma~\ref{Lem:tildeh}. We prove this
  claim by induction. First, consider $n = 1$. Using the condition that $u$ and
  $\widetilde{u}$ both satisfy~\eqref{E:sol-unique}, we get that
  \begin{align*}
    \MoveEqLeft \|u(t, x) - \widetilde{u}(t, x)\|_2^2\\
    & \le 4C_T^2 \lambda^2 L_\sigma^2 \int_0^t \ud s \iint_{\dom^2} \ud y\, \ud y' \widetilde{G}(t-s_1, x, x, y_1, y_1') f(y-y') J_{c}(s, y) J_{c}(s, y')\\
    & = 4C_T^2 \lambda^2 L_\sigma^2 \iint_{\dom^2} |\nu|(\ud z) |\nu|(\ud z') \\
    & \quad \times \int_0^t \ud s \iint_{\dom^2} \ud y\, \ud y' \widetilde{G}(t-s_1, x, x, y_1, y_1') f(y-y')
      \frac{1}{1 \wedge s^d} e^{-c \frac{|y-z|^2 + |y'-z'|^2}{s}}.
  \end{align*}
  Then, by Lemma~\ref{Lem:iintGGf-d}(i),
  \begin{align*}
      \|u(t, x) - \widetilde{u}(t, x)\|_2^2
    & \le 4C_T^2 \lambda^2 L_\sigma^2 J_{c}^2 (t, x) \int_0^t e^{-2\mu_1 (t-s)} \left(1 \wedge \frac{(t-s)s}{t}\right)^{-\beta/2} \ud s \\
    & \le 4C_T^2 J_{c}^2 (t, x) (\lambda L_\sigma)^2 \int_0^t \left(1 \wedge \frac{(t-s)s}{t}\right)^{-\beta/2} \ud s.
  \end{align*}
  This proves~\eqref{claim:unique} for $n = 1$. Assume that~\eqref{claim:unique}
  holds for some $n \ge 1$. We apply the induction hyposthesis to $\|u(s, y) -
  \widetilde{u}(s, y)\|_2$ and $\|u(s, y') - \widetilde{u}(s, y')\|_2$ on the right-hand
  side of~\eqref{Ineq:unique} to get
  \begin{align*}
        \|u(t, x) - \widetilde{u}(t, x)\|_2^2
    & \le 4C_T^2 (\lambda L_\sigma)^{2(n+1)} \int_0^t \ud s \, \widetilde{h}_n(s) \\
    & \quad \times \iint_{\dom^2} \ud y\, \ud y' \widetilde{G}(t-s_1, x, x, y_1, y_1') f(y-y') J_{c}(s, y) J_{c}(s, y').
  \end{align*}
  Then, by similar calculations to those in the $n = 1$ case, we obtain
  \begin{align*}
      \Norm{u(t, x) - \widetilde{u}(t, x)}_2^2
    & \le 4C_T^2 J_{c}^2 (t, x) (\lambda L_\sigma)^{2(n+1)} \int_0^t \widetilde{h}_n(s) \left(1 \wedge \frac{(t-s)s}{t}\right)^{-\beta/2} \ud s \\
    & = 4C_T^2 J_{c}^2 (t, x) (\lambda L_\sigma)^{2(n+1)} \widetilde{h}_{n+1}(t).
  \end{align*}
  This proves the claim~\eqref{claim:unique}. Finally, by
  Lemma~\ref{Lem:tildeh}, we have $\sum_{n = 0}^\infty (\lambda L_\sigma)^{2n}
  \widetilde h_n(t) < \infty,$ which implies that $(\lambda L_\sigma)^{2n}
  \widetilde h_n(t) \to 0$ as $n \to \infty$. Hence, by~\eqref{claim:unique},
  $u(t, x) = \widetilde{u}(t, x)$ a.s.

  The case of the Neumann boundary condition can be proved in the same way
  except that Lemma~\ref{Lem:iintGGf-n} (i) is applied in place of
  Lemma~\ref{Lem:iintGGf-d} (i). This completes the proof of part (ii) of
  Theorem~\ref{T:ExUnque}.
\end{proof}

\section{The two-point correlation function}\label{S:Two-point}

\subsection{A general formula for the two-point correlation function}

The authors in~\cite{chen.kim:19:nonlinear} have defined and studied the
space-time convolution-type operator ``$\triangleright$'' for the heat kernel in
the whole space $\R^d$, where the heat kernel $G(t,x,y)$ can be written as
$G(t,x-y)$. However, when the domain is not the whole space $\R^d$, for example
when it is a bounded domain, this translation invariant property is no longer
true and one has to keep the fundamental solution in the form of three
parameters. This natural generalization of the operator ``$\triangleright$'' for
fundamental solutions from those with two parameters to those with three has
been carried out by the first author's thesis; see~\cite[Chapter
5]{candil:22:localization}. Here, let us first briefly recall this
generalization.

For two measurable functions $k_1, k_2: \R_+ \times \dom^4 \to \R$, we define
\begin{align}\label{E:tri}
  (k_1 \triangleright k_2)(t, x, x', y, y')
  = \int_0^t \ud s \iint_{\dom^2} \ud z\, \ud z' \,
  k_1(t - s, x, x', z, z') k_2(s, z, z', y, y') f(z - z')
\end{align}
whenever the multiple integral is well-defined. For $h: \R_+ \times \dom^2 \to
\R$, we define $k_1 \triangleright h = k_1 \triangleright \overline{h}$, where
$\overline{h}(t, x, x', y, y') \coloneqq h(t, x, x')$.

\begin{remark}
  In~\cite{chen.kim:19:nonlinear}, where $U = \R^d$, the operator ``$\triangleright$'' is defined by
  \begin{align*}
    (k_1 \triangleright k_2)(t, x, x'; y)
    = \int_0^t \ud s \iint_{\R^d \times \R^d} \ud z\, \ud z' \,
           & k_1\big(t - s, x-z, x'- z'; y - (z-z')\big) \\
    \times & k_2(s, z, z'; y) f\big(y-(z - z')\big)
  \end{align*}
  for measurable functions $k_i:\R_+\times\R^3\to \R$, $i = 1,2$. The prototype
  of the function $k$ is $k(t,x,x';y) = G(t,x-y)G(t,x'-y)$, which corresponds to
  (and actually equals to) $J(t,x)J(t,x')$ when the initial condition is $\nu =
  \delta_y$. But when there is a lack of space invariance, one has to write this
  function $k$ as $k(t,x,x',y,y') = G(t,x,y)G(t,x',y')$. Indeed, introducing one
  more parameter in the definition of the convolution operator
  ``$\triangleright$" makes the associative property much more straightforward:
  Provided that one can apply Fubini's theorem to interchange of the order of integration, one gets that for any
  measurable functions $k_1, k_2, k_3: \R_+ \times \dom^4 \to \R$,
  \begin{align}\label{E:associative}
      (k_1 \triangleright (k_2 \triangleright k_3))(t, x, x', y, y')
    = ((k_1 \triangleright k_2) \triangleright k_3)(t, x, x', y, y').
  \end{align}
  Therefore, it makes sense to write $k_1 \triangleright k_2 \triangleright k_3$
  without parentheses to specify the order. Moreover, for any $n \ge 1$, we
  define $k^{\triangleright n}$ by $k^{\triangleright n} \coloneqq \underbrace{k
  \triangleright \dots \triangleright k}_{\text{$n$ times}}$.
\end{remark}

Now we apply this operator to the function $\widetilde{G}(t, x, x', y, y') =
G(t, x, y)G(t, x', y')$. For any $\lambda > 0$ and $x,x',y,y'\in \dom$, define
formally the function $\mathcal{K}^\lambda$ by
\begin{equation}\label{Def:K}
  \mathcal{K}^\lambda(t, x, x', y, y')
  \coloneqq \sum_{n = 1}^\infty \lambda^{2n} \widetilde{G}^{\triangleright n}(t, x, x', y, y').
\end{equation}
Specifically, we will write $\mathcal{K}^\lambda_D$ and $\widetilde{G}_D$ when
$G = G_D$ is the Dirichlet heat kernel, and write $\mathcal{K}^\lambda_N$ and
$\widetilde{G}_N$ when $G = G_N$ is the Neumann heat kernel. Upper and lower
bounds for $\mathcal{K}^\lambda_D$ and $\mathcal{K}^\lambda_N$ will be proved
later in Propositions~\ref{P:K-d} and~\ref{Lem:K-n}. In particular, the upper
bounds there imply the convergence of the series in~\eqref{Def:K}.

The connection between the two-point correlation function and the function
$\mathcal{K}^\lambda$ is given by the next proposition.

\begin{proposition}\label{P:2p-corr}
  Let $\dom$ be a bounded Lipschitz domain and $u(t, x)$ be the solution
  of~\eqref{E:she-d} or~\eqref{E:she-n}. Recall that $J(t,x)$ is the solution to
  the homogeneous equation; see~\eqref{E:J0}. Let $\widetilde{J}(t, x, x') =
  J(t, x) J(t, x')$.
  \begin{enumerate}
    \item[(i)] If $\sigma(t, x, u) = u$ for all $t > 0$, $x \in \dom$ and $u \in \R$, then
      \begin{equation}\label{E:2p-corr}
        \E(u(t, x) u(t, x')) = \widetilde{J}(t, x, x') +
        \sum_{n = 1}^\infty \lambda^{2n} (\widetilde{G}^{\triangleright n} \triangleright \widetilde{J})(t, x, x', 0, 0).
      \end{equation}
    \item[(ii)] If there exists a constant $\lip_\sigma > 0$ such that
      $\sigma(t, x, u) \ge \lip_\sigma |u|$ for all $t > 0$, $x \in \dom$, $u
      \in \R$, then
      \begin{equation}\label{E:2p-corr-LB}
        \E(u(t, x) u(t, x')) \ge \widetilde{J}(t, x, x') +
        \sum_{n = 1}^\infty (\lambda \lip_\sigma)^{2n} (\widetilde{G}^{\triangleright n} \triangleright \widetilde{J})(t, x, x', 0, 0).
      \end{equation}
    \item[(iii)] If $\sigma(t, x, u) \le \Lip_\sigma u$ for all $t > 0$, $x \in
      \dom$, $u \in [0, \infty)$ and $u(t,x)\ge 0$ a.s. for all $t> 0$ and $x\in
      \dom$, then
      \begin{equation}\label{E:2p-corr-UB}
        \E(u(t, x)u(t, x')) \le \widetilde{J}(t, x, x') +
        \sum_{n = 1}^\infty (\lambda \Lip_\sigma)^{2n} (\widetilde{G}^{\triangleright
        n} \triangleright \widetilde{J})(t, x, x', 0, 0).
      \end{equation}
  \end{enumerate}
  Moreover, for any $a > 0$, we have
  \begin{equation}\label{E:iintK}
     \widetilde{J}(t, x, x') + \sum_{n = 1}^\infty a^{2n} (\widetilde{G}^{\triangleright n} \triangleright \widetilde{J})(t, x, x', 0, 0)
     = a^{-2} \iint_{\dom^2} \mathcal{K}^a (t, x, x', y, y') \nu(\ud y) \nu(\ud y').
  \end{equation}
\end{proposition}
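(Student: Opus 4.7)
The plan is to derive a Volterra-type integral equation for the two-point correlation function $F(t,x,x') \coloneqq \E[u(t,x) u(t,x')]$ from the mild formulation~\eqref{E:mild-sol} together with Walsh's isometry for the colored noise, and then iterate. Expanding $u(t,x) u(t,x')$ via~\eqref{E:mild-sol} and using that the Walsh stochastic integral has zero mean, I would obtain
\begin{equation*}
  F(t,x,x') = \widetilde{J}(t,x,x') + \lambda^2 \int_0^t\!\ud s\iint_{\dom^2}G(t-s,x,y)\,G(t-s,x',y')\,f(y-y')\,\Phi(s,y,y')\,\ud y\,\ud y',
\end{equation*}
where $\Phi(s,y,y') \coloneqq \E[\sigma(s,y,u(s,y))\,\sigma(s,y',u(s,y'))]$. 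In case~(i) this gives $\Phi=F$, hence $F = \widetilde{J} + \lambda^2\,\widetilde{G}\triangleright F$ (viewing $F$ as a function of five variables that is constant in $(y,y')$). In case~(ii), $\sigma(u)\sigma(u')\ge l_\sigma^2 |u||u'|\ge l_\sigma^2 u u'$ gives $\Phi\ge l_\sigma^2 F$; in case~(iii), $\sigma(u)\le L_\sigma u$ combined with $u\ge 0$ a.s.\ gives $\Phi\le L_\sigma^2 F$.

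For~(i), the cleanest route is to work along the Picard scheme $u_n$ from the proof of Theorem~\ref{T:ExUnque}: a direct induction on $n$ yields $F_n(t,x,x') \coloneqq \E[u_n(t,x) u_n(t,x')] = \sum_{k=0}^{n}\lambda^{2k}(\widetilde{G}^{\triangleright k}\triangleright\widetilde{J})(t,x,x',0,0)$, with the convention $\widetilde{G}^{\triangleright 0}\triangleright\widetilde{J}\coloneqq\widetilde{J}$, and the $L^2(\Omega)$-convergence $u_n\to u$ lets me pass to the limit. For~(ii), I would iterate $F\ge \widetilde{J} + (\lambda l_\sigma)^2\,\widetilde{G}\triangleright F$: positivity of $\widetilde{G}$ and $f$ preserves the inequality, so $F\ge V_N\coloneqq\sum_{k=0}^{N}(\lambda l_\sigma)^{2k}\widetilde{G}^{\triangleright k}\triangleright\widetilde{J}$ for every $N$, and monotone convergence yields~\eqref{E:2p-corr-LB}. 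For~(iii), iterating $F\le \widetilde{J} + (\lambda L_\sigma)^2\,\widetilde{G}\triangleright F$ produces
\begin{equation*}
  F \le V_N + (\lambda L_\sigma)^{2(N+1)}\bigl(\widetilde{G}^{\triangleright(N+1)}\triangleright F\bigr),
\end{equation*}
and the residual must be shown to vanish as $N\to\infty$.

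The main technical obstacle will be precisely this residual estimate in~(iii). I would control $F(s,y,y')$ via Cauchy--Schwarz and the upper moment bound~\eqref{E:sol-Lp-bd} from Theorem~\ref{T:ExUnque}, and then bound the $(N{+}1)$-fold convolution of $\widetilde{G}$ against $f$ by iterating~\eqref{E:iintGGf-d-ub} (respectively~\eqref{E:iintGGf-n-ub}); the resulting factorial-type decay is the analogue of the one extracted in Lemmas~\ref{Lem:I} and~\ref{Lem:tildeh}, and forces the residual to tend to zero. Cases~(i) and~(ii) are by comparison routine: they reduce to the iteration above plus monotone convergence.

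Finally, for identity~\eqref{E:iintK}, I would write $\widetilde{J}(t,x,x') = \iint_{\dom^2}\widetilde{G}(t,x,x',y,y')\,\nu(\ud y)\nu(\ud y')$ and use the associativity~\eqref{E:associative} of $\triangleright$ to rewrite $(\widetilde{G}^{\triangleright n}\triangleright \widetilde{J})(t,x,x',0,0) = \iint_{\dom^2}\widetilde{G}^{\triangleright(n+1)}(t,x,x',y,y')\,\nu(\ud y)\nu(\ud y')$. Reindexing $m=n+1$ and pulling out a factor of $a^{-2}$ matches the definition~\eqref{Def:K} of $\mathcal{K}^a$; the interchange of summation and the double integral against $\nu\otimes\nu$ is justified by Fubini using nonnegativity of each $\widetilde{G}^{\triangleright m}$, after splitting via the Jordan decomposition $\nu=\nu_+-\nu_-$ in the signed case.
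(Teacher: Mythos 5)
Your proposal follows the paper's proof closely and is essentially correct. For part~(i) you use the same Picard-iteration telescoping as the paper, and for the ``Moreover'' identity~\eqref{E:iintK} your associativity-plus-Fubini argument (reindexing $m=n+1$ and writing $\widetilde{J}$ as $\iint\widetilde{G}\,\nu\,\nu$) is exactly the paper's.

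Two remarks on where your route diverges slightly from the paper. First, for part~(ii) the paper iterates at the level of the Picard approximants $\rho_n(t,x,x')=\E[u_n(t,x)u_n(t,x')]$ with base case $\rho_0=\widetilde{J}$; this telescopes exactly, producing $\rho_n\ge\widetilde{J}+\sum_{m=1}^n(\lambda l_\sigma)^{2m}\widetilde{G}^{\triangleright m}\triangleright\widetilde{J}$ with \emph{no} residual term. Your variant iterates the inequality $F\ge\widetilde{J}+(\lambda l_\sigma)^2\widetilde{G}\triangleright F$ on the limit correlation $F$ itself; to discard the residual $(\lambda l_\sigma)^{2(N+1)}\widetilde{G}^{\triangleright(N+1)}\triangleright F$ you implicitly need $F\ge 0$, which is not automatic for signed $\nu$, or else you would need a vanishing-residual argument as in~(iii). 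The Picard route sidesteps this cleanly and is worth adopting for~(ii) as well. Second, for part~(iii) you correctly flag that the residual must be shown to vanish; your proposed control --- Cauchy--Schwarz plus the moment bound~\eqref{E:sol-Lp-bd}, combined with iterating~\eqref{E:iintGGf-d-ub} (or~\eqref{E:iintGGf-n-ub}) to extract factorial decay as in Lemmas~\ref{Lem:I} and~\ref{Lem:tildeh} --- is the right one. The paper writes only ``we let $n\to\infty$'' at this step, so your explicitness here is a genuine improvement in rigor.
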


\begin{remark}
  Note that case (i) in the above proposition covers the important special
  case---the Anderson model. In this case, the two-point correlation function
  enjoys an explicit formula, having an equality in~\eqref{E:2p-corr}. For the
  nonlinear case, one needs to either introduce a cone condition, $\sigma(t, x,
  u) \ge l_\sigma |u|$, for the lower bounds as in case (ii) or assume the
  nonnegativity of solution for the upper bounds as in case (iii).
\end{remark}

\begin{proof}[Proof of Proposition~\ref{P:2p-corr}]
  Let $u_0(t, x) = J(t, x)$. For $n \ge 1$, let
  \begin{align*}
    u_n(t, x) =
    J(t, x) + \lambda \int_0^t \int_\dom G(t-s, x, y) \sigma(s, y, u_{n-1}(s, y)) W(\ud s, \ud y).
  \end{align*}
  Let $\rho_n(t, x, x') \coloneqq \E(u_n(t, x) u_n(t, x'))$. From the proof of
  Theorem~\ref{T:ExUnque}, we have $u_n(t, x) \to u(t, x)$ in $L^2 (\Omega)$ as
  $n \to \infty$, and $u(t, x)$ satisfies~\eqref{E:mild-sol}. It follows that
  $\rho_n(t, x, x') \to \rho(t, x, x')$ as $n \to \infty$, where $\rho(t, x, x')
  = \E(u(t, x) u(t, x'))$. \medskip

  (i) Suppose that $\sigma(t, x, u) = u$ for all $u \in \R$. Then, we have
  \begin{align*}
      & \quad \E(u_n(t, x) u_n(t, x'))                                                                                                                           \\
    & = \widetilde{J}(t, x, x') + \lambda^2 \int_0^t \ud s \iint_{\dom^2} \ud y\, \ud y'\, G(t-s, x, y) f(y-y') G(t-s, x', y') \E[u_{n-1}(s, y) u_{n-1}(s, y')] \\
    & = \widetilde{J}(t, x, x') + \lambda^2 (\widetilde{G} \triangleright \rho_{n-1})(t, x, x', 0, 0).
  \end{align*}
  Iterating this, we get
  \begin{align*}
    \rho_n(t, x, x')
    & = \widetilde{J}(t, x, x') + \lambda^2 (\widetilde{G} \triangleright \rho_{n-1})(t, x, x', 0, 0)\\
    & = \widetilde{J}(t, x, x') + \sum_{m = 1}^n \lambda^{2m} (\widetilde{G}^{\triangleright m} \triangleright \widetilde{J})(t, x, x', 0, 0).
  \end{align*}
  Then, we let $n \to \infty$ to get~\eqref{E:2p-corr}.
  Note that this also implies that the series
  \[
    \sum_{n = 1}^\infty \lambda^{2n} \left(\widetilde{G}^{\triangleright n} \triangleright \widetilde{J}\right) (t, x, x', 0, 0)
  \]
  is convergent for any $\lambda > 0$, $t > 0$ and $x, x' \in \dom$. \medskip

  (ii) Suppose that $\sigma(t, x, u) \ge \lip_\sigma |u|$ for all $t > 0$, $x \in \dom$ and $u \in \R$. We have
  \begin{align*}
    \rho_n(t, x, x')
    = \widetilde{J}(t, x, x') + & \lambda^2 \int_0^t \ud s \iint_{\dom^2} \ud y\, \ud y'\, G(t-s, x, y) f(y-y') G(t-s, x', y') \\
                           & \times \E[\sigma(s, y, u_{n-1}(s, y)) \sigma(s, y', u_{n-1}(s, y'))].
  \end{align*}
  For any $s \ge 0$ and $y, y' \in \dom$,
  \begin{align*}
    \E[\sigma(s, y, u_{n-1}(s, y))\sigma(s, y', u_{n-1}(s, y'))]
    &\ge \lip_\sigma^2\, \E(|u_{n-1}(s, y) u_{n-1}(s, y')|)\\
    &\ge \lip_\sigma^2\, \E(u_{n-1}(s, y) u_{n-1}(s, y')).
  \end{align*}
  It follows that
  \begin{align*}
    \MoveEqLeft \rho_n(t, x, x')\\
    & \ge \widetilde{J}(t, x, x') + (\lambda\, \lip_\sigma)^2
    \int_0^t \ud s \iint_{\dom^2} \ud y\, \ud y'\, G(t-s, x, y) f(y-y') G(t-s, x', y')
    \rho_{n-1}(s, y, y')\\
    & = \widetilde{J}(t, x, x') + (\lambda\, \lip_\sigma)^2 \left(\widetilde{G}^{\triangleright n} \triangleright \rho_{n-1}\right)(t, x, x', 0, 0).
  \end{align*}
  By induction,
  \begin{align*}
    \rho_n(t, x, x') \ge \widetilde{J}(t, x, x') + \sum_{m = 1}^n (\lambda\, \lip_\sigma)^{2m} \left(\widetilde{G}^{\triangleright m} \triangleright \widetilde{J}\right)(t, x, x', 0, 0).
  \end{align*}
  Then, we let $n \to \infty$ to get~\eqref{E:2p-corr-LB}. \medskip

  (iii) Suppose $\sigma(t, x, u) \le \Lip_\sigma u$ for all $t > 0$, $x\in \dom$ and $u \in [0, \infty)$. By the
  nonnegativity assumption of the solution, we see that
  \begin{align*}
    \MoveEqLeft \rho(t, x, x') = \E(u(t, x)u(t, x'))\\
    &= \widetilde{J}(t, x, x') +
    \lambda^2 \int_0^t \ud s \iint_{\dom^2} \ud y\, \ud y'\, G(t-s, x, y) f(y-y')G(t-s, x', y')\\
    & \hspace{170pt} \times \E[\sigma(s, y, u(s, y)) \sigma(s, y', u(s, y'))]\\
    & \le \widetilde{J}(t, x, x') +
    (\lambda \Lip_\sigma)^2 \int_0^t \ud s \iint_{\dom^2} \ud y\, \ud y'\, G(t-s, x, y) f(y-y')G(t-s, x', y') \E[u(s, y) u(s, y')]\\
    & = \widetilde{J}(t, x, x') + (\lambda \Lip_\sigma)^2 \left(\widetilde{G} \triangleright \rho\right)(t, x, x', 0, 0).
  \end{align*}
  Iterating this, we get,
  \begin{align*}
    \rho(t, x, x')
     \le \widetilde{J}(t, x, x') + \sum_{m = 1}^{n-1} (\lambda \Lip_\sigma)^{2m} \left(\widetilde{G}^{\triangleright m} \triangleright \widetilde{J}\right)(t, x, x', 0, 0)
    + (\lambda \Lip_\sigma)^{2n} \left(\widetilde{G}^{\triangleright n} \triangleright \rho\right)(t, x, x', 0, 0).
  \end{align*}
  Finally, we let $n\to \infty$ to complete the proof of part (iii). \medskip

  It remains to prove~\eqref{E:iintK}. Recall that $|\nu|(\dom) < \infty$. By
  Proposition~\ref{P:K-d}(i) or~\ref{Lem:K-n}(i), for any $t > 0$ and $x, x' \in
  \dom$,
  \begin{align*}
  \iint_{\dom^2} \mathcal{K}^a(t, x, x', y, y') |\nu|(\ud y) |\nu|(\ud y') < \infty.
  \end{align*}
  Then, by the definition~\eqref{Def:K} of $\mathcal{K}^a$ and Fubini's theorem,
  \begin{align*}
  a^{-2} \iint_{\dom^2} \mathcal{K}^a (t, x, x', y, y')\nu(\ud y)\nu(\ud y')
  = \sum_{n = 1}^\infty a^{2n-2} \iint_{\dom^2} \widetilde{G}^{\triangleright n}(t, x, x', y, y') \nu(\ud y) \nu(\ud y').
  \end{align*}
  We compare the right-hand side above with~\eqref{E:iintK}. For $n = 1$,
  \[
  \iint_{\dom^2} \widetilde{G}(t, x, x', y, y') \nu(\ud y) \nu(\ud y') = \widetilde{J}(t, x, x').
  \]
  For $n \ge 2$, by expressing $\widetilde{G}^{\triangleright n} =
  \widetilde{G}^{\triangleright (n-1)}\triangleright \widetilde{G}$ and
  interchanging the order of integration, we have
  \begin{align*}
    \MoveEqLeft \iint_{\dom^2} \widetilde{G}^{\triangleright n}(t, x, x', y, y') \nu(\ud y) \nu(\ud y')\\
    & = \iint_{\dom^2} \nu(\ud y) \nu(\ud y') \int_0^t \ud s \iint_{\dom^2} \ud z\, \ud z'\, \widetilde{G}^{\triangleright (n-1)}(t-s, x, x', z, z') f(z-z') \widetilde{G}(s, z, z', y, y') \\
    & = \int_0^t \ud s \iint_{\dom^2} \ud z\, \ud z'\, \widetilde{G}^{\triangleright (n-1)}(t-s, x, x', z, z') f(z-z') \widetilde{J}(s, z, z')                                              \\
    & = \left(\widetilde{G}^{\triangleright (n-1)} \triangleright \widetilde{J}\right)(t, x, x', 0, 0).
  \end{align*}
  This proves~\eqref{E:iintK} and completes the proof of
  Proposition~\ref{P:2p-corr}.
\end{proof}

\subsection{Estimation of the resolvent kernel functions \texorpdfstring{$\mathcal{K}$}{K}}

From Proposition~\ref{P:2p-corr}, we see that it is possible to estimate the
two-point correlation function once we have some useful bounds for
$\mathcal{K}^\lambda$. Therefore, our goal is to establish explicit upper and
lower bounds for $\mathcal{K}^\lambda$. To this end, we first prove a lemma
which strengthens Lemma~\ref{Lem:vol2}.

\begin{lemma}\label{Lem:volA}
  Let $\dom \subset \R^d$ be a bounded Lipschitz domain with the $\delta$-cone
  property. Let $0 < \eps_0 < 1$ be defined
  by~\eqref{Def:eps_0}, and recall that, for each $x \in \dom$, $V(x)$ is the
subset of $\dom$ defined in~\eqref{E:V(x)}. Then, there is a positive constant
$C$ such that the following property holds: For any $\eps \in (0, \eps_0]$, for
any $x \in \dom_\eps$, for any $r, s$ such that $0 < r \le s \le \eps_0/2$, for
any $z \in V(x) \cap B(x, s)$, we have
  \begin{align}\label{volA-lb}
  \op{Vol}(V(x) \cap B(x, s) \cap B(z, r)) \ge C(1\wedge r)^d.
  \end{align}
\end{lemma}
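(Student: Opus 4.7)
The plan is as follows. Since $0 < r \le s \le \eps_0/2 < 1$, one has $1 \wedge r = r$, so the target reduces to showing $\op{Vol}(V(x) \cap B(x, s) \cap B(z, r)) \ge C r^d$. I would split according to the two cases in the definition~\eqref{E:V(x)} of $V(x)$ and, in each, reduce by an affine scaling to a unit-scale inequality that is independent of $s$.

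In case (2), where $V(x) = \overline{B(x, \eps_0/2)}$, the constraint $s \le \eps_0/2$ gives $V(x) \cap B(x, s) = B(x, s)$. It then suffices to prove the classical fact that for $z \in \overline{B(x, s)}$ and $r \le s$ the auxiliary point $z' \coloneqq z + \tfrac{r}{2}(x-z)/|x-z|$ (or $z' \coloneqq x$ when $z = x$) satisfies $B(z', r/2) \subset B(x, s) \cap B(z, r)$; this is a direct verification using $r \le s$, handling separately the sub-cases $|z-x| \ge r/2$ and $|z-x| < r/2$, and yields a volume lower bound of order $r^d$.

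In case (1), where $V(x) = \overline{\mathscr{C}(x, \xi_{y_i}, \eps_0/2)}$, the set $V(x) \cap B(x, s)$ is the truncated cone with apex $x$, axis $\xi_{y_i}$, length $s$, and half-angle $\eps_0/2$. The affine map $\Phi(w) = x + sw$ sends the reference set
\[
  \widetilde{C} \coloneqq \{w \in \R^d : |w| < 1 \text{ and } w \cdot \xi_{y_i} \ge |w| \cos(\eps_0/2)\}
\]
bijectively onto $V(x) \cap B(x, s)$ and multiplies $d$-dimensional Lebesgue measure by $s^d$. Hence, with $w_0 \coloneqq (z - x)/s \in \widetilde{C}$ and $r' \coloneqq r/s \in (0, 1]$,
\[
  \op{Vol}(V(x) \cap B(x, s) \cap B(z, r)) = s^d\, \op{Vol}(\widetilde{C} \cap B(w_0, r')),
\]
and the problem reduces to the scale-free estimate $\op{Vol}(\widetilde{C} \cap B(w_0, r')) \ge c\,(r')^d$ uniform in $w_0 \in \widetilde{C}$ and $r' \in (0, 1]$, with $c = c(d, \eps_0) > 0$.

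The main obstacle is this last scale-free bound on the reference cone $\widetilde{C}$. I would handle it by observing that $\widetilde{C}$ is itself a bounded Lipschitz domain whose own $\delta'$-cone property has constants depending only on $d$ and $\eps_0$, as can be read off from the geometry of a truncated cone at its apex (where a homothetic smaller copy of $\widetilde{C}$ fits inside), on its smooth slanted lateral surface and spherical cap, and at the rim where those two surfaces meet at a fixed angle strictly smaller than $\pi$. The desired estimate is then a direct application of Lemma~\ref{Lem:vol} with $\widetilde{C}$ in place of $\dom$, where $r' \le 1$ makes $1 \wedge r' = r'$. Taking $C$ to be the smaller of the two constants obtained from cases (1) and (2) completes the argument.
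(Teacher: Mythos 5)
Your proposal is correct and follows essentially the same route as the paper's proof: split on the two cases of $V(x)$, handle the ball case directly, and in the cone case use the dilation $w\mapsto x+sw$ to reduce to a scale-free volume bound on a fixed reference truncated cone. The only real difference is at the last step: the paper re-derives the unit-scale estimate $\op{Vol}(\overline{\mathscr{C}_1}\cap B(w,r'))\ge C (r')^d$ from scratch by observing that $\mathscr{C}_1$ is bounded and \emph{convex} (hence has the cone property by Proposition 2.4.4 of~\cite{henrot.pierre:05:variation}) and repeating the argument of Lemma~\ref{Lem:vol}, whereas you simply cite Lemma~\ref{Lem:vol} applied to $\widetilde{C}$. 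That shortcut is legitimate and arguably cleaner; the one place your write-up could be tightened is the justification that $\widetilde{C}$ is a bounded Lipschitz domain with constants depending only on $d$ and $\eps_0$ — rather than inspecting the apex, lateral surface, cap, and rim by hand, it is simpler and airtight to note that $\widetilde{C}$ is bounded and convex (the defining inequalities $|w|<1$ and $w\cdot\xi\ge |w|\cos(\eps_0/2)$ each describe a convex set since $\eps_0/2<\pi/2$), and then invoke the standard fact that bounded convex domains are Lipschitz with constants determined by inner and outer radius, which for $\widetilde{C}$ depend only on $d$ and $\eps_0$.
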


\begin{proof}
  First, suppose that case (2) in~\eqref{E:V(x)} holds. Then for all $z \in B(x,
  s)$ and $0 < r \le s \le \eps_0/2$, we have
  \begin{align*}
  V(x) \cap B(x, s) \cap B(z, r) = B(x, s) \cap B(z, r).
  \end{align*}
  This intersection clearly contains a ball of radius $r/3$, and hence has
  volume which is bounded from below by $C (r/3)^d$.

  Suppose that case (1) in~\eqref{E:V(x)} holds. Let
  \begin{align*}
     \mathscr{C}_1 = \{y \in \R^d: 0 < |y| < 1~\text{and}~y \cdot \xi > |y|\cos(\eps_0/2) \}.
  \end{align*}
  Note that $\mathscr{C}_1$ is a bounded convex domain, so it satisfies the
  $\bar \delta$-cone property for some $\bar \delta > 0$ by Proposition 2.4.4
  of~\cite{henrot.pierre:05:variation}. By scaling and translation, we see that
  \begin{align*}
    \mathrm{Vol}(V(x) \cap B(x, s) \cap B(z, r))
    = s^d \: \mathrm{Vol}\left(\overline{\mathscr{C}_1}\cap B((z-x)/s, r/s)\right)
  \end{align*}
  with $(z-x)/s \in \overline{\mathscr{C}_1}$ and $0 < r/s \le 1$. So, it suffices to prove
  the existence of a constant $C > 0$ such that
  \begin{align}\label{C1-lb}
    \mathrm{Vol}(\overline{\mathscr{C}_1} \cap B(w, r)) \ge C r^d, \quad
    \text{for all $w \in \overline{\mathscr{C}_1}$ and $r \in(0,1]$.}
  \end{align}

  To prove~\eqref{C1-lb}, we first consider the case that $0 < r \le
  \bar\delta$. If $\mathrm{dist}(w, \partial \mathscr{C}_1) \ge \bar\delta$,
  then $B(w, r) \subset \overline{\mathscr{C}_1}$ and hence
  \begin{align*}
    \mathrm{Vol}(\overline{\mathscr{C}_1} \cap B(w, r)) = C_1 r^d.
  \end{align*}
  If $\mathrm{dist}(w, \partial \mathscr{C}_1) < \bar\delta$, then $w \in
  \overline{\mathscr{C}_1} \cap B(v, \bar\delta)$ for some $v \in \partial \mathscr{C}_1$.
  By the $\bar\delta$-cone property of $\mathscr{C}_1$, we can find a unit
  vector $\eta = \eta_v \in \R^d$ such that $\mathscr{C}(w, \eta, \bar\delta)
  \subset \mathscr{C}_1$. Then,
  \begin{align*}
    \mathrm{Vol}(\overline{\mathscr{C}_1} \cap B(w, r))
    & \ge \mathrm{Vol}(\mathscr{C}(w, \eta, \bar\delta) \cap B(w, r))                                       \\
    & = \mathrm{Vol}\{y \in \R^d: 0 < |y-w| < r \text{ and } (y-w) \cdot \eta \ge |y-w| \cos(\eps_0/2) \} \\
    & \ge C_2 r^d.
  \end{align*}
  Finally, consider the case that $\bar\delta < r \le 1$. We have $\overline{\mathscr{C}_1}
  \cap B(w, r) \supset \overline{\mathscr{C}_1} \cap B(w, \bar\delta)$. Then, by the first
  case that we just proved,
  \begin{align*}
    \mathrm{Vol}(\overline{\mathscr{C}_1} \cap B(w, r)) \ge (C_1 \wedge C_2)\bar\delta^d \ge (C_1 \wedge C_2) \bar \delta^d r^d.
  \end{align*}
  This completes the proof of Lemma~\ref{Lem:volA}.
\end{proof}

Now, we are ready to establish upper and lower bounds for
$\mathcal{K}^\lambda_D$ and $\mathcal{K}^\lambda_N$.

\begin{proposition}\label{P:K-d}
  Let $\dom$ be a bounded Lipschitz domain. Then:
  \begin{enumerate}
  \item[(i)] There exist positive finite constants $C$, $c$, $c'$ such that for all
    $t > 0$, for all $x, x', y, y' \in \dom$,
    \begin{equation}\label{K-d-ub}
      \mathcal{K}_D^\lambda(t, x, x', y, y')
      \le \frac{C \lambda^2}{1\wedge t^d} e^{-c_1\left(\frac{|x-y|^2}{t} + \frac{|x'-y'|^2}{t}\right)}
      e^{2t\left(c \lambda^2 + c' \lambda^{\frac{4}{2-\beta}}-\mu_1\right)}.
    \end{equation}
  \item[(ii)] For all $\eps > 0$ small, there exist positive finite constants
    $\overline{C}$, $\overline{c}$, $\widetilde{c}$ depending on $\eps$ such that
    for all $t > 0$, for all $x, x', y, y' \in \dom_\eps$,
    \begin{equation}\label{K-d-lb}
      \qquad \mathcal{K}_D^\lambda(t, x, x', y, y')
      \ge \frac{\overline{C} \lambda^2}{1 \wedge t^d} e^{-16c_2\frac{|x-x'|^2}{t}}e^{-12c_2\left(\frac{|x-y|^2}{t} + \frac{|x'-y'|^2}{t}\right)}
      e^{2t\left(\overline{c} \lambda^2 + \widetilde{c} \lambda^{\frac{4}{2-\beta}}-\mu_1\right)}
    \end{equation}
    and $\overline{C}\to 0$, $\bar{c} \to 0$ and $\widetilde{c}\to 0$ as $\eps \to 0$.
  \end{enumerate}
\end{proposition}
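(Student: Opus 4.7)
The plan is to prove both bounds by first establishing inductive estimates on each iterated convolution $\widetilde{G}_D^{\triangleright n}$ and then summing the series $\mathcal{K}_D^\lambda = \sum_{n \ge 1} \lambda^{2n} \widetilde{G}_D^{\triangleright n}$ via Lemma~\ref{Lem:tildeh} with $\rho = \beta/2$, under which the exponent $2/(1-\rho)$ in~\eqref{E:tildeK-bd} equals $4/(2-\beta)$.

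For the upper bound~\eqref{K-d-ub}, I would prove by induction that
\begin{equation*}
\widetilde{G}_D^{\triangleright n}(t, x, x', y, y') \le C_0^{\,n}\, \frac{e^{-2\mu_1 t}}{1 \wedge t^d}\, e^{-c_1 (|x-y|^2 + |x'-y'|^2)/t}\, \widetilde{h}_{n-1}(t)
\end{equation*}
for some absolute constant $C_0$. The base case $n = 1$ follows from squaring the upper bound of $G_D$ in Proposition~\ref{P:G}. For the inductive step I would write $\widetilde{G}_D^{\triangleright (n+1)}(t,x,x',y,y') = \int_0^t ds \iint_{\dom^2} \widetilde{G}_D(t-s,x,x',z,z')\, \widetilde{G}_D^{\triangleright n}(s,z,z',y,y')\, f(z-z')\, \ud z\, \ud z'$, plug in the induction hypothesis, and apply Lemma~\ref{Lem:iintGGf-d}(i) to the spatial double integral. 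The time-dependent factor $\widetilde{h}_{n-1}(s)$ pulls out, leaving $\int_0^t (1 \wedge \tfrac{(t-s)s}{t})^{-\beta/2}\, \widetilde{h}_{n-1}(s)\, \ud s = \widetilde{h}_n(t)$ by the recursion defining $\widetilde{h}_n$. Multiplying by $\lambda^{2n}$, summing, and invoking the upper estimate in~\eqref{E:tildeK-bd} yields~\eqref{K-d-ub}.

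For the lower bound~\eqref{K-d-lb}, I would target the analogous inductive estimate
\begin{equation*}
\widetilde{G}_D^{\triangleright n}(t,x,x',y,y') \ge \overline{C}_\eps^{\,n}\, \frac{e^{-2\mu_1 t}}{1 \wedge t^d}\, e^{-16 c_2 |x-x'|^2/t}\, e^{-12 c_2 (|x-y|^2 + |x'-y'|^2)/t}\, \widetilde{h}_{n-1}(t)
\end{equation*}
for $x, x', y, y' \in \dom_\eps$. The base case uses the pointwise lower bound of $G_D$ in Proposition~\ref{P:G} together with the uniform positivity $\Phi_1(z) \ge c_0^{-1}\eps^{a_1}$ on $\dom_\eps$ from~\eqref{E:Phi-bd}. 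For the inductive step one cannot invoke Lemma~\ref{Lem:iintGGf-d}(ii) because that lemma assumes convexity, and the interpolation point $\tfrac{s}{t} x + \tfrac{t-s}{t} y$ appearing via the identity~\eqref{E:exp-id} may leave $\dom$ when $\dom$ is merely Lipschitz. Instead I would restrict the time variable to $s \in [t/4, 3t/4]$, so that $(t-s) \asymp s \asymp t$, and restrict the spatial integration to $(z, z') \in V(z_0) \times V(z_0')$ where $z_0, z_0' \in \dom_{\eps/2}$ are admissible substitutes for the true interpolation points chosen using the $\delta$-cone property~\eqref{E:delta-Lip} of $\dom$. The volume lower bound $\op{Vol}(V(z_0) \cap B(z_0, r)) \ge C (1 \wedge r)^d$ from Lemmas~\ref{Lem:vol2} and~\ref{Lem:volA} controls the spatial integration, while tracking the distortion produced by replacing the true interpolation point by $z_0$ in~\eqref{E:exp-id} inflates the decay constant from $c_2$ to $12 c_2$; the cross-term that couples primed and unprimed variables through $f(z-z')$ and the mismatch between $z_0$ and $z_0'$ contributes the extra $e^{-16 c_2 |x-x'|^2/t}$. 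Summing $\lambda^{2n}$ times this bound and applying the lower estimate in~\eqref{E:tildeK-bd} delivers~\eqref{K-d-lb}.

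The hard part is the lower bound, and within it the crucial requirement that the exponents $12 c_2$ and $16 c_2$ be \emph{uniform in $n$}, so that the inductive hypothesis self-perpetuates rather than the constants degrading at each convolution. This forces an $n$-independent recipe for selecting the substitute points $z_0, z_0'$ and an $n$-independent bound on the resulting Gaussian distortion; both rely on the fixed geometry of $\dom_{\eps/2} \subset \dom$ captured by Lemma~\ref{Lem:volA}. The $\eps$-dependence enters only through the multiplicative constant $\overline{C}_\eps$, which tends to $0$ as $\eps \to 0$ because the lower bound $c_\eps$ on $\op{dist}(z, \partial \dom)$ for $z \in V(x)$ in Lemma~\ref{Lem:vol2} does; this in turn forces $\overline{C}$, $\overline{c}$, $\widetilde{c}$ in~\eqref{K-d-lb} to tend to $0$ as $\eps \to 0$.
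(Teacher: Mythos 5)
Your part (i) matches the paper's argument exactly: the same inductive upper bound on $\widetilde G_D^{\triangleright n}$, base case from Proposition~\ref{P:G}, inductive step via Lemma~\ref{Lem:iintGGf-d}(i), and summation via Lemmas~\ref{Lem:tildeh} and~\ref{Lem:I}. For part (ii) you take a genuinely different route, and it has a gap you do not close. You correctly identify the obstruction---the interpolation point $\tfrac{s}{t}x + \tfrac{t-s}{t}y$ can leave a non-convex Lipschitz domain, so Lemma~\ref{Lem:iintGGf-d}(ii) is unavailable---but the proposed repair, restricting $(z,z')$ to $V(z_0)\times V(z_0')$ for ``admissible substitutes'' $z_0,z_0'\in\dom_{\eps/2}$ and closing a one-step induction, fails for two reasons. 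First, your inductive hypothesis is stated for arguments in $\dom_\eps$, yet Lemma~\ref{Lem:vol2} only places $V(z_0)$ inside $\dom_{c_{\eps/2}}$ with $c_{\eps/2}\le\eps/2<\eps$; you therefore cannot apply the hypothesis at $(s,z,z',y,y')$ without shrinking $\eps$ at every step, and then $\overline C_{\eps'}^{\,n}$ degrades with $n$ instead of self-perpetuating. Second, for non-convex domains the nearest admissible substitute can be far from the true interpolation point in a way not controlled by $|x-y|$, so the assertion that the Gaussian distortion is uniformly $12c_2$ (resp.\ $16c_2$) is stated but not demonstrated.

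The paper avoids both problems by not running a one-step induction at all. It expands the full $(n+1)$-fold integral, keeps \emph{every} intermediate spatial variable inside the single fixed cone $V(x)$ anchored at $x$ (so no interpolation point arises and no convexity is needed), and restricts the time variables $s_1>\dots>s_n$ to nested, $n$-dependent intervals clustered near $t/2$ with spacing of order $t/n$. Because each $z_i,z_i'$ lies within $O(\sqrt{s_{i-1}-s_i})$ of $x$, all interior heat kernel factors are bounded below by constants times $(1\wedge(s_{i-1}-s_i))^{-\beta/2}$; only the first factor (joining $x,x'$ to $z_1,z_1'$, with $t-s_1\ge t/2$) and the last (joining $z_n,z_n'\approx x$ to $y,y'$, with $s_n\ge t/4$) carry Gaussian decay, and the triangle inequality $|z_n-y|^2+|z_n'-y'|^2\le 5(s_{n-1}-s_n)+2|x-y|^2+3|x'-y'|^2+3|x-x'|^2$ yields $12c_2$ and $16c_2$ uniformly in $n$. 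The nested time integrals are then evaluated directly and reproduce the $\sum_k t^{n-k\beta/2}/((n-k)!(k!)^{1-\beta/2})$ shape of Lemma~\ref{Lem:I}(ii), rather than being manufactured by convolving $\widetilde h_{n-1}$ over a fixed restricted window; your fixed window $s\in[t/4,3t/4]$ applied inductively cannot produce this, since both the time windows and the spatial centering must depend on $n$.
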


We first make a few remarks:

\begin{remark}
  (1) Under the conditions of Proposition~\ref{P:2p-corr}, for the delta initial
  condition $\nu = \delta_y$, we have
  \begin{align*}
    (\lambda l_\sigma)^{-2}\,\mathcal{K}^\lambda_D(t, x, x', y, y)
    \le \mathbb{E}(u(t, x) u(t, x'))
    \le (\lambda L_\sigma)^{-2}\,\mathcal{K}^\lambda_D(t, x, x', y, y).
  \end{align*}
  (2) Proposition~\ref{P:K-d} applies to all bounded Lipschitz domains. But in
  case of $C^{1, \alpha}$-domains, we will improve the bounds~\eqref{K-d-ub} and \eqref{K-d-lb} in Proposition~\ref{P:K-C1alpha} so that the moment
  estimates will be consistent with Dirichlet condition at the boundary,
  namely,
  \begin{align*}
    \lim_{\text{$x$ or $x'$}\to \partial \dom}\mathbb{E}(u(t, x) u(t, x')) = 0.
  \end{align*}
  (3) In case of $\dom = \R^d$, the factor $\exp(-c|x-x'|^2 /t)$
  in~\eqref{K-d-lb} also appears in the lower bound in Lemma 2.7
  of~\cite{chen.kim:19:nonlinear}. We think that a sharp upper bound for
  $\mathcal{K}_D^\lambda$ would have this extra exponential factor as well.
\end{remark}

\begin{proof}[Proof of Proposition~\ref{P:K-d}]
  (i). We claim that, for all $n \ge 1$, $t > 0$ and $x, x', y, y' \in \dom$,
  \begin{align}\label{claim:K-d-ub}
    \widetilde{G}_D^{\triangleright n}(t, x, x', y, y')
    \le C^n \frac{e^{-2\mu_1 t}}{1\wedge t^d}
    e^{-c_1\Big(\frac{|x-y|^2}{t} + \frac{|x'-y'|^2}{t}\Big)}\widetilde{h}_{n-1}(t),
  \end{align}
  where $\widetilde{h}_n(t)$ is the iterated integral defined in
  Lemma~\ref{Lem:tildeh}. We prove this by induction. For $n = 1$, this follows
  from the upper bound in~\eqref{E:G-d}. Assume that~\eqref{claim:K-d-ub} holds
  for some $n \ge 1$. Then, by the induction hypothesis, the upper bound
  in~\eqref{E:G-d} and Lemma~\ref{Lem:iintGGf-d}(i),
  \begin{align*}
    \MoveEqLeft \widetilde{G}_D^{\triangleright (n+1)}(t, x, x', y, y')\\
    &= \int_0^t \ud s \iint_{\dom^2} \widetilde{G}_D(t-s, x, x', z, z') \widetilde{G}_D^{\triangleright n}(s, z, z', y, y') f(z-z')\, \ud z\,\ud z' \\
    &\le C^n \int_0^t \ud s\, \widetilde{h}_{n-1}(s) \iint_{\dom^2} \ud z\, \ud z'\, \widetilde{G}_D(t-s, x, x', z, z')
    \frac{e^{-2\mu_1 s}}{1\wedge s^d}e^{-c_1\frac{|z-y|^2+|z'-y'|^2}{s}}
    f(z-z') \, \ud z\, \ud z'\\
    & \le C^{n+1} \frac{e^{-2\mu_1 t}}{1\wedge t^d}
    e^{-c_1\Big(\frac{|x-y|^2}{t} + \frac{|x'-y'|^2}{t}\Big)}\int_0^t \widetilde{h}_{n-1}(s) \left(1\wedge \frac{(t-s)s}{t}\right)^{-\beta/2} \ud s\\
    & = C^{n+1} \frac{e^{-2\mu_1 t}}{1\wedge t^d}
    e^{-c_1\Big(\frac{|x-y|^2}{t} + \frac{|x'-y'|^2}{t}\Big)} \widetilde{h}_n(t).
  \end{align*}
  This proves the claim~\eqref{claim:K-d-ub}. Then, by Lemmas~\ref{Lem:tildeh}
  and~\ref{Lem:I},
  \begin{align*}
    \mathcal{K}^\lambda_D(t, x, x', y, y')
    & = \sum_{n = 1}^\infty \lambda^{2n} \widetilde{G}_D^{\triangleright n}(t, x, x', y, y')\\
    & \le C\lambda^2 \frac{e^{-2\mu_1 t}}{1\wedge t^d}
      e^{-c_1\Big(\frac{|x-y|^2}{t} + \frac{|x'-y'|^2}{t}\Big)}
      \sum_{n = 1}^\infty (C \lambda^2)^{n-1} \widetilde{h}_{n-1}(t)\\
    & \le C' \lambda^2 \frac{e^{-2\mu_1 t}}{1\wedge t^d}
      e^{-c_1\Big(\frac{|x-y|^2}{t} + \frac{|x'-y'|^2}{t}\Big)}
      e^{t \Big(c \lambda^2 + c' \lambda^{\frac{4}{2-\beta}}\Big)}.
  \end{align*}
  This proves the upper bound~\eqref{K-d-ub}. \medskip

  (ii). To prove the lower bound~\eqref{K-d-lb}, we need to derive lower bounds
  for $\widetilde G_D^{\triangleright n}(t, x, x', y, y')$ for each $n \ge 1$.
  Let $t > 0$ and $x, x', y, y' \in \dom_\eps$. First, by the lower bounds
  in~\eqref{E:G-d} and~\eqref{E:Phi-bd},
  \begin{align*}
    \widetilde{G}_D(t, x, x', y, y') \ge C_\eps^2 \frac{e^{-2\mu_1 t}}{1 \wedge t^d} e^{-c_2 \Big(\frac{|x-y|^2}{t} + \frac{|x'-y'|^2}{t} \Big)}.
  \end{align*}
  For $n = 2$, we have
  \begin{align*}
    \widetilde{G}_D^{\triangleright 2}(t, x, x', y, y')
    & \ge \int_{t/4}^{t/2} \ud s \iint_{[V(x) \cap B(x, \sqrt{t})]^2} \ud z \, \ud z'\, \\
    & \quad \times \widetilde{G}_D(t-s, x, x', z, z') f(z-z')\widetilde{G}_D(s, z, z', y, y'),
  \end{align*}
  where $V(x) \subset \dom$ is defined in~\eqref{E:V(x)}. By~\eqref{E:G-d}, for
  $t/4 < s < t/2$, we have
  \begin{align*}
        \widetilde{G}_D(t-s, x, x', z, z')
    & \ge C_2^2\: \left(1 \wedge \Phi_1(x)\right)\left(1 \wedge \Phi_1(x')\right)\left(1 \wedge \Phi_1(z)\right)\left(1 \wedge \Phi_1(z')\right) \\
    & \quad \times \frac{e^{-2\mu_1(t-s)}}{1 \wedge (t-s)^d} e^{-c_2\frac{|x-z|^2+|x'-z'|^2}{t/2}} \shortintertext{and}
        \widetilde{G}_D(s, z, z', y, y')
    & \ge C_2^2\: \left(1 \wedge \Phi_1(z)\right)\left(1 \wedge \Phi_1(z')\right)\left(1 \wedge \Phi_1(y)\right)\left(1 \wedge \Phi_1(y')\right) \\
    & \quad \times \frac{e^{-2\mu_1s}}{1 \wedge s^d} e^{-c_2\frac{|z-y|^2+|z'-y'|^2}{t/4}}.
  \end{align*}
  For $z, z' \in V(x) \cap B(x, \sqrt{t})$, by the triangle inequality and
  Cauchy--Schwarz inequality, we have
  \begin{align*}
    |x-z|^2 + |x'-z'|^2 & \le |x-z|^2 + 2(|x'-x|^2 + |x-z'|^2) \\
                        & \le 2|x-x'|^2 + 3t
  \end{align*}
  and
  \begin{align*}
    |z-y|^2 + |z'-y'|^2 & \le 2(|z-x|^2 + |x-y|^2) + 3(|z'-x|^2 + |x-x'|^2 + |x'-y'|^2) \\
                        & \le 5t + 2|x-y|^2 + 3|x'-y'|^2 + 3|x-x'|^2.
  \end{align*}
  By applying the bounds above,~\eqref{E:Phi-bd} and Lemma~\ref{Lem:vol2}, we
  get that
  \begin{align*}
    \widetilde{G}_D^{\triangleright 2}(t, x, x', y, y')
    \ge C_\eps^4 \frac{e^{-2\mu_1 t}}{1 \wedge t^d} e^{-16c_2\frac{|x-x'|^2}{t}}e^{-12c_2\frac{|x-y|^2 + |x'-y'|^2}{t}} t^{1-\beta/2}.
  \end{align*}
  For $n \ge 2$, expanding $\widetilde{G}_D^{\triangleright (n+1)}$ in its full
  integral form, we have
  \begin{align*}
    \MoveEqLeft \widetilde{G}_D^{\triangleright (n+1)}(t, x, x', y, y')\\
    \ge & \int_0^t \ud s_1 \int_0^{s_1} \ud s_2 \cdots \int_0^{s_{n-2}} \ud s_{n-1}\int_0^{s_{n-1}} \ud s_{n}                                                          \\
        & \times \iint_{\dom^2} \ud z_1\, \ud z_1'\, \widetilde{G}_D(t-s_1, x, x', z_1, z_1') f(z_1 - z_1')                                                            \\
        & \times \iint_{\dom^2} \ud z_2\, \ud z_2'\, \widetilde{G}_D(s_1-s_2, z_1, z_1', z_2, z_2') f(z_2 - z_2')                                                      \\
        & \times \cdots \times \iint_{\dom^2} \ud z_{n-1} \, \ud z_{n-1}'\, \widetilde{G}_D(s_{n-2}-s_{n-1}, z_{n-2}, z_{n-2}', z_{n-1}, z_{n-1}') f(z_{n-1}-z_{n-1}') \\
        & \times \iint_{\dom^2} \ud z_{n} \, \ud z_{n}'\,
          \widetilde{G}_D(s_{n-1}-s_{n}, z_{n-1}, z_{n-1}', z_{n}, z_{n}') f(z_{n}-z_{n}')
          \widetilde{G}_D(s_{n}, z_{n}, z_{n}', y, y').
  \end{align*}
  Then, we derive a lower bound by integrating on the smaller intervals
  \begin{align}\label{interval-si}
    \begin{split}
      s_1 & \in \left[ \left(1-\frac{1}{4n}\right)\frac t 2, \frac{t}{2}\right],     \\
      s_2 & \in \left[\left(1-\frac{1}{2n}\right)\frac t 2, s_1-\frac{t}{8n}\right], \\
      s_3 & \in \left[\left(1-\frac{2}{2n}\right)\frac t 2, s_2 - (s_1-s_2)\right],  \\
          & \vdotswithin{\in}                                                        \\
      s_n & \in \left[\left(1-\frac{n-1}{2n}\right)\frac t 2, s_{n-1}-(s_{n-2}-s_{n-1})\right].
    \end{split}
  \end{align}
  Let
  $0 < \eps_0 < 1$ be given by~\eqref{Def:eps_0}. For each $\eps \in (0,
  \eps_0)$ and $x \in \dom_\eps$, recall the subset $V(x) \subset \dom$ defined
  by~\eqref{E:V(x)}. Consider
  \begin{alignat*}{2}
    z_1, z_1' & \in A_1  &  & \coloneqq V(x) \cap B(x, \sqrt{(t-s_1)/(5n)} \wedge (\eps_0/2)),                                                       \\
    z_2       & \in A_2  &  & \coloneqq V(x) \cap B(x, \sqrt{s_1-s_2} \wedge (\eps_0/2)) \cap B(z_1 , \sqrt{s_1-s_2} \wedge (\eps_0/2)),             \\
    z_2'      & \in A_2' &  & \coloneqq V(x) \cap B(x, \sqrt{s_1-s_2} \wedge (\eps_0/2)) \cap B(z_1', \sqrt{s_1-s_2} \wedge (\eps_0/2)),             \\
              &          &  & \vdotswithin{\coloneqq}                                                                                                \\
    z_n       & \in A_n  &  & \coloneqq V(x) \cap B(x, \sqrt{s_{n-1}-s_n} \wedge (\eps_0/2)) \cap B(z_{n-1} , \sqrt{s_{n-1}-s_n} \wedge (\eps_0/2)), \\
    z_n'      & \in A_n' &  & \coloneqq V(x) \cap B(x, \sqrt{s_{n-1}-s_n} \wedge (\eps_0/2)) \cap B(z_{n-1}', \sqrt{s_{n-1}-s_n} \wedge (\eps_0/2)).
  \end{alignat*}
  Note that by \eqref{interval-si}, we have $(t-s_1)/(5n) \le t/(8n) \le s_1-s_2
  \le s_2-s_3 \le \dots \le s_{n-1}-s_n$, which ensures that for $2 \le i \le
  n$, both $z_{i-1}$ and $z_{i-1}'$ lie in $V(x) \cap B(x,
  \sqrt{s_{i-1}-s_i}\wedge (\eps_0/2))$. Then, by Lemma~\ref{Lem:volA}, for $2
  \le i \le n$,
  \begin{align*}
    \mathrm{Vol}(A_i) \wedge \mathrm{Vol}(A_i') \ge C (\sqrt{s_{i-1}-s_i} \wedge (\eps_0/2))^d
    \ge C (\eps_0/2)^d ((s_{i-1}-s_i) \wedge 1)^{d/2}.
  \end{align*}
  By Lemma~\ref{Lem:vol2} and \eqref{E:Phi-bd}, we have
  \begin{gather*}
    \mathrm{Vol}(A_1) \ge C \left(\sqrt{\frac{t-s_1}{5n}}\wedge (\eps_0/2)\right)^d \ge C (\eps_0/2)^d (5n)^{-d/2} ((t-s_1) \wedge 1)^{d/2}, \\
    1\wedge \Phi_1(z) \ge 1 \wedge (c_0^{-1} \eps^{a_1}) \quad \text{for all $z \in V(x)$}.
  \end{gather*}
  Also, on $A_1 \times A_1$, we have $f(z_1 - z_1') \ge C(((t-s_1)/n) \wedge 1)^{-\beta/2}$,
  and on $A_i \times A_i'$, where  $2 \le i \le n$, we have $f(z_i - z_i') \ge C((s_{i-1}-s_i) \wedge 1)^{-\beta/2}$.
  Then, by the lower bound in~\eqref{E:G-d} and the inequalities
  \begin{align*}
    | z_n-z_{n-1} | ^2 + | z_n' - z_{n-1}' |^2 & \le 2(s_{n-1}-s_n), \\
    |z_n - y|^2 + |z_n' - y'|^2                & \le 5(s_{n-1}-s_n) + 2|x-y|^2 + 3|x'-y'|^2 + 3|x-x'|^2,
  \end{align*}
  we have
  \begin{gather*}
    \iint_{A_n\times A_n'} \ud z_{n} \, \ud z_{n}'\,
    \widetilde{G}_D(s_{n-1}-s_{n}, z_{n-1}, z_{n-1}', z_{n}, z_{n}') f(z_{n}-z_{n}')
    \widetilde{G}_D(s_{n}, z_{n}, z_{n}', y, y') \\
    \ge C_\eps^4 e^{-2\mu_1 s_{n-1}}(1\wedge (s_{n-1}-s_n))^{-\frac \beta 2}
    \frac{1}{1 \wedge t^d}
    e^{-c_2\frac{3|x-x'|^2 + 3|x-y|^2 + 3|x'-y'|^2 + 5s_{n-1}}{s_n}}.
  \end{gather*}
  For $i = 2, \dots, n-1$, by $|z_i - z_{i-1}|^2 + |z_i' - z_{i-1}'|^2 \le
  2(s_{i-1}-s_i)$, we have
  \begin{align*}
    \iint_{A_i\times A_i'} \ud z_{i} \, \ud z_{i}'\, \widetilde{G}_D(s_{i-1}-s_{i}, z_{i-1}, z_{i-1}', z_{i}, z_{i}') f(z_{i}-z_{i}')
    \ge C_\eps^2 e^{-2\mu_1 (s_{i-1}-s_i)} (1\wedge (s_{i-1}-s_i))^{-\frac \beta 2}.
  \end{align*}
  For $i = 1$, by the inequality
  \begin{align*}
    | z_1 - x | ^2 + | z_1' - x' | ^2 &\le 3(t-s_1) + 2|x-x'|^2,
  \end{align*}
  we have
  \begin{align*}
    \iint_{A_1\times A_1} \ud z_{1} \, \ud z_{1}'\, \widetilde{G}_D(t-s_{1}, x, x', z_1, z_1') f(z_1-z_1')
    \ge \frac{C_\eps^2}{n^{d/2}} e^{-2\mu_1 (t-s_1)} e^{-2c_2\frac{|x-x'|^2}{t-s_1}} \left(1\wedge \frac{t-s_1}{n} \right)^{-\frac \beta 2}.
  \end{align*}
  Combining these estimates and using~\eqref{E:Crho}, we get that
  \begin{align*}
    \MoveEqLeft \widetilde{G}_D^{\triangleright (n+1)}(t, x, x', y, y')                                                                               \\
   & \ge C_\eps^{2(n+1)} \frac{e^{-2\mu_1 t}}{1 \wedge t^d} \int_{(1-\frac{1}{4n})\frac{t}{2}}^{\frac{t}{2}} \ud s_1 \, \left(1 + \left(\frac{t-s_1}{n}\right)^{-\frac \beta 2}\right) e^{-2c_2 \frac{|x-x'|^2}{t-s_1}}       \\
   & \quad \times \int_{(1-\frac{1}{2n})\frac{t}{2}}^{s_1-\frac{t}{8n}} \ud s_2\, \left(1+ (s_1-s_2)^{-\frac \beta 2}\right)\times
   \int_{(1-\frac{2}{2n})\frac{t}{2}}^{s_2-(s_1-s_2)} \ud s_3\, \left(1+ (s_2-s_3)^{-\frac \beta 2}\right)\times \cdots \\
   & \quad \cdots \times \int_{(1-\frac{n-2}{2n})\frac{t}{2}}^{s_{n-2}-(s_{n-3}-s_{n-2})} \ud s_{n-1} \, \left(1 + (s_{n-2}-s_{n-1})^{-\frac \beta 2}\right) \\
   & \quad \times \int_{(1-\frac{n-1}{2n})\frac{t}{2}}^{s_{n-1}-(s_{n-2}-s_{n-1})} \ud s_n\, \left(1 + (s_{n-1}-s_n)^{-\frac\beta 2}\right) e^{-c_2\frac{3|x-x'|^2 + 3|x-y|^2 + 3|x'-y'|^2 + 5t}{t/4}}.
  \end{align*}
  Let $I$ denote the above multiple integral for $s_1, \dots, s_n$. By
  \eqref{interval-si}, we have $s_{i-1}-s_i \ge \frac{t}{8n}$ and
  $s_{i-1}-(1-\frac{i}{2n})\frac{t}{2} \ge (1-\frac{i-1}{2n})\frac{t}{2} -
  (1-\frac{i}{2n})\frac{t}{2} = \frac{t}{4n}$ for $2 \le i \le n$. Fixing $s_1
  \in \left[\left(1-\frac{1}{4n} \right)\frac{t}{2}, \frac{t}{2}\right]$, by the
  change of variables $s_n \mapsto s_{n-1}-s_n$, $s_{n-1} \mapsto
  s_{n-2}-s_{n-1}$, $\dots$, $s_2 \mapsto s_1-s_2$, we have
  \begin{align*}
    s_n & \in \left[s_{n-2}-s_{n-1}, s_{n-1}-\left(1-\frac{n-1}{2n}\right)\frac t 2\right] \supset \left[\frac{t}{8n}, \frac{t}{4n}\right], \\
        & \vdotswithin{\in}                                                                                                                 \\
    s_3 & \in \left[s_1-s_2, s_2 - \left(1-\frac{2}{2n}\right)\frac t 2\right] \supset \left[\frac{t}{8n}, \frac{t}{4n}\right],             \\
    s_2 & \in \left[\frac{t}{8n}, s_1 - \left(1-\frac{1}{2n}\right)\frac t 2\right] \supset \left[\frac{t}{8n}, \frac{t}{4n}\right].
  \end{align*}
  It follows that
  \begin{align*}
    I \ge \:
    & e^{-10c_2} e^{-16c_2\frac{|x-x'|^2}{t}} e^{-12c_2 \frac{|x-y|^2+|x'-y'|^2}{t}} \\
    & \times \int_{(1- \frac{1}{4n})\frac t 2}^{\frac{t}{2}} \ud s_1 \, \left(1 + \left(\frac{t-s_1}{n}\right)^{-\frac \beta 2}\right)
      \times \int_{\frac{t}{8n}}^{\frac{t}{4n}} \ud s_2 \, \left(1+s_2^{-\frac \beta 2}\right)\times \cdots \\
    & \times \int_{\frac{t}{8n}}^{\frac{t}{4n}} \ud s_{n-1} \, \left(1+s_{n-1}^{-\frac \beta 2}\right)
      \times \int_{\frac{t}{8n}}^{\frac{t}{4n}} \ud s_n \, \left(1+s_n^{-\frac \beta 2}\right)\\
    & \eqqcolon e^{-10c_2} e^{-16c_2\frac{|x-x'|^2}{t}} e^{-12c_2 \frac{|x-y|^2+|x'-y'|^2}{t}} \prod_{i=1}^{n} I_i.
  \end{align*}
  The above $\ud s_2\cdots\ud s_n$ integrals can be evaluated explicitly, which
  is equal to
  \begin{align*}
    \prod_{i=2}^{n} I_i
     = \left(\frac{t}{8n} + \frac{\left(\frac{t}{4n}\right)^{1-\frac \beta 2}-\left(\frac{t}{8n}\right)^{1-\frac \beta 2}}{1-\frac \beta 2}\right)^{n-1}
     \ge c^{n-1} \left(\frac{t}{n}\right)^{n-1} \left(1+\left(\frac{t}{n}\right)^{-\frac \beta 2}\right)^{n-1}. 
  \end{align*}
  As for the $\ud s_1$ integral, we have that
  \begin{align*}
    I_1 \ge \int_{(1- \frac{1}{4n})\frac t 2}^{\frac{t}{2}} \ud s_1 \, \left(1 + \left(\frac{t- t/2}{n}\right)^{-\frac \beta 2}\right)  
        \ge c \: \frac{t}{n} \left(1+\left(\frac{t}{n}\right)^{-\frac \beta 2}\right).
  \end{align*}
  Therefore, we see that
  \begin{align*}
    I & \ge c^n n^{-\beta/2} e^{-10c_2} e^{-16c_2\frac{|x-x'|^2}{t}} e^{-12c_2 \frac{|x-y|^2+|x'-y'|^2}{t}} \left(\frac tn\right)^n \left(1 + \left(\frac{t}{n}\right)^{-\beta/2} \right)^n \\
      & = c^n n^{-\beta/2} e^{-10c_2} e^{-16c_2\frac{|x-x'|^2}{t}} e^{-12c_2 \frac{|x-y|^2+|x'-y'|^2}{t}} \sum_{k = 0}^n \frac{n!}{k!(n-k)!} \left(\frac tn\right)^{n-k\beta/2}.
  \end{align*}
  By Stirling's formula, $\frac{n^{k\beta/2}}{n^n} \ge C^n
  \frac{(k!)^{\beta/2}}{n!}$ for all $k = 0, 1, \dots, n$ and $n \ge 2$. It
  follows that
  \begin{align*}
    \widetilde{G}_D^{\triangleright (n+1)}(t, x, x', y, y')
    \ge C_\eps^{2(n+1)} c^n \frac{e^{-2\mu_1 t}}{1 \wedge t^d} e^{-16c_2\frac{|x-x'|^2}{t}}e^{-12c_2 \frac{|x-y|^2 + |x'-y'|^2}{t}} \sum_{k = 0}^n \frac{t^{n-k\beta/2}}{(n-k)! (k!)^{1-\beta/2}}.
  \end{align*}
  Finally, recalling the definition of $\mathcal{K}^\lambda_D$, interchanging
  the order of summation, and using~\eqref{E:exp-bd}, we get that
  \begin{align*}
    \MoveEqLeft \mathcal{K}^\lambda_D(t, x, x', y, y')
       = \lambda^2 \sum_{n = 0}^\infty \lambda^{2n} \widetilde{G}_D^{\triangleright (n+1)}(t, x, x', y, y')\\
    & \ge C_\eps^2 \lambda^2 \frac{e^{-2\mu_1 t}}{1 \wedge t^d}e^{-16c_2\frac{|x-x'|^2}{t}} e^{-12c_2\frac{|x-y|^2 + |x'-y'|^2}{t}}
      \sum_{n = 0}^\infty {(C_\eps^2 c\lambda^2 t)}^n
      \sum_{k = 0}^n \frac{t^{-k\beta/2}}{(n-k)! (k!)^{1-\beta/2}}\\
    & \ge C_\eps^2 C \lambda^2 \frac{e^{-2\mu_1 t}}{1 \wedge t^d} e^{-16c_2\frac{|x-x'|^2}{t}}e^{-12c_2\frac{|x-y|^2 + |x'-y'|^2}{t}}
      e^{t\Big(K \lambda^2 + K' \lambda^{\frac{4}{2-\beta}} \Big)}.
  \end{align*}
  The proof of Proposition~\ref{P:K-d} is complete.
\end{proof}

\begin{proposition}\label{Lem:K-n}
  Let $\dom$ be a bounded Lipschitz domain. Then:
  \begin{enumerate}
    \item[(i)] There exist positive finite constants $C$, $c$, $c'$ such that
      for all $t > 0$, for all $x, x', y, y' \in \dom$,
      \begin{equation}\label{K-n-ub}
        \mathcal{K}_N^\lambda(t, x, x', y, y')
        \le \frac{C\lambda^2}{1\wedge t^d} e^{-c_3\left(\frac{|x-y|^2}{t} + \frac{|x'-y'|^2}{t}\right)}
        e^{t\Big(c\lambda^2 + c' \lambda^{\frac{4}{2-\beta}}\Big)}.
      \end{equation}
    \item[(ii)] If $\dom$ is convex (or if~\eqref{E:G-n-lower} holds), then there
      exist positive finite constants $\overline{C}$, $\overline{c}$,
      $\widetilde{c}$ such that for all $t > 0$, for all $x, x', y, y' \in
      \dom$,
      \begin{equation}\label{K-n-lb}
        \mathcal{K}^\lambda_N(t, x, x', y, y')
        \ge \frac{\overline{C} \lambda^2}{1\wedge t^d} e^{-16c_4 \frac{|x-x'|^2}{t}} e^{-12c_4\left(\frac{|x-y|^2}{t} + \frac{|x'-y'|^2}{t}\right)}
        e^{t\Big(\overline{c}\lambda^2 + \widetilde{c} \lambda^{\frac{4}{2-\beta}}\Big)}.
      \end{equation}
  \end{enumerate}
\end{proposition}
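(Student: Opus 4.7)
\medskip

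\noindent\textbf{Proof proposal for Proposition~\ref{Lem:K-n}.}
The plan is to follow the same two-step induction scheme used in the proof of Proposition~\ref{P:K-d}, but replacing the Dirichlet heat kernel estimates of Proposition~\ref{P:G}(i) by the Neumann analogues in Proposition~\ref{P:G}(ii), and replacing Lemma~\ref{Lem:iintGGf-d} by Lemma~\ref{Lem:iintGGf-n}. In particular, there is no need for the eigenfunction factor $\Phi_1$ and no factor $e^{-\mu_1 t}$: the Gaussian bounds for $G_N$ on a Lipschitz domain already enjoy the right form, and the lower bound~\eqref{E:G-n-lower} (assumed or implied by convexity in the $\mathscr L=-\Delta$ case) extends down to the boundary, so the $\eps$-shrinking of the domain that was needed in the Dirichlet lower bound disappears entirely. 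In particular, the constants $\overline C, \overline c, \widetilde c$ can be taken independent of $\eps$.

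\emph{Upper bound.} First I would prove by induction on $n\ge 1$ the claim
\begin{equation*}
  \widetilde{G}_N^{\triangleright n}(t,x,x',y,y') \le C^n \frac{1}{1\wedge t^d} e^{-c_3\left(\frac{|x-y|^2}{t}+\frac{|x'-y'|^2}{t}\right)} \widetilde h_{n-1}(t),
\end{equation*}
where $\widetilde h_n$ is the iterated integral from Lemma~\ref{Lem:tildeh}. The base case $n=1$ follows from~\eqref{E:G-n}. For the inductive step, write $\widetilde G_N^{\triangleright(n+1)}=\widetilde G_N\triangleright \widetilde G_N^{\triangleright n}$, apply the induction hypothesis and~\eqref{E:G-n} under the integral, and invoke Lemma~\ref{Lem:iintGGf-n}(i) to conclude with the extra factor $(1\wedge (t-s)s/t)^{-\beta/2}$; the $\ud s$ integral then produces $\widetilde h_n(t)$ by the definition~\eqref{E:wtdh}. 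Summing $\lambda^{2n}$ times this bound and applying Lemma~\ref{Lem:tildeh} yields~\eqref{K-n-ub}.

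\emph{Lower bound.} This is the more delicate step, but it is substantially easier than its Dirichlet counterpart because the lower bound in~\eqref{E:G-n-lower} holds uniformly for all $x,y\in\dom$, so the $V(x)$-cone subsets from Lemma~\ref{Lem:vol2} can be replaced by the ambient balls $B(x,r)\cap\dom$, whose volumes satisfy $\op{Vol}(\dom\cap B(x,r))\ge C(1\wedge r)^d$ by Lemma~\ref{Lem:vol}. I would mimic the nested-time construction from the proof of Proposition~\ref{P:K-d}(ii): first treat $n=1,2$ directly from~\eqref{E:G-n-lower} and~\eqref{E:f-bd}, and for $n\ge 2$ split the time simplex into the nested subintervals~\eqref{interval-si} and integrate the spatial variables over $A_1=\dom\cap B(x,\sqrt{(t-s_1)/(5n)})$ and, for $2\le i\le n$, over $A_i=\dom\cap B(x,\sqrt{s_{i-1}-s_i})\cap B(z_{i-1},\sqrt{s_{i-1}-s_i})$ (and similarly for $A_i'$). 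Using~\eqref{E:G-n-lower},~\eqref{E:f-bd}, and the triangle-inequality estimates
\begin{align*}
  |z_i-z_{i-1}|^2+|z_i'-z_{i-1}'|^2 &\le 2(s_{i-1}-s_i),\\
  |z_n-y|^2+|z_n'-y'|^2 &\le 5(s_{n-1}-s_n)+2|x-y|^2+3|x'-y'|^2+3|x-x'|^2,
\end{align*}
I obtain, just as in the Dirichlet case, the bound
\begin{equation*}
  \widetilde G_N^{\triangleright(n+1)}(t,x,x',y,y') \ge C^{n+1} c^n \frac{1}{1\wedge t^d} e^{-16 c_4\frac{|x-x'|^2}{t}} e^{-12c_4\frac{|x-y|^2+|x'-y'|^2}{t}} \sum_{k=0}^n \frac{t^{n-k\beta/2}}{(n-k)!(k!)^{1-\beta/2}}
\end{equation*}
via the same Stirling estimate. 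Multiplying by $\lambda^{2(n+1)}$, summing over $n\ge 0$, swapping the order of summation, and applying~\eqref{E:exp-bd} with $a=1-\beta/2$ then yields~\eqref{K-n-lb}.

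\emph{Main obstacle.} The principal technical point is to verify that the nested family of balls $\{A_i,A_i'\}$ satisfies the volume lower bound $\op{Vol}(A_i)\wedge\op{Vol}(A_i')\ge C((s_{i-1}-s_i)\wedge 1)^{d/2}$ uniformly in $x$ and in the centers $z_{i-1},z_{i-1}'$. In the Dirichlet proof this required the more refined Lemma~\ref{Lem:volA}; in the Neumann setting, because no distance-to-boundary correction enters the heat kernel bound, the much simpler estimate~\eqref{E:RegSet} of Lemma~\ref{Lem:vol} suffices once we observe that the choice of radii in~\eqref{interval-si} keeps $z_{i-1},z_{i-1}'$ within $\dom\cap B(x,\sqrt{s_{i-1}-s_i})$, so that the double intersection contains a ball of comparable radius intersected with $\dom$. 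Once this geometric step is in place, the remainder of the computation is a verbatim transcription of the proof of Proposition~\ref{P:K-d}, with $\mu_1\mapsto 0$, $c_1\mapsto c_3$, $c_2\mapsto c_4$, and all $\Phi_1$ factors suppressed.
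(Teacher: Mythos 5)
Your proposal follows the same overall strategy that the paper intends---the paper's own proof consists solely of the line ``The proof is similar to that of Lemma~\ref{P:K-d}''---and the upper bound in part (i) is a correct verbatim adaptation: drop $\mu_1$ and the $\Phi_1$ factors, replace $c_1$ by $c_3$, and substitute Lemma~\ref{Lem:iintGGf-n}(i) for Lemma~\ref{Lem:iintGGf-d}(i).

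For the lower bound in part (ii), the one place where you genuinely depart from the Dirichlet template---replacing the cone subsets $V(x)$ by the ambient sets $\dom\cap B(x,\cdot)$ and appealing only to Lemma~\ref{Lem:vol}---is where the argument is thinner than you suggest. Lemma~\ref{Lem:vol} bounds $\op{Vol}(\dom\cap B(w,r))$ for a \emph{single} ball centered at $w\in\overline\dom$. To extract such a ball from the double intersection $\dom\cap B(x,\rho)\cap B(z_{i-1},\rho)$ you take $w$ on the segment joining $x$ and $z_{i-1}$, and you then need $w\in\overline\dom$. This holds when $\dom$ is convex, which is one of the stated hypotheses, so your argument is valid in that case; but under the alternative hypothesis ``\eqref{E:G-n-lower} holds'' (for a possibly non-convex Lipschitz domain), the midpoint can lie outside $\dom$ and the reduction to Lemma~\ref{Lem:vol} breaks. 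The safer route, closer to what the paper actually has in mind, is simply to keep the cone subsets $V(x)$ (or the shifted variant $\widetilde V(x)$ used in Proposition~\ref{P:K-C1alpha}) and invoke Lemma~\ref{Lem:volA}; these are defined for every $x\in\dom$, and since no $\Phi_1$ factor needs to be bounded below there is nothing forcing an $\eps$-shrinking---one obtains the bound on all of $\dom$ directly. The remainder of your computation (nested time intervals~\eqref{interval-si}, triangle-inequality estimates, Stirling bound, resummation via~\eqref{E:exp-bd}) is a faithful transcription of the proof of Proposition~\ref{P:K-d}(ii) with $\mu_1\mapsto 0$ and $c_2\mapsto c_4$, and is correct.
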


\begin{proof}
The proof is similar to that of Lemma~\ref{P:K-d}.
\end{proof}

\subsection{Proof of Theorem~\ref{T:corr-bd}}

\begin{proof}[Proof of Theorem~\ref{T:corr-bd}]
  The correlation bounds~\eqref{E:corr-ub} and~\eqref{E:corr-lb} under Dirichlet
  condition (or Neumann condition, respectively) follows immediately from
  Propositions~\ref{P:2p-corr} and~\ref{P:K-d} (or~\ref{P:2p-corr}
  and~\ref{Lem:K-n}, respectively).
\end{proof}

\section{The case of bounded \texorpdfstring{$C^{1, \alpha}$}{C one alpha}-domains with Dirichlet condition and some
variations}\label{S:Holder-Domain}

In the case of $C^{1, \alpha}$-domains, one can expect better
  moment estimates under Dirichlet boundary condition because the heat kernel
  estimates in~\eqref{E:G-d} hold with $a_1 = a_2 = 1$. This implies that the
  Dirichlet heat kernel estimates in~\eqref{E:G-d} are sharp, yielding matching
upper and lower bounds with the same factor $\left(1\wedge \frac{\Phi_1(x)}{1
\wedge t^{1/2}}\right)\left(1 \wedge \frac{\Phi_1(y)}{1 \wedge t^{1/2}}\right)$.

\subsection{Better estimates for the resolvent kernel \texorpdfstring{$\mathcal{K}$}{K}}

In this part, we study the case of bounded $C^{1, \alpha}$-domains with
Dirichlet boundary condition. For the points $x, x', y, y'$ that are close to
$\partial \dom$, the lemma below provides more precise estimate than what one
gets from the upper bound in Lemma~\ref{Lem:iintGGf-d}.

\begin{lemma}\label{Lem:G^2}
  If $\dom$ is a bounded $C^{1, \alpha}$-domain for some $\alpha > 0$, then
  there exists a positive finite constant $C$ such that for all $t > 0$, for all
  $x, x', y, y' \in \dom$,
  \begin{align}\label{E:G^2}
    &\int_0^t \ud s \iint_{\dom^2} \ud z \, \ud z' \, \Psi(t-s, x) \Psi(t-s, x') \Psi(t-s, z) \Psi(t-s, z')
    \frac{e^{-2\mu_1 (t-s)}}{1 \wedge (t-s)^d} e^{-c_1 \frac{|x-z|^2 + |x'-z'|^2}{t-s}}\notag\\
    &\qquad\times f(z-z') \Psi(s, z) \Psi(s, z') \Psi(s, y) \Psi(s, y') \frac{e^{-2\mu_1 s}}{1 \wedge s^d}
    e^{-\frac{2c_1}{3} \frac{|z-y|^2 + |z'-y'|^2}{s}}\notag\\
    &\le C \Psi(t, x) \Psi(t, x') \Psi(t, y) \Psi(t, y') \frac{e^{-2\mu_1 t}}{1 \wedge t^d}
    e^{-\frac{2c_1}{3}\frac{|x-y|^2 + |x'-y'|^2}{t}} \int_0^t \left(1 \wedge \frac{(t-s)s}{t}\right)^{-\beta/2} \ud s,
  \end{align}
  where $\Psi(t, x)$ is as defined in~\eqref{E:Psi}.
\end{lemma}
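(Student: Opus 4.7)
The plan is to imitate the proof of Lemma~\ref{Lem:iintGGf-d}(i) while keeping track of the $\Psi$-factors that sharpen the heat kernel estimate on a $C^{1,\alpha}$-domain. The asymmetry between the exponents $c_1$ on the $(t-s)$-factor and $2c_1/3$ on the $s$-factor is designed precisely to leave Gaussian room for absorbing Lipschitz remainders. Splitting $c_1 = 2c_1/3 + c_1/3$ and applying the identity~\eqref{E:exp-id} with constant $C = 2c_1/3$ to the matching pieces gives
\[
  e^{-c_1|x-z|^2/(t-s)}\, e^{-(2c_1/3)|z-y|^2/s}
  = e^{-(2c_1/3)|x-y|^2/t}\,e^{-(2c_1/3)|z-a|^2/\tau}\,e^{-(c_1/3)|x-z|^2/(t-s)},
\]
where $a = (s/t)x + ((t-s)/t)y$ and $\tau = (t-s)s/t$, with the same decomposition applied for the primed variables. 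The first factor matches the target Gaussian, the second concentrates $z$ in a $\sqrt{\tau}$-neighbourhood of $a$, and the third is reserved for Step~2.

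Next I would deal with the four ``intermediate'' $\Psi$-factors $\Psi(t-s, z)\Psi(t-s, z')\Psi(s, z)\Psi(s, z')$. Since $\dom$ is $C^{1,\alpha}$, $\Phi_1$ is Lipschitz on $\overline\dom$, so for any reference point $w \in \overline\dom$,
\[
  \Psi(r, z) \le C\bigl[\Psi(r, w) + \bigl(1 \wedge |z-w|/(1 \wedge r^{1/2})\bigr)\bigr].
\]
Anchoring $\Psi(t-s, z)$ at $w = x$ makes the remainder $|z-x|/\sqrt{t-s}$ absorbable by the leftover Gaussian $e^{-(c_1/3)|x-z|^2/(t-s)}$; anchoring $\Psi(s, z)$ at $w = y$ and using $|z-y| \le |z-a| + (s/t)|x-y|$ allows the remainder to be absorbed into the concentrated Gaussian $e^{-(2c_1/3)|z-a|^2/\tau}$, at the cost of only a mild weakening of the surviving exponents.

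To convert the surviving $\Psi(t-s, x)$ and $\Psi(s, y)$ into $\Psi(t, x)\Psi(t, y)$, I would split $\int_0^t ds$ at $t/2$. On $[0, t/2]$ one has $t-s \ge t/2$, hence $1\wedge(t-s)^{1/2} \ge c(1\wedge t^{1/2})$ and $\Psi(t-s, w) \le C\Psi(t, w)$ for $w \in \{x, x'\}$; on $[t/2, t]$ the analogous argument gives $\Psi(s, w) \le C\Psi(t, w)$ for $w \in \{y, y'\}$. In each sub-interval the ``short-time'' $\Psi$-factor (e.g.\ $\Psi(s, y)$ on $[0, t/2]$) cannot be dominated directly, but it is precisely the factor already extracted via the Lipschitz compensation in Step~2 anchored at the near endpoint. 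Once the $\Psi$-factors are reduced to $\Psi(t, x)\Psi(t, x')\Psi(t, y)\Psi(t, y')$, what remains is
\[
  \iint_{\dom^2} e^{-(2c_1/3)|z-a|^2/\tau}\,|z-z'|^{-\beta}\, e^{-(2c_1/3)|z'-a'|^2/\tau}\,dz\,dz',
\]
which by the Plancherel argument of Lemma~\ref{Lem:iintGGf-d}(i) is bounded by $C\tau^{d-\beta/2}$. Combined with the prefactor $[(1\wedge(t-s)^d)(1\wedge s^d)]^{-1}$ and the inequality~\eqref{min-lb}, this yields $\frac{1}{1\wedge t^d}(1\wedge\tau)^{-\beta/2}$, and integrating over $s$ delivers~\eqref{E:G^2}.

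The technical heart of the argument lies in the bookkeeping of the previous two paragraphs. The naive pointwise bound $\Psi(t-s, x)\Psi(s, y) \le C\Psi(t, x)\Psi(t, y)$ is false in general (for instance when $\Phi_1(y) \ll 1\wedge s^{1/2}$), so the required compensation must come from the $\Psi(\cdot, z)$ factors on the intermediate variable, which are localized near $a$ via the concentrated Gaussian. Making this compensation rigorous while preserving enough Gaussian headroom to run the final Plancherel estimate is the main obstacle and relies essentially on the $C^{1,\alpha}$-regularity of $\dom$, which is what makes $\Phi_1$ Lipschitz.
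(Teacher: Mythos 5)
Your overall skeleton (split $\int_0^t$ at $t/2$, split the $c_1$-Gaussian, apply~\eqref{E:exp-id}, run the Plancherel argument, and use~\eqref{min-lb}) is right, and you correctly flag the real obstacle: $\Psi(t-s,x)\Psi(s,y)\le C\,\Psi(t,x)\Psi(t,y)$ is false when the short-time argument is small. But your proposed mechanism for closing that gap does not work, and this is not a bookkeeping detail --- it is the heart of the lemma.

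Your Step 2 uses the one-way Lipschitz decomposition
\[
  \Psi(r,z)\le C\Bigl[\Psi(r,w)+\Bigl(1\wedge\tfrac{|z-w|}{1\wedge r^{1/2}}\Bigr)\Bigr],
\]
and then absorbs the remainder into a Gaussian. After absorption this only produces $\Psi(r,z)e^{-c|z-w|^2/r}\le C\bigl[\Psi(r,w)+1\bigr]\le C$, i.e.\ nothing better than the trivial bound $\Psi\le1$. Crucially, it does \emph{not} eliminate either pre-existing short-time factor: on $[0,t/2]$ you are still holding $\Psi(s,y)\Psi(s,y')$ (and $\Psi(s,y)\ge\Psi(t,y)$, with the ratio blowing up like $1/\Phi_1(y)$ as $s\to0$), and symmetrically $\Psi(t-s,x)\Psi(t-s,x')$ on $[t/2,t]$. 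Your sentence ``it is precisely the factor already extracted via the Lipschitz compensation'' gestures at a cancellation that your inequality cannot deliver, because a one-sided comparison of $\Psi(r,z)$ against $\Psi(r,w)$ never touches the $s$-argument of $\Psi(s,y)$.

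What is actually needed is the two-factor ``swap'' inequality, for all $s,r>0$ and $v,w\in U$,
\[
  \Psi(s,v)\,\Psi(r,w)\,e^{-\frac{c_1}{6}\frac{|v-w|^2}{r}}\le C_0\,\Psi(s,w),
\]
which the paper proves by combining $\Phi_1(v)\le c_0^2\Phi_1(w)+c_0|v-w|$ with the cancellation $\Psi(r,w)\le\Phi_1(w)/(1\wedge r^{1/2})$; the factor $\Psi(r,w)$ absorbs the $1/\Phi_1(w)$ produced by the ratio $\Phi_1(v)/\Phi_1(w)$, which your additive decomposition never sees. Applied on $[0,t/2]$ as $\Psi(t-s,z)\Psi(s,y)e^{-c|z-y|^2/s}\le C_0\Psi(t-s,y)$ (and on $[t/2,t]$ as $\Psi(t-s,x)\Psi(s,z)e^{-c|x-z|^2/(t-s)}\le C_0\Psi(s,x)$), it simultaneously consumes one intermediate $\Psi(\cdot,z)$ and the dangerous short-time endpoint factor, replacing them by a single $\Psi$ carrying the \emph{long} time argument, which is then $\le\sqrt2\,\Psi(t,\cdot)$. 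You should replace your additive anchoring by this multiplicative swap; after that, the Gaussian split, the application of~\eqref{E:exp-id} (with a time reparametrization $t'=t+s$, $s'=2s$ on $[0,t/2]$), and the Plancherel step go through essentially as you outlined.
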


\begin{proof}
  We first derive the following bound: for all $s, r > 0$ and $v, w \in \dom$,
  \begin{align}\label{Psi-bd}
    \Psi(s, v) \Psi(r, w) e^{-\frac{c_1}{6} \frac{|v-w|^2}{r}} \le C_0 \Psi(s, w).
  \end{align}
  Since $\dom$ is a $C^{1, \alpha}$-domain for some $\alpha > 0$,~\eqref{E:Phi-bd}
  holds with $a_1 = a_2 = 1$. By~\eqref{E:Phi-bd} and the triangle inequality,
  \begin{align*}
    \Phi_1(v) \le c_0(\op{dist}(w, \partial \dom) + |v-w|) \le c_0^2\Phi_1(w) + c_0|v-w|.
  \end{align*}
  It follows that
  \begin{align*}
      \Psi(s, v) = 1 \wedge \frac{\Phi_1(v)}{1\wedge s^{1/2}}
    & \le 1 \wedge \left(\frac{c_0^2\Phi_1(w)+c_0|v-w|}{\Phi_1(w)}\cdot \frac{\Phi_1(w)}{1 \wedge s^{1/2}}\right) \\
    & \le \left(c_0^2 + \frac{c_0|v-w|}{\Phi_1(w)}\right) \Psi(s, w).
  \end{align*}
  Thus,
  \begin{align*}
      \Psi(s, v) \Psi(r, w)
    & \le \left(c_0^2 + \frac{c_0|v-w|}{\Phi_1(w)}\right) \left(1\wedge \frac{\Phi_1(w)}{1\wedge r^{1/2}}\right) \Psi(s, w) \\
    & \le \left(c_0^2 + \frac{c_0|v-w|}{1 \wedge r^{1/2}}\right) \Psi(s, w).
  \end{align*}
  Moreover, since the function $xe^{-x^2}$ is bounded, we have
  \begin{align*}
      \Psi(s, v) \Psi(r, w) e^{-\frac{c_1}{6} \frac{|v-w|^2}{r}}
    & \le \left(c_0^2 + c_0 |v-w| + \frac{c_0|v-w|}{r^{1/2}}e^{-\frac{c_1}{6} \frac{|v-w|^2}{r}}\right) \Psi(s, w) \\
    & \le C_0 \Psi(s, w),
  \end{align*}
  where $C_0$ is a finite constant depending on $c_0$, $c_1$ and the diameter of
  $\dom$. This proves~\eqref{Psi-bd}.

  In order to prove the lemma, we split the integral over $[0, t]$ into the sum
  of the integral over $[t/2, t]$ and the integral over $[0, t/2]$. Denote these
  two integrals by $I_1$ and $I_2$, respectively. By~\eqref{Psi-bd}, we have the
  following two bounds
  \begin{align*}
    \Psi(t-s, x)\Psi(s, z) e^{-\frac{c_1}{3}\frac{|x-z|^2}{t-s}}     & \le C_0 \Psi(s, x), \\
    \Psi(t-s, x')\Psi(s, z') e^{-\frac{c_1}{3}\frac{|x'-z'|^2}{t-s}} & \le C_0 \Psi(s, x').
  \end{align*}
  These bounds and $\Psi(t-s, z) \Psi(t-s, z') \le 1$ imply that
  \begin{align*}
    I_1 &\le C_0^2 \int_{t/2}^t \ud s \, \Psi(s, x) \Psi(s, x') \Psi(s, y) \Psi(s, y') \\
        & \quad \times \iint_{\dom^2} \ud z \, \ud z' \, \frac{e^{-2\mu_1(t-s)}}{1 \wedge (t-s)^d} e^{-\frac{2c_1}{3} \frac{|x-z|^2 + |x'-z'|^2}{t-s}} f(z-z') \frac{e^{-2\mu_1 s}}{1 \wedge s^d} e^{-\frac{2c_1}{3}\frac{|z-y|^2 + |z'-y'|^2}{s}}.
  \end{align*}
  Then, using the bound $\Psi(s, x) \le \Psi(t/2, x) \le \sqrt{2} \Psi(t, x)$
  for $t/2 \le s \le t$ and Lemma~\ref{Lem:iintGGf-d}, we obtain
  \begin{align*}
    I_1 \le C \Psi(t, x) \Psi(t, x') \Psi(t, y) \Psi(t, y) \frac{e^{-2\mu_1 t}}{1 \wedge t^d}
    e^{-\frac{2c_1}{3} \frac{|x-y|^2 + |x'-y'|^2}{t}} \int_{t/2}^t \left(1 \wedge \frac{(t-s)s}{t}\right)^{-\beta/2} \ud s.
  \end{align*}
  For $I_2$, we use the bounds
  \begin{align*}
    \Psi(t-s, z)\Psi(s, y) e^{-\frac{c_1}{6}\frac{|z-y|^2}{s}}     & \le C_0 \Psi(t-s, y), \\
    \Psi(t-s, z')\Psi(s, y') e^{-\frac{c_1}{6}\frac{|z'-y'|^2}{s}} & \le C_0 \Psi(t-s, y'),
  \end{align*}
  and $\Psi(s, z) \Psi(s, z') \le 1$ to get that
  \begin{align*}
    I_2 &\le C_0^2 \int_0^{t/2} \ud s\, \Psi(t-s, x)\Psi(t-s, x') \Psi(t-s, y) \Psi(t-s, y')\\
        & \quad \times \iint_{\dom^2} \ud z \, \ud z' \, \frac{e^{-2\mu_1(t-s)}}{1 \wedge (t-s)^d} e^{-c_1 \frac{|x-z|^2+|x'-z'|^2}{t-s}} f(z-z') \frac{e^{-2\mu_1 s}}{1 \wedge s^d} e^{-c_1 \frac{|z-y|^2+|z'-y'|^2}{2s}}.
  \end{align*}
  For $0 \le s \le t/2$, we apply the identity~\eqref{E:exp-id} with $t$ and $s$
  replaced by $t' = t+s$ and $s' = 2s$ respectively, and with $v = z-x$ and $w =
  y-x$. This gives
  \begin{align*}
    e^{-c_1 \frac{|x-z|^2+|x'-z'|^2}{t-s}}e^{-c_1 \frac{|z-y|^2+|z'-y'|^2}{2s}}
  & = e^{-c_1 \frac{|x-y|^2 + |x'-y'|^2}{t+s}} e^{-c_1 \frac{|(z-x) - \frac{t'-s'}{t'}(y-x)|^2 + |(z'-x') - \frac{t'-s'}{t'}(y'-x')|^2}{\tau'}} \\
  & \le e^{-\frac{2c_1}{3} \frac{|x-y|^2 + |x'-y'|^2}{t}} e^{-c_1 \frac{|(z-x) - \frac{t'-s'}{t'}(y-x)|^2 + |(z'-x') - \frac{t'-s'}{t'}(y'-x')|^2}{\tau'}},
  \end{align*}
  where $\tau' = (t'-s')s'/t'$.
  It is easy to see that for $0 \le s \le t/2$,
  \begin{align*}
    (1 \wedge \tau')^{d-\beta/2}\le 2^d \left(1\wedge \frac{(t-s)s}{t}\right)^{d-\beta/2}.
  \end{align*}
  Hence, we can follow the proof of Lemma~\ref{Lem:iintGGf-d}(i) and use the
  bound $\Psi(t-s, x) \le \sqrt{2} \Psi(t, x)$ for $0 \le s \le t/2$ to deduce
  that
  \begin{align*}
    I_2 \le C' \Psi(t, x) \Psi(t, x') \Psi(t, y) \Psi(t, y) \frac{e^{-2\mu_1 t}}{1 \wedge t^d}
    e^{-\frac{2c_1}{3} \frac{|x-y|^2 + |x'-y'|^2}{t}} \int_0^{t/2} \left(1 \wedge \frac{(t-s)s}{t}\right)^{-\beta/2} \ud s.
  \end{align*}
  The proof of Lemma~\ref{Lem:G^2} is complete.
\end{proof}

We can now strengthen the bounds~\eqref{K-d-ub} and~\eqref{K-d-lb} for
$\mathcal{K}^\lambda_D(t, x, x', y, y')$ in the case of bounded $C^{1,
\alpha}$-domains. Note that $\Psi(t, x) = 0$ for all $x \in \partial \dom$,
hence~\eqref{K-d-ub2} and~\eqref{K-d-lb2} below provide more precise estimates
than~\eqref{K-d-ub} and~\eqref{K-d-lb} for $x, x', y, y'$ that are close to
$\partial \dom$.

\begin{proposition}\label{P:K-C1alpha}
  If $\dom$ is a bounded $C^{1, \alpha}$-domain for some $\alpha > 0$. Then:
  \begin{enumerate}
  \item[(i)] There exist positive finite constants $C$, $c$, $c'$ such that for
    all $t > 0$ and $x, x', y, y' \in \dom$,
    \begin{align}\label{K-d-ub2}
      \begin{split}
        \mathcal{K}^\lambda_D(t, x, x', y, y')  \le
        & \frac{C\lambda^2}{1 \wedge t^d} \Psi(t, x) \Psi(t, x') \Psi(t, y) \Psi(t, y') \\
        & \times e^{-\frac{2c_1}{3}\left(\frac{|x-y|^2}{t} + \frac{|x'-y'|^2}{t}\right)}
                 e^{2t\left(c \lambda^2 + c' \lambda^{\frac{4}{2-\beta}}-\mu_1\right)}.
      \end{split}
    \end{align}
  \item[(ii)] There exist positive finite constants $\bar C$, $\bar c$, $\tilde
    c$ such that for all $t > 0$, for all $x, x', y, y' \in \dom$,
  \begin{align}\label{K-d-lb2}
    \begin{split}
      \mathcal{K}^\lambda_D(t, x, x', y, y') \ge
      & \frac{\bar C \lambda^2}{1 \wedge t^d} \Psi(t, x) \Psi(t, x') \Psi(t, y) \Psi(t, y') \\
      & \times e^{-16c_2 \frac{|x-x'|^2}{t}} e^{-12c_2 \left(\frac{|x-y|^2}{t} + \frac{|x'-y'|^2}{t} \right)} e^{2t\left(\bar c\lambda^2 + \tilde c \lambda^{\frac{4}{2-\beta} - \mu_1}\right)}.
    \end{split}
  \end{align}
  \end{enumerate}
\end{proposition}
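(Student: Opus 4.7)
The plan is to prove both bounds by induction on $n$ for $\widetilde{G}_D^{\triangleright n}$, exploiting that for bounded $C^{1,\alpha}$-domains the Dirichlet heat kernel estimates~\eqref{E:G-d} hold with $a_1=a_2=1$. In this regime both the upper and the lower bounds for $G_D(t,x,y)$ carry matching leading-eigenfunction factors $\Psi(t,x)\Psi(t,y)$, which is what makes the prefactor $\Psi(t,x)\Psi(t,x')\Psi(t,y)\Psi(t,y')$ in~\eqref{K-d-ub2} and~\eqref{K-d-lb2} achievable.

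For the upper bound~\eqref{K-d-ub2}, I will show by induction on $n$ that
\begin{align*}
  \widetilde{G}_D^{\triangleright n}(t,x,x',y,y')
  \le C^n\,\Psi(t,x)\Psi(t,x')\Psi(t,y)\Psi(t,y')\,\frac{e^{-2\mu_1 t}}{1\wedge t^d}\,e^{-\frac{2c_1}{3}\frac{|x-y|^2+|x'-y'|^2}{t}}\,\widetilde{h}_{n-1}(t),
\end{align*}
where $\widetilde{h}_n$ is from Lemma~\ref{Lem:tildeh}. The base case $n=1$ is the upper bound in~\eqref{E:G-d} with $a_1=1$, after weakening the Gaussian coefficient from $c_1$ down to $2c_1/3$. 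For the inductive step I write $\widetilde{G}_D^{\triangleright(n+1)}=\widetilde{G}_D\triangleright\widetilde{G}_D^{\triangleright n}$, apply the upper bound~\eqref{E:G-d} to $\widetilde{G}_D$ at time $t-s$ (with coefficient $c_1$) and the induction hypothesis to $\widetilde{G}_D^{\triangleright n}$ at time $s$ (with coefficient $2c_1/3$), and then invoke Lemma~\ref{Lem:G^2}, which was tailored exactly for this composition: it yields the $(n+1)$-th bound with outer prefactor $\Psi(t,x)\Psi(t,x')\Psi(t,y)\Psi(t,y')$, Gaussian coefficient $2c_1/3$, and iterated integral $\widetilde{h}_n(t)=\int_0^t(1\wedge(t-s)s/t)^{-\beta/2}\widetilde{h}_{n-1}(s)\,\ud s$. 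Summing $\sum_{n\ge 1}\lambda^{2n}$-weighted bounds and using the exponential estimate of Lemma~\ref{Lem:tildeh} then produces the factor $e^{2t(c\lambda^2+c'\lambda^{4/(2-\beta)}-\mu_1)}$, completing~\eqref{K-d-ub2}.

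For the lower bound~\eqref{K-d-lb2}, the plan is to rerun the inductive lower bound used in the proof of Proposition~\ref{P:K-d}(ii), but with the sharper lower bound in~\eqref{E:G-d} (with $a_2=1$): $G_D(t,x,y)\ge C\,\Psi(t,x)\Psi(t,y)\frac{e^{-\mu_1 t}}{1\wedge t^{d/2}}e^{-c_2|x-y|^2/t}$. The outer factors $\Psi(t,x),\Psi(t,x'),\Psi(t,y),\Psi(t,y')$ emerge directly from the lower bounds applied to the first and last heat kernels in the convolution chain, since $\Psi(s,z)$ is non-increasing in $s$ so that $\Psi(t-s_1,x)\ge\Psi(t,x)$ and $\Psi(s_n,y)\ge\Psi(t,y)$ (and similarly for the primed variables). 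The intermediate factors $\Psi(s_{i-1}-s_i,z_i)$ and $\Psi(s_{i-1}-s_i,z_i')$ need to be bounded below by a positive absolute constant; this will be achieved by choosing each integration sub-region to lie in $V(x)$ at distance of order $\sqrt{(s_{i-1}-s_i)\wedge 1}$ along the axis of the cone $\mathscr{C}(x,\xi_x,\eps_0/2)$, so that the $\delta$-cone property combined with $\Phi_1(z)\asymp\op{dist}(z,\partial\dom)$ (from~\eqref{E:Phi-bd} with $a_2=1$) yields $\Phi_1(z_i)\ge c\,(1\wedge\sqrt{s_{i-1}-s_i})$ and hence $\Psi(s_{i-1}-s_i,z_i)\ge c$ \emph{independently} of how close $x$ is to $\partial\dom$. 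The remainder of the argument---the time-slab choice~\eqref{interval-si}, the volume lower bound of Lemma~\ref{Lem:volA}, the change of variables, Stirling, and the exponential-sum estimate~\eqref{E:exp-bd}---then goes through essentially as before.

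The hard part will be the lower bound. In~\eqref{K-d-lb} the constants $\overline{C},\overline{c},\widetilde{c}$ degenerate as $\eps\to 0$ precisely because the integration region there is $V(x)\cap B(x,\cdot)$ with the requirement $x\in\dom_\eps$, producing the factor $(1\wedge(c_0^{-1}\eps^{a_1}))^{2(n+1)}$. To get $\eps$-free constants in~\eqref{K-d-lb2}, one must shrink the integration region to a thinner sub-cone parametrized by distance along the cone axis that simultaneously (a) keeps the intermediate points uniformly away from $\partial\dom$ at the scale $\sqrt{(s_{i-1}-s_i)\wedge 1}$ and (b) still has volume comparable to $((s_{i-1}-s_i)\wedge 1)^{d/2}$, so that the final series sum in $n$ still produces the right exponential $e^{2t(\bar c\lambda^2+\tilde c\lambda^{4/(2-\beta)})}$. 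Verifying quantitative uniformity of this cone-based construction across all $n$ convolution steps---and confirming that every $\eps$-dependent loss is absorbed into the outer prefactor $\Psi(t,x)\Psi(t,x')\Psi(t,y)\Psi(t,y')$---is the delicate point.
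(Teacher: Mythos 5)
Your upper‐bound plan is essentially the paper's: induct on $n$, apply the $a_1=1$ heat kernel upper bound and then the pointwise‐in‐$s$ form of Lemma~\ref{Lem:G^2} to the composition $\widetilde G_D\triangleright\widetilde G_D^{\triangleright n}$, accumulating the iterated integrals $\widetilde h_n$, and sum with Lemma~\ref{Lem:tildeh}. That part is correct.

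For the lower bound your overall architecture is right (heat kernel lower bound with $a_2=1$; the outer $\Psi(t,x)\Psi(t,x')\Psi(t,y)\Psi(t,y')$ drop out of the first and last kernels via monotonicity of $\Psi$ in time; the intermediate $\Psi$'s must be bounded below by absolute constants), and you correctly single out the $\eps$‐freeness of the intermediate‐point construction as the delicate step. But the device you propose for it — placing $z_i$ a distance of order $\sqrt{(s_{i-1}-s_i)\wedge 1}$ along the cone axis, so that $\Phi_1(z_i)\gtrsim \sqrt{(s_{i-1}-s_i)\wedge 1}$ and hence $\Psi(s_{i-1}-s_i,z_i)\gtrsim 1$ — is needlessly complicated and incomplete as stated. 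For large time increments the prescribed distance is of order $1$, which can exceed the cone radius $\eps_0/2$ (the cone $\mathscr{C}(x,\xi,\eps_0/2)$ has diameter $\eps_0/2<1$), so you must cap the distance at $O(\eps_0)$; once you do, you discover that the cap alone suffices for all time scales, because a \emph{fixed} positive distance $\delta_0$ from $\partial\dom$ already yields $\Psi(s,z)\ge 1\wedge \frac{\Phi_1(z)}{1\wedge\sqrt{s}}\ge 1\wedge\Phi_1(z)\ge 1\wedge c_0^{-1}\delta_0$ uniformly in $s>0$. The paper exploits exactly this: instead of your time‐dependent scheme, it replaces $V(x)$ by the shifted, thinner sub‐cone $\widetilde V(x)=\overline{\mathscr{C}(\tilde x,\xi_{y_i},\eps_0/4)}$ with $\tilde x=x+(\eps_0/4)\xi_{y_i}$ (a shifted ball in case (2)), whose closure lies a fixed distance $\delta_0>0$ inside $\dom$ \emph{uniformly over all $x\in\dom$}, and it proves a modified volume lower bound (the analogue of Lemma~\ref{Lem:volA}) for $\widetilde V(x)\cap B(\tilde x,s)\cap B(z,r)$. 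With that, the intermediate $\Phi_1$'s are bounded below by a single constant and the whole combinatorial/Stirling machinery of Proposition~\ref{P:K-d}(ii) carries over without $\eps$‐dependence. So the idea you are missing is that, on a $C^{1,\alpha}$ domain, a single fixed interior shift of the cone — rather than a scale‐dependent one — is what makes all the intermediate $\Psi$ factors uniformly bounded below.
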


\begin{proof}
  (i). Similarly to the proof of Proposition~\ref{P:K-d}(i), the upper
  bound~\eqref{K-d-ub2} can be proved by applying Lemmas~\ref{Lem:G^2},
  \ref{Lem:tildeh} and~\ref{Lem:I}.

  (ii). To prove the lower bound~\eqref{K-d-lb2}, we claim that for each $n \ge
  0$,
  \begin{align}\label{E:Gn-d-lb}
    \begin{split}
        \widetilde{G}^{\triangleright (n+1)}_D(t, x, x', y, y') & \ge
      C^{n+1} \Psi(t, x) \Psi(t, x') \Psi(t, y) \Psi(t, y') \\
      & \quad \times \frac{e^{-2\mu_1 t}}{1 \wedge t^d} e^{-16c_2\frac{|x-x'|^2}{t}} e^{-12c_2 \frac{|x-y|^2+|x'-y'|^2}{t}}
        \sum_{k=0}^n \frac{t^{n-k\beta/2}}{(n-k)!(k!)^{1-\beta/2}}.
    \end{split}
  \end{align}
  Indeed, for $n = 0$, \eqref{E:Gn-d-lb} follows from Proposition~\ref{P:G}(i).
  For $n \ge 1$, we can prove~\eqref{E:Gn-d-lb} by modifying the proof in
  Proposition~\ref{P:K-d}(ii) and we outline the major changes as follows.
  Instead of $V(x)$ defined in~\eqref{E:V(x)}, we now consider
  \begin{align*}
    \widetilde{V}(x) =
    \begin{cases}
      \overline{\mathscr{C}(x+(\eps_0/4)\xi_{y_i}, \xi_{y_i}, \eps_0/4)} & \text{in case (1) of~\eqref{E:V(x)}}, \\
      \overline{B(x, \eps_0/4)}                                          & \text{in case (2) of~\eqref{E:V(x)}},
    \end{cases}
  \end{align*}
  so that $\widetilde{V}(x) \subset V(x) \subset \dom$ and there exists
  $\delta_0 > 0$ such that for all $x \in \dom$, for all $z \in
  \widetilde{V}(x)$, $\operatorname{dist}(z, \partial \dom) \ge \delta_0$.
  Hence, \eqref{E:Phi-bd} implies that
  \begin{align}\label{E:inf-Phi}
    \inf_{x \in \dom} \inf_{z \in \widetilde{V}(x)}\Phi_1(z) \ge c_0^{-1}\delta_0 > 0.
  \end{align}
  For any $x \in \dom$, define $\tilde{x}$ by
  \begin{align*}
    \tilde{x} =
    \begin{cases}
      x+(\eps_0/4)\xi_{y_i} & \text{in case (1) of }\eqref{E:V(x)}, \\
      x & \text{in case (2) of }\eqref{E:V(x)}.
    \end{cases}
  \end{align*}
  Then, as in Lemma \ref{Lem:volA}, we can show that there exists $C_0 > 0$ such
  that for all $t > 0$, for all $x \in \dom$, for all $0 \le r \le s \le
  \eps_0/4$, for all $z \in \widetilde{V}(x) \cap B(\tilde{x}, s)$,
  \begin{align}\label{E:vol-Vtilde}
    \operatorname{Vol}(\widetilde{V}(x) \cap B(\tilde{x}, s) \cap B(z, r)) \ge C_0 (1 \wedge r)^d.
  \end{align}
  For $n = 1$, similarly to the proof in Proposition~\ref{P:K-d}(ii), we have
  \begin{align*}
    \MoveEqLeft \widetilde{G}_D^{\triangleright 2}(t, x, x', y, y')\\
    & \ge \int_{t/4}^{t/2} \ud s \iint_{[\widetilde{V}(x) \cap B(\tilde{x}, \sqrt{t})]^2} \ud z\, \ud z'\,
      \widetilde{G}_D(t-s, x, x', z, z')f(z-z') \widetilde{G}_D(s, z, z', y, y')\\
    & \ge C \Psi(t, x) \Psi(t, x') \Psi(t, y) \Psi(t, y') \frac{e^{-2\mu_1 t}}{(1\wedge t^d)^2}
      e^{-16 c_2 \frac{|x-x'|^2}{t}} e^{-12 c_2 \frac{|x-y|^2 + |x'-y'|^2}{t}}\\
    & \quad \times \int_{t/4}^{t/2} \ud s \iint_{[\widetilde{V}(x) \cap B(\tilde{x}, \sqrt{t})]^2} \ud z\, \ud z' \, (1 \wedge \Phi_1(z))^2 (1 \wedge \Phi_1(z'))^2 |z-z'|^{-\beta}.
  \end{align*}
  Then, by using~\eqref{E:inf-Phi} and~\eqref{E:vol-Vtilde}, we
  obtain~\eqref{E:Gn-d-lb} for $n=1$.

  For $n \ge 2$, we first restrict the integrals of $s_1, s_2, s_3, \dots, s_n$
  to the smaller intervals   $\left[(1-\frac{1}{4n})\frac t 2, \frac t
  2\right]$, $\left[(1-\frac{1}{2n})\frac t 2, s_1-\frac{t}{8n}\right]$,
  $\left[(1-\frac{2}{2n})\frac{t}{2}, s_2 - (s_1-s_2) \right]$, $\dots$,
  $\left[(1-\frac{n-1}{2n})\frac t 2, s_{n-1}-(s_{n-1}-s_{n-1})\right]$,
  respectively, so that $\frac{t-s_1}{5n} \le \frac{t}{8n} \le s_1-s_2 \le
  s_2-s_3 \le \dots \le s_{n-1}-s_n$. We can then modify the proof of
  Proposition~\ref{P:K-d}(ii) by considering
  \begin{alignat*}{2}
    z_1, z_1' & \in A_1  &  & \coloneqq \widetilde{V}(x) \cap B(\tilde{x}, \sqrt{(t-s_1)/(5n)} \wedge (\eps_0/4)),                                                       \\
    z_2       & \in A_2  &  & \coloneqq \widetilde{V}(x) \cap B(\tilde{x}, \sqrt{s_1-s_2} \wedge (\eps_0/4)) \cap B(z_1 , \sqrt{s_1-s_2} \wedge (\eps_0/4)),             \\
    z_2'      & \in A_2' &  & \coloneqq \widetilde{V}(x) \cap B(\tilde{x}, \sqrt{s_1-s_2} \wedge (\eps_0/4)) \cap B(z_1', \sqrt{s_1-s_2} \wedge (\eps_0/4)),             \\
              &          &  & \vdotswithin{\coloneqq}                                                                                                                    \\
    z_n       & \in A_n  &  & \coloneqq \widetilde{V}(x) \cap B(\tilde{x}, \sqrt{s_{n-1}-s_n} \wedge (\eps_0/4)) \cap B(z_{n-1} , \sqrt{s_{n-1}-s_n} \wedge (\eps_0/4)), \\
    z_n'      & \in A_n' &  & \coloneqq \widetilde{V}(x) \cap B(\tilde{x}, \sqrt{s_{n-1}-s_n} \wedge (\eps_0/4)) \cap B(z_{n-1}', \sqrt{s_{n-1}-s_n} \wedge (\eps_0/4)).
  \end{alignat*}
  Then, along the lines of the proof of Proposition~\ref{P:K-d}(ii), we can
  deduce~\eqref{E:Gn-d-lb} using~\eqref{E:inf-Phi} and~\eqref{E:vol-Vtilde}
  above. 
  
  Finally, recall that
  \begin{align*}
    \mathcal{K}^\lambda_D(t, x, x', y, y')
    = \lambda^2 \sum_{n=0}^\infty \lambda^{2n} \widetilde{G}_D^{\triangleright (n+1)}(t, x, x', y, y').
  \end{align*}
  By using~\eqref{E:Gn-d-lb}, interchanging the order of summation, and
  using~\eqref{E:exp-bd}, we can obtain the lower bound~\eqref{K-d-lb2} as in
  the proof of Proposition~\ref{P:K-d}(ii). This completes the proof of
  Proposition~\ref{P:K-C1alpha}.
 \end{proof}

\subsection{Proof of Theorem~\ref{T:C1alpha}}

\begin{proof}[Proof of Theorem~\ref{T:C1alpha}]
  In Remark~\ref{R:JcFinite}, we have seen that $\nu$ satisfies
  condition~\eqref{E:C1alpha-nu} if and only if $J_{2c_1/3}^* (t, x) < \infty$
  for all $t > 0$ and $x \in \dom$. The proof of the existence, $p$-th moment
  bounds and uniqueness of the solution is similar to the proof of
  Theorem~\ref{T:ExUnque} in Section~\ref{S:Key}, with the use of
  Lemma~\ref{Lem:G^2} instead of Lemma~\ref{Lem:iintGGf-d}~(i). The correlation
  bounds follow immediately from Propositions~\ref{P:2p-corr}
  and~\ref{P:K-C1alpha}.
\end{proof}

\subsection{Proof of Corollary~\ref{C:prod-dom}}

\begin{proof}[Proof of Corollary~\ref{C:prod-dom}]
  Note that the Dirichlet kernel kernel for $\mathscr{L}$ on $\dom = \prod_{i =
  1}^m \dom_i$ is given by
  \begin{align*}
    G_D^\dom(t, x, y) = \prod_{i = 1}^m G_D^{\dom_i}(t, x_i, y_i)
  \end{align*}
  for $t > 0$ and $x = (x_1, \dots, x_m)$, $y = (y_1, \dots, y_m) \in \dom$,
  where $G_D^{\dom_i}(t, x_i, y_i)$ is the Dirichlet heat kernel for
  $\mathscr{L}_i$ on the $C^{1, \alpha_i}$-domain $\dom_i$. This and
  Proposition~\ref{P:G} applied to each $G_D^{\dom_i}(t, x_i, y_i)$ imply the
  following heat kernel estimate
  \begin{align*}
    G_D^\dom(t, x, y) \le C_1 \Psi^* (t, x)\Psi^* (t, y) \frac{e^{-\mu_1 t}}{1 \wedge t^{d/2}}
    e^{-c_1 \frac{|x-y|^2}{t}}
  \end{align*}
  with suitable constants $C_1$ and $c_1$, where $\Psi^*$ is as defined
  in~\eqref{E:Psi-prod} and $\mu_1$ is the sum of the leading eigenvalues of the
  Dirichlet operators $\mathscr{L}_i$. By the proof of~\eqref{Psi-bd} in
  Lemma~\ref{Lem:G^2}, for each $i$, we have
  \begin{align*}
    \left(1 \wedge \frac{\Phi_1^{\dom_i}(v_i)}{1 \wedge s^{1/2}}\right)
    \left(1 \wedge \frac{\Phi_1^{\dom_i}(w_i)}{1 \wedge r^{1/2}}\right)
    e^{-\frac{c_1}{6} \frac{|v_i-w_i|^2}{r}}
    \le C_{0, i} \left(1 \wedge \frac{\Phi_1^{\dom_i}(w_i)}{1 \wedge s^{1/2}}\right)
  \end{align*}
  for all $s, r > 0$ and $v_i, w_i \in \dom_i$, where $C_{0, i}$ is a constant.
  Taking products over $i \in \{1, \dots, m\}$ gives
  \begin{align*}
    \Psi^* (s, v) \Psi^* (r, w) e^{-\frac{c_1}{6} \frac{|v-w|^2}{r}} \le C_0 \Psi^* (s, w)
  \end{align*}
  for all $s, r > 0$ and $v, w \in \dom$, where $C_0 = \prod_{i = 1}^m C_{0,i}$.
  Then, the same proof of Lemma~\ref{Lem:G^2} shows that the
  estimate~\eqref{E:G^2} holds with every $\Psi$ replaced by $\Psi^*$. This
  implies that the estimate~\eqref{K-d-ub2} also holds with every $\Psi$
  replaced by $\Psi^*$. Using these estimates, the statements (i) and (ii) in
  Theorem~\ref{T:C1alpha} with $\Psi$ replaced by $\Psi^*$ can be proved for the
  product domain the same way they are proved in Theorem~\ref{T:C1alpha}.
  Condition~\eqref{E:int-prod-nu} ensures that $J^*_c(t, x) < \infty$ for all $t
  > 0$ and $x \in \dom$.
\end{proof}

\subsection{Some auxiliary results related to examples in Section~\ref{SS:Examples}}

\begin{lemma}\label{L:BesselJMax}
   For $\nu > -1$, let $J_\nu(\cdot)$ be the Bessel function of first kind and
   of order $\nu$ and let $z_0$ be any positive zero of $J_\nu(\cdot)$. Then,
   \begin{align*}
     \sup_{r\in (0,1)} \frac{r^{-\nu} J_\nu(r z_0)}{1-r}<\infty.
   \end{align*}
\end{lemma}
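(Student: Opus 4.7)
The plan is to show that $F(r) \coloneqq r^{-\nu} J_\nu(rz_0)/(1-r)$ extends continuously to the closed interval $[0,1]$, from which boundedness on $(0,1)$ is immediate by compactness. Continuity on $(0,1)$ is clear because $J_\nu$ is analytic on $(0,\infty)$ and $r\mapsto r^{-\nu}$ is continuous on $(0,1)$. So the entire content is in checking that the two potentially singular endpoints are in fact removable.

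First, I would handle the left endpoint. Using the standard power series
\begin{equation*}
  J_\nu(x) = \sum_{k=0}^{\infty} \frac{(-1)^k}{k!\,\Gamma(\nu+k+1)}\left(\frac{x}{2}\right)^{\nu+2k},
\end{equation*}
one gets $r^{-\nu} J_\nu(rz_0) = (z_0/2)^\nu \sum_{k\ge 0} \frac{(-1)^k (z_0/2)^{2k}}{k!\,\Gamma(\nu+k+1)}\, r^{2k}$, which is an analytic function of $r$ on $[0,1]$ with value $(z_0/2)^\nu/\Gamma(\nu+1)$ at $r=0$. In particular $r^{-\nu} J_\nu(rz_0)$ is bounded on $(0,\tfrac12]$, and dividing by $1-r\ge \tfrac12$ there gives boundedness of $F$ near $0$.

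Next, the right endpoint is the main point. Since $z_0$ is a positive zero of $J_\nu$, we have $J_\nu(rz_0)\to 0$ as $r\to 1^-$, so we need the zero at $r=1$ to be \emph{simple}, i.e.\ $J_\nu'(z_0)\neq 0$. This is the classical fact that all positive zeros of $J_\nu$ are simple, and I would justify it by the Bessel ODE
\begin{equation*}
  x^2 y'' + x y' + (x^2-\nu^2) y = 0,
\end{equation*}
of which $J_\nu$ is a solution: if both $J_\nu(z_0)=0$ and $J_\nu'(z_0)=0$ held at some $z_0>0$, then by uniqueness of solutions to this second-order linear ODE with nonsingular coefficients at $z_0>0$, $J_\nu$ would vanish identically, contradicting $J_\nu\not\equiv 0$. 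Granted this, a single application of l'H\^opital (or a Taylor expansion of $J_\nu$ at $z_0$) yields
\begin{equation*}
  \lim_{r\to 1^-} \frac{J_\nu(rz_0)}{1-r} = -z_0\, J_\nu'(z_0),
\end{equation*}
which is a finite nonzero number, and since $r^{-\nu}\to 1$, $F$ extends continuously to $r=1$.

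Combining the two limits, $F$ extends to a continuous function on the compact interval $[0,1]$, hence is bounded, which proves the claim. The only non-routine step is the simplicity of the zero $z_0$, but the ODE argument above dispatches it in one line; everything else is bookkeeping with the series expansion of $J_\nu$ near $0$ and a single l'H\^opital at $r=1$.
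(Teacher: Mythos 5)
Your proof is correct and follows essentially the same route as the paper: extend the function continuously to the compact interval $[0,1]$ by computing the limits at both endpoints (power series at $r=0$, simplicity of the zero plus l'H\^opital at $r=1$) and conclude boundedness by compactness. The only difference is cosmetic: the paper cites the NIST handbook for the simplicity of the zeros of $J_\nu$, whereas you supply the one-line ODE-uniqueness argument for it, which makes the proof a bit more self-contained.
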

\begin{proof}
   Set $f(r) = \frac{r^{-\nu} J_\nu(r z_0)}{1-r}$, which is a continuous
   function on $(0,1)$. By Eq.~10.7.3 of~\cite{olver.lozier.ea:10:nist}, we see
   that $\lim_{r\to 0} f(r) = \Gamma(\nu)^{-1}
   \left(\frac{z_0}{2}\right)^\nu<\infty$. Because all zeros of $J_\nu(\cdot)$
   are simple (see Section 10.21 \textit{ibid}), we see that $\lim_{r \to 1}f(r)
   = -z_0 J_\nu'(z_0) <\infty$. Therefore, $f(r)$ is a continuous function on
   $[0,1]$, from which the desired result follows.
\end{proof}

\begin{remark}\label{R:EgienBall}
  In the setting of Example~\ref{Eg:ball}, the leading eigenfunction is given by
  \begin{align*}
    \Phi_1(x) = \frac{1}{C_d} |x|^{(2-d)/2} J_{(d-2)/2}\left(z_0|x|\right),
  \end{align*}
  where $z_0$ is the first positive zero of the Bessel function
  $J_{(d-2)/2}(x)$, and $C_d$ is some normalization constant. One may refer to
  \S 34.2 in Chapter III of~\cite{treves:75:basic} for the case $d = 2$ and
  Section H in Chapter 2 of~\cite{folland:95:introduction} for the general case
  $d \ge 3$. In particular, the leading eigenfunction $\Phi_1(x)$ corresponds to
  $F_k^{lm}(x)$ in Theorem 2.66 (\textit{ibid.}) with $k = 0$, $l = 1$ and $m = 1$.
  Here are a few comments:
  \begin{enumerate}
    \item The multiplicity for the leading eigenvalue $z_0$ is one since $d_k =
      1$; see Corollary 2.55 (\textit{ibid.}).
    \item $Y_0^1(x)$ in Theorem 2.66 (\textit{ibid.}) is a constant function.
    \item In Figure~\ref{SF:Psi_4d}, we have chosen the following normalization
      constant:
      \begin{align}\label{E:Cd}
        C_d =
        \lim_{r\to 0} r^{(2-d) /2} J_{(n-2) /2}(z_0 r) = 2^{-d/2} d\,
        \Gamma\left(1+d/2\right)^{-1} z_0^{(d-2) /2},
      \end{align}
      where the limit is due to~\cite[Eq. 10.7.3 on p.
      223]{olver.lozier.ea:10:nist}, so that $\max_{|x|\le 1}\Psi(1,x) = 1$.
  \end{enumerate}
\end{remark}

The following Lemma is used in Example~\ref{Eg:Ann}.

\begin{lemma}\label{L:simple0}
  $R_1$ and $R_2$ are two simple zeros for $Z(r)$ defined in Example~\ref{Eg:Ann}.
\end{lemma}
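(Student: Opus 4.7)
The plan is to recognize $Z(r)$ as a non-trivial solution of a second-order linear ODE (Bessel's equation of order zero with argument $z_0 r$) and then exploit the standard fact that any non-trivial solution of such an ODE has only simple zeros. In parallel, I will compute the derivative at $R_1$ explicitly via the Wronskian identity for $J_0$ and $Y_0$ to verify non-triviality and at the same time give an explicit value of $Z'(R_1)$.

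First I would check that both $R_1$ and $R_2$ are indeed zeros of $Z$: $Z(R_1)=J_0(R_1z_0)Y_0(R_1z_0)-Y_0(R_1z_0)J_0(R_1z_0)=0$, and $Z(R_2)=0$ is precisely the defining equation~\eqref{E:ZeroJY} for $z_0$. Next I would observe that since $r\mapsto J_0(z_0r)$ and $r\mapsto Y_0(z_0r)$ both solve
\begin{equation*}
    u''(r)+\tfrac{1}{r}u'(r)+z_0^{2}\,u(r)=0\qquad(r>0),
\end{equation*}
the linear combination $Z$ also solves this equation on $(0,\infty)$. It is a standard uniqueness fact that if $Z(r_\ast)=Z'(r_\ast)=0$ for some $r_\ast>0$, then $Z\equiv 0$.

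The key step is therefore to show $Z\not\equiv 0$, and the cleanest way is to compute $Z'(R_1)$ directly. Differentiating,
\begin{equation*}
    Z'(r)=z_0\bigl[J_0(R_1z_0)\,Y_0'(rz_0)-Y_0(R_1z_0)\,J_0'(rz_0)\bigr],
\end{equation*}
and evaluating at $r=R_1$ and using the Wronskian identity $J_0(x)Y_0'(x)-Y_0(x)J_0'(x)=\tfrac{2}{\pi x}$ (see e.g.~\cite[\S10.5]{olver.lozier.ea:10:nist}) with $x=R_1z_0$ gives
\begin{equation*}
    Z'(R_1)=z_0\cdot\frac{2}{\pi R_1 z_0}=\frac{2}{\pi R_1}\neq 0.
\end{equation*}
This directly shows that $R_1$ is a simple zero and, in particular, that $Z$ is not identically zero. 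Consequently, by the uniqueness argument above applied at $r_\ast=R_2$, one cannot also have $Z'(R_2)=0$, so $R_2$ is likewise a simple zero.

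I do not anticipate any serious obstacle: the only thing one must guard against is the possibility of accidentally working with the trivial solution, but the explicit Wronskian computation at $R_1$ rules this out in one line. An alternative (but less informative) route would be to compute $Z'(R_2)$ directly from the same Wronskian together with the relation~\eqref{E:ZeroJY}, which would yield $Z'(R_2)=\frac{2}{\pi R_2}\cdot\frac{J_0(R_1z_0)}{J_0(R_2z_0)}$ (assuming $J_0(R_2z_0)\neq 0$, which follows because otherwise~\eqref{E:ZeroJY} forces $J_0(R_1z_0)=0$ as well, contradicting $Z'(R_1)\neq 0$); I would include this as a remark since it gives an explicit non-vanishing value at $R_2$.
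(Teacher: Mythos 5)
Your main argument is correct, and it takes a genuinely different — and cleaner — route than the paper. The paper computes $Z'(R_2)$ directly (via $J_0' = -J_1$, $Y_0' = -Y_1$), and since the resulting expression $z_0^{-1}Z'(R_2) = -J_0(R_1 z_0)Y_1(R_2 z_0) + J_1(R_2 z_0)Y_0(R_1 z_0)$ is not a Wronskian evaluated at a single point, the authors must carry out a three-way case analysis on which of $J_0(R_i z_0)$, $Y_0(R_i z_0)$ vanish, combining the relation $F(z_0) = 0$ from~\eqref{E:ZeroJY} with the Wronskian identity in each case. You sidestep all of this by observing that $Z$ is a nontrivial solution of the second-order linear ODE $u'' + r^{-1}u' + z_0^2 u = 0$ on $(0,\infty)$, where the coefficients are continuous; standard uniqueness then gives that any zero of a nontrivial solution is simple. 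Your one explicit computation, $Z'(R_1) = 2/(\pi R_1)$ via the Wronskian $J_0(x)Y_0'(x) - J_0'(x)Y_0(x) = 2/(\pi x)$, certifies $Z \not\equiv 0$ and simultaneously settles $R_1$. What the paper's approach buys in exchange for the extra work is an explicit nonzero value of $Z'(R_2)$ in each case; your approach buys brevity and transparency.

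One small correction to your closing remark, which is not part of the main argument: you claim that $J_0(R_2 z_0) = 0$ would force $J_0(R_1 z_0) = 0$, ``contradicting $Z'(R_1) \neq 0$''. That inference is wrong. Indeed if $J_0(R_1 z_0) = 0$, then $Z'(R_1) = -z_0 Y_0(R_1 z_0) J_0'(R_1 z_0)$, and this is still $2/(\pi R_1) \neq 0$ by the Wronskian (or, concretely, because $Y_0(R_1 z_0) \neq 0$ and $J_0'(R_1 z_0) \neq 0$ when $J_0(R_1 z_0) = 0$). So the assumed vanishing of $J_0(R_2 z_0)$ does not lead to a contradiction via $Z'(R_1)$; rather, in that degenerate case one simply checks $Z'(R_2) = -z_0 Y_0(R_1 z_0) J_0'(R_2 z_0) \neq 0$ directly. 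Since this is only in the ``alternative route'' remark, the primary ODE-uniqueness proof is unaffected.
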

\begin{proof}
  It is straightforward to check that $R_1$ and $R_2$ are two zeros of $Z(r)$.
  To show that they are simple, one needs to prove that $Z'(r)\ne 0$ for $r =
  R_1$ and $R_2$. By Eq.~10.6.3 of~\cite{olver.lozier.ea:10:nist}, we see that
  \begin{align*}
    z_0^{-1} Z'(r)
    & =   J_0(R_1 z_0) Y_0'(r z_0)
        - J_0'(r z_0)  Y_0(R_1 z_0)  \\
    & = - J_0(R_1 z_0) Y_1(r z_0)
        + J_1(r z_0)  Y_0(R_1 z_0).
  \end{align*}
  By setting $r = R_1$ and applying Eq.~10.5.2 (\textit{ibid.}), we have that
  \begin{align*}
    z_0^{-1} Z'(R_1)
    & = - J_0(R_1 z_0) Y_1(R_1 z_0)
        + J_1(R_1 z_0) Y_0(R_1 z_0)
      = \frac{2}{\pi R_1 z_0} \ne 0,
  \end{align*}
  which proves that $R_1$ is a simple zero of $Z(r)$.

  Let $F(z)$ denote the function in~\eqref{E:ZeroJY}. By setting $r = R_2$, we
  have
  \begin{align}\label{E:Z'R2}
    z_0^{-1} Z'(R_2)
    & = - J_0(R_1 z_0) Y_1(R_2 z_0)
        + J_1(R_2 z_0) Y_0(R_1 z_0).
  \end{align}
  Because $J_0^2 (z) + Y_0^2 (z) > 0$ for all $z>0$ (see, e.g., Eq.~10.9.30
  \textit{ibid}), we see that if $Y_0(R_1z_0) = 0$, then $J_0(R_1z_0) \ne 0$.
  Because all zeros of $Y_\nu(\cdot)$ are simple, we see that $Y_1(R_1z_0) =
  -Y_0'(R_1z_0) \ne 0$. But since $z_0$ is a zero of $F(z)$, i.e.,
  \begin{align*}
    0 & = J_0(R_1z_0) Y_0(R_2z_0) -
          J_0(R_2z_0) Y_0(R_1z_0)
        = J_0(R_1z_0) Y_0(R_2z_0).
  \end{align*}
  Hence, $Y_0(R_2z_0) = 0$, which further implies that both $J_0(R_2z_0)\ne 0$
  and $Y_1(R_2z_0) = - Y_0'(R_2z_0) \ne 0$. This proves Case 1 in
  Table~\ref{ST:Case1}. Applying the same arguments with $Y_0(R_1z_0) = 0$
  replaced by $Y_0(R_2z_0) = 0$, $J_0(R_1z_0) = 0$, and $J_0(R_2z_0) = 0$, we
  see that only three cases can happen, which are listed in the following
  Table~\ref{T:3Cases}:

  \begin{table}[htpb]
    \renewcommand{\thesubtable}{\thetable.\arabic{subtable}}
    \makeatletter
      \renewcommand{\p@subtable}{}
    \makeatother
    \captionsetup[subtable]{style = default, margin = 0pt, parskip = 0pt, hangindent = 0pt, labelformat = simple}

    \centering
    \caption{Three cases in the proof of Lemma~\ref{L:simple0}}\label{T:3Cases}
    \renewcommand{\arraystretch}{1.2}

    \subfloat[][Case 1.]{\label{ST:Case1}
    \centering
    \begin{tabular}{|c|c|c|c|c|}
      \hline \rowcolor{lightgray}
                & $J_0$   & $J_1$ & $Y_0$ & $Y_1$   \\ \hline
      $R_1 z_0$ & $\ne 0$ & ---   & $= 0$ & $\ne 0$ \\
      $R_2 z_0$ & $\ne 0$ & ---   & $= 0$ & $\ne 0$ \\ \hline
    \end{tabular}}
    \quad
    \subfloat[][Case 2.]{\label{ST:Case2}
    \centering
    \begin{tabular}{|c|c|c|c|c|}
      \hline \rowcolor{lightgray}
                & $J_0$ & $J_1$   & $Y_0$   & $Y_1$ \\ \hline
      $R_1 z_0$ & $= 0$ & $\ne 0$ & $\ne 0$ & ---   \\
      $R_2 z_0$ & $= 0$ & $\ne 0$ & $\ne 0$ & ---   \\ \hline
    \end{tabular}}
    \quad
    \subfloat[][Case 3.]{\label{ST:Case3}
    \centering
    \begin{tabular}{|c|c|c|c|c|}
      \hline \rowcolor{lightgray}
                & $J_0$   & $J_1$ & $Y_0$   & $Y_1$ \\ \hline
      $R_1 z_0$ & $\ne 0$ & ---   & $\ne 0$ & ---   \\
      $R_2 z_0$ & $\ne 0$ & ---   & $\ne 0$ & ---   \\ \hline
    \end{tabular}}
  \end{table}

  \noindent\textbf{Cases 1 \& 2}: From the expression of $Z'(R_2)$
  in~\eqref{E:Z'R2}, we see that in these two cases, $Z'(R_2) \ne 0$.

  \noindent\textbf{Case 3}: Since both $Y_0(R_1z_0)$ and $Y_0(R_2z_0)$ are
  nonzero, and since $F(z_0) = 0$, we see that
  \begin{align*}
    J_0(R_1 z_0) = \frac{Y_0(R_1 z_0) J_0(R_2 z_0)}{Y_0(R_2 z_0)}.
  \end{align*}
  This and Eq.~10.5.2 (\textit{ibid.}) imply that
  \begin{align*}
    z_0^{-1} Z'(R_2)
    & = \frac{Y_0(R_1z_0)}{Y_0(R_2z_0)}
    \left(-J_0(R_2z_0)Y_1(R_2z_0) + J_1(R_2z_0)Y_0(R_2z_0)\right)\\
    & = \frac{Y_0(R_1z_0)}{Y_0(R_2z_0)} \frac{2}{\pi R_2z_0} \ne 0.
  \end{align*}
  Therefore, combining the above three cases, we have proved that $R_2$ is a
  simple zero of $Z(r)$.
\end{proof}

\bigskip
\noindent
\textbf{Acknowledgments} C.~Y.~Lee is partially supported by Taiwan's National
Science and Technology Council grant NSTC111-2115-M-007-015-MY2.
L.~Chen is partially supported by NSF DMS-2246850 and a
  collaboration grant from Simons Foundation (959981). The authors would also
like to thank the anonymous referee for the careful reading and helpful
suggestions, which have greatly improved the quality of the paper.

% -------------------------------------------------
% References go here.
% -------------------------------------------------

% \global\let\savedifeof = \ifeof
% \newcommand\ifeof#1{\global\let\ifeof = \savedifeof\iftrue}%

\addcontentsline{toc}{section}{References}
\printbibliography[title = {References}]

\end{document}